\newtheorem{theorem}{Theorem}[section]
\newtheorem{lemma}[theorem]{Lemma}
\newtheorem{proposition}[theorem]{Proposition}
\newtheorem{corollary}[theorem]{Corollary}
\theoremstyle{definition}
\theoremstyle{remark}
\newtheorem{remark}[theorem]{Remark}
\numberwithin{equation}{section}
\title[Well-posedness for the fifth-order KdV equation]{Well-posedness for the  fifth-order KdV equation in the energy space} 
\author[C. E. Kenig and D. Pilod]{Carlos E. Kenig$^{\star}$ and Didier Pilod$^{\dagger}$}
\thanks{$^{\star}$ Partially supported by NSF Grant DMS-0968472.}
\thanks{$^{\dagger}$ Partially supported by CNPq/Brazil, Grant 200001/2011-6.}
\subjclass[2010]{Primary 35Q53,  35Q35, 35A01; Secondary 37K05, 76B15}
\keywords{Fifth-order KdV equation, fifth-order water-waves models, initial value problem}
\date{}
\begin{document}
\maketitle 

\vspace{-0.5cm}

{\scriptsize \centerline{$^{\star}$Department of Mathematics, University of Chicago, }
             \centerline{ Chicago, IL, 60637 USA.}
             \centerline{email: cek@math.chicago.edu}

\vspace{0.1cm}
             \centerline{$^{\dagger}$ Instituto de Matemática,
             Universidade Federal do Rio de Janeiro,}
              \centerline{Caixa Postal 68530, CEP: 21945-970, Rio
              de Janeiro, RJ, Brazil.}
              \centerline{email: didier@im.ufrj.br}}

\begin{abstract}
We prove that the initial value problem (IVP) associated to the fifth order KdV equation
\begin{equation} \label{05KdV}
\partial_tu-\alpha\partial^5_x u=c_1\partial_xu\partial_x^2u+c_2\partial_x(u\partial_x^2u)+c_3\partial_x(u^3),
\end{equation}
where $x \in \mathbb R$, $t \in \mathbb R$, $u=u(x,t)$ is a real-valued function and $\alpha, \ c_1, \ c_2, \ c_3$ are real constants with $\alpha \neq 0$, is locally well-posed in $H^s(\mathbb R)$ for $s \ge 2$. In the Hamiltonian case (\textit{i.e.} when $c_1=c_2$), the IVP associated to \eqref{05KdV} is then globally well-posed in the energy space $H^2(\mathbb R)$.
\end{abstract}

\section{Introduction}
Considered here is the initial value problem (IVP) associated to the fifth-order Korteweg-de Vries equation
\begin{equation} \label{5KdV}
\left\{\begin{array}{l} 
\partial_tu-\alpha\partial^5_x u=c_1\partial_xu\partial_x^2u+c_2\partial_x(u\partial_x^2u)+c_3\partial_x(u^3) \\ 
u(\cdot,0)=u_0,
\end{array} \right.
\end{equation}
where $x \in \mathbb R$, $t \in \mathbb R$, $u=u(x,t)$ is a real-valued function and $\alpha, \ c_1, \ c_2, \ c_3$ are real constants with $\alpha \neq 0$.  Such equations and its generalizations 
\begin{equation} \label{gene5KdV}
\partial_tu-\alpha\partial^5_x u+\beta\partial_x^3u=c_0u\partial_xu+c_1\partial_xu\partial_x^2u+c_2\partial_x(u\partial_x^2u)+c_3\partial_x(u^3)
\end{equation}
arise as long-wave approximations to the water-wave equation. They have been derived as second-order asymptotic expansions for unidirectional wave propagation in the so-called Boussinesq regime (see Craig, Guyenne and Kalisch \cite{CGK}, Olver \cite{Ol} and the references therein), the first order expansions being of course the Korteweg-de Vries (KdV) equation, 
\begin{equation} \label{KdV} 
\partial_tu+\beta\partial_x^3u=c_0u\partial_xu.
\end{equation} 
The equation in \eqref{5KdV} was also proposed by Benney \cite{Be} as a model for interaction of short and long waves.

When $c_1=c_2$, the Hamiltonian 
\begin{equation} \label{H} 
H(u)=\frac12 \int_{\mathbb R}\Big(\alpha(\partial_x^2u)^2-c_1u(\partial_xu)^2+\frac{c_3}2u^4
\Big)dx
\end{equation} 
as well as the quantity 
\begin{equation} \label{M} 
M(u)=\int_{\mathbb R}u^2dx,
\end{equation}
are conserved by the flow of \eqref{5KdV}. Indeed, it is easy to check that 
\begin{displaymath}
H'(u)\varphi =\int_{\mathbb R}\big(\alpha\partial_x^4u-\frac{c_1}2(\partial_xu)^2+c_1\partial_x(u\partial_xu)+c_3u^3 \big)\varphi dx =:\big(\text{grad} \, H(u),\varphi \big)_{L^2}.
\end{displaymath}
Thus the equation in \eqref{5KdV} has the form 
$ \partial_tu=\partial_x\text{grad} \, H(u)$, so that 
\begin{displaymath} 
\frac{d}{dt}H(u)=\big(\text{grad} \, H(u),\partial_tu \big)_{L^2}=\big(\text{grad} \, H(u),\partial_x\text{grad} \, H(u)\big)_{L^2}=0.
\end{displaymath}
Moreover in the special case where  $c_2=c_1=-10\alpha$ and $c_3=10\alpha$, the equation in \eqref{5KdV} is the equation following KdV in the KdV hierarchy discovered by Lax \cite{La} and writes in the case $\alpha=1$
\begin{equation} \label{Lax5KdV} 
\partial_tu-\partial_x^5u+10\partial_xu\partial_x^2u+10\partial_x(u\partial_x^2u)-10\partial_x(u^3)=0.
\end{equation}
 Therefore equation \eqref{Lax5KdV} is completely integrable and possesses an infinite number of conservation laws. We refer to the introductions in \cite{Gr,Po,Sa} for more details on this subject.

Our purpose is to study the IVP \eqref{5KdV} in classical $L^2$-based Sobolev spaces $H^s(\mathbb R)$. We shall say that the IVP is locally (resp. globally) well-posed in the function space $X$ if it induces a dynamical system on $X$ by generating a continuous local (resp. global) flow. 

First, it is worth mentioning that without dispersion (\textit{i.e.} when $\alpha=0$) and when $c_1\neq 0$ or $c_2 \neq 0$, the IVP \eqref{5KdV} is likely to be ill-posed in any $H^s(\mathbb R)$ (see the comments in the introduction of \cite{Po}). This is in sharp contrast with the KdV equation. Indeed, when $\beta=0$ in \eqref{KdV}, we obtain the Burgers equation, which is still well-posed in $H^s(\mathbb R)$ for $s>3/2$ by using standard energy methods. However, the direct energy estimate for equation \eqref{5KdV}  (after fixing  $c_3=0$ for simplicity) gives only
\begin{equation} \label{standardenergy}
\frac{d}{dt}\|\partial_x^ku(t)\|_{L^2}^2 \lesssim \|\partial_x^3u\|_{L^{\infty}_x}\|\partial_x^ku(t)\|_{L^2} ^2+\Big|\int_{\mathbb R}\partial_xu\partial_x^{k+1}u\partial_x^{k+1}udx\Big|.
\end{equation}
Observe that the last term on the right-hand side of \eqref{standardenergy} has still higher-order derivatives and cannot be treated by using only integration by parts. To overcome this difficulty, Ponce \cite{Po} used a recursive argument based on the dispersive smoothing effects associated to the linear part of \eqref{5KdV}, combined to a parabolic regularization method, to establish that the IVP \eqref{5KdV} is locally well-posed in $H^s(\mathbb R)$ for $s \ge 4$. Later, Kwon \cite{Kw} improved Ponce's result by proving local well-posedness for \eqref{5KdV} in $H^s(\mathbb R)$ for $s>5/2$. The main new idea was to modify the energy by adding a correctional lower-order cubic term to cancel the last term on the right-hand side of \eqref{standardenergy}. Note that he also used a refined Strichartz estimate derived by chopping the time interval in small pieces whose length depends on the spatial frequency. This estimate was first established by Koch and Tzvetkov \cite{KT} (see also Kenig and Koenig \cite{KK} for an improved version) in the Benjamin-Ono context.

On the other hand, it was proved\footnote{Strictly speaking the result was proved only in the case where $c_3=0$, but as observed in the introduction of \cite{Gr}, the cubic term $\partial_x(u^3)$ in \eqref{5KdV} is well behaved and no cancellations occur, so that the proof remains true even when $c_3 \neq 0$.} by the second author in \cite{Pi}, by using an argument due to Molinet, Saut and Tzvetkov for the Benjamin-Ono equation \cite{MST}, that, in the case $c_2 \neq 0$, the flow map associated to \eqref{5KdV} fails to be $C^2$ in $H^s(\mathbb R)$, for any $s \in \mathbb R$.  This result was improved by Kwon \cite{Kw}  who showed that the flow map fails to be even uniformly continuous in $H^s(\mathbb R)$ when $s>\frac52$ (and $s>0$ in the completely integrable case). Those results are based on the fact that the dispersive smoothing effects associated to the linear part of \eqref{5KdV} are not strong enough to control the high-low frequency interactions in the nonlinear term $\partial_x(u\partial_x^2u)$. As a consequence, one cannot solve the IVP \eqref{5KdV} by a Picard iterative method implemented on the integral equation associated to \eqref{5KdV} for initial data in any Sobolev space $H^s(\mathbb R)$ with $s \in \mathbb R$.  

However, the fixed point method may be employed to prove well-posedness for \eqref{5KdV} in other function spaces. For example in \cite{KPV3,KPV4}, Kenig, Ponce and Vega proved that the more general class of IVPs 
\begin{equation} \label{KdV_h}
\left\{  \begin{array}[pos]{ll}
          \partial_tu+\partial_x^{2j+1}u+P(u,\partial_xu,\ldots,\partial_x^{2j}u),
          \quad x,\ t \in \mathbb R, \ j \in \mathbb N \\          u(0)=u_0, \\
     \end{array} \right.
\end{equation}
where
\begin{displaymath}
P: \mathbb R^{2j+1}\rightarrow \mathbb R \quad (\mbox{or} \ P:
\mathbb C^{2j+1}\rightarrow \mathbb C)
\end{displaymath}
is a polynomial having no constant or linear terms, is well-posed in weighted Sobolev spaces of the type $H^k(\mathbb R) \cap H^l(\mathbb R; x^2dx)$ with $k, \ l \in \mathbb Z_+$, $k \ge k_0, \ l \ge l_0$ for some $k_0, \ l_0 \in \mathbb Z_+$. We also refer to \cite{Pi} for sharper results in the case of small initial data and when the nonlinearity in \eqref{KdV_h} is quadratic. Recently, Gr\"unrock \cite{Gr}, respectively Kato \cite{Ka}, used variants of the Fourier restriction norm method to prove well-posedness in $\widehat{H}^s_r(\mathbb R)$ for $1<r \le \frac{4}{3}$ and $s>\frac14+\frac{3}{2r'}$, respectively in $H^{s,a}(\mathbb R)$ for $s \ge \max\{-\frac14,-2a-2\}$ with $-\frac32 < a \le -\frac14$ and $(s,a) \neq (-\frac14,-\frac78)$. The spaces $\widehat{H}^s_r(\mathbb R)$ and $H^{s,a}(\mathbb R)$ are respectively defined by the norms
$ \|\varphi\|_{\widehat{H}^s_r}=\|\langle \xi\rangle^s \widehat{\varphi}\|_{L^{r'}} $with $\frac1r+\frac1{r'}=1$ and $\|\varphi\|_{H^{s,a}}=\|\langle \xi\rangle^{s-a}|\xi|^a \widehat{\varphi}\|_{L^2}$.

Nevertheless, the $L^2$-based Sobolev spaces $H^s(\mathbb R)$ remain the natural\footnote{When the equation in \eqref{5KdV} is Hamiltonian (\textit{i.e.} when $c_1=c_2$),  the space $H^2(\mathbb R)$ is the natural space where the Hamiltonian $H$ in \eqref{H} is well defined.} spaces to study well-posedness for the fifth order KdV equation.  Our main result states that the IVP \eqref{5KdV} is locally well-posed in $H^s(\mathbb R)$ for $s \ge 2$.
\begin{theorem} \label{maintheo}
Assume that $s \ge 2$. Then, for every $u_0 \in H^s(\mathbb R)$, there exists a positive time $T=T(\|u_0\|_{H^s})$ and a unique solution $u$ to \eqref{5KdV} in the class 
\begin{equation} \label{maintheo1} 
 C([-T,T];H^s(\mathbb R)) \cap F^s(T) \cap B^s(T).
\end{equation}
Moreover, for any $0<T'<T$, there exists a neighbourhood $\mathcal{U}$ of $u_0$ in $H^s(\mathbb R)$ such that the flow map data-solution 
\begin{equation} \label{maintheo2} 
S^s_{T'}: \mathcal{U} \longrightarrow C([-T',T'];H^s(\mathbb R)) , \ u_0 \longmapsto u,
\end{equation}
is continuous.
\end{theorem}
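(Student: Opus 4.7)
Since the flow map cannot be constructed by Picard iteration on the Duhamel formula (Kwon's non-uniform continuity result quoted above), and since the naive energy identity \eqref{standardenergy} loses a derivative in the $\partial_x(u\partial_x^2u)$ interaction, the plan is to combine a modified energy method (in the spirit of Kwon) with short-time, frequency-localized Bourgain-type function spaces (in the spirit of Ionescu--Kenig--Tataru and Koch--Tzvetkov). I would define $F^s(T)$ block-by-block on the Littlewood--Paley decomposition: its $N$-th dyadic piece would be an $X^{s,1/2,1}$-type norm restricted to time intervals of length $\sim N^{-2}$ (the scale at which the fifth-order operator $\alpha\partial_x^5$ behaves like a transport operator at frequency $N$), and $B^s(T)$ would be the associated short-time energy space, summed in $\ell^2$ over the frequency parameter. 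Built into these spaces there should be a refined Strichartz estimate, obtained by chopping time à la Koch--Tzvetkov and Kenig--Koenig, giving control of $\|\partial_x^3 u\|_{L^2_T L^\infty_x}$ by $\|u\|_{F^s(T)}$ precisely at the threshold $s=2$.

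The proof would then rest on three a priori estimates on $[-T,T]$ with $T=T(\|u_0\|_{H^s})$: a linear estimate
\[
\|u\|_{F^s(T)} \lesssim \|u\|_{B^s(T)} + \|(\partial_t-\alpha\partial_x^5)u\|_{N^s(T)},
\]
where $N^s(T)$ is the natural dual forcing space; a short-time trilinear estimate controlling
\[
\|c_1\partial_xu\partial_x^2v+c_2\partial_x(u\partial_x^2v)+c_3\partial_x(uvw)\|_{N^s(T)}
\]
by products of $F^s(T)$-norms, whose high-high and high-low contributions are absorbed by the dispersive gain in $F^s$; and, most importantly, a modified energy estimate
\[
\|u\|_{B^s(T)}^2 \lesssim \|u_0\|_{H^s}^2 + T^{\theta}\bigl(\|u\|_{F^s(T)}+\|u\|_{B^s(T)}\bigr)^3
\]
for some $\theta>0$. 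The energy would take the form $\widetilde E^s(u) = \|u\|_{H^s}^2 + \lambda\int_{\mathbb R} u\,(\partial_x^s u)^2\,dx + \text{lower-order cubic corrections}$, with $\lambda$ chosen so that the fatal cubic $\int \partial_xu\,(\partial_x^{s+1}u)^2\,dx$ produced by the $c_2\partial_x(u\partial_x^2u)$ term when one differentiates $\|\partial_x^s u\|_{L^2}^2$ is canceled exactly; the residual commutator-type terms should close using $\|\partial_x^3u\|_{L^2_TL^\infty_x}$ delivered by the refined Strichartz estimate. This matching of cancellations at the endpoint $s=2$ is what I expect to be the technical heart and main obstacle of the argument, because the lower-order corrections must themselves be controllable by $\|u\|_{F^s(T)}\cap \|u\|_{B^s(T)}$ without ever demanding more than three $L^\infty$-derivatives of $u$.

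Combining the three estimates by a bootstrap yields the a priori bound $\|u\|_{F^s(T)}+\|u\|_{B^s(T)}\le C\|u_0\|_{H^s}$ on a time interval depending only on $\|u_0\|_{H^s}$. Existence would then follow by regularizing $u_0$ to $u_0^n\in H^\infty(\mathbb R)$, invoking the Ponce--Kwon high-regularity theory to build smooth solutions $u^n$, applying the uniform a priori bound, and extracting an $L^\infty_TH^s$ limit using the compactness provided by the $B^s(T)$-norm together with time regularity read off from the equation. The naive difference equation for $w=u-u'$ again loses a derivative, so uniqueness and continuity of the flow must be proved by a Bona--Smith argument: I would derive an analogous modified-energy estimate for $w$ at a lower regularity (say $L^2$ or $H^{s-1}$), where the energy correction takes the simpler shape $\int w^2 + \mu\int \partial_x(u+u')\,w^2\,dx$, and interpolate the resulting low-regularity difference bound against the uniform $H^s$-control on the smoothed data to close continuous dependence at the endpoint $s=2$.
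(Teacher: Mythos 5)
Your overall architecture coincides with the paper's: short-time Bourgain spaces with time localization $2^{-2k}$ at frequency $2^k$, the linear estimate trading $F^s(T)$ for $B^s(T)+N^s(T)$, short-time bilinear/trilinear estimates for the nonlinearity, a Kwon-type cubic correction to the energy, and a Bona--Smith scheme for existence, uniqueness and continuity at $s=2$. There is, however, one step that would fail as written and that carries the weight of closing the argument: the factor $T^{\theta}$ in your modified energy estimate is not available in these spaces. Because the $F^s(T)$- and $B^s(T)$-norms are assembled from time windows of length $2^{-2k}$ that are already saturated at each frequency, neither the multilinear estimates nor the energy estimate gains any positive power of $T$; compare \eqref{propEE1}, which has no $T$-dependence and instead requires the solution to be small in $F^{\frac34+}(T)$ and $F^{\frac54}(T)$. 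The paper closes the bootstrap by rescaling $u_{\lambda}(x,t)=\lambda^{2}u(\lambda x,\lambda^{5}t)$ so that the data (and hence, by a continuity-in-$T'$ argument for the quantity $\Lambda^s_{T'}$, the solution norms) are small, and then recovers a lifespan $T\sim\|u_0\|_{H^s}^{-10/3}$ by undoing the scaling. Without this reduction to small data your bootstrap ``on a time interval depending only on $\|u_0\|_{H^s}$'' does not close.

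Two further points. First, the cubic correction must be frequency-localized: since $B^s(T)$ is a weighted sum over $k$ of $\sup_{t_k}\|P_ku(t_k)\|_{L^2}^2$, the paper corrects each dyadic block separately as in \eqref{Ek}, with two coefficients tuned to cancel the two commutator remainders isolated in Lemma \ref{tec2EE}; a single global term $\lambda\int u(\partial_x^su)^2dx$ does not interact correctly with the block-wise supremum in time. Second, the heuristic that the refined Strichartz estimate controls $\|\partial_x^3u\|_{L^2_TL^{\infty}_x}$ by $\|u\|_{F^2(T)}$ is false here: with $\alpha=2$, estimate \eqref{Bstrichartz1} only yields $\|D_x^{1/4}u\|_{L^2_TL^{\infty}_x}\lesssim\|u\|_{F^{0+}(T)}$, so three $L^{\infty}$-derivatives would cost $F^{\frac{11}4+}$. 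The energy estimates in fact never need more than $\|\partial_xu\|_{L^2_TL^{\infty}_x}\lesssim\|u\|_{F^{\frac34+}(T)}$, and the a priori bound holds already for $s\ge\frac54$; the threshold $s\ge2$ enters only through the asymmetric difference equation, whose $H^s$ energy estimate \eqref{coroEEdiff1} carries the derivative-losing term $\|v\|_{F^0(T)}\|u_2\|_{F^{s+2}(T)}\|v\|_{F^s(T)}$, absorbed in the Bona--Smith step because the regularized data lose exactly $\lambda^{-2}$ while the $L^2$ difference gains $o(\lambda^{s})$. Your final paragraph contains this mechanism in substance, so with the scaling reduction in place of the $T^{\theta}$ gain the strategy matches the paper's proof.
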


\begin{remark} The short-time Bourgain space $F^s(T):=F^s_2(T)$  and the energy space $B^s(T)$ are defined in Subsection \ref{functionspace}.
\end{remark}

\begin{remark} The result of Theorem \ref{maintheo} is also valid for equation \eqref{gene5KdV} and the proof is similar. 
\end{remark}

\begin{remark} For sake of simplicity, we assume that $\alpha=1$ and $c_3=0$ in the proof of Theorem \ref{maintheo} since the cubic term $\partial_x(u^3)$ has low order derivative when compared to the two other nonlinear terms in \eqref{5KdV}. Nevertheless, we indicate in the appendix what modifications are needed to deal with the case $c_3 \neq 0$.
\end{remark}

\begin{remark} Observe that at this level of regularity ($s \ge 2$), the limits of smooth solutions are still weak solutions to the equation in \eqref{5KdV}. 
\end{remark}

\begin{remark} As a byproduct of the proof of Theorem \ref{maintheo}, we obtain \textit{a priori} estimates on smooth solutions of \eqref{5KdV} in $H^s(\mathbb R)$ for $s \ge \frac54$ (see Proposition \ref{apriori} below). In other word, the flow map data-solutions in $H^{\infty}(\mathbb R)$ satisfies 
\begin{equation} \label{a priori1}
\big\|S^{\infty}_T(u_0) \big\|_{L^{\infty}_TH^s} \lesssim \|u_0\|_{H^s},
\end{equation}
for any $s \ge \frac54$ and where $T$ only depends on $\|u_0\|_{H^s}$. However, we were not able to prove well-posedness at this level of regularity.
\end{remark}

In the Hamiltonian case, the conserved quantities $H$ and $M$ defined in \eqref{H}--\eqref{M} provide a control on the $H^2$-norm and allow to prove that the IVP \eqref{5KdV} is globally well-posed in $H^2(\mathbb R)$. 
\begin{corollary} \label{maincorro} 
In the case $c_1=c_2$, the results of Theorem \ref{maintheo} are true for $T>0$ arbitrarily large.
\end{corollary}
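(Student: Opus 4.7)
The plan is the standard bootstrap: Theorem \ref{maintheo} furnishes a local solution on $[-T,T]$ with $T = T(\|u_0\|_{H^2}) > 0$, so to globalize it suffices to establish an \emph{a priori} bound $\|u(t)\|_{H^2} \le C(\|u_0\|_{H^2})$ on the maximal interval of existence, and then iterate the local result. In the Hamiltonian case $c_1 = c_2$, such a bound should come from combining the two conservation laws \eqref{H} and \eqref{M}.

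Conservation of $M$ gives $\|u(t)\|_{L^2} = \|u_0\|_{L^2}$ immediately. For the second-derivative part I would rearrange the Hamiltonian as
\[
\alpha \|\partial_x^2 u\|_{L^2}^2 = 2 H(u) + c_1 \int_{\mathbb R} u (\partial_x u)^2\, dx - \frac{c_3}{2}\int_{\mathbb R} u^4\, dx,
\]
and control the two integrals on the right by Gagliardo--Nirenberg interpolation between $L^2$ and $\dot H^2$. Concretely, $\|u\|_{L^\infty} \lesssim \|u\|_{L^2}^{3/4}\|\partial_x^2 u\|_{L^2}^{1/4}$ and $\|\partial_x u\|_{L^2}^2 \lesssim \|u\|_{L^2}\|\partial_x^2 u\|_{L^2}$ yield
\[
\Big|\int_{\mathbb R} u(\partial_x u)^2\, dx \Big| \lesssim \|u\|_{L^2}^{7/4}\|\partial_x^2 u\|_{L^2}^{5/4},
\]
while $\|u\|_{L^4}^4 \lesssim \|u\|_{L^2}^{7/2}\|\partial_x^2 u\|_{L^2}^{1/2}$. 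Since both exponents $5/4$ and $1/2$ are strictly less than $2$, Young's inequality absorbs these contributions into $\tfrac{|\alpha|}{2}\|\partial_x^2 u\|_{L^2}^2$, leaving a uniform bound on $\|\partial_x^2 u\|_{L^2}$ in terms of $|H(u_0)|$ and $\|u_0\|_{L^2}$ — hence a uniform bound on $\|u(t)\|_{H^2}$.

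The one point that needs care is that the conservation identities were derived formally for smooth solutions, and must be justified along the $H^2$ flow. I would do this by approximation: take $u_0^n \in H^\infty(\mathbb R)$ with $u_0^n \to u_0$ in $H^2$, solve \eqref{5KdV} by Theorem \ref{maintheo} on a common interval $[-T,T]$ where $T = T(\|u_0\|_{H^2})$, use conservation of $H$ and $M$ for the smooth solutions $u^n$, and then pass to the limit using the continuity of $H$ and $M$ on $H^2$ together with the continuous dependence of the flow map. The main (and only nontrivial) step is really the Gagliardo--Nirenberg/Young absorption above; everything else is a routine consequence of the local theory and the conservation of $H$ and $M$. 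Iterating the local existence result on intervals of uniform length $T(C(\|u_0\|_{H^2}))$ then yields a solution on $[-T',T']$ for arbitrary $T'>0$, together with continuous dependence on the initial datum.
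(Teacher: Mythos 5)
Your proposal is correct and is exactly the argument the paper intends: conservation of $M$ and $H$ (justified for $H^2$ solutions by approximation and continuity of the flow), Gagliardo--Nirenberg plus Young to absorb the subcritical terms $\int u(\partial_xu)^2dx$ and $\int u^4dx$ into $\tfrac{|\alpha|}{2}\|\partial_x^2u\|_{L^2}^2$, and iteration of the local theory on intervals of uniform length. The only point worth adding is that for $s>2$ the claim also requires persistence of higher regularity on any interval where the $H^2$ norm stays bounded, which follows from the \emph{a priori} estimate \eqref{apriori3} applied with $\sigma=s$.
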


\begin{remark} 
Corollary \ref{maincorro} remains true for equation \eqref{gene5KdV} (still in the case $c_1=c_2$).
\end{remark} 

\begin{remark} 
In \cite{Sa}, Saut already proved the existence of global weak solutions in the Hamiltonian case. However Corollary \ref{maincorro} is the first existence result of global strong solutions for the fifth-order KdV equation in the Hamiltonian case\footnote{Except of course in the completely integrable case.}.
\end{remark}

\begin{remark} In his study of stability of solitary waves for Hamiltonian fifth-order water wave models of the form \eqref{gene5KdV} with quadratic nonlinearities\footnote{The question of existence of solitary waves for such models with nonhomogeneous nonlinearities was addressed in \cite{Kic}.}, Levandosky assumed well-posedness in $H^2(\mathbb R)$ (c.f. Assumption 1.1 in \cite{Lev}). Therefore, Corollary \ref{maincorro} provides an affirmative answer to this issue. We also refer to \cite{An,Lev2} for further results on stability/instability of such fifth-order water wave models.
\end{remark}

We now discuss the main ingredients in the proof of Theorem \ref{maintheo}. We follow the method introduced by Ionescu, Kenig and Tataru \cite{IKT} in the context of the KP1 equation, which is based on the dyadic Bourgain's spaces $F^s_{\alpha}$ and their dual $N^s_{\alpha}$, defined in Subsection \ref{functionspace}. We refer to \cite{CCT,KT2} for previous works using similar spaces to prove \textit{a priori} bounds for the 1 D cubic NLS at low regularity and also to \cite{Guo,Guo1,Mo} for applications to other dispersive equations. 

The $F^s_{\alpha}$ spaces enjoy a $X^{s,b}$-type structure but with a localization in small time dependent intervals whose length is of order $2^{-\alpha k}$ when the spatial frequency of the function is localized around $2^{k}$. This prevents the time frequency modulation\footnote{Here, $w(\xi)=\xi^5$ denotes the dispersive symbol of the linear equation.} $|\tau-w(\xi)|$ to be too small, which allows for suitable $\alpha$, $\alpha=2$ in our case, to prove a bilinear estimate of the form (c.f Proposition \ref{bilinE} for a precise statment )
\begin{equation} \label{Introbilin} 
\big\| \partial_xu\partial_x^2v\big\|_{N^s_2(T)}+\big\| \partial_x(u\partial_x^2v)\big\|_{N^s_2(T)}  \lesssim \|u\|_{F^s_2(T)}\|v\|_{F^s_2(T)},
\end{equation}
as soon as $s>1$. Of course\footnote{This would be in contradiction with the $C^2$-ill-posedness results in \cite{Pi}.}, we cannot conclude directly by using a contraction argument since the linear estimate estimate 
\begin{equation} \label{Introlinear} 
\|u\|_{F^s_{\alpha}(T)} \lesssim \|u\|_{B^s(T)}+\big\| \partial_xu\partial_x^2v\big\|_{N^s_{\alpha}(T)}+\big\| \partial_x(u\partial_x^2v)\big\|_{N^s_{\alpha}(T)}
\end{equation}
requires the introduction of the energy norm $\|u\|_{B^s(T)}$, instead of the usual $H^s$-norm of the initial data $\|u_0\|_{H^s}$, in order to control the small time localization appearing in the $F^s_{\alpha}$-structure. Therefore it remains to derive the frequency localized energy estimate 
\begin{equation} \label{Introenergy} 
\|u\|_{B^s(T)}^2 \lesssim \|u_0\|_{H^s}^2+\big(1+\|u\|_{F^s(T)}\big)\|u\|_{F^s(T)}\big(\|u\|_{F^s(T)}^2+\|u\|_{B^s(T)}^2\big),
\end{equation}
which is true if $s\ge \frac54$ and $\|u\|_{L^{\infty}_TH^s_x}$ is small. The main new difficulty in our case is that after using suitable frequency localized commutator estimates, we are not able to handle directly the remaining lower-order terms  (see Lemma \ref{tec2EE} and Remark \ref{rematecEE} below). This is somehow the price to pay for the choice of $\alpha=2$ which enabled to derive the bilinear estimate \eqref{Introbilin}. Then, we modify the energy by adding a cubic lower-order term to $\|u\|_{B^s(T)}^2$ in order to cancel those terms. This can be viewed as a localized version of Kwon's argument in \cite{Kw}.

We deduce the \textit{a priori} bound \eqref{a priori1} by combining \eqref{Introbilin}--\eqref{Introenergy} and using a scaling argument. To finish the  proof of Theorem \ref{maintheo}, we apply this method to the difference of two solutions. However, due to the lack of symmetry of the new equation, we only are able to prove the corresponding energy estimate for $s \ge 2$. Finally, we conclude the proof by adapting the classical Bona-Smith argument \cite{BS}. 

Around the time when we completed this work, we learned that Guo, Kwak and Kwon \cite{GKK} had also worked on the same problem and obtained the same results as ours (in Theorem \ref{maintheo} and Proposition \ref{apriori}). They also used the short-time $X^{s,b}$ method. However, instead of modifying the energy as we did, they put an additional weight in the $X^{s,b}$ structure of the spaces in order to derive the key energy estimates.

The rest of the paper is organized as follows: in Section 2, we introduce the notations, define the function spaces and prove some of their basic properties as well the main linear estimates. In Section 3, we derive the $L^2$ bilinear and trilinear estimates, which are used to prove the bilinear estimates in Section 4 and the energy estimates in Section 5. The proof of Theorem \ref{maintheo} is given in Section 6. We conclude the paper with an appendix explaining how to treat the cubic term $\partial_x(u^3)$, which we omit in the previous sections to simplify the exposition.

\section{Notation, function spaces and linear estimates} 

\subsection{Notation}
For any positive numbers $a$ and $b$, the notation $a \lesssim b$
means that there exists a positive constant $c$ such that $a \le c
b$. We also denote $a \sim b$ when $a \lesssim b$ and $b \lesssim
a$. Moreover, if $\alpha \in \mathbb R$, $\alpha_+$, respectively
$\alpha_-$, will denote a number slightly greater, respectively
lesser, than $\alpha$. 

For $a_1, \ a_2, \ a_3 \in \mathbb R$, it will be convenient to define the quantities $a_{max} \ge a_{med} \ge a_{min}$ to be the maximum, median and minimum of $a_1, \ a_2$  and $a_3$ respectively. For $a_1, \ a_2, \ a_3, \ a_4 \in \mathbb R$, we define the quantities $a_{max} \ge a_{sub} \ge a_{thd} \ge a_{min}$ to be the maximum, sub-maximum, third-maximum and minimum of $a_1, \ a_2, \ a_3$  and $a_4$ respectively. Usually, we use $k_i$ and $j_i$ to denote integers and $N_i=2^{k_i}\ , L_i=2^{j_i}$ to denote dyadic numbers. 

For $u=u(x,t) \in \mathcal{S}(\mathbb R^2)$,
$\mathcal{F}u=\widehat{u}$ will denote its space-time Fourier
transform, whereas $\mathcal{F}_xu=(u)^{\wedge_x}$, respectively
$\mathcal{F}_tu=(u)^{\wedge_t}$, will denote its Fourier transform
in space, respectively in time.  Moreover, we generally omit the index $x$ or $t$ when the function depends only on one variable. For $s \in \mathbb R$, we define the
Bessel and Riesz potentials of order $-s$, $J^s_x$ and $D_x^s$, by
\begin{displaymath}
J^s_xu=\mathcal{F}^{-1}_x\big((1+|\xi|^2)^{\frac{s}{2}}
\mathcal{F}_xu\big) \quad \text{and} \quad
D^s_xu=\mathcal{F}^{-1}_x\big(|\xi|^s \mathcal{F}_xu\big).
\end{displaymath}

The unitary group $e^{t\partial_x^5}$ associated to the linear dispersive equation 
\begin{equation} \label{linear5KdV} 
\partial_tu-\partial_x^5u=0,
\end{equation}
is defined via Fourier transform by 
\begin{equation} \label{group} 
e^{t\partial_x^5}u_0= \mathcal{F}_x^{-1}\big( e^{itw(\xi)}\mathcal{F}_xu_0\big),
\end{equation}
where $w(\xi)=\xi^5$.

For $k \in \mathbb Z_+$, let us define 
\begin{displaymath} 
I_k=\big\{ \xi \in \mathbb R \ : \ 2^{k-1} \le |\xi| \le 2^{k+1} \big\},
\end{displaymath}
if $k \ge 1$ and
\begin{displaymath}
I_0=\big\{ \xi \in \mathbb R \ : \  |\xi| \le 2 \big\}.
\end{displaymath}
Throughout the paper, we fix an even smooth cutoff function $\eta_0:\mathbb R \rightarrow [0,1]$ supported in $[-8/5,8/5] $ and such that $\eta_0$ is equal to $1$ in $[-5/4,5/4]$. 
For $k \in \mathbb Z \cap [1,+\infty)$, we define the functions $\eta_k$ and $\eta_{\le k}$ respectively by 
\begin{equation} \label{eta}
\eta_k(\xi)=\eta_0(2^{-k}\xi)-\eta_0(2^{-(k-1)}\xi)=:\eta(2^{-k}\xi) \quad \text{and}\quad \eta_{\le k}=\sum_{j=0}^k \eta_{j}. 
\end{equation} 
Then, $(\eta_k)_{k\ge 0}$ is dyadic partition of the unity satisfying $\text{supp} \ \eta_k \subset I_k$. 

Let $(\widetilde{\eta}_k)_{k \ge 0}$ be another nonhomogeneous dyadic partition of the unity satisfying $\text{supp} \, \widetilde{\eta}_k \subset I_k$ and $\widetilde{\eta}_k=1$ on $\text{supp} \; \eta_k$.

Finally, for $k \in \mathbb Z \cap [1,+\infty)$, let us define the Fourier multiplier $P_{k}$, $P_{\le 0}$ and  $P_{\le k}$ by 
\begin{displaymath}
P_{k}u=\mathcal{F}^{-1}_x\big(\eta_k\mathcal{F}_xu\big), \ P_{\le 0}u=\mathcal{F}^{-1}_x\big(\eta_{0}\mathcal{F}_xu\big), \ \text{and} \ P_{\le k}=P_{\le 1}+\sum_{j=1}^kP_{j}. 
\end{displaymath}  
Then it is clear that $P_{\le 0} +\sum_{k=1}^{+\infty}P_{k}=1$. Often, when there is no risk of confusion, we also denote $P_0=P_{\le 0}.$

\subsection{Function spaces}  \label{functionspace}
For $1 \le p \le \infty$, $L^p(\mathbb R)$ is the usual Lebesgue space with the norm $\|\cdot \|_{L^p}$, and for $s \in \mathbb R$, the Sobolev spaces $H^s(\mathbb R)$ is defined via its usual norm $\|\phi \|_{H^s}= \|J_x^s \phi\|_{L^2}$. 

Let $f=f(x,t)$ be a function defined for $x \in
\mathbb R$ and $t$ in the time interval $[-T,T]$, with $T>0$ or in the whole line $\mathbb R$. Then if $X$
is one of the spaces defined above, we define the spaces $L^p_TX_x$ and 
$L^p_tX_x$ by the norms
\begin{displaymath}
\|f\|_{L^p_TX_x} =\Big(
\int_{-T}^T\|f(\cdot,t)\|_{X}^pdt\Big)^{\frac1p} \quad \text{and} \quad
\|f\|_{L^p_tX_x} =\Big( \int_{\mathbb R}\|f(\cdot,t)\|_{X}^pdt\Big)^{\frac1p}, 
\end{displaymath}
when $1 \le p < \infty$, with the natural modifications for $p=\infty$.

We will work with the short time localized Bourgain spaces introduced in \cite{IKT}. 
First, for $k \in \mathbb Z_+$, we introduce the $l^1$-Besov type space $X_k$ of regularity $1/2$ with respect to modulations, 
\begin{displaymath} 
X_k=\Big\{\phi \in L^2(\mathbb R^2) \ : \ \text{supp} \, \phi \subset I_k \times \mathbb R \ \text{and} \  \|\phi\|_{X_k}= < \infty \Big\},
\end{displaymath}
where 
\begin{equation} \label{Xk} 
\|\phi\|_{X_k}=\sum_{j=0}^{+\infty}2^{j/2}\|\eta_{j}(\tau-w(\xi))\phi(\xi,\tau)\|_{L^2_{\xi,\tau}}.
\end{equation}
Let $\alpha \ge 0$ be fixed. For $k \in \mathbb Z_+$, we introduce the space $F_{k,\alpha}$ possessing a  $X_k$-structure in short time intervals of length $2^{-\alpha k}$, 
\begin{displaymath} 
F_{k,\alpha}=\Big\{f \in L^{\infty}(\mathbb R;L^2(\mathbb R))\ : \ \text{supp} \, \mathcal{F}(u) \subset I_k \times \mathbb R \ \text{and} \  \|f\|_{F_{k,\alpha}} < \infty \Big\}
\end{displaymath}
where
\begin{equation} \label{Fk} 
\|f\|_{F_{k,\alpha}}= \sup_{\tilde{t} \in \mathbb R} \|\mathcal{F}\big(\eta_0(2^{\alpha k}(\cdot-\tilde{t}))f\big)\|_{X_k}.
\end{equation}
Its dual version $N_{k,\alpha}$ is defined by 
\begin{displaymath} 
N_{k,\alpha}=\Big\{f \in L^{\infty}(\mathbb R;H^{-1}(\mathbb R)) \ : \ \text{supp} \, \mathcal{F}(f) \subset I_k \times \mathbb R \ \text{and} \  \|f\|_{N_{k,\alpha}} < \infty \Big\},
\end{displaymath}
where
\begin{equation} \label{Nk} 
\|f\|_{N_{k,\alpha}}= \sup_{\tilde{t} \in \mathbb R} \big\|\big(\tau-w(\xi)+i2^{\alpha k} \big)^{-1}\mathcal{F}\big(\eta_0(2^{\alpha k}(\cdot-\tilde{t}))f\big) \big\|_{X_k}.
\end{equation}
Now for $s \in \mathbb R_+$, we define the global $F_{\alpha}^s$ and $N_{\alpha}^s$  spaces from their frequency localized versions $F_{k,\alpha}$ and $N_{k,\alpha}$, by using a nonhomogeneous Littlewood-Paley decomposition as follows
\begin{equation} \label{Fs}
F^s_{\alpha}=\Big\{f \in L^{\infty}(\mathbb R;L^2(\mathbb R)) \ : \  \|f\|_{F^s_{\alpha}}=\Big(\sum_{k=0}^{+\infty}2^{2ks}\|P_{k}f\|_{F_{k,\alpha}}^2 \Big)^{\frac12} < \infty \Big\}.
\end{equation}
and 
\begin{equation} \label{Ns}
N^s_{\alpha}=\Big\{f \in L^{\infty}(\mathbb R;H^{-1}(\mathbb R)) \ : \  \|f\|_{N^s_{\alpha}}=\Big(\sum_{k=0}^{+\infty}2^{2ks}\|P_{k}f\|_{N_{k,\alpha}}^2 \Big)^{\frac12} < \infty \Big\}.
\end{equation}
We also define a localized (in time) version of those spaces. Let $T$ be a positive time and $Y$ denote $F_{\alpha}^s$ or $N_{\alpha}^s$. If $f:\mathbb R \times [-T,T] \rightarrow \mathbb C$, we define 
\begin{displaymath} 
\|f\|_{Y(T)}= \inf \big\{ \|\tilde{f}\|_{Y}\ : \ \tilde{f}: \mathbb R^2 \rightarrow \mathbb C\ \text{and} \ \tilde{f}_{|_{\mathbb R \times [-T,T]}}=f\big\}.
\end{displaymath}
Then, 
\begin{displaymath} 
F^s_{\alpha}(T)=\Big\{f \in L^{\infty}([-T,T];L^2(\mathbb R)) \ : \  \|f\|_{F^s_{\alpha}(T)}< \infty \Big\}.
\end{displaymath}
and similarly 
\begin{displaymath} 
N^s_{\alpha}(T)=\Big\{f \in L^{\infty}([-T,T];H^{-1}(\mathbb R)) \ : \  \|f\|_{N^s_{\alpha}(T)}< \infty \Big\}.
\end{displaymath}

Finally for $s \in \mathbb R_+$ and $T>0$, we define the energy space $B^s(T)$ by 
\begin{displaymath} 
B^s(T)=\Big\{f \in L^{\infty}([-T,T];L^2(\mathbb R)) \ : \  \|f\|_{B^s(T)}< \infty \Big\},
\end{displaymath}
where
\begin{equation} \label{Bs} 
\|f\|_{B^s(T)}=\Big(\|P_{\le 0}f(\cdot,0)\|_{L^2}^2+\sum_{k=1}^{+\infty}2^{2ks}\sup_{t_k \in [-T,T]}\|P_{k}f(\cdot,t_k)\|_{L^2}^2 \Big)^{\frac12}.
\end{equation}

\subsection{First properties} Following \cite{IKT}, we state some important properties of the $F^s_{\alpha}(T)$ spaces. First, we show that $F^s_{\alpha}(T) \hookrightarrow L^{\infty}([-T,T];H^s(\mathbb R))$.
\begin{lemma} \label{lemma1} 
Let $T>0$, $s \in \mathbb R_+$ and $\alpha \in \mathbb R_+$. Then it holds that 
\begin{equation} \label{lemma1.1} 
\|f\|_{L^{\infty}_TH^s_x} \lesssim \|f\|_{F^s_{\alpha}(T)},
\end{equation}
for all $f \in F^s_{\alpha}(T)$.
\end{lemma}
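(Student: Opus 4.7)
The plan is to reduce \eqref{lemma1.1} to a dyadic frequency-localized embedding and then exploit the $\ell^1$-Besov structure built into $X_k$. For any admissible extension $\tilde f$ of $f$, each $P_k\tilde f$ coincides with $P_kf$ on $[-T,T]$ and has space-time Fourier support in $I_k\times\mathbb R$. Thus, once the block bound $\|P_k\tilde f\|_{L^\infty_tL^2_x}\lesssim\|P_k\tilde f\|_{F_{k,\alpha}}$ is established for every $k\ge 0$, squaring, multiplying by $2^{2ks}$, summing in $k$, using the Littlewood--Paley characterization
\begin{equation*}
\|\phi\|_{H^s}^2\sim\|P_{\le 0}\phi\|_{L^2}^2+\sum_{k\ge 1}2^{2ks}\|P_k\phi\|_{L^2}^2,
\end{equation*}
and finally taking the infimum over $\tilde f$ will yield \eqref{lemma1.1}.

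To prove the block bound, fix $t_0\in\mathbb R$ and specialize $\tilde t=t_0$ in the supremum defining \eqref{Fk}. Since $\eta_0\equiv 1$ near the origin, the function $h(x,t):=\eta_0(2^{\alpha k}(t-t_0))P_k\tilde f(x,t)$ satisfies $h(\cdot,t_0)=P_k\tilde f(\cdot,t_0)$, has $\mathrm{supp}\,\mathcal F(h)\subset I_k\times\mathbb R$, and obeys $\|\mathcal F(h)\|_{X_k}\le\|P_k\tilde f\|_{F_{k,\alpha}}$ by construction. It therefore suffices to show $\|h(\cdot,t_0)\|_{L^2_x}\lesssim\|\mathcal F(h)\|_{X_k}$, which is the classical embedding of the $\ell^1$-modulation Besov space of regularity $1/2$ into $C_tL^2_x$: Fourier inversion in $\tau$ at fixed $\xi$, Plancherel in $\xi$, and Minkowski's inequality give $\|h(\cdot,t_0)\|_{L^2_x}\lesssim\|\mathcal F(h)\|_{L^1_\tau L^2_\xi}$, after which inserting the modulation decomposition $1=\sum_{j\ge 0}\eta_j(\tau-w(\xi))$ and applying Cauchy--Schwarz in $\tau$ on each slab $|\tau-w(\xi)|\sim 2^j$ (of $\tau$-measure $\lesssim 2^j$ at each $\xi$) produces a factor $2^{j/2}$, so summing in $j$ recovers precisely the norm in \eqref{Xk}.

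There is no serious obstacle here: the lemma is essentially a reformulation of the definitions of $F_{k,\alpha}$ and $X_k$. The time-localization scale $2^{-\alpha k}$ plays no role, because the cutoff $\eta_0$ is only evaluated at its peak value $1$; this scale will become decisive in the later bilinear and energy estimates, where one has to trade the short-time $F^s_\alpha$ norm against norms built on free solutions.
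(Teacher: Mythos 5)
Your argument is correct and follows essentially the same route as the paper: pick a near-optimal extension, reduce to the frequency-localized bound $\|P_k\tilde f(\cdot,t)\|_{L^2}\lesssim\|P_k\tilde f\|_{F_{k,\alpha}}$ by evaluating the cutoff $\eta_0(2^{\alpha k}(\cdot-t))$ at its peak, and prove that bound by Fourier inversion in $\tau$ together with Cauchy--Schwarz at fixed $\xi$ on each modulation slab (the paper's \eqref{lemma1.4}--\eqref{lemma1.5}). The only cosmetic point is that the Cauchy--Schwarz gain of $2^{j/2}$ must be taken at fixed $\xi$ before the $L^2_\xi$ norm, i.e.\ one works with $L^2_\xi L^1_\tau$ rather than $L^1_\tau L^2_\xi$, since the slab $|\tau-w(\xi)|\sim 2^j$ depends on $\xi$; your parenthetical ``at each $\xi$'' shows you intend exactly this.
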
 
\begin{proof} Let $f \in F^s_{\alpha}(T)$. We choose $\widetilde{f} \in F_{\alpha}^s$ such that 
\begin{equation} \label{lemma1.2}
\widetilde{f}_{|_{[-T,T]}}=f \quad \text{and} \quad \|\widetilde{f}\|_{F^s_{\alpha}} \le 2\|f\|_{F^s_{\alpha}(T)}.
\end{equation}
It follows that for every $t \in [-T,T]$, 
\begin{equation} \label{lemma1.3} 
\|f(\cdot,t)\|_{H^s}=\|\widetilde{f}(\cdot,t)\|_{H^s} \lesssim \Big(\sum_{k=0}^{+\infty}2^{2ks}\|\widetilde{f_k}(\cdot,t)\|_{L^2}^2\Big)^{\frac12},
\end{equation}
where $\widetilde{f_0}=P_{\le 0}\widetilde{f}$ and $\widetilde{f_k}=P_{k}\widetilde{f}$  for any $k \in \mathbb Z \cap [1,+\infty)$.

Now fix $t \in [-T,T]$ and $k \in \mathbb Z_+$.  The Fourier inversion formula gives that
\begin{equation} \label{lemma1.4}
\mathcal{F}_x(\widetilde{f}_k)(\xi,t)=c\int_{\mathbb R}\mathcal{F}\big(\eta_0(2^{\alpha k}(\cdot-t))\widetilde{f}_k \big)(\xi,\tau)e^{it\tau}d\tau.
\end{equation}
On the other hand, the definition $X_k$ in \eqref{Xk} and the Cauchy-Schwarz inequality in $\tau$ implies that
\begin{equation} \label{lemma1.5}
\Big\| \int_{\mathbb R}\big|\phi(\xi,\tau)\big|d\tau\Big\|_{L^2_{\xi}} \lesssim \|\phi\|_{X_k},
\end{equation}
for all $\phi \in X_k$. Therefore, it is deduced from \eqref{Fk}, \eqref{lemma1.4} and \eqref{lemma1.5} that 
\begin{equation} \label{lemma1.6}
\|\widetilde{f_k}(\cdot,t)\|_{L^2} \lesssim \|\widetilde{f_k}\|_{F_{k,\alpha}},
\end{equation}
for all $k \in \mathbb Z_+$.
Then,  estimate \eqref{lemma1.1} follows gathering \eqref{lemma1.2}, \eqref{lemma1.3}, \eqref{lemma1.6} and taking the supreme over $t \in [-T,T]$.
\end{proof}

Then, we derive an important property involving the space $X_k$ (see \cite{IKT}).
\begin{lemma} \label{lemma2} 
Let $\alpha \ge 0$ and $l\in \mathbb Z_+$ be given. Then, if $[\alpha l]$ denotes the integer part of $\alpha l$, we have that 
\begin{equation} \label{lemma2.1} 
2^{\frac{\alpha l}2}\Big\| \eta_{\le [\alpha l]}(\tau-w(\xi))\int_{\mathbb R}|\phi(\xi,\tau')|2^{-\alpha l}\big(1+2^{-\alpha l}|\tau-\tau'|\big)^{-4}d\tau'\Big\|_{L^2_{\xi,\tau}} \lesssim \|\phi\|_{X_k},
\end{equation}
and 
\begin{equation} \label{lemma2.2} 
\sum_{j>[\alpha l]}2^{\frac j2}\Big\| \eta_{j}(\tau-w(\xi))\int_{\mathbb R}|\phi(\xi,\tau')|2^{-\alpha l}\big(1+2^{-\alpha l}|\tau-\tau'|\big)^{-4}d\tau'\Big\|_{L^2_{\xi,\tau}} \lesssim \|\phi\|_{X_k},
\end{equation}
for all $\phi \in X_k$.
\end{lemma}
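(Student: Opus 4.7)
The plan is to decompose $\phi=\sum_{j'\ge 0}\phi_{j'}$ with $\phi_{j'}(\xi,\tau):=\eta_{j'}(\tau-w(\xi))\phi(\xi,\tau)$, so that $\|\phi\|_{X_k}=\sum_{j'}2^{j'/2}\|\phi_{j'}\|_{L^2_{\xi,\tau}}$ and $|\phi|\le\sum_{j'}|\phi_{j'}|$. It then suffices to bound the contribution of each $\phi_{j'}$ to the left-hand side by $\lesssim 2^{j'/2}\|\phi_{j'}\|_{L^2}$ and sum. Writing $K(s):=2^{-\alpha l}(1+2^{-\alpha l}|s|)^{-4}$, the change of variables $u=2^{-\alpha l}s$ gives $\|K\|_{L^1_s}\lesssim 1$ and $\|K\|_{L^2_s}\lesssim 2^{-\alpha l/2}$, and for $|s|\gtrsim 2^j$ with $j\ge\alpha l$ one has the pointwise decay $K(s)\lesssim 2^{3\alpha l-4j}$. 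I will also repeatedly use that $\phi_{j'}(\xi,\cdot)$ is supported in an interval of length $\lesssim 2^{j'}$ in $\tau$, so Cauchy--Schwarz yields $\|\phi_{j'}(\xi,\cdot)\|_{L^1_\tau}\lesssim 2^{j'/2}\|\phi_{j'}(\xi,\cdot)\|_{L^2_\tau}$.

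For \eqref{lemma2.1} I split on the size of $j'$ relative to $[\alpha l]$. When $j'>[\alpha l]$, Young's inequality in the form $\|K*|\phi_{j'}(\xi,\cdot)|\|_{L^2_\tau}\le\|K\|_{L^1}\|\phi_{j'}(\xi,\cdot)\|_{L^2_\tau}$, followed by $|\eta_{\le[\alpha l]}|\le 1$ and integration in $\xi$, bounds the contribution by $\lesssim\|\phi_{j'}\|_{L^2}$, and multiplication by the external $2^{\alpha l/2}\le 2^{j'/2}$ gives the target. When $j'\le[\alpha l]$, Young's in the complementary form $\|K*|\phi_{j'}(\xi,\cdot)|\|_{L^2_\tau}\le\|K\|_{L^2}\|\phi_{j'}(\xi,\cdot)\|_{L^1_\tau}$ combined with the $L^1$-bound above produces a factor $2^{-\alpha l/2}\cdot 2^{j'/2}$; the external $2^{\alpha l/2}$ then cancels the $2^{-\alpha l/2}$, again yielding $2^{j'/2}\|\phi_{j'}\|_{L^2}$. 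Summation in $j'$ reconstructs $\|\phi\|_{X_k}$.

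For \eqref{lemma2.2} I fix $j>[\alpha l]$ and split the contribution of $\phi_{j'}$ according to the three regimes $|j-j'|\le 4$, $j'\le j-5$, and $j'\ge j+5$. The diagonal regime is handled by the same $\|K\|_{L^1}$ Young bound as above, giving $2^{j/2}\|\eta_j K*|\phi_{j'}|\|_{L^2}\sim 2^{j'/2}\|\phi_{j'}\|_{L^2}$ (only finitely many $j$'s contribute). In the off-diagonal regimes the $\tau$-supports of $\phi_{j'}(\xi,\cdot)$ and of $\eta_j(\tau-w(\xi))$ are separated by at least a constant times $2^{\max(j,j')}$, so $K(\tau-\tau')\lesssim 2^{3\alpha l-4\max(j,j')}$ throughout the integration; combining this pointwise bound with the $L^1$-control on $\phi_{j'}(\xi,\cdot)$ and the fact that $\eta_j(\tau-w(\xi))$ has $\tau$-support of length $\sim 2^j$ gives $2^{j/2}\|\eta_j K*|\phi_{j'}|\|_{L^2}\lesssim 2^{3\alpha l-3j+j'/2}\|\phi_{j'}\|_{L^2}$ in the case $j'\le j-5$, and $\lesssim 2^{j+3\alpha l-7j'/2}\|\phi_{j'}\|_{L^2}$ in the case $j'\ge j+5$. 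Summing geometrically in $j$ --- using $j>\alpha l$ for the first off-diagonal case and $j'>\alpha l$ (which holds since $j'\ge j+5>[\alpha l]+5$) for the second --- yields $\lesssim 2^{j'/2}\|\phi_{j'}\|_{L^2}$, and summation in $j'$ gives $\|\phi\|_{X_k}$.

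The main obstacle is purely combinatorial: one must verify in each of the five subcases that the $2^{\pm\alpha l/2}$ factors conspire to cancel and that the decay $2^{3\alpha l-4\max(j,j')}$ of $K$ in the off-diagonal regimes leaves a strictly positive margin over the unavoidable losses $2^{j/2}$ (from the external factor) and $2^{j'/2}$ (from Cauchy--Schwarz on the $\tau$-support of $\phi_{j'}$). The fourth power in the decay of $K$ is precisely what supplies this margin, and the hypothesis $j>[\alpha l]$ is precisely what makes the pointwise bound $K(s)\lesssim 2^{3\alpha l-4j}$ effective.
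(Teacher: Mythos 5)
Your proof is correct. For \eqref{lemma2.1} your argument is essentially the paper's: both decompose $\phi$ dyadically in modulation and trade the external factor $2^{\alpha l/2}$ against either the $L^1$-bound of the kernel (large modulation) or its $L^2$-bound $\lesssim 2^{-\alpha l/2}$ combined with Cauchy--Schwarz on the short $\tau$-support of $\phi_{j'}$ (small modulation); the paper packages this as a single weighted Cauchy--Schwarz with the quantity $I_{q,l}\lesssim 1$, but the accounting is identical. For \eqref{lemma2.2} your route genuinely differs. The paper first applies the mean value theorem to $\eta_j$, writing $\eta_j(\tau-w(\xi))=\eta_j(\tau'-w(\xi))+O(2^{-j}|\tau-\tau'|)$, which transfers the modulation cutoff onto the input variable; the main term $II_a$ is then a straight Young's inequality and the error $II_b$ is absorbed using the extra factor $|\tau-\tau'|$ against the kernel decay. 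You instead keep the cutoff on the output and run a direct case analysis on $j$ versus $j'$, observing that in the off-diagonal regimes the supports force $|\tau-\tau'|\gtrsim 2^{\max(j,j')}$ with $\max(j,j')>\alpha l$, so the fourth-power decay of the kernel supplies a summable gain $2^{3\alpha l-4\max(j,j')}$ that dominates the $2^{j/2}$ and $2^{j'/2}$ losses. Your version is somewhat more elementary and makes transparent exactly where the hypothesis $j>[\alpha l]$ and the exponent $-4$ are used; the paper's commutator trick has the mild advantage of reducing the diagonal-plus-off-diagonal bookkeeping to a single Young's inequality plus one weighted estimate. All your exponent checks (the cancellation of $2^{\pm\alpha l/2}$ in \eqref{lemma2.1}, the geometric sums in $j$ over $j>[\alpha l]$ and $j\le j'-5$) are sound.
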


\begin{proof} We fix $\tilde{l}=[\alpha l]$. We begin proving estimate \eqref{lemma2.1}. Following \cite{Mo}, we use that $(\eta_k)_{k \ge 0}$ is dyadic partition of the unity and the Cauchy-Schwarz inequality in $\tau'$ to get that 
\begin{displaymath} 
\begin{split}
I & :=  2^{\frac{\alpha l}2}\Big\| \eta_{\le \tilde{l}}(\tau-w(\xi))\int_{\mathbb R}|\phi(\xi,\tau')|2^{-\alpha l}\big(1+2^{-\alpha l}|\tau-\tau'|\big)^{-4}d\tau'\Big\|_{L^2_{\xi,\tau}} \\ 
& = 2^{-\frac{\alpha l}2}\Big\| \eta_{\le \tilde{l}}(\tau-w(\xi))\sum_{q=0}^{+\infty}\int_{\mathbb R}\eta_{q}(\tau'-w(\xi))|\phi(\xi,\tau')|\big(1+2^{-\alpha l}|\tau-\tau'|\big)^{-4}d\tau'\Big\|_{L^2_{\xi,\tau}} \\ 
& \le 2^{-\frac{\alpha l}2} 
\Big\| \eta_{\le \tilde{l}}(\tau-w(\xi))\sum_{q=0}^{+\infty}I_{q,l}(\xi,\tau)2^{\frac{q}2}\big\|\eta_{q}(\cdot-w(\xi))\phi(\xi,\cdot)\big\|_{L^2}\Big\|_{L^2_{\xi,\tau}},
\end{split}
\end{displaymath}
where 
\begin{displaymath}
I_{q,l}(\xi,\tau):= \big\|\widetilde{\eta}_{q}(\tau'-w(\xi))\big(1+2^{-\alpha l}|\tau-\tau'|\big)^{-4}\langle \tau'-w(\xi) \rangle^{-\frac12} \big\|_{L^2_{\tau'}}.
\end{displaymath} 
Now, we get trivially that 
\begin{displaymath} 
I_{q,l}(\xi,\tau) \lesssim 2^{\frac{q}2}2^{-\frac{q}2} \lesssim 1,
\end{displaymath}
which concludes the proof of \eqref{lemma2.1} recalling the definition of the space $X_k$ in \eqref{Xk}. 

Next, we turn to the proof of estimate \eqref{lemma2.2}. The mean-value theorem yields 
\begin{displaymath} 
\big|\eta_j(\tau-w(\xi))-\eta_j(\tau'-w(\xi)) \big| \lesssim 2^{-j}|\tau-\tau'|,
\end{displaymath}
which implies that 
\begin{equation} \label{lemma2.3} 
\sum_{j>\tilde{l}}2^{\frac j2}\Big\| \eta_{j}(\tau-w(\xi))\int_{\mathbb R}|\phi(\xi,\tau')|2^{-\alpha l}\big(1+2^{-\alpha l}|\tau-\tau'|\big)^{-4}d\tau'\Big\|_{L^2_{\xi,\tau}} \le II_a + II_b,
\end{equation}
where
\begin{displaymath} 
II_a:=\sum_{j > \tilde{l}}2^{\frac{j}2}\Big\|\big[ \eta_j(\cdot-w(\xi))\phi(\xi,\cdot)\big]\ast \big[ 2^{-\alpha l}(1+2^{-\alpha l}|\cdot|)^{-4}\big](\tau) \Big\|_{L^2_{\xi,\tau}},
\end{displaymath}
and 
\begin{displaymath} 
II_b:=\sum_{j > \tilde{l}}2^{-\frac{j}2}\Big\| \widetilde{\eta}_{j}(\tau-w(\xi))\int_{\mathbb R}|\phi(\xi,\tau')|2^{-\alpha l}|\tau-\tau'|\big(1+2^{-\alpha l}|\tau-\tau'|\big)^{-4}d\tau'\Big\|_{L^2_{\xi,\tau}}.
\end{displaymath}
Applying Young's theorem on convolution ($L^2_{\tau} \ast L^1_{\tau} \rightarrow L^2_{\tau}$), we get that  
\begin{equation} \label{lemma2.4} 
II_a \lesssim \sum_{j > \tilde{l}}2^{\frac{j}2}\big\| \eta_j(\tau-w(\xi))\phi(\xi,\tau)\big\|_{L^2_{\xi,\tau}} \le \|\phi\|_{X_k}.
\end{equation}
To deal with $II_b$ we just proceed as in the proof of estimate \eqref{lemma2.1} and obtain that 
\begin{displaymath} 
II_b \lesssim 2^{-\tilde{l}}\sum_{j > \tilde{l}}2^{-\frac{j}2}
\Big\| \widetilde{\eta}_{j}(\tau-w(\xi))\sum_{q=0}^{+\infty}J_{q,l}(\xi,\tau)2^{\frac{q}2}\big\|\eta_{q}(\cdot-w(\xi))\phi(\xi,\cdot)\big\|_{L^2}\Big\|_{L^2_{\xi,\tau}},
\end{displaymath}
where 
\begin{displaymath} 
J_{q,l}(\xi,\tau):= \big\|\widetilde{\eta}_{q}(\tau'-w(\xi))|\tau-\tau'|\big(1+2^{-\alpha l}|\tau-\tau'|\big)^{-4}\langle \tau'-w(\xi) \rangle^{-\frac12} \big\|_{L^2_{\tau'}}.
\end{displaymath}
In the case where $j \ge q+5$, we have that 
\begin{displaymath} 
J_{q,l}(\xi,\tau) \lesssim 2^j2^{-4(j-\tilde{l})},
\end{displaymath}
since $|\tau-\tau'| \sim 2^j$.  In the case where $q \ge j-4$, we get that 
\begin{displaymath} 
J_{q,l}(\xi,\tau) \lesssim 2^{-\frac{j}2}\big\||\cdot|(1+2^{-\alpha l}|\cdot|)^{-4} \big\|_{L^2} \lesssim 2^{-\frac{j}2}2^{\frac{3\widetilde{l}}2}. 
\end{displaymath}
Then, after summing in $j$, we deduce that in both cases 
\begin{equation} \label{lemma2.5}
II_b \lesssim \sum_{q=0}^{+\infty}2^{\frac{q}2}\big\|\eta_{q}(\tau-w(\xi))\phi(\xi,\tau)\big\|_{L^2_{\xi,\tau}}=\|\phi\|_{X_k}.
\end{equation}
Estimate \eqref{lemma2.2} follows gathering \eqref{lemma2.3}--\eqref{lemma2.5}, which concludes the proof of Lemma \ref{lemma2}.
\end{proof}

\begin{corollary} \label{coro1} 
Let $k \in \mathbb Z_+$, $\alpha \ge 0,$ $\tilde{t} \in \mathbb R$ and $\gamma \in \mathcal{S}(\mathbb R)$. Then it holds that 
\begin{equation} \label{coro1.1} 
\big\| \mathcal{F}\big[\gamma(2^{\alpha k}(\cdot-\tilde{t}))f \big]\big\|_{X_k} \lesssim \|\mathcal{F}(f)\|_{X_k},
\end{equation} 
for all $f$ such that $\mathcal{F}(f) \in X_k$.
\end{corollary}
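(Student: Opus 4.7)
The strategy is to reduce the bound to the two estimates established in Lemma \ref{lemma2}, with $l = k$. The key observation is that multiplication in $t$ by $\gamma(2^{\alpha k}(\cdot-\tilde{t}))$ corresponds, on the Fourier side, to convolution in $\tau$ by the rescaled Schwartz function $2^{-\alpha k} e^{-i\tilde{t}\tau} \widehat{\gamma}(2^{-\alpha k}\tau)$, and the spatial Fourier support in $I_k$ is preserved. Since $\gamma \in \mathcal{S}(\mathbb{R})$, its Fourier transform satisfies $|\widehat{\gamma}(\sigma)| \lesssim (1+|\sigma|)^{-4}$, so
\begin{displaymath}
|\mathcal{F}[\gamma(2^{\alpha k}(\cdot-\tilde{t}))f](\xi,\tau)| \lesssim \int_{\mathbb{R}} |\mathcal{F}(f)(\xi,\tau')|\,2^{-\alpha k}\bigl(1+2^{-\alpha k}|\tau-\tau'|\bigr)^{-4} d\tau'=: G(\xi,\tau),
\end{displaymath}
which is exactly the object appearing in Lemma \ref{lemma2} with $\phi = \mathcal{F}(f)$ and $l=k$.

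Next, I would split the defining $X_k$-sum at the threshold $[\alpha k]$, writing
\begin{displaymath}
\|\mathcal{F}[\gamma(2^{\alpha k}(\cdot-\tilde{t}))f]\|_{X_k}
\le \sum_{j=0}^{[\alpha k]} 2^{j/2} \|\eta_j(\tau-w(\xi)) G\|_{L^2_{\xi,\tau}}
+ \sum_{j>[\alpha k]} 2^{j/2} \|\eta_j(\tau-w(\xi)) G\|_{L^2_{\xi,\tau}}.
\end{displaymath}
The second (high-modulation) sum is controlled directly by \eqref{lemma2.2} applied with $\phi=\mathcal{F}(f)$, giving a bound of $\|\mathcal{F}(f)\|_{X_k}$.

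For the low-modulation sum, the $\eta_j$'s with $j \le [\alpha k]$ have essentially disjoint supports inside $\{|\tau-w(\xi)| \lesssim 2^{\alpha k}\}$, so by Cauchy--Schwarz in $j$,
\begin{displaymath}
\sum_{j=0}^{[\alpha k]} 2^{j/2} \|\eta_j(\tau-w(\xi)) G\|_{L^2_{\xi,\tau}}
\le \Bigl(\sum_{j=0}^{[\alpha k]} 2^j\Bigr)^{1/2} \Bigl(\sum_{j=0}^{[\alpha k]} \|\eta_j(\tau-w(\xi)) G\|_{L^2}^2\Bigr)^{1/2}
\lesssim 2^{\alpha k/2}\|\eta_{\le [\alpha k]}(\tau-w(\xi)) G\|_{L^2_{\xi,\tau}},
\end{displaymath}
and then estimate \eqref{lemma2.1} of Lemma \ref{lemma2} bounds this by $\|\mathcal{F}(f)\|_{X_k}$. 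Summing the two contributions yields \eqref{coro1.1}.

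The only real subtlety is the Cauchy--Schwarz step needed to convert the $\ell^1$-Besov-type low-frequency sum into a single $\eta_{\le [\alpha k]}$ bound; this is precisely what allows Lemma \ref{lemma2} to be invoked with its sharp $2^{\alpha l/2}$ loss, which matches exactly the $(1+2^{-\alpha k}|\tau-\tau'|)^{-4}$ kernel coming from the rescaled Schwartz cutoff. No further cancellation or oscillation is needed since we pass to absolute values in the convolution.
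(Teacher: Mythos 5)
Your argument is correct and follows essentially the same route as the paper: reduce the multiplication to a convolution against the rescaled kernel $2^{-\alpha k}(1+2^{-\alpha k}|\tau-\tau'|)^{-4}$ and then invoke estimates \eqref{lemma2.1}--\eqref{lemma2.2} of Lemma \ref{lemma2} with $l=k$. The paper leaves implicit the Cauchy--Schwarz step converting the low-modulation $\ell^1$-sum $\sum_{j\le[\alpha k]}2^{j/2}\|\eta_j(\tau-w(\xi))G\|_{L^2}$ into $2^{\alpha k/2}\|\eta_{\le[\alpha k]}(\tau-w(\xi))G\|_{L^2}$, which you correctly supply.
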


\begin{proof} Since $\widehat{\gamma} \in \mathcal{S}(\mathbb R^2)$, we have that 
\begin{equation} \label{coro1.2}
\begin{split}
\Big| \mathcal{F}\big[\gamma(2^{\alpha k}(\cdot-\tilde{t}))f \big](\xi,\tau)\Big|
&=\Big| \mathcal{F}(f)(\xi,\cdot) \ast \big[e^{-i\tilde{t}(\cdot)}2^{-\alpha k}\widehat{\gamma}(2^{-\alpha k}(\cdot))\big](\tau)\Big|
\\ & \le \int_{\mathbb R} \big|\mathcal{F}(f)(\xi,\tau')\big| 2^{-\alpha k}\big(1+2^{-\alpha k}|\tau-\tau'| \big)^{-4}d\tau'.
\end{split}
\end{equation}
Therefore estimate \eqref{coro1.1} follows by using the definition of $X_k$ and applying estimates \eqref{lemma2.1}--\eqref{lemma2.2} to the right-hand side of \eqref{coro1.2}.
\end{proof}

\begin{corollary} \label{coro1b} 
Let $k \in \mathbb Z_+$, $\alpha \ge 0,$ $\tilde{t} \in \mathbb R$ and $\gamma \in \mathcal{S}(\mathbb R)$. Then it holds that 
\begin{equation} \label{coro1b.1} 
\big\|(\tau-w(\xi)+i2^{\alpha k})^{-1} \mathcal{F}\big[\gamma(2^{\alpha k}(\cdot-\tilde{t}))f \big]\big\|_{X_k} \lesssim \|(\tau-w(\xi)+i2^{\alpha k})^{-1}\mathcal{F}(f)\|_{X_k},
\end{equation} 
for all $f$ such that $\mathcal{F}(f) \in X_k$.
\end{corollary}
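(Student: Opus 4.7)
The plan is to mimic the proof of Corollary \ref{coro1}, inserting a multiply-and-divide step that manufactures the weight $(\tau'-w(\xi)+i2^{\alpha k})^{-1}$ on $\mathcal{F}(f)$ on the right-hand side. Starting from the convolution representation
\begin{displaymath}
\mathcal{F}\bigl[\gamma(2^{\alpha k}(\cdot-\tilde{t}))f\bigr](\xi,\tau) = \int_{\mathbb R}\mathcal{F}(f)(\xi,\tau')\,e^{-i\tilde{t}(\tau-\tau')}\,2^{-\alpha k}\widehat{\gamma}\bigl(2^{-\alpha k}(\tau-\tau')\bigr)\,d\tau',
\end{displaymath}
already used to derive \eqref{coro1.2}, I would set $\phi(\xi,\tau') := (\tau'-w(\xi)+i2^{\alpha k})^{-1}\mathcal{F}(f)(\xi,\tau')$ and substitute $\mathcal{F}(f)(\xi,\tau') = (\tau'-w(\xi)+i2^{\alpha k})\phi(\xi,\tau')$ under the integral.

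Then, multiplying both sides by $(\tau-w(\xi)+i2^{\alpha k})^{-1}$ produces the ratio
\begin{displaymath}
\frac{\tau'-w(\xi)+i2^{\alpha k}}{\tau-w(\xi)+i2^{\alpha k}} = 1+\frac{\tau'-\tau}{\tau-w(\xi)+i2^{\alpha k}}
\end{displaymath}
as a multiplier. Since $|\tau-w(\xi)+i2^{\alpha k}| \ge 2^{\alpha k}$, the modulus of this quantity is controlled by $1+2^{-\alpha k}|\tau-\tau'|$. Using that $y\mapsto (1+|y|)\widehat{\gamma}(y)$ is still a Schwartz function and hence dominated by $C(1+|y|)^{-4}$, this linear growth is absorbed into $\widehat{\gamma}(2^{-\alpha k}(\tau-\tau'))$, yielding the pointwise bound
\begin{displaymath}
\bigl|(\tau-w(\xi)+i2^{\alpha k})^{-1}\mathcal{F}\bigl[\gamma(2^{\alpha k}(\cdot-\tilde{t}))f\bigr](\xi,\tau)\bigr| \lesssim \int_{\mathbb R}|\phi(\xi,\tau')|\,2^{-\alpha k}\bigl(1+2^{-\alpha k}|\tau-\tau'|\bigr)^{-4}\,d\tau'.
\end{displaymath}

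This is precisely the right-hand side of \eqref{coro1.2} with $\mathcal{F}(f)$ replaced by $\phi$, so applying Lemma \ref{lemma2} (namely estimates \eqref{lemma2.1}--\eqref{lemma2.2}) yields an $X_k$-bound by $\|\phi\|_{X_k}$, which is \eqref{coro1b.1}. The only real subtlety is the multiply-and-divide step: once the weighted $\phi$ has been produced and the ratio of resolvent factors is shown to grow only linearly in $|\tau-\tau'|$ (and thus to be harmless against the Schwartz decay of $\widehat{\gamma}$), the argument reduces mechanically to that of Corollary \ref{coro1}.
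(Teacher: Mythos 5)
Your proof is correct, but it takes a different route from the paper's. The paper first bounds the resolvent factor on the left crudely according to the modulation size, writing the $X_k$-norm as $2^{-[\alpha k]/2}\sum_{j\le[\alpha k]}2^{j/2}\|\eta_j(\tau-w(\xi))\mathcal{F}[\gamma(2^{\alpha k}(\cdot-\tilde t))f]\|_{L^2}$ plus $\sum_{j>[\alpha k]}2^{-j/2}\|\eta_j(\tau-w(\xi))\mathcal{F}[\gamma(2^{\alpha k}(\cdot-\tilde t))f]\|_{L^2}$, and then treats the low-modulation piece via Lemma \ref{lemma2} as in Corollary \ref{coro1} and the high-modulation piece via Lemma \ref{lemma2} \emph{and a duality argument}, which is left implicit. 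You instead transfer the weight from the output to the input: writing $\mathcal{F}(f)(\xi,\tau')=(\tau'-w(\xi)+i2^{\alpha k})\phi(\xi,\tau')$ in the convolution formula, the multiplier $\frac{\tau'-w(\xi)+i2^{\alpha k}}{\tau-w(\xi)+i2^{\alpha k}}=1+\frac{\tau'-\tau}{\tau-w(\xi)+i2^{\alpha k}}$ has modulus at most $1+2^{-\alpha k}|\tau-\tau'|$ (since the denominator is at least $2^{\alpha k}$ in modulus), and this linear growth is harmlessly absorbed into the Schwartz decay of $\widehat\gamma(2^{-\alpha k}\cdot)$. After that, the pointwise bound is literally \eqref{coro1.2} with $\phi$ in place of $\mathcal{F}(f)$, and a single application of \eqref{lemma2.1}--\eqref{lemma2.2} finishes the argument exactly as in Corollary \ref{coro1}. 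What your version buys is that it avoids the unexplained duality step entirely and makes the mechanism visible: the only new input beyond Corollary \ref{coro1} is the elementary estimate on the ratio of the two resolvent factors. Both arguments ultimately rest on Lemma \ref{lemma2}, so the gain is in transparency rather than strength.
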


\begin{proof} 
We have that 
\begin{equation} \label{coro1b.2}
\begin{split}
\big\|(\tau-w(\xi)&+i2^{\alpha k})^{-1} \mathcal{F}\big[\gamma(2^{\alpha k}(\cdot-\tilde{t}))f \big]\big\|_{X_k} \\ & \lesssim 
2^{-\frac{[\alpha k]}2} \sum_{j \le [\alpha k]} 2^{\frac{j}2}\big\|\eta_j(\tau-w(\xi)) \mathcal{F}\big[\gamma(2^{\alpha k}(\cdot-\tilde{t}))f \big]\big\|_{L^2_{\xi,\tau}} \\ 
& \quad +  \sum_{j > [\alpha k]} 2^{-\frac{j}2}\big\|\eta_j(\tau-w(\xi)) \mathcal{F}\big[\gamma(2^{\alpha k}(\cdot-\tilde{t}))f \big]\big\|_{L^2_{\xi,\tau}}.
\end{split}
\end{equation}
We treat the first term on the right-hand side of \eqref{coro1b.2} by using Lemma \ref{lemma2} as in the proof of Corollary \ref{coro1} and the second one by using Lemma \ref{lemma2} and duality. This implies estimate \eqref{coro1b.1}.
\end{proof}

\begin{remark} \label{rema1} 
For $s \in \mathbb R_+$, the classical dyadic Bourgain space $X^{s,\frac12,1}$ (introduced for instance in \cite{Ta}) is defined by the norm
\begin{displaymath} 
\|f\|_{X^{s,\frac12,1}}=\Big(\|\mathcal{F}(P_{\le 0}f)\|_{X_0}^2+\sum_{k=1}^{+\infty}2^{2ks}\|\mathcal{F}(P_{k}f)\|_{X_k}^2 \Big)^{\frac12}.
\end{displaymath}
Thus, if $f \in X^{s,\frac12,1}$, one deduce after applying estimate \eqref{coro1.1} to each $P_kf$, taking the supreme in $\widetilde{t}$ and summing in $k$ that $\|f\|_{F^s_{\alpha}} \lesssim \|f\|_{X^{s,\frac12,1}}$, for any $\alpha \ge 0$. In other words, we have that 
\begin{displaymath}
X^{s,\frac12,1} \hookrightarrow F^s_{\alpha} \hookrightarrow L^{\infty}(\mathbb R;H^s(\mathbb R)).
\end{displaymath}
\end{remark} 

More generally for any $k \in \mathbb Z_+$ and $\alpha \ge 0$, we define the set $S_{k,\alpha}$ of \textit{$k$-acceptable time multiplication factors} (c.f. \cite{IKT}) as 
\begin{displaymath} 
S_{k,\alpha}=\Big\{ m_k: \mathbb R \rightarrow \mathbb R \ : \ \|m_k\|_{S_{k,\alpha}}=\sum_{j=0}^{10}2^{-j \alpha k}\|\partial_jm_k\|_{L^{\infty}}<\infty\Big\}.
\end{displaymath}
\begin{corollary}\label{coro3} 
Let $k \in \mathbb Z_+$, $\alpha \ge 0$ and $m_k \in S_{k,\alpha}$. Then it holds that 
\begin{equation} \label{coro3.1} 
\|m_k f\|_{F_{k,\alpha}} \lesssim \|m_k\|_{S_{k,\alpha}}\|f\|_{F_{k,\alpha}},
\end{equation}
and 
\begin{equation} \label{coro3.2} 
\|m_k f\|_{N_{k,\alpha}} \lesssim \|m_k\|_{S_{k,\alpha}}\|f\|_{N_{k,\alpha}}.
\end{equation}
\end{corollary}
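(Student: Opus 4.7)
The plan is to reduce Corollary \ref{coro3} to Corollaries \ref{coro1} and \ref{coro1b}. The key observation is that, on the support of $\eta_0(2^{\alpha k}(\cdot-\tilde t))$, the factor $m_k$ is slowly varying at the natural time scale $2^{-\alpha k}$: if one sets $\gamma_{\tilde t}(s):=\eta_0(s) m_k(\tilde t+2^{-\alpha k}s)$, then
$$\eta_0(2^{\alpha k}(t-\tilde t))\, m_k(t)=\gamma_{\tilde t}(2^{\alpha k}(t-\tilde t)),$$
and a direct application of the Leibniz rule together with the definition of $S_{k,\alpha}$ gives
$\|\partial^j\gamma_{\tilde t}\|_{L^\infty}\lesssim \|m_k\|_{S_{k,\alpha}}$ for $0\le j\le 10$, uniformly in $\tilde t$. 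Ten integrations by parts then yield
$|\widehat{\gamma_{\tilde t}}(\tau)|\lesssim \|m_k\|_{S_{k,\alpha}}(1+|\tau|)^{-4}$, which is exactly the Fourier decay used in the proof of Corollary \ref{coro1}.

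Next I would insert an auxiliary Schwartz bump $\widetilde\eta_0$ that is identically $1$ on $\text{supp}(\eta_0)$, and factor
$$\eta_0(2^{\alpha k}(t-\tilde t))\, m_k(t) f(t)=\gamma_{\tilde t}(2^{\alpha k}(t-\tilde t))\cdot\bigl[\widetilde\eta_0(2^{\alpha k}(t-\tilde t))\, f(t)\bigr].$$
Taking the space-time Fourier transform turns the left-hand side into $\mathcal F_t[\gamma_{\tilde t}(2^{\alpha k}(\cdot-\tilde t))]\ast_\tau \mathcal F[\widetilde\eta_0(2^{\alpha k}(\cdot-\tilde t))f]$, and the decay of $\widehat{\gamma_{\tilde t}}$ yields the pointwise estimate \eqref{coro1.2} with the extra factor $\|m_k\|_{S_{k,\alpha}}$. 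Running Lemma \ref{lemma2} exactly as in the proof of Corollary \ref{coro1} then gives
$$\bigl\|\mathcal F[\eta_0(2^{\alpha k}(\cdot-\tilde t))\, m_k f]\bigr\|_{X_k}\lesssim \|m_k\|_{S_{k,\alpha}}\bigl\|\mathcal F[\widetilde\eta_0(2^{\alpha k}(\cdot-\tilde t))f]\bigr\|_{X_k}.$$

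It remains to bound the right-hand side by $\|f\|_{F_{k,\alpha}}$, which has a slightly different cutoff. I would fix a smooth partition of unity $1=\sum_{i=1}^N\phi_i$ on $\text{supp}(\widetilde\eta_0)$ with each $\phi_i$ supported in an interval of length less than $5/4$ centered at some $a_i$. Setting $\tilde t_i=\tilde t+2^{-\alpha k}a_i$ and $\Psi_i=\phi_i\widetilde\eta_0$, one has $\eta_0(2^{\alpha k}(t-\tilde t_i))\equiv 1$ on the support of $\Psi_i(2^{\alpha k}(\cdot-\tilde t))$, whence
$$\widetilde\eta_0(2^{\alpha k}(t-\tilde t))\, f(t)=\sum_{i=1}^N\Psi_i(2^{\alpha k}(t-\tilde t))\cdot\eta_0(2^{\alpha k}(t-\tilde t_i))\, f(t).$$
Applying Corollary \ref{coro1} to each term (with $\gamma=\Psi_i\in\mathcal S(\mathbb R)$) and summing the $N$ contributions bounds the right-hand side by $\|f\|_{F_{k,\alpha}}$. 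Taking the supremum over $\tilde t$ then gives \eqref{coro3.1}. The proof of \eqref{coro3.2} is identical once Corollary \ref{coro1b} is invoked in place of Corollary \ref{coro1}.

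The main subtlety is the first step: $m_k$ is not Schwartz, only $C^{10}$ with derivatives controlled at scale $2^{\alpha k}$, so Corollary \ref{coro1} cannot be applied to $m_k$ itself. The quantity $\|m_k\|_{S_{k,\alpha}}$ is tailored precisely so that $\gamma_{\tilde t}$ has enough smoothness to produce the $(1+|\tau|)^{-4}$ Fourier decay required by Lemma \ref{lemma2}; everything else is bookkeeping.
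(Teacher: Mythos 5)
Your proof is correct and follows essentially the same route as the paper: your Fourier decay bound for $\gamma_{\tilde t}$ is, after rescaling, exactly the paper's key estimate \eqref{coro3.3}, obtained by the same Leibniz-rule/integration-by-parts mechanism exploiting the definition of $S_{k,\alpha}$, and the conclusion then runs through Lemma \ref{lemma2} as in Corollaries \ref{coro1} and \ref{coro1b}. Your final partition-of-unity step, reducing $\|\mathcal F[\widetilde\eta_0(2^{\alpha k}(\cdot-\tilde t))f]\|_{X_k}$ back to $\|f\|_{F_{k,\alpha}}$, is a correct and welcome elaboration of a step the paper compresses into ``arguing as in the proof of Corollary \ref{coro1}''.
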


\begin{proof} We prove estimate \eqref{coro3.1}. The proof of estimate \eqref{coro3.2} would follow in a similar way. Arguing as in the proof of Corollary \ref{coro1} it suffices to prove 
\begin{equation} \label{coro3.3} 
\big|\mathcal{F}_t\big[m_{k}(\cdot)\eta_0(2^{\alpha k}(\cdot-\tilde{t}))(\tau)\big] \big| \lesssim \|m_k\|_{S_{k,\alpha}}
2^{-\alpha k}(1+2^{-\alpha k}|\tau|)^{-4},
\end{equation}
for all $\tilde{t}, \ \tau \in \mathbb R$. 

It follows from the definition of the Fourier transform that 
\begin{equation} \label{coro3.4} 
\begin{split}
\big\|\mathcal{F}_t\big[m_{k}(\cdot)\eta_0(2^{\alpha k}(\cdot-\tilde{t}))\big] \big\|_{L^{\infty}} & \lesssim
\big\|m_{k}(\cdot)\eta_0(2^{\alpha k}(\cdot-\tilde{t})) \big\|_{L^1} \\ 
& \lesssim 2^{-\alpha k}\|m_k\|_{L^{\infty}}\|\eta_0\|_{L^1}.
\end{split}
\end{equation}
By using again basic properties of the Fourier transform and the Leibniz rule, we deduce that 
\begin{equation} \label{coro3.5} 
\begin{split}
2^{-4\alpha k}|\tau|^4\big|\mathcal{F}_t\big[m_{k}(\cdot)\eta_0&(2^{\alpha k}(\cdot-\tilde{t}))\big](\tau) \big| \\& \lesssim
2^{-4\alpha k}\big\|\partial_t^4\big[m_{k}(\cdot)\eta_0(2^{\alpha k}(\cdot-\tilde{t}))\big]\big\|_{L^1} \\ 
& \lesssim 2^{-4\alpha k}\sum_{j=0}^4\|\partial_t^jm_k\|_{L^{\infty}}2^{(4-j)\alpha k}2^{-\alpha k}\|\partial_t^{(4-j)}\eta_0\|_{L^1}.
\end{split}
\end{equation}
Estimates \eqref{coro3.4}--\eqref{coro3.5} and the definition of $S_{k,\alpha}$ imply estimate \eqref{coro3.3} which concludes the proof of Corollary \ref{coro3}.
\end{proof}

The next Corollary of Lemma \ref{lemma2} will be useful in the proof of the bilinear and energy estimates (c.f. Sections \ref{STBL} and \ref{EE}).
\begin{corollary} \label{coro2} 
Let  $\alpha \ge 0,$ $\tilde{t} \in \mathbb R$ and $l, \ k \in \mathbb Z_+$ be such that $l +5\ge  k$. Then it holds that 
\begin{equation} \label{coro2.1} 
2^{\frac{\alpha l}2}\big\|\eta_{\le [\alpha l]}(\tau-w(\xi)) \mathcal{F}\big[\eta_0(2^{\alpha l}(\cdot-\tilde{t}))f \big]\big\|_{L^2_{\xi,\tau}} \lesssim \|f\|_{F_{k,\alpha}},
\end{equation} 
and 
\begin{equation} \label{coro2.2} 
\sum_{j>[\alpha l]}2^{\frac j2}\big\|\eta_j(\tau-w(\xi)) \mathcal{F}\big[\eta_0(2^{\alpha l}(\cdot-\tilde{t}))f \big]\big\|_{L^2_{\xi,\tau}} \lesssim \|f\|_{F_{k,\alpha}},
\end{equation}
for all $f \in F_{k,\alpha}$.
\end{corollary}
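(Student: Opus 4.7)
\emph{Plan.} The corollary is a variant of Lemma~\ref{lemma2} in which the $X_k$ hypothesis on the input has been replaced by the weaker $F_{k,\alpha}$ hypothesis, and in which the localizing cutoff has a possibly different scale $2^{-\alpha l}$ instead of $2^{-\alpha k}$. Since an $F_{k,\alpha}$ function is only guaranteed to have finite $X_k$ norm after localization to a time window of length $\sim 2^{-\alpha k}$, the strategy is to decompose $\eta_0(2^{\alpha l}(\cdot-\tilde t))f$ as a \emph{finite} sum of pieces of the form $\eta_0(2^{\alpha l}(\cdot-\tilde t))h_m$, where each $h_m$ is supported in a $2^{-\alpha k}$-time interval and satisfies $\|\mathcal{F}(h_m)\|_{X_k}\lesssim\|f\|_{F_{k,\alpha}}$, and then to apply Lemma~\ref{lemma2} piecewise.

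To construct the $h_m$, I would fix a smooth nonnegative bump $\chi\in C_c^\infty(\mathbb R)$ supported in $[-1,1]$ with $\sum_{m\in\mathbb Z}\chi(s-m)=1$, set $t_m=\tilde t+m\,2^{-\alpha k}$, and define $h_m(t)=\chi(2^{\alpha k}(t-t_m))\,f(t)$. On $\mathrm{supp}\,\chi(2^{\alpha k}(\cdot-t_m))\subset[t_m-2^{-\alpha k},t_m+2^{-\alpha k}]$ one has $\eta_0(2^{\alpha k}(\cdot-t_m))\equiv 1$, so
\begin{displaymath}
h_m=\chi(2^{\alpha k}(\cdot-t_m))\,g_m,\qquad g_m:=\eta_0(2^{\alpha k}(\cdot-t_m))f.
\end{displaymath}
By \eqref{Fk}, $\|\mathcal{F}(g_m)\|_{X_k}\le\|f\|_{F_{k,\alpha}}$ uniformly in $m$, and Corollary~\ref{coro1} applied with $\gamma=\chi$ yields $\|\mathcal{F}(h_m)\|_{X_k}\lesssim\|f\|_{F_{k,\alpha}}$. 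The spatial Fourier support of $h_m$ remains in $I_k$, so $\mathcal{F}(h_m)\in X_k$, as required to feed Lemma~\ref{lemma2}.

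The hypothesis $l+5\ge k$ is used only in the next step: since $\mathrm{supp}\,\eta_0(2^{\alpha l}(\cdot-\tilde t))\subset\{|t-\tilde t|\le \tfrac85 2^{-\alpha l}\le\tfrac85 2^{5\alpha}2^{-\alpha k}\}$ and each $h_m$ is supported in a $2^{-\alpha k}$-interval about $t_m$, only $O(1)$ indices $m$ (with an implicit constant depending on $\alpha$ only) produce a nonzero product $\eta_0(2^{\alpha l}(\cdot-\tilde t))h_m$. Finally, since
\begin{displaymath}
\bigl|\mathcal{F}_t[\eta_0(2^{\alpha l}(\cdot-\tilde t))](\omega)\bigr|=2^{-\alpha l}\bigl|\widehat{\eta_0}(2^{-\alpha l}\omega)\bigr|\lesssim 2^{-\alpha l}(1+2^{-\alpha l}|\omega|)^{-4},
\end{displaymath}
one obtains the pointwise convolution estimate
\begin{displaymath}
\bigl|\mathcal{F}[\eta_0(2^{\alpha l}(\cdot-\tilde t))h_m](\xi,\tau)\bigr|\lesssim\int_{\mathbb R}|\mathcal{F}(h_m)(\xi,\tau')|\,2^{-\alpha l}(1+2^{-\alpha l}|\tau-\tau'|)^{-4}\,d\tau'.
\end{displaymath}
Plugging this into the left-hand sides of \eqref{coro2.1} and \eqref{coro2.2} and invoking Lemma~\ref{lemma2} with $\phi=\mathcal{F}(h_m)$ (and the same parameter $l$) controls each piece by $\|\mathcal{F}(h_m)\|_{X_k}\lesssim\|f\|_{F_{k,\alpha}}$; summing the $O(1)$ nontrivial $m$'s closes the argument. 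There is no real obstacle: the only point needing care is the constant bookkeeping in the partition of unity that ensures a uniform bound on the number of interacting pieces even in the borderline case $l=k-5$, which is precisely what the assumption $l+5\ge k$ guarantees.
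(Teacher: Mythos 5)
Your proof is correct and follows essentially the same strategy as the paper's: use the hypothesis $l+5\ge k$ to show that the time-localized function has $X_k$ norm controlled by $\|f\|_{F_{k,\alpha}}$, then conclude via the convolution bound and Lemma \ref{lemma2}. The only (immaterial) difference is organizational: you decompose with a partition of unity into $O(2^{5\alpha})$ windows of length $2^{-\alpha k}$ and invoke Corollary \ref{coro1} on each piece, whereas the paper absorbs $\eta_0(2^{\alpha l}(\cdot-\tilde t))$ into a single wider cutoff $\eta_0(2^{\alpha(k-6)}(\cdot-\tilde t))$ that is identically $1$ on its support and handles that one factor with Corollary \ref{coro3}.
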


\begin{proof} Observe that 
$$\eta_0(2^{\alpha l}(\cdot-\tilde{t}))f=\eta_0(2^{\alpha l}(\cdot-\tilde{t}))\eta_0(2^{\alpha (k-6)}(\cdot-\tilde{t}))f.$$ 
Moreover, it follows from Corollary \ref{coro3} that 
\begin{displaymath}
\big\| \mathcal{F}\big[\eta_0(2^{\alpha (k-6)}(\cdot-\tilde{t}))f \big]\big\|_{X^k} \lesssim \|f\|_{F_{k,\alpha}}. 
\end{displaymath}
Therefore, we conclude estimates \eqref{coro2.1} and \eqref{coro2.2} by applying  \eqref{lemma2.1} and \eqref{lemma2.2} arguing as it was done in the proof of Corollary \ref{coro1}.
\end{proof}

\begin{remark} Estimate \eqref{coro2.1} can be viewed as a consequence of the uncertainty Heisenberg principle. It is of fundamental importance in the proof of the short time bilinear estimates (c.f. Section \ref{STBL}), since it allows to consider only regions where the modulation $|\tau-w(\xi)|$ is not too small, and therefore to avoid the regions giving troubles in the low-high frequency interactions (c.f. \cite{Pi}).
\end{remark}

\subsection{Linear estimates} In this subsection, we derive the linear estimate associated to the spaces $F_{\alpha}^s(T)$ (c.f. \cite{IKT}).

\begin{proposition} \label{linear}
Assume $s \in \mathbb R_+$, $\alpha>0$ and $T \in (0,1]$. Then we have that 
\begin{equation} \label{linear1} 
\|u\|_{F_{\alpha}^s(T)} \lesssim \|u\|_{B^s(T)}+\|f\|_{N^s_{\alpha}(T)},
\end{equation}
for all  $u \in B^s(T)$ and $f \in N_{\alpha}^s$ satisfying 
\begin{equation} \label{linear2}
\partial_tu-\partial_x^5u=f, \quad \text{on} \quad \mathbb R\times[-T,T].
\end{equation} 
\end{proposition}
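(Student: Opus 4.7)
\textbf{Proposal for the proof of Proposition \ref{linear}.} The plan is to follow the Ionescu--Kenig--Tataru strategy. By squaring and summing the Littlewood--Paley decomposition, it suffices to prove the frequency-localized estimate
\begin{equation*}
\|P_k u\|_{F_{k,\alpha}(T)} \lesssim \sup_{t_k\in[-T,T]}\|P_k u(\cdot,t_k)\|_{L^2} + \|P_k f\|_{N_{k,\alpha}(T)}
\end{equation*}
for every $k \in \mathbb Z_+$ (with the obvious modification for $k=0$), multiply by $2^{2ks}$, and sum in $k$ using Minkowski. Since the $F_{k,\alpha}(T)$ norm is defined by infimum over extensions, the task is to construct, for each $k$, a global extension $\widetilde{u_k}$ of $P_k u\vert_{[-T,T]}$ whose $F_{k,\alpha}$ norm is controlled by the right-hand side.

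To build the extension, first pick $\widetilde{f_k}$ extending $P_k f\vert_{[-T,T]}$ to $\mathbb{R}\times\mathbb{R}$ with spatial frequency still in $\widetilde\eta_k$ and $\|\widetilde{f_k}\|_{N_{k,\alpha}}\le 2\|P_k f\|_{N_{k,\alpha}(T)}$. Then define $\widetilde{u_k}$ to agree with $P_k u$ on $[-T,T]$ and, for $t>T$ (respectively $t<-T$), by Duhamel's formula starting at $T$ (respectively $-T$) with source $\widetilde{f_k}$. This makes $\widetilde{u_k}$ a global solution of $(\partial_t-\partial_x^5)\widetilde{u_k}=\widetilde{f_k}$ and satisfies $\sup_{t\in\mathbb{R}}\|\widetilde{u_k}(\cdot,t)\|_{L^2}\lesssim\sup_{t_k\in[-T,T]}\|P_k u(\cdot,t_k)\|_{L^2}+\|\widetilde{f_k}\|_{L^1_tL^2_x\text{(on bdry pieces)}}$, which can be absorbed into $N_{k,\alpha}$ since $T\le 1$.

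Now fix $\tilde t\in\mathbb{R}$ and set $v=\eta_0(2^{\alpha k}(\cdot-\tilde t))\widetilde{u_k}$. Choose a reference time $t_0\in[-T,T]$ near $\tilde t$ and write Duhamel as $\widetilde{u_k}(t)=e^{(t-t_0)\partial_x^5}\widetilde{u_k}(t_0)+\int_{t_0}^t e^{(t-s)\partial_x^5}\widetilde{f_k}(s)\,ds$. The homogeneous piece $\eta_0(2^{\alpha k}(t-\tilde t))e^{(t-t_0)\partial_x^5}\widetilde{u_k}(t_0)$ has space--time Fourier transform concentrated on $\{|\tau-w(\xi)|\lesssim 2^{\alpha k}\}$, so a direct computation against the definition \eqref{Xk} gives $X_k$-norm $\lesssim\|\widetilde{u_k}(t_0)\|_{L^2}$. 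For the inhomogeneous piece one dyadically decomposes $\mathcal F(\widetilde{f_k})=\sum_{L\ge 1}\phi_L$ with $\phi_L$ supported in $\{|\tau-w(\xi)|\sim L\}$; on the region $L\gtrsim 2^{\alpha k}$ the Duhamel oscillatory integral contributes the standard Bourgain symbol $(i(\tau-w(\xi)))^{-1}$, while on the region $L\lesssim 2^{\alpha k}$ the sharp time-cutoff produces a loss exactly $2^{-\alpha k}$ — both losses are absorbed by $(\tau-w(\xi)+i2^{\alpha k})^{-1}$, which is precisely the symbol appearing in \eqref{Nk}. The time-cutoff $\eta_0(2^{\alpha k}(\cdot-\tilde t))$ can be commuted through each modulation slab by Corollary \ref{coro1b} (combined with Lemma \ref{lemma2}), reducing everything to $\|\widetilde{f_k}\|_{N_{k,\alpha}}$. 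Taking the supremum over $\tilde t$ yields the frequency-localized bound.

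The main obstacle is the matching between the sharp time-localization of scale $2^{-\alpha k}$ and the modulation structure of $X_k$: one must show that the inhomogeneous Duhamel term, once truncated by $\eta_0(2^{\alpha k}(\cdot-\tilde t))$, is indeed bounded by the dual norm with symbol $(\tau-w(\xi)+i2^{\alpha k})^{-1}$. This is where Lemma \ref{lemma2} and its corollaries do the essential work, letting us trade a modulation weight of $2^{j/2}$ at low modulation $j\le[\alpha k]$ against the $2^{-\alpha k/2}$ gain from $(\tau-w(\xi)+i2^{\alpha k})^{-1}$, and handle the tail at high modulation $j>[\alpha k]$ by the $(1+2^{-\alpha k}|\tau-\tau'|)^{-4}$ decay of $\widehat{\eta_0}$. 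Once this is in place, the rest of the argument reduces to careful bookkeeping of the extensions near $\pm T$ and summation in $k$.
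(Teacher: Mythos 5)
Your proposal is correct and follows essentially the same route as the paper: extend $f$ at each frequency, extend $P_ku$ outside $[-T,T]$ by Duhamel, and reduce the frequency-localized bound to the homogeneous and inhomogeneous linear estimates in $X_k$ (Lemmas \ref{homogeneous} and \ref{nonhomogeneous}), with Lemma \ref{lemma2} and Corollaries \ref{coro1}--\ref{coro3} handling the sharp time cutoffs at scale $2^{-\alpha k}$. The only point you leave implicit, which the paper makes explicit, is that both $\widetilde{f}_k$ and $\widetilde{u}_k$ must be truncated to a $2^{-\alpha k}$-neighbourhood of $[-T,T]$ (via the cutoffs $\theta$ and $\eta_0(2^{\alpha k+5}(t\mp T))$); this is what reduces the supremum over all $\tilde t\in\mathbb R$ in the $F_{k,\alpha}$ norm to a supremum over $t_k\in[-T,T]$ and prevents the Duhamel integral against a global source from accumulating, i.e.\ it is exactly the ``bookkeeping near $\pm T$'' you allude to.
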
 

\begin{remark} Observe that, when working in the classical Bourgain space $X^{0,\frac12,1}(T)$ defined in Remark \ref{rema1}, 
one would obtain an estimate of the form 
\begin{displaymath} 
\|u\|_{X^{s,\frac12,1}(T)} \lesssim \|u(0)\|_{H^s}+\|f\|_{X^{s,-\frac12,1}(T)}.
\end{displaymath}
Here, we need to introduce the energy norm $\|u\|_{B^s(T)}$ instead of $\|u(0)\|_{H^s}$, since we are working on very short time intervals, whose length depends on the spatial frequency.
\end{remark}

We first derive a homogeneous and a nonhomogeneous linear estimate in the spaces $X_k$.
\begin{lemma} \label{homogeneous} [Homogeneous linear estimate] 
Let $\alpha \ge 0$ and $k \in \mathbb Z_+$. Then it holds that 
\begin{equation} \label{homogeneous1} 
\big\| \mathcal{F}\big[\eta_0(2^{\alpha k}t)e^{it\partial_x^5}u_0 \big]\big\|_{X_k} \lesssim \|u_0\|_{L^2},
\end{equation}
for all $u_0 \in L^2(\mathbb R)$ such that $\text{supp} \, \mathcal{F}_x(u_0) \in I_k$.
\end{lemma}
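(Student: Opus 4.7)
The approach is to compute the space-time Fourier transform explicitly, separate the $\xi$ and $\tau$ dependencies, and then bound the resulting dyadic sum defining the $X_k$-norm.

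First I would observe that $\mathcal{F}(e^{it\partial_x^5}u_0)(\xi,\tau)=\widehat{u_0}(\xi)\,\delta(\tau-w(\xi))$, so multiplication by $\eta_0(2^{\alpha k}t)$ in the time variable becomes convolution in $\tau$ with $2^{-\alpha k}\widehat{\eta_0}(2^{-\alpha k}\cdot)$. This yields the pointwise identity
\begin{equation*}
\mathcal{F}\big[\eta_0(2^{\alpha k}t)e^{it\partial_x^5}u_0\big](\xi,\tau)=c\,\widehat{u_0}(\xi)\cdot 2^{-\alpha k}\widehat{\eta_0}\big(2^{-\alpha k}(\tau-w(\xi))\big),
\end{equation*}
with support in $I_k\times\mathbb{R}$ by the frequency support hypothesis on $u_0$. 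Since $\eta_0\in\mathcal{S}(\mathbb{R})$, $\widehat{\eta_0}$ decays faster than any polynomial, and I will use $|\widehat{\eta_0}(y)|\lesssim_N (1+|y|)^{-N}$.

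Next, after the change of variable $\lambda=\tau-w(\xi)$ in the inner $\tau$-integral, the variables $\xi$ and $\lambda$ separate, and the definition \eqref{Xk} gives
\begin{equation*}
\big\|\mathcal{F}[\eta_0(2^{\alpha k}t)e^{it\partial_x^5}u_0]\big\|_{X_k}=\|\widehat{u_0}\|_{L^2_\xi}\cdot\sum_{j\ge 0}2^{j/2}\big\|\eta_j(\lambda)\,2^{-\alpha k}\widehat{\eta_0}(2^{-\alpha k}\lambda)\big\|_{L^2_\lambda}.
\end{equation*}
On the support of $\eta_j$ one has $|\lambda|\lesssim 2^j$, so the $L^2_\lambda$-norm is bounded by $C\cdot 2^{-\alpha k}(1+2^{j-\alpha k})^{-N}2^{j/2}$, and the $j$-th term of the series is controlled by $2^j\cdot 2^{-\alpha k}(1+2^{j-\alpha k})^{-N}$.

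Finally I would split the sum at $j=[\alpha k]$: for $j\le [\alpha k]$ the factor $(1+2^{j-\alpha k})^{-N}$ is $O(1)$ and a geometric sum in $j$ yields a total bounded by a constant; for $j>[\alpha k]$, choosing $N\ge 2$ makes $2^j(1+2^{j-\alpha k})^{-N}\lesssim 2^{\alpha k}\cdot 2^{-(N-1)(j-\alpha k)}$, so summation again gives $O(1)$ after cancellation with the $2^{-\alpha k}$ prefactor. Combining, the bracketed sum is $O(1)$ uniformly in $k$ and $\alpha$, which yields \eqref{homogeneous1}. The only mild obstacle is this dyadic bookkeeping, and it is routine given the Schwartz decay of $\widehat{\eta_0}$.
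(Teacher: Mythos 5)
Your proposal is correct and follows essentially the same route as the paper: compute $\mathcal{F}\big[\eta_0(2^{\alpha k}t)e^{it\partial_x^5}u_0\big](\xi,\tau)=c\,2^{-\alpha k}\widehat{\eta}_0\big(2^{-\alpha k}(\tau-w(\xi))\big)\widehat{u}_0(\xi)$, separate variables via Plancherel, and control the dyadic sum $\sum_j 2^{j/2}\|\eta_j(\cdot)2^{-\alpha k}\widehat{\eta}_0(2^{-\alpha k}\cdot)\|_{L^2}$ using the Schwartz decay of $\widehat{\eta}_0$, splitting at $j\sim\alpha k$. The paper's bound $2^{-\alpha k}2^{j/2}\min(1,2^{4(\alpha k-j)})$ is exactly your estimate with $N=4$, so the two arguments coincide.
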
 

\begin{proof} A direct computation shows that 
\begin{displaymath} 
\mathcal{F}\big[\eta_0(2^{\alpha k}t)e^{it\partial_x^5}u_0 \big](\xi,\tau)=2^{-\alpha k}\widehat{\eta}_0\big(2^{-\alpha k}(\tau-w(\xi))\big)\widehat{u}_0(\xi).
\end{displaymath}
Thus, it follows from the definition of $X_k$ and Plancherel's identity that 
\begin{equation} \label{homogeneous2}
\big\| \mathcal{F}\big[\eta_0(2^{\alpha k}t)e^{it\partial_x^5}u_0 \big]\big\|_{X_k} \le 
\sum_{j \ge 0} 2^{j/2}\big\|\eta_j(\cdot)2^{-\alpha k}\widehat{\eta}_0(2^{-\alpha k}\cdot) \big\|_{L^2}\|u_0\|_{L^2}. 
\end{equation}
Moreover, it is clear since $\widehat{\eta}_0 \in \mathcal{S}(\mathbb R)$ that
\begin{displaymath} 
\|\eta_j(\cdot)2^{-\alpha k}\widehat{\eta}_0(2^{-\alpha k}\cdot) \big\|_{L^2} \lesssim 2^{-\alpha k}
\|\eta_j(\cdot)(1+2^{-\alpha k}|\cdot|)^{-4} \big\|_{L^2}  \lesssim 2^{-\alpha k}2^{j/2}\min(1,2^{4(\alpha k-j)}),
\end{displaymath}
which combined with \eqref{homogeneous2} implies estimate \eqref{homogeneous1}.
\end{proof}

\begin{lemma} \label{nonhomogeneous} [Non-homogeneous linear estimate] 
Let $\alpha \ge 0$ and $k \in \mathbb Z_+$. Then it holds that 
\begin{equation} \label{nonhomogeneous1} 
\big\| \mathcal{F}\big[\eta_0(2^{\alpha k}t)\int_0^te^{i(t-s)\partial_x^5}f(\cdot,s)ds \big]\big\|_{X_k} \lesssim 
\big\| (\tau-w(\xi)+i2^{\alpha k})^{-1}\mathcal{F}(f)\big\|_{X_k},
\end{equation}
for all $f$ such that $\text{supp} \, \mathcal{F}(f) \in I_k \times \mathbb R$.
\end{lemma}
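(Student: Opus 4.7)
Plan: The approach is to compute the space-time Fourier transform of
\begin{displaymath}
v(x,t):=\eta_0(2^{\alpha k}t)\int_0^t e^{i(t-s)\partial_x^5}f(\cdot,s)\,ds
\end{displaymath}
explicitly and to bound the resulting kernel. Inserting the inverse Fourier representation of $f$ into the Duhamel integral and computing the inner $s$-integral via
\begin{displaymath}
\int_0^t e^{is(\tau'-w(\xi))}\,ds=\frac{e^{it\tau'}-e^{itw(\xi)}}{i(\tau'-w(\xi))},
\end{displaymath}
and then taking $\mathcal F_t$ of $\eta_0(2^{\alpha k}t)$ times this expression, one obtains
\begin{displaymath}
\mathcal F(v)(\xi,\tau)=2^{-\alpha k}\int_{\mathbb R}\frac{\widehat{\eta_0}(2^{-\alpha k}(\tau-\tau'))-\widehat{\eta_0}(2^{-\alpha k}(\tau-w(\xi)))}{i(\tau'-w(\xi))}\mathcal F(f)(\xi,\tau')\,d\tau'.
\end{displaymath}
After the volume-preserving change of variables $\lambda=\tau-w(\xi)$, $\mu=\tau'-w(\xi)$, the $X_k$-norm of $\mathcal F(v)$ becomes $\sum_l 2^{l/2}\|\eta_l(\lambda) G(\xi,\lambda)\|_{L^2_{\xi,\lambda}}$, where $G$ is the convolution-type $\mu$-integral above.

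I would then dyadically decompose $\mathcal F(f)=\sum_{j\ge 0}h_j$ with $h_j$ supported in $|\mu|\sim 2^j$ and $a_j:=\|h_j\|_{L^2}$. Since $|(\mu+i2^{\alpha k})^{-1}|\sim 2^{-\max(j,\alpha k)}$ on $\mathrm{supp}\,h_j$, the right-hand side of \eqref{nonhomogeneous1} is comparable to $\sum_j 2^{j/2-\max(j,\alpha k)}a_j$, so it suffices to prove the matching bound for the contribution $G_j$ coming from each $h_j$.

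The kernel $K(\lambda,\mu)=\mu^{-1}[\widehat{\eta_0}(2^{-\alpha k}(\lambda-\mu))-\widehat{\eta_0}(2^{-\alpha k}\lambda)]$ splits according to whether $|\mu|\lesssim 2^{\alpha k}$ or $|\mu|\gtrsim 2^{\alpha k}$. When $|\mu|\lesssim 2^{\alpha k}$, the fundamental theorem of calculus and $\widehat{\eta_0}'\in\mathcal S$ give $|K(\lambda,\mu)|\lesssim 2^{-\alpha k}\int_0^1(1+2^{-\alpha k}|\lambda-s\mu|)^{-4}\,ds$. When $|\mu|\gtrsim 2^{\alpha k}$, the triangle inequality yields $|K(\lambda,\mu)|\lesssim|\mu|^{-1}[(1+2^{-\alpha k}|\lambda-\mu|)^{-4}+(1+2^{-\alpha k}|\lambda|)^{-4}]$. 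In either regime, the kernel is dominated by $\min(2^{-\alpha k},|\mu|^{-1})$ times a convolution-type bump at scale $2^{\alpha k}$ in $\lambda$, plus a rank-one piece of the same size localized to $|\lambda|\lesssim 2^{\alpha k}$.

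The remaining work is to estimate $\|\eta_l(\lambda)G_j(\xi,\lambda)\|_{L^2_{\xi,\lambda}}$ for each $l$ and sum against the weight $2^{l/2}$. For $l\le\alpha k$, the trivial $L^\infty$-bound on the bump combined with $\|\eta_l\|_{L^2}\sim 2^{l/2}$ and Cauchy--Schwarz in $\mu$ over $|\mu|\sim 2^j$ produces the factor $2^{(l+j)/2}\min(2^{-\alpha k},2^{-j})$, whose geometric summation in $l$ gives exactly $2^{j/2-\max(j,\alpha k)}a_j$. For $l>\alpha k$, one separates two cases: if $j<l-O(1)$, then the bump gives the rapid decay $(1+2^{l-\alpha k})^{-4}$ which absorbs the $2^{l/2}$ weight; if $l\le j+O(1)$, Young's $L^2\ast L^1$ inequality on the convolution piece yields the bound directly. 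The main obstacle is this case analysis: one must arrange the splitting so that every geometric series in $l$ closes when weighted by $2^{l/2}$, which relies crucially on the Schwartz decay of $\widehat{\eta_0}$ and on the delicate competition between the input modulation scale $2^j$, the cutoff scale $2^{\alpha k}$, and the output modulation scale $2^l$.
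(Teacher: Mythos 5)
Your proposal is correct and follows essentially the same route as the paper: the same explicit computation of $\mathcal{F}(v)$, the same splitting of the kernel into a convolution-type bump plus a rank-one piece via the mean value theorem (for $|\mu|\lesssim 2^{\alpha k}$) and the triangle inequality (for $|\mu|\gtrsim 2^{\alpha k}$), and then the modulation-weighted summation that the paper packages into Lemma \ref{lemma2} and estimate \eqref{lemma1.5} but which you carry out by hand through a dyadic decomposition of the input modulation. The only blemish is bookkeeping: your displayed intermediate factor $2^{(l+j)/2}\min(2^{-\alpha k},2^{-j})$ for the low-modulation case omits the overall $2^{-\alpha k}$ prefactor of the kernel (which is needed to cancel the $\sum_{l\le\alpha k}2^{l}\sim 2^{\alpha k}$ from the geometric sum), though the conclusion you draw from it, $2^{j/2-\max(j,\alpha k)}a_j$, is the correct one and is what the properly bookkept computation yields.
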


\begin{proof} Straightforward computations yield
\begin{equation} \label{nonhomogeneous3}
\begin{split}
&\mathcal{F}\big[\eta_0(2^{\alpha k}t)\int_0^te^{i(t-s)\partial_x^5}f(\cdot,s)ds\big](\xi,\tau)
\\ &=\mathcal{F}_t\big[\eta_0(2^{\alpha k}t)\int_{\mathbb R}\frac{e^{it\tilde{\tau}}-e^{itw(\xi)}}{i(\tilde{\tau}-w(\xi))}\mathcal{F}(f)(\xi,\tilde{\tau})d\tilde{\tau}\big](\tau) \\ & 
= 2^{-\alpha k}\widehat{\eta}_0(2^{-\alpha k}\cdot) \ast \big[\frac{\mathcal{F}(f)(\xi,\cdot)}{i(\cdot-w(\xi))}\big](\tau)
-\mathcal{F}_t\big[\eta_0(2^{\alpha k}t)e^{itw(\xi)}\big](\tau)\int_{\mathbb R}\frac{\mathcal{F}(f)(\xi,\tilde{\tau})}{i(\tilde{\tau}-w(\xi))}d\tilde{\tau} \\ & 
=2^{-\alpha k}\int_{\mathbb R}\frac{\widehat{\eta}_0(2^{-\alpha k}(\tau-\tilde{\tau}))-\widehat{\eta}_0(2^{-\alpha k}(\tau-w(\xi)))}{i(\tilde{\tau}-w(\xi))}\mathcal{F}(f)(\xi,\tilde{\tau})d\tilde{\tau}.
\end{split}
\end{equation}

Now, we observe that 
\begin{equation} \label{nonhomogeneous2}
\begin{split}
2^{-\alpha k}\Big|&\frac{\widehat{\eta}_0(2^{-\alpha k}(\tau-\tilde{\tau}))-\widehat{\eta}_0(2^{-\alpha k}(\tau-w(\xi)))}{i(\tilde{\tau}-w(\xi))} \big( \tilde{\tau}-w(\xi)+i2^{\alpha k} \big) \Big| \\ & 
\lesssim 2^{-\alpha k}\big(1+2^{-\alpha k}|\tau-\tilde{\tau}| \big)^{-4}+2^{-\alpha k}\big(1+2^{-\alpha k}|\tau-w(\xi)| \big)^{-4}.
\end{split}
\end{equation}
Indeed, in the case where $|\tilde{\tau}-w(\xi)| \ge 2^{\alpha k}$, then $\big|\tilde{\tau}-w(\xi)+i2^{\alpha k}  \big| \lesssim |\tilde{\tau}-w(\xi)|$ and estimate \eqref{nonhomogeneous2} follows directly from the fact that $\widehat{\eta}_0 \in \mathcal{S}(\mathbb R)$ and the triangle inequality. Now we deal with the case where $|\tilde{\tau}-w(\xi)| \le 2^{\alpha k}$. We deduce by applying the mean value theorem to the radial function $\widehat{\eta}_0$ that 
\begin{displaymath} 
\big| \widehat{\eta}_0(2^{-\alpha k}(\tau-\tilde{\tau}))-\widehat{\eta}_0(2^{-\alpha k}(\tau-w(\xi)))\big| \le 2^{-\alpha k}|\widehat{\eta}_0'(2^{-\alpha k}\theta)||\tilde{\tau}-w(\xi)|,
\end{displaymath}
for some $\theta \in ] |\tau-\tilde{\tau}|,|\tau-w(\xi)|[$ or $\theta \in ] |\tau-w(\xi)|,|\tau-\tilde{\tau}|[$, depending wether $|\tau-\tilde{\tau}| < |\tau-w(\xi)|$  or $|\tau-w(\xi)|<|\tau-\tilde{\tau}|$. Thus, since $\widehat{\eta}_0' \in \mathcal{S}(\mathbb R)$, the left-hand side of \eqref{nonhomogeneous2} can be bounded by 
$2^{-\alpha k}\big( 1+2^{-\alpha k}|\theta|\big)^{-4}$. This implies estimate \eqref{nonhomogeneous2} in this case by using the assumption on $\theta$. 

On the one hand, we deduce from Lemma \ref{lemma2} that
\begin{equation}  \label{nonhomogeneous4}
\big\| \int_{\mathbb R}\frac{|\mathcal{F}(f)(\xi,\tilde{\tau})|}{|\tilde{\tau}-w(\xi)+i2^{\alpha k}|} 2^{-\alpha k}\big(1+2^{-\alpha k}|\tau-\tilde{\tau}| \big)^{-4}d\tilde{\tau} \big\|_{X_k} \lesssim 
\big\| \frac{\mathcal{F}(f)(\xi,\tau)}{\tau-w(\xi)+i2^{\alpha k}} \big\|_{X_k}.
\end{equation} 
On the other hand, it follows arguing as in the proof of Lemma \ref{homogeneous} and using estimate \eqref{lemma1.5} that
\begin{equation} \label{nonhomogeneous5}
\begin{split}
&\big\| 2^{-\alpha k}\big(1+2^{-\alpha k}|\tau-w(\xi)| \big)^{-4}\int_{\mathbb R}\frac{|\mathcal{F}(f)(\xi,\tilde{\tau})|}{|\tilde{\tau}-w(\xi)+i2^{\alpha k}|} d\tilde{\tau} \big\|_{X_k} \\ & \lesssim
\sum_{j \ge 0}2^{j/2}\big\|\eta_j(\cdot)2^{-\alpha k}(1+2^{-\alpha k}|\cdot|)^{-4}\big\|_{L^2}\big\|\int_{\mathbb R} \frac{|\mathcal{F}(f)(\xi,\tilde{\tau})|}{|\tilde{\tau}-w(\xi)+i2^{\alpha k}|} d\tilde{\tau}\big\|_{L^2_{\xi}} \\ 
& \lesssim \sum_{j \ge 0}2^j2^{-\alpha k}\min(1,2^{4(\alpha k-j)})\big\|\int_{\mathbb R} \frac{|\mathcal{F}(f)(\xi,\tilde{\tau})|}{|\tilde{\tau}-w(\xi)+i2^{\alpha k}|} d\tilde{\tau}\big\|_{L^2_{\xi}} \\ & 
\lesssim  \big\| \frac{\mathcal{F}(f)(\xi,\tau)}{\tau-w(\xi)+i2^{\alpha k}} \big\|_{X_k}.
\end{split}
\end{equation}

Finally, we conclude the proof of Proposition \ref{nonhomogeneous} gathering \eqref{nonhomogeneous3}--\eqref{nonhomogeneous5}.
\end{proof}

A proof of Proposition \ref{linear} is now in sight. 
\begin{proof}[Proof of Proposition \ref{linear}] 
Let $u, \ f:\mathbb R \times [-T,T]$ satisfying \eqref{linear2}. First, we choose an extension $\widetilde{f}$ of $f$ on $\mathbb R^2$ satisfying 
\begin{equation} \label{linear3}
\|\widetilde{f}\|_{N^s_{\alpha}} \le 2\|f\|_{N^s_{\alpha}(T)}. 
\end{equation} 

Fix $\theta \in C_0^{\infty}(\mathbb R)$ such that $\theta(t)=1$ if $t \ge 1$ and $\theta(t)=0$ if $t \le 0$. For $k \in \mathbb Z_+$, we define 
\begin{displaymath} 
\widetilde{f}_k=\theta\big(2^{\alpha k+10}(t+T+2^{-\alpha k-10})\big)
\theta\big(-2^{\alpha k+10}(t-T-2^{-\alpha k-10})\big)P_k\widetilde{f}.
\end{displaymath} 
Then, it follows from \eqref{coro3.2} and the definition of $\theta$ that
\begin{equation} \label{linear4}
\|\widetilde{f}_k\|_{N_{k,\alpha}} \lesssim \|P_{k}\widetilde{f}\|_{N_{k,\alpha}},
\end{equation}
\begin{displaymath}
\text{supp}\, \widetilde{f}_k \subset \mathbb R \times [-T-2^{-\alpha k-10},T+2^{-\alpha k -10}] \quad \text{and} \quad  \widetilde{f}_{k|_{[-T,T]}}=P_kf.
\end{displaymath}

Moreover, for all $k \in \mathbb Z_+$, we also extend $P_ku$ on $\mathbb R^2$, by defining $\widetilde{u}_k(t)$ as
\begin{displaymath} 
\left\{
\begin{array}{cll}
\eta_0(2^{\alpha k+5}(t-T))\big( e^{i(t-T)\partial_x^5}P_ku(T)+\int_T^te^{i(t-s)\partial_x^5}\widetilde{f}_k(s)ds\big) & \text{if} & t >T\\
P_ku(t) & \text{if} & t \in [-T,T] 
 \\ 
 \eta_0(2^{\alpha k+5}(t+T))\big( e^{i(t+T)\partial_x^5}P_ku(-T)+\int_{-T}^te^{i(t-s)\partial_x^5}\widetilde{f}_k(s)ds\big) & \text{if} & t <-T.
\end{array}
\right.
\end{displaymath}
Next, we show that 
\begin{equation} \label{linear5} 
\|\widetilde{u}_k\|_{F_{k,\alpha}} \lesssim \sup_{t_k \in [-T,T]}\big\|\mathcal{F}\big(\eta_0(2^{\alpha k}(t-t_k))\widetilde{u}_k\big) \big\|_{X_k}
\end{equation}
It is clear from the definition that $\widetilde{u}_k$ is supported in $\mathbb R \times [T-2^{-\alpha k-5},T+2^{-\alpha k-5}]$. Thus, if $t_k > T$, we get,
\begin{displaymath} 
\eta_0(2^{\alpha k}(t-t_k))\widetilde{u}_k=\eta_0(2^{\alpha k}(t-t_k))\eta_0(2^{\alpha k}(t-\tilde{t}_k))\widetilde{u}_k
\end{displaymath}
for some $\tilde{t}_k \in [T-2^{-\alpha k},T]$, so that \eqref{coro1.1} implies 
\begin{displaymath} 
\sup_{t_k >T}\big\| \mathcal{F}\big[\eta_0(2^{\alpha k}(\cdot-t_k))\widetilde{u}_k \big]\big\|_{X_k} \lesssim \sup_{\tilde{t}_k \in [-T,T]}\big\| \mathcal{F}\big[\eta_0(2^{\alpha k}(\cdot-\tilde{t}_k))\widetilde{u}_k \big]\big\|_{X_k}.\end{displaymath}
We could argue similarly for $t <T$, which implies estimate \eqref{linear5}.

Now we fix $t_k \in [-T,T]$. Observe that 
\begin{displaymath} 
\big\| \mathcal{F}\big[\eta_0(2^{\alpha k}(\cdot-t_k))\widetilde{u}_k \big]\big\|_{X_k}=
\big\| \mathcal{F}\big[\eta_0(2^{\alpha k}\cdot)\widetilde{u}_k(\cdot+t_k) \big]\big\|_{X_k}
\end{displaymath}
and by the Duhamel principle, 
\begin{displaymath} 
\eta_0(2^{\alpha k}t)\widetilde{u}_k(t+t_k)=m_k(t)\eta_0(2^{\alpha k}t)
\Big(e^{it\partial_x^5}P_ku(t_k)+\int_0^t e^{i(t-s)\partial_x^5}\widetilde{\eta}_0(2^{\alpha k}s)\widetilde{f}_k(s+t_k)ds\Big),
\end{displaymath}
where $m_k \in S_{k,\alpha}$. Thus, we deduce from estimates \eqref{coro3.1}, \eqref{homogeneous1} and \eqref{nonhomogeneous1} that 
\begin{displaymath} 
\big\| \mathcal{F}\big[\eta_0(2^{\alpha k}\cdot)\widetilde{u}_k(\cdot+t_k) \big]\big\|_{X_k} \lesssim \|P_ku(t_k)\|_{L^2}
+\big\| (\tau-w(\xi)+i2^{\alpha k})^{-1}\mathcal{F}(\widetilde{\eta}_0(2^{\alpha k}\cdot)\widetilde{f}_k(\cdot+t_k))\big\|_{X_k},
\end{displaymath}
which implies estimate \eqref{linear1} after taking the supreme in $t_k \in [-T,T]$, summing over $k \in \mathbb Z_+$ and using \eqref{coro3.2}, \eqref{linear3}--\eqref{linear5}.
\end{proof}

\subsection{Strichartz estimates}
We recall the Strichartz estimates associated to $\{e^{t\partial_x^5} \}$ proved by Kenig, Ponce and Vega in \cite{KPV}. 
\begin{proposition} \label{strichartz} 
Let $2 \le q, \ r \le +\infty$ and $0 \le s \le \frac3q$ satisfy $-s+\frac5q=\frac12$. Then, 
\begin{equation} \label{strichartz1} 
\|D^{s}_xe^{t\partial_x^5}u_0 \|_{L^q_tL^r_x} \lesssim \|u_0\|_{L^2},
\end{equation}
for all $u_0 \in L^2(\mathbb R)$.
\end{proposition}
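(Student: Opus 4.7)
The plan is to follow the Kenig-Ponce-Vega proof \cite{KPV}, which derives this family of estimates by interpolating between a local-smoothing endpoint and a dispersive Strichartz endpoint.

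The smoothing endpoint corresponds to $(q,r,s)=(2,\infty,2)$ and follows from a direct Plancherel computation. For each fixed $x \in \mathbb R$, using the change of variable $\eta = w(\xi) = \xi^5$ on each half-line (so that $|w'(\xi)| = 5|\xi|^4$), one obtains
\begin{equation*}
\int_{\mathbb R}\big|D_x^2 e^{t\partial_x^5}u_0(x)\big|^2\,dt = c\int_{\mathbb R}\frac{|\xi|^4}{|w'(\xi)|}\big|\widehat{u_0}(\xi)\big|^2\,d\xi = \tfrac{c}{5}\|u_0\|_{L^2}^2
\end{equation*}
uniformly in $x$, and hence $\|D_x^2 e^{t\partial_x^5}u_0\|_{L^\infty_x L^2_t} \lesssim \|u_0\|_{L^2}$.

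For the dispersive endpoint (corresponding to $s = 0$), I would prove the pointwise kernel bound $\|K_t\|_{L^\infty_x} \lesssim |t|^{-1/5}$, where $K_t(x) = (2\pi)^{-1}\int_{\mathbb R} e^{i(x\xi + t\xi^5)}\,d\xi$. The scaling $K_t(x) = |t|^{-1/5}K_1\big(x|t|^{-1/5}\big)$ reduces matters to $\|K_1\|_{L^\infty} \lesssim 1$, which follows from Van der Corput's lemma applied to the phase $\Phi(\xi) = x\xi + t\xi^5$: one decomposes $\xi$ dyadically, uses integration by parts in regions separated from the real stationary points $\xi_c = \pm(-x/5t)^{1/4}$, and applies the second-derivative Van der Corput bound near them, where $|\Phi''(\xi)| = 20|t||\xi|^3$. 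This yields the dispersive estimate $\|e^{t\partial_x^5}u_0\|_{L^\infty_x} \lesssim |t|^{-1/5}\|u_0\|_{L^1}$, and combining with $L^2_x$-unitarity via the standard $TT^*$ and Hardy-Littlewood-Sobolev argument produces the Strichartz estimate at $s = 0$.

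The full family stated in the proposition is then recovered by complex interpolation between these two endpoints; the identity $-s+5/q = 1/2$ is preserved along the interpolation segment, and the constraint $s\le 3/q$ selects its admissible portion. The main technical obstacle is the Van der Corput stationary-phase analysis of $K_1$, which requires the careful separation into stationary and non-stationary regions sketched above; this is the heart of \cite{KPV}. Since Proposition~\ref{strichartz} is recalled verbatim from that reference, the paper's actual justification reduces to a citation.
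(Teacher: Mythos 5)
The paper itself offers no proof of Proposition \ref{strichartz}: it is quoted from \cite{KPV}, so you are right that the justification in the text reduces to a citation, and your task is really to reconstruct the argument of that reference. Your two individual computations are correct as far as they go: the Plancherel/change-of-variables identity giving $\|D_x^2e^{t\partial_x^5}u_0\|_{L^\infty_xL^2_t}\lesssim\|u_0\|_{L^2}$ is the sharp Kato smoothing estimate, and the Van der Corput analysis of $K_1$ giving $\|e^{t\partial_x^5}\|_{L^1\to L^\infty}\lesssim|t|^{-1/5}$, hence via $TT^*$ and Hardy--Littlewood--Sobolev the $s=0$ member $\|e^{t\partial_x^5}u_0\|_{L^{10}_tL^\infty_x}\lesssim\|u_0\|_{L^2}$, is also sound.

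The gap is the final interpolation step. The smoothing estimate lives in $L^\infty_xL^2_t$ (space outside, time inside), while the family in the proposition lives in $L^q_tL^r_x$ (time outside). These are reversed mixed norms: complex interpolation between $L^\infty_xL^2_t$ and $L^{10}_tL^\infty_x$ does not produce the intermediate spaces $L^q_tL^r_x$, and Minkowski's inequality only gives $\|\cdot\|_{L^\infty_xL^2_t}\le\|\cdot\|_{L^2_tL^\infty_x}$, i.e.\ the wrong direction. A symptom of the mismatch is that your proposed endpoint $(q,r,s)=(2,\infty,2)$ violates the constraint $s\le 3/q$ of the proposition, so it is not an endpoint of this family at all. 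What \cite{KPV} actually do to obtain the derivative gain inside $L^q_tL^r_x$ is to put the weight into the oscillatory integral: Van der Corput's lemma with $|\Phi''(\xi)|=20|t||\xi|^3$ tolerates precisely the weight $|\xi|^{3/2}\sim\big(|\Phi''(\xi)|/|t|\big)^{1/2}$ and yields the uniform kernel bound $\sup_x\big|D_x^{3/2}K_t(x)\big|\lesssim|t|^{-1/2}$. Interpolating $\|D_x^{3/2}e^{t\partial_x^5}f\|_{L^\infty}\lesssim|t|^{-1/2}\|f\|_{L^1}$ with $L^2$-unitarity gives $\|D_x^{3\theta/2}e^{t\partial_x^5}f\|_{L^{2/(1-\theta)}}\lesssim|t|^{-\theta/2}\|f\|_{L^{2/(1+\theta)}}$, and the same $TT^*$/HLS argument you invoke then produces $\|D_x^{3/q}e^{t\partial_x^5}u_0\|_{L^q_tL^r_x}\lesssim\|u_0\|_{L^2}$ with $q=4/\theta$, $r=2/(1-\theta)$; this is the family with $s=3/q$ actually used in the proof of Corollary \ref{Bstrichartz} (e.g.\ $s=3/4$ at $q=4$), which your $s=0$ dispersive endpoint alone cannot reach. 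The local smoothing estimate is not needed anywhere in this derivation. (Incidentally, the scaling relation as printed, $-s+\frac5q=\frac12$, omits the $\frac1r$ term; the dimensionally correct relation $-s+\frac5q+\frac1r=\frac12$ is the one the authors use later in the proof of Corollary \ref{Bstrichartz}.)
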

As a consequence, we obtain a Strichartz estimate in the context of the Bourgain spaces $F^s_{\alpha}(T)$.
\begin{corollary} \label{Bstrichartz}
Assume $0<T \le 1$, $\alpha \ge 0$ and $\epsilon>0$. Then, it holds that 
\begin{equation} \label{Bstrichartz1} 
\|D^{\frac34-\frac{\alpha}4}_xu\|_{L^2_TL^{\infty}_x} \lesssim \|u\|_{F_{\alpha}^{\epsilon}(T)},
\end{equation}
and 
\begin{equation} \label{Bstrichartz1b} 
\|D^{\frac34-\frac{\alpha}4}_xu\|_{\widetilde{L^2_TL^{\infty}_x}} := \Big( \sum_{k \ge 0} \|D^{\frac34-\frac{\alpha}4}_xP_ku\|_{L^2_TL^{\infty}_x}^2 \Big)^{\frac12}\lesssim \|u\|_{F_{\alpha}^{\epsilon}(T)},
\end{equation}
for any $u \in F_{\alpha}^{\epsilon}(T)$.
\end{corollary}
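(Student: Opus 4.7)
The plan is to reduce the corollary to the frequency-localized estimate
\begin{equation*}
\|D^{3/4-\alpha/4}_x P_k u\|_{L^2_T L^\infty_x} \lesssim \|P_k u\|_{F_{k,\alpha}(T)}, \qquad k\ge 0.
\end{equation*}
Granted this bound, \eqref{Bstrichartz1b} will follow by squaring and summing in $k$, while \eqref{Bstrichartz1} will follow from the triangle inequality combined with Cauchy--Schwarz against the summable weight $2^{-k\epsilon}$ (which is exactly why $\epsilon>0$ is required).

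To prove the frequency-localized estimate, I would start from Proposition \ref{strichartz} taken at $(q,r,s)=(4,\infty,3/4)$, giving $\|D^{3/4}_x e^{t\partial_x^5}u_0\|_{L^4_t L^\infty_x}\lesssim\|u_0\|_{L^2}$, and then invoke the standard transfer principle: write $f(x,t)=\int_{\mathbb R} e^{it\sigma}[e^{t\partial_x^5}g_\sigma](x)\,d\sigma$ with $\widehat{g_\sigma}(\xi):=\mathcal{F}(f)(\xi,\sigma+w(\xi))$, apply Minkowski in $\sigma$, decompose $\sigma$ dyadically via $\sum_j\eta_j(\sigma)$, apply Cauchy--Schwarz, and change variables $\tau=\sigma+w(\xi)$. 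This produces
\begin{equation*}
\|D^{3/4}_x f\|_{L^4_t L^\infty_x}\lesssim\|\mathcal{F}(f)\|_{X_k}
\end{equation*}
for every $f$ with $\mathcal{F}(f)\in X_k$. This transfer step is the main technical point; everything afterwards is bookkeeping with short-time cutoffs, H\"older, and summation.

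To descend from $L^4_t$ to $L^2_t$ on $[-T,T]$, I would exploit the short-time structure of $F_{k,\alpha}$. Fix an extension $\tilde u_k$ of $P_ku$ with $\|\tilde u_k\|_{F_{k,\alpha}}\le 2\|P_ku\|_{F_{k,\alpha}(T)}$, and cover $[-T,T]$ by $N\lesssim 1+T\cdot 2^{\alpha k}$ bumps $\chi_i(t)=\eta_0(2^{\alpha k}(t-t_i))$ arranged so that $\sum_i\chi_i^2\sim 1$ on $[-T,T]$. Each $\chi_i\tilde u_k$ is supported in a time interval of length $\sim 2^{-\alpha k}$, so H\"older gives $\|\chi_i\tilde u_k\|_{L^2_tL^\infty_x}\lesssim 2^{-\alpha k/4}\|\chi_i\tilde u_k\|_{L^4_tL^\infty_x}$, while the $X_k$-Strichartz bound above together with the definition \eqref{Fk} of $F_{k,\alpha}$ yields $\|\chi_i\tilde u_k\|_{L^4_tL^\infty_x}\lesssim 2^{-3k/4}\|\tilde u_k\|_{F_{k,\alpha}}$. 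Squaring, summing the $N$ pieces, multiplying by $2^{k(3/4-\alpha/4)}$, and taking the infimum over extensions will produce the frequency-localized bound with a harmless extra factor $T^{1/2}\le 1$. The low-frequency case $k=0$ is handled directly by Bernstein's inequality together with Lemma \ref{lemma1}.
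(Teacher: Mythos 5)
Your argument is correct, and it reaches the same two inequalities through a recognizably similar skeleton (transfer principle from the free group to $X_k$, time-chopping into intervals of length $2^{-\alpha k}$, H\"older in $t$ on each piece, summation over the $\sim 2^{\alpha k}T$ pieces), but it differs from the paper's proof in one genuine respect: you work directly at the endpoint $(q,r,s)=(4,\infty,3/4)$ of Proposition \ref{strichartz}, i.e. $\|D^{3/4}_xe^{t\partial_x^5}u_0\|_{L^4_tL^\infty_x}\lesssim\|u_0\|_{L^2}$, and transfer it to $\|D_x^{3/4}f\|_{L^4_tL^\infty_x}\lesssim\|\mathcal{F}(f)\|_{X_k}$. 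The paper instead avoids $r=\infty$: it first uses the Sobolev embedding $W^{\epsilon',r}\hookrightarrow L^\infty$ together with the square function theorem to reduce to $L^2_TL^r_x$ bounds on each dyadic piece with $r$ finite, then chops time and applies the transfer principle at a pair $(q,r)$ with $\tfrac{2}{q}+\tfrac1r=\tfrac12$; the hypothesis $\epsilon>0$ is consumed there, in choosing $\epsilon'$ and $r$ with $r\epsilon'>1$ and $\epsilon'+\tfrac{3-\alpha}{2r}<\epsilon$. Your route buys a cleaner frequency-localized statement $\|D^{3/4-\alpha/4}_xP_k\tilde u\|_{L^2_TL^\infty_x}\lesssim\|P_k\tilde u\|_{F_{k,\alpha}}$ with no $\epsilon$-loss at the dyadic level, so that \eqref{Bstrichartz1b} holds already with $F^0_\alpha$ on the right and $\epsilon>0$ is only needed for the Cauchy--Schwarz in $k$ giving \eqref{Bstrichartz1}; the price is reliance on the $r=\infty$ Strichartz endpoint, which is indeed covered by Proposition \ref{strichartz} as stated (it is the fifth-order analogue of the classical $L^4_tL^\infty_x$ Airy estimate of Kenig--Ponce--Vega). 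Two small points of bookkeeping you should make explicit in a final write-up: the inversion $\|\chi_i\tilde u_k\|_{L^4_tL^\infty_x}\lesssim 2^{-3k/4}\|D^{3/4}_x(\chi_i\tilde u_k)\|_{L^4_tL^\infty_x}$ requires $k\ge1$ and a Bernstein/multiplier argument for the localized symbol $|\xi|^{-3/4}\widetilde{\eta}_k(\xi)$ (you correctly route $k=0$ through Bernstein and Lemma \ref{lemma1}); and for the summation in $k$ one should take a single global extension $\tilde u$ with $\|\tilde u\|_{F^\epsilon_\alpha}\le2\|u\|_{F^\epsilon_\alpha(T)}$ and apply the localized bound to $P_k\tilde u$, rather than choosing near-optimal extensions separately for each $k$.
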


\begin{proof} Let $0<T \le 1$, $\alpha \ge 0$, $\epsilon>0$ and $u \in F_{\alpha}^{\epsilon}(T)$. Choose $\widetilde{u} \in F^{\epsilon}_{\alpha}$ such that 
\begin{displaymath}
\widetilde{u}_{|_{[-T,T]}}=u \quad \text{and} \quad
\|\widetilde{u}\|_{F_{\alpha}^{\epsilon}} \le 2 \|u\|_{F_{\alpha}^{\epsilon}(T)}.
\end{displaymath} 
For $k \in \mathbb Z_+$, we denote $\widetilde{u}_k=P_k\widetilde{u}$ (recall that $P_0=P_{\le 0}$). Then we deduce using the Sobolev embedding $W^{\epsilon',r}(\mathbb R) \hookrightarrow L^{\infty}(\mathbb R)$, the square function theorem and Minkowski's inequality that
\begin{displaymath} 
\|D^{\frac34}_xu\|_{L^2_TL^{\infty}_x} \lesssim \|D^{\frac34}_xJ_x^{\epsilon'}\widetilde{u}\|_{L^2_TL^r_x} \lesssim 
\Big( \sum_{k \ge 0}2^{2k\epsilon'}\|D^{\frac34}_x\widetilde{u}_k\|_{L^2_TL^r_x}^2\Big)^{\frac12}.
\end{displaymath}
where $\epsilon'$ and $r(>1/\epsilon')$ will be chosen later. Therefore, according to the definition of $F^{\epsilon}_{\alpha}$ in \eqref{Fs}, it suffices to prove that 
\begin{equation} \label{Bstrichartz2} 
\|D^{\frac34}_x\widetilde{u}_k\|_{L^2_TL^r_x} \lesssim 2^{k(\frac{\alpha}4+\frac{3-\alpha}{2r})}\|\widetilde{u}_k\|_{F_{k,\alpha}},
\end{equation}
for all $k \ge 0$ in order to prove estimate \eqref{Bstrichartz1}. Indeed, it is enough then to choose $r$ and $\epsilon'$ such that $r\epsilon'>1$ and $\epsilon'+\frac{3-\alpha}{2r}<\epsilon$.

Next, we prove estimate \eqref{Bstrichartz2}. For $k \ge 0$, we chop the interval $[-T,T]$ in subintervals $I_j$ of length $2^{-\alpha k}$. Let $[-T,T]=U_{j}I_j$ where $\big| I_j\big| \sim 2^{-\alpha k}$ and $\eta_0(2^{\alpha k}(\cdot-c_j))=1$ on $I_j$ (here $c_j$ denotes the center of $I_j$). Note that the number of intervals $I_j$ is of order $2^{\alpha k}T$. Let $2 \le q$ be so that $-\frac3q+\frac5q+\frac1r=\frac12$. Then, we deduce applying H\"older's inequality in time that 
\begin{equation} \label{Bstrichartz3} 
\begin{split} 
\|D^{\frac34}_x\widetilde{u}_k\|_{L^2_TL^r_x}&=\Big(\sum_{j}\big\|D_x^{\frac34}\widetilde{u}_k \eta_0(2^{\alpha k}(\cdot-c_j))\big\|_{L^2_{I_j}L^r_x}^2 \Big)^{\frac12} \\ 
& \lesssim 2^{-\alpha k (\frac12-\frac1q)}\Big(\sum_{j}\big\|D_x^{\frac34}\widetilde{u}_k \eta_0(2^{\alpha k}(\cdot-c_j))\big\|_{L^q_{I_j}L^r_x}^2 \Big)^{\frac12}.
\end{split}
\end{equation}

Due to the Fourier inversion formula, we have that 
\begin{displaymath}
 D^{\frac34}_x\widetilde{u}_k(x,t)\eta_0(2^{\alpha k}(t-c_j))=c\int_{\mathbb R}D^{\frac34}_xe^{t\partial_x^5}\mathcal{F}_s\big(e^{-s\partial_x^5}\eta_0(2^{\alpha k}(s-c_j))\widetilde{u}_k(\cdot,s) \big)(\tau)e^{it\tau}d\tau.
\end{displaymath}
Thus, Minkowski's inequality,  estimate \eqref{strichartz1}, Plancherel's identity and the Cauchy-Schwarz inequality in $q$ imply that
\begin{equation} \label{Bstrichartz4}
\begin{split} 
\|D^{\frac34}_x\widetilde{u}_k\|_{L^q_{I_j}L^r_x} &\lesssim \int_{\mathbb R}\big\| D^{\frac34}_xe^{t\partial_x^5}\mathcal{F}_s\big(e^{-s\partial_x^5}\eta_0(2^{\alpha k}(s-c_j))\widetilde{u}_k \big)\big\|_{L^q_{I_j}L^r_x}d\tau \\ 
& \lesssim 2^{k(\frac34-\frac3q)}\sum_{q \ge 0}2^{q/2}\big\|\eta_q(\tau)\mathcal{F}\big(e^{-\cdot\partial_x^5}\eta_0(2^{\alpha k}(\cdot-c_j))\widetilde{u}_k \big)(\xi,\tau) \big\|_{L^2_{\xi,\tau}}.
\end{split}
\end{equation}
Then, we observe that 
\begin{displaymath} 
\mathcal{F}\big(e^{-\cdot\partial_x^5}\eta_0(2^{\alpha k}(\cdot-c_j))\widetilde{u}_k \big)(\xi,\tau)
=\mathcal{F}\big(\eta_0(2^{\alpha k}(\cdot-c_j))\widetilde{u}_k \big)(\xi,\tau+w(\xi)),
\end{displaymath}
which together with \eqref{Bstrichartz4} and the definition of $F_{k,\alpha}$ in \eqref{Fk} implies that 
\begin{equation} \label{Bstrichartz5}
\|D^{\frac34}_x\widetilde{u}_k\|_{L^q_{I_j}L^r_x} \lesssim 2^{k(\frac34-\frac3q)}\|\widetilde{u}_k\|_{F_{k,\alpha}}.
\end{equation}

Finally, we deduce combining \eqref{Bstrichartz3} and \eqref{Bstrichartz5} that 
\begin{displaymath} 
\|D^{\frac34}_x\widetilde{u}_k\|_{L^2_TL^r_x} \lesssim 2^{k(\frac{\alpha}q+\frac34-\frac3q)}\|\widetilde{u}_k\|_{F_{k,\alpha}},
\end{displaymath}
which yields estimate \eqref{Bstrichartz2} since $\frac1q=\frac14-\frac1{2r}$. The proof of estimate \eqref{Bstrichartz1b} is similar.
\end{proof}

Next, we derive a bilinear Strichartz estimate for the group $\{e^{t\partial_x^5}\}$, which is an extension of the one proved in \cite{Gr2} for the Airy equation (see also Lemma 3.4 in \cite{He} for the dispersion generalized Benjamin-Ono equation). Let $\zeta \in C^{\infty}$ be an even function such that $\zeta_{|_{[-1,1]}}=0$, $\zeta_{|_{\mathbb R \setminus [-2,2]}}=1$ and $0 \le \zeta \le1$.  We define $|x|_1=\zeta(x)|x|$. 

\begin{lemma} \label{bilinStrichartz}
For $s \in \mathbb R$, we define the bilinear operator $\mathcal{I}^s$ by 
\begin{displaymath}
\mathcal{F}_x\big(\mathcal{I}^s(u_1,u_2)\big)(\xi)=\int_{\xi=\xi_1+\xi_2}\big| |\xi_1|^{2s}-|\xi_2|^{2s}\big|_1^{\frac12}\widehat{u}_1(\xi_1)\widehat{u}_2(\xi_2)d\xi_1.
\end{displaymath}
Then, it holds that 
\begin{equation} \label{bilinStrichartz1} 
\big\|\mathcal{I}^2(e^{t\partial_x^5}u_1,e^{t\partial_x^5}u_2) \big\|_{L^2_{x,t}} \lesssim \|u_1\|_{L^2}\|u_2\|_{L^2},
\end{equation}
for any $u_1, \ u_2 \in L^2(\mathbb R)$.
\end{lemma}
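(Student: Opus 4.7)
The plan is to use Plancherel in space–time and reduce the estimate to a measure-theoretic computation on the characteristic surface $\tau=\xi^5$. Writing $\widehat{e^{t\partial_x^5}u_j}$ as a distribution concentrated on $\tau = \xi^5$, the space–time Fourier transform of the bilinear object becomes a convolution of such measures, so
\begin{equation*}
\mathcal{F}_{x,t}\big(\mathcal{I}^2(e^{t\partial_x^5}u_1,e^{t\partial_x^5}u_2)\big)(\xi,\tau)
= c\int_{\mathbb R} \big||\xi_1|^4-|\xi-\xi_1|^4\big|_1^{1/2}\widehat u_1(\xi_1)\widehat u_2(\xi-\xi_1)\,\delta\bigl(\tau-\phi_\xi(\xi_1)\bigr)\,d\xi_1,
\end{equation*}
where $\phi_\xi(\xi_1):=\xi_1^5+(\xi-\xi_1)^5$. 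By Plancherel it suffices to bound the $L^2_{\xi,\tau}$ norm of the right-hand side by $\|u_1\|_{L^2}\|u_2\|_{L^2}$.

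Next I would apply the co-area formula in $\xi_1$ for fixed $\xi$. Observe that $\phi_\xi'(\xi_1)=5\bigl(\xi_1^4-(\xi-\xi_1)^4\bigr)=5\xi(2\xi_1-\xi)\bigl(\xi_1^2+(\xi-\xi_1)^2\bigr)$, and that $\phi_\xi$ is actually a polynomial of degree $4$ in $\xi_1$ (the leading $\xi_1^5$'s cancel), so for each fixed $\xi\neq 0$ the equation $\phi_\xi(\xi_1)=\tau$ has at most $4$ real roots $\xi_1^{(j)}$. The delta distribution therefore yields
\begin{equation*}
\mathcal{F}_{x,t}\bigl(\mathcal{I}^2(\cdots)\bigr)(\xi,\tau)
= c\sum_{j}\frac{\big||\xi_1^{(j)}|^4-|\xi-\xi_1^{(j)}|^4\big|_1^{1/2}}{|\phi_\xi'(\xi_1^{(j)})|}\,\widehat u_1(\xi_1^{(j)})\widehat u_2(\xi-\xi_1^{(j)}).
\end{equation*}

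The central observation, which is where the weight $|\cdot|_1$ is chosen to do its job, is the identity $|\phi_\xi'(\xi_1)|=5\big||\xi_1|^4-|\xi-\xi_1|^4\big|$. Hence on the support of the numerator,
\begin{equation*}
\frac{\big||\xi_1|^4-|\xi-\xi_1|^4\big|_1}{|\phi_\xi'(\xi_1)|}=\frac{\zeta\bigl(|\xi_1|^4-|\xi-\xi_1|^4\bigr)}{5}\le \frac15.
\end{equation*}
Squaring, applying Cauchy–Schwarz over the at most $4$ branches, integrating in $\tau$ and changing variables $\tau\mapsto \xi_1$ on each branch (which produces a Jacobian $|\phi_\xi'|$ that combines with one of the two factors $|\phi_\xi'|^{-1}$), one obtains
\begin{equation*}
\int_{\mathbb R}\bigl|\mathcal{F}_{x,t}(\mathcal{I}^2(\cdots))(\xi,\tau)\bigr|^2\,d\tau
\lesssim \int_{\mathbb R}\frac{\big||\xi_1|^4-|\xi-\xi_1|^4\big|_1}{|\phi_\xi'(\xi_1)|}|\widehat u_1(\xi_1)|^2|\widehat u_2(\xi-\xi_1)|^2\,d\xi_1
\lesssim \int_{\mathbb R}|\widehat u_1(\xi_1)|^2|\widehat u_2(\xi-\xi_1)|^2\,d\xi_1,
\end{equation*}
and Fubini in $\xi$ then produces the desired bound $\|u_1\|_{L^2}^2\|u_2\|_{L^2}^2$.

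The principal obstacle is of course the resonant set $\phi_\xi'(\xi_1)=0$, which occurs precisely on $\{\xi=0\}\cup\{\xi_1=\xi/2\}$. On this set both the Jacobian and $|\xi_1|^4-|\xi-\xi_1|^4$ vanish at the same rate, and the ratio above is pointwise bounded; the smooth cutoff $\zeta$ built into $|\cdot|_1$ ensures that a neighbourhood of the resonant set is simply removed from the integrand, so no singular-integral analysis is needed and the cancellation is clean. The case $\xi=0$ contributes a null set in the $\xi$ integration and can be safely disregarded.
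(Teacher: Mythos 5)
Your argument is correct, and it rests on exactly the same mechanism as the paper's proof: Plancherel reduces the bound to a delta distribution on the resonance surface, and the identity $|\phi_\xi'(\xi_1)|=5\big||\xi_1|^4-|\xi-\xi_1|^4\big|$ makes the Jacobian cancel against the weight, with the cutoff $\zeta$ in $|\cdot|_1$ excising the degenerate set $\{\xi=0\}\cup\{\xi_1=\xi/2\}$ so that all roots encountered are simple and the ratio is bounded by $1/5$. The only genuine difference is where the delta is resolved. The paper squares the $L^2_x$ norm at fixed $t$ and integrates in $t$ afterwards, producing $\delta_0(\phi(\xi,\xi_1,\eta_1))$ for the \emph{difference} of two phases; for fixed $(\xi,\xi_1)$ this has exactly two explicit simple roots $\eta_1=\xi_1$ and $\eta_1=\xi-\xi_1$, so the sum over roots is immediate and no Cauchy--Schwarz over branches is required. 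You instead compute $\mathcal{F}_{x,t}$ of the bilinear expression as a measure and resolve $\delta(\tau-\phi_\xi(\xi_1))$ in $\xi_1$ at fixed $(\xi,\tau)$, which yields up to four implicit roots and forces the extra Cauchy--Schwarz over branches together with the change of variables $\tau\mapsto\xi_1$ on each monotonicity interval of $\phi_\xi$. Both routes are rigorous; the paper's bookkeeping is marginally cleaner because its roots are explicit, while yours is the more standard ``convolution of surface measures'' formulation and requires no identification of the roots at all. Either way the conclusion and the constants are the same.
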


\begin{proof} For a fixed $t \in \mathbb R$, we get by using Plancherel's identity that 
\begin{displaymath} 
\begin{split}
\big\|\mathcal{I}^2(e^{t\partial_x^5}u_1&,e^{t\partial_x^5}u_2)  \big\|_{L^2_x}^2  
\\ &=\int_{\mathbb R} \Big|\int_{\xi=\xi_1+\xi_2}\big| |\xi_1|^{4}-|\xi_2|^{4}\big|_1^{\frac12}e^{it(\xi_1^5+\xi_2^5)}\widehat{u}_1(\xi_1)\widehat{u}_2(\xi_2)d\xi_1\Big|^2d\xi \\ & 
=\int_{\mathbb R^3}e^{it\phi(\xi,\xi_1,\eta_1)}f(\xi,\xi_1,\eta_1)d\xi d\xi_1d\eta_1,
\end{split}
\end{displaymath}
where the phase function $\phi$ is given by 
\begin{displaymath} 
\phi(\xi,\xi_1,\eta_1):=\xi_1^5+(\xi-\xi_1)^5-\eta_1^5-(\xi-\eta_1)^5,
\end{displaymath}
and $f$ is defined by 
\begin{displaymath}
\begin{split} 
f(\xi&,\xi_1,\eta_1) \\ & :=\big| |\xi_1|^{4}-|\xi-\xi_1|^{4}\big|_1^{\frac12}
\big| |\eta_1|^{4}-|\xi-\eta_1|^{4}\big|_1^{\frac12}\widehat{u}_1(\xi_1)\widehat{u}_2(\xi-\xi_1)
\overline{\widehat{u}_1(\eta_1)\widehat{u}_2(\xi-\eta_1)}.
\end{split}
\end{displaymath}

Now, observed that for $(\xi,\xi_1)$ fixed, the function $\phi_1(\eta_1):=\phi(\xi,\xi_1,\eta_1)$ has only two simple roots $\eta_1=\xi_1$ and $\eta_1=\xi-\xi_1$ in the support of $f$. Moreover,
\begin{displaymath} 
|\phi_1'(\eta_1)|=5\big|(\eta_1^4-(\xi-\eta_1)^4) \big| \ge 5 \quad \text{in} \ \text{supp}\, f, 
\end{displaymath}
and 
\begin{displaymath}
|\phi_1'(\xi_1)|=|\phi_1'(\xi-\xi_1)|=5\big| \xi_1^4-(\xi-\xi_1)^4\big|.
\end{displaymath}
Therefore, it follows from the Fourier inversion formula, Fubini's theorem and Plancherel's identity that 
\begin{displaymath} 
\begin{split}
\big\|\mathcal{I}^2(e^{t\partial_x^5}u_1,e^{t\partial_x^5}u_2)  \big\|_{L^2_{x,t}}^2
&=c\int_{\mathbb R^3}\delta_0(\phi(\xi,\xi_1,\eta_1))f(\xi,\xi_1\eta_1)d\eta_1d\xi d\xi_1\\ 
&=c\int_{\mathbb R^2}\Big(\frac{f(\xi,\xi_1,\xi_1)}{\big|\phi_1'(\xi_1) \big|}+\frac{f(\xi,\xi_1,\xi-\xi_1)}{\big|\phi_1'(\xi-\xi_1) \big|}\Big)d\xi d\xi_1 \\ 
& \lesssim \|u_1\|_{L^2_x}^2\|u_2\|_{L^2_x}^2.
\end{split}
\end{displaymath}
\end{proof}

\section{$L^2$ bilinear and trilinear estimates} 

\subsection{$L^2$ bilinear estimates}
Recall that $w(\xi)=\xi^5$. Then we define the resonance functions $\Omega:=\Omega(\xi_1,\xi_2)$ by 
\begin{equation} \label{resonnance} 
\Omega(\xi_1,\xi_2):=w(\xi_1)+w(\xi_2)-w(\xi_1+\xi_2).
\end{equation}
We first derive a technical lemma  (see Lemma 3.1 in \cite{CLMW}).
\begin{lemma} \label{lresonance} 
If $|\xi_1| \sim N_1$, $\xi_2 \sim N_2$ and $|\xi_1+\xi_2| \sim N$, then 
\begin{equation} \label{lresonance1} 
\big|\Omega\big| \sim N_{max}^4N_{min}.
\end{equation}
\end{lemma}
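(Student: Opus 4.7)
\textbf{Plan of proof for Lemma \ref{lresonance}.}

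The strategy is purely algebraic: factor the polynomial $\Omega(\xi_1,\xi_2)=\xi_1^5+\xi_2^5-(\xi_1+\xi_2)^5$ completely and then read off the size of each factor under the frequency localizations $|\xi_1|\sim N_1$, $|\xi_2|\sim N_2$, $|\xi_1+\xi_2|\sim N$.

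First, expanding $(\xi_1+\xi_2)^5$ by the binomial theorem and pulling out the common factor $-5\xi_1\xi_2$, I would obtain the identity
\begin{equation*}
\Omega(\xi_1,\xi_2)=-5\,\xi_1\xi_2\,(\xi_1+\xi_2)\,(\xi_1^2+\xi_1\xi_2+\xi_2^2),
\end{equation*}
where the last factor is nonnegative since $\xi_1^2+\xi_1\xi_2+\xi_2^2=(\xi_1+\xi_2/2)^2+\tfrac34\xi_2^2$. Thus
\begin{equation*}
|\Omega|=5\,|\xi_1|\,|\xi_2|\,|\xi_1+\xi_2|\,(\xi_1^2+\xi_1\xi_2+\xi_2^2).
\end{equation*}

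The next step is to exploit the relation $\xi_1+\xi_2+(-\xi_1-\xi_2)=0$: among the three magnitudes $N_1,N_2,N$ the two largest must be comparable (an elementary triangle-inequality argument splitting into the three cases $N_{\min}=N_1$, $N_{\min}=N_2$, $N_{\min}=N$). Consequently, in every case the product of all three equals
\begin{equation*}
|\xi_1|\,|\xi_2|\,|\xi_1+\xi_2|\,\sim\, N_{\max}^{\,2}\,N_{\min}.
\end{equation*}

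It remains to show $\xi_1^2+\xi_1\xi_2+\xi_2^2\sim N_{\max}^{\,2}$. The upper bound $\lesssim N_{\max}^{\,2}$ is immediate. For the lower bound I would use the sum-of-squares representation together with the observation that at least one of $|\xi_1|,|\xi_2|$ is $\sim N_{\max}$ (since at most one of the three frequencies $N_1,N_2,N$ can be $N_{\min}$, and if $N_{\min}=N$ then both $|\xi_1|,|\xi_2|\sim N_{\max}$, while if $N_{\min}\in\{N_1,N_2\}$ the other one is $\sim N_{\max}$); then $(\xi_1+\xi_2/2)^2+\tfrac34\xi_2^2\geq\tfrac34\xi_2^2\sim N_{\max}^{\,2}$, or symmetrically with $\xi_1$. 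Combining the two estimates yields $|\Omega|\sim N_{\max}^{\,4}N_{\min}$.

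There is no real obstacle here: the only mild subtlety is making sure the last factor $\xi_1^2+\xi_1\xi_2+\xi_2^2$ really is of size $N_{\max}^{\,2}$ and not smaller because of possible cancellation when $\xi_1$ and $\xi_2$ have opposite signs; the sum-of-squares rewriting rules this out cleanly.
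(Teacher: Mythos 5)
Your proof is correct and follows essentially the same route as the paper: the same factorization $\Omega=-5\xi_1\xi_2(\xi_1+\xi_2)(\xi_1^2+\xi_1\xi_2+\xi_2^2)$ followed by showing the quadratic factor is comparable to $N_{max}^2$. The only difference is cosmetic — you obtain the lower bound on $\xi_1^2+\xi_1\xi_2+\xi_2^2$ by completing the square, whereas the paper splits into the cases $|\xi_2|\le\frac12|\xi_1|$ and $\frac12|\xi_1|\le|\xi_2|\le|\xi_1|$; both work equally well.
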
 

\begin{proof} A direct computation shows that 
\begin{equation} \label{lresonance2}
\Omega(\xi_1,\xi_2)=-5\xi_1\xi_2(\xi_1+\xi_2)\big(\xi_1^2+\xi_1\xi_2+\xi_2^2  \big).
\end{equation} 
We affirm that 
\begin{equation} \label{lresonance3}
\xi_1^2+\xi_1\xi_2+\xi_2^2 \sim \max\{\xi_1^2,\xi_2^2 \}, 
\end{equation}
which, together with \eqref{lresonance2}, would implies \eqref{lresonance1}.  

Next, we prove \eqref{lresonance3}. It is clear that $\xi_1^2+\xi_1\xi_2+\xi_2^2 \lesssim \max\{\xi_1^2,\xi_2^2 \}$. To prove the reverse inequality, we can always assume by symmetry that $|\xi_2| \le |\xi_1|$. Then in the case where 
$|\xi_2| \le \frac12|\xi_1|$, we have that 
\begin{displaymath} 
\xi_1^2+\xi_1\xi_2+\xi_2^2=(\xi_1+\xi_2)^2-\xi_1\xi_2 \ge \frac12(\xi_1+\xi_2)^2 \ge \frac18\xi_1^2.
\end{displaymath} 
In the other case, \textit{i.e.} $\frac12|\xi_1| \le |\xi_2| \le |\xi_1|$, then 
\begin{displaymath} 
\xi_1^2+\xi_1\xi_2+\xi_2^2 \ge \xi_1^2-|\xi_1\xi_2|+\xi_2^2 \ge \frac14\xi_1^2,
\end{displaymath}
which concludes the proof of \eqref{lresonance3}.
\end{proof}

For $k \in \mathbb Z_+$ and $j \in \mathbb Z_+$, let us define $D_{k,j}$ by 
\begin{equation} \label{D}
D_{k,j}=\big\{(\xi,\tau) : \xi \in I_k \ \text{and} \ |\tau+w(\xi)| \le 2^j \big\}. 
\end{equation}

We state a useful lemma (see also Lemma 2.3 in \cite{CGL}). 
\begin{lemma} \label{L2bilin}
Assume that $k_1, \ k_2, \ k_3 \in \mathbb Z_+$, $j_1, \ j_2, \ j_3 \in \mathbb Z_+$ and $f_i:\mathbb R^2 \to \mathbb R_+ $ are $L^2$ functions supported in $D_{k_i,j_i}$ for $i=1,2,3$. 
\begin{itemize}
\item[(a)]
Then it follows that 
\begin{equation} \label{L2bilin1}
\int_{\mathbb R^2}\big(f_1\ast f_2\big)\cdot f_3 \lesssim 2^{k_{min}/2}2^{j_{min}/2}\|f_1\|_{L^2}\|f_2\|_{L^2}\|f_3\|_{L^2}.
\end{equation}
\item[(b)]
Let us suppose that $k_{min} \le k_{max}-5$. If we are in the case where $(k_i,j_i)=(k_{min},j_{max})$ for some $i \in \{1,2,3\}$, then it holds that
\begin{equation} \label{L2bilin2b}
\int_{\mathbb R^2}\big(f_1\ast f_2\big)\cdot f_3 \lesssim 2^{(j_1+j_2+j_3)/2}2^{-j_{med}/2}2^{-2k_{max}}\|f_1\|_{L^2}\|f_2\|_{L^2}\|f_3\|_{L^2}.
\end{equation}
If moreover $k_{min} \ge 1$, then 
\begin{equation} \label{L2bilin2}
\int_{\mathbb R^2}\big(f_1\ast f_2\big)\cdot f_3 \lesssim 2^{(j_1+j_2+j_3)/2}2^{-j_{max}/2}2^{-(3k_{max}+k_{min})/2}\|f_1\|_{L^2}\|f_2\|_{L^2}\|f_3\|_{L^2}.
\end{equation}

In all the others cases, we have that 
\begin{equation} \label{L2bilin3}
\int_{\mathbb R^2}\big(f_1\ast f_2\big)\cdot f_3 \lesssim 2^{(j_1+j_2+j_3)/2}2^{-j_{max}/2}2^{-2k_{max}}\|f_1\|_{L^2}\|f_2\|_{L^2}\|f_3\|_{L^2}.
\end{equation}
\item[(c)] In the case $|k_{min}-k_{max}| \le 10$, $k_{min} \ge 10$, then we have that
\begin{equation} \label{L2bilin3b}
\int_{\mathbb R^2}\big(f_1\ast f_2\big)\cdot f_3 \lesssim 2^{j_{min}/2}2^{j_{med}/4}2^{-3k_{max}/4}\|f_1\|_{L^2}\|f_2\|_{L^2}\|f_3\|_{L^2}.
\end{equation}
\end{itemize}
\end{lemma}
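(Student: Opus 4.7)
The plan is to estimate the trilinear form
\[
I = \int (f_1 * f_2) \cdot f_3\, d\xi d\tau = \int f_1(\xi_1, \tau_1)\, f_2(\xi_2,\tau_2)\, f_3(\xi_1+\xi_2, \tau_1+\tau_2)\, d\xi_1 d\tau_1 d\xi_2 d\tau_2
\]
via standard measure-of-support/Cauchy--Schwarz arguments, driven in parts (b) and (c) by the resonance identity $\lambda_1 + \lambda_2 - \lambda_3 = \Omega(\xi_1, \xi_2)$ (where $\lambda_i := \tau_i + w(\xi_i)$, $|\lambda_i| \le 2^{j_i}$) and Lemma \ref{lresonance}, which yields $|\Omega| \sim N_{max}^4 N_{min}$ and hence $2^{j_{max}} \gtrsim 2^{4k_{max}+k_{min}}$ on the support of the integrand.

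For part (a), I fix one pair $(\xi_m,\tau_m)$ and write $I = \int f_m(\xi_m,\tau_m) G(\xi_m,\tau_m)\,d\xi_m d\tau_m$, with $G$ the inner integral of $f_i \cdot f_k$ along the convolution constraint surface. Cauchy--Schwarz gives $|I| \le M^{1/2}\|f_1\|_{L^2}\|f_2\|_{L^2}\|f_3\|_{L^2}$, where $M$ bounds the $(\xi_i, \tau_i)$-measure of the joint support and $M \le \min(2^{k_i},2^{k_k})\cdot\min(2^{j_i},2^{j_k})$. A short case split (depending on whether $k_{min}$ and $j_{min}$ are attained at the same or at different indices) shows that one can always choose $m$ so that these two minima are exactly $2^{k_{min}}$ and $2^{j_{min}}$, yielding \eqref{L2bilin1}.

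For part (b), assume WLOG $k_3 = k_{min}$, so $k_1 \sim k_2 \sim k_{max}$ with $\xi_1, \xi_2$ of opposite signs. When $(k_i,j_i) = (k_{min}, j_{max})$ for some $i$, necessarily $i = 3$; I fix $(\xi_3,\tau_3)$ and integrate in $(\xi_1,\tau_1)$. After substituting $\mu = \tau_1 + w(\xi_1)$, the modulation constraints force $\Omega(\xi_1, \xi_3-\xi_1)$ to lie in an interval of length $\sim 2^{\max(j_1, j_2)} = 2^{j_{med}}$, and from \eqref{lresonance2} one computes
\[
|\partial_{\xi_1}\Omega| = 5|\xi_1 - \xi_2|\, |\xi_3|\, (\xi_1^2+\xi_2^2) \sim 2^{3k_{max}+k_{min}} \quad \text{when } k_{min} \ge 1.
\]
The admissible $\xi_1$-set thus has measure $\lesssim 2^{j_{med} - 3k_{max} - k_{min}}$ and the admissible $\tau_1$-interval has length $\le 2^{j_{min}}$; square-rooting the product yields \eqref{L2bilin2}. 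The estimate \eqref{L2bilin2b} (valid also at $k_{min} = 0$, where the derivative above degenerates) follows directly from \eqref{L2bilin1} combined with the resonance inequality $2^{j_{max}} \gtrsim 2^{4k_{max}+k_{min}}$. In the remaining subcase, where $j_{max}$ sits on a high-frequency factor (WLOG $j_1 = j_{max}$), I instead fix $(\xi_1,\tau_1)$ and integrate in $(\xi_2,\tau_2)$: now
\[
|\partial_{\xi_2}\Omega| = 5|\xi_2 - \xi_3|\,|\xi_2+\xi_3|\,(\xi_2^2+\xi_3^2) \sim 2^{4k_{max}}
\]
\emph{independently} of $k_{min}$, the $\Omega$-interval has length $\sim 2^{\max(j_2,j_3)} = 2^{j_{med}}$, and $\min(2^{j_2}, 2^{j_3}) = 2^{j_{min}}$, giving \eqref{L2bilin3}.

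For part (c), all three $k_i$ are comparable and $\ge 10$. Setting $h_i(\xi,\mu) := f_i(\xi, \mu - w(\xi))$ with $|\mu| \le 2^{j_i}$, Fourier inversion represents $F_i := \mathcal{F}^{-1}f_i$ as $\int_{|\mu|\le 2^{j_i}} e^{it\mu}\, G_i^\mu\, d\mu$, where each $G_i^\mu$ is a free solution of $\partial_t u + \partial_x^5 u = 0$ with $\|G_i^\mu(\cdot,t)\|_{L^2_x} = \|h_i(\cdot,\mu)\|_{L^2_\xi}$. The Strichartz estimate \eqref{strichartz1} with $(q,s) = (4, 3/4)$, together with Minkowski in space-time and Cauchy--Schwarz in $\mu$, yields $\|F_i\|_{L^4_tL^\infty_x} \lesssim 2^{j_i/2 - 3k_{max}/4}\|f_i\|_{L^2}$; the elementary bounds $\|F_i\|_{L^\infty_tL^2_x}\lesssim 2^{j_i/2}\|f_i\|_{L^2}$ and $\|F_i\|_{L^2_{x,t}} = \|f_i\|_{L^2}$ interpolate to $\|F_i\|_{L^4_tL^2_x}\lesssim 2^{j_i/4}\|f_i\|_{L^2}$. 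Plancherel rewrites $I = c\int F_1 F_2 \overline{F_3}\, dx\, dt$, and Hölder with triples $(4,4,2)$ in time and $(\infty,2,2)$ in space, assigning the index carrying $j_{min}$ to $L^4_tL^\infty_x$, the one carrying $j_{med}$ to $L^4_tL^2_x$, and $j_{max}$ to $L^2_{x,t}$, produces \eqref{L2bilin3b}. The main bookkeeping obstacle is picking the correct $\xi$-derivative of $\Omega$ in part (b) to sidestep the degeneracy at $k_{min}=0$, and then matching the three index assignments in the final Hölder step of (c) to the target exponents.
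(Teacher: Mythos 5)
Your arguments for \eqref{L2bilin1}, \eqref{L2bilin2} and \eqref{L2bilin3} are correct and are essentially the paper's proof repackaged: the paper separates the Cauchy--Schwarz steps in the modulation and frequency variables via the reduction to $J(g_1,g_2,g)$ and then performs the change of variables $\mu=\Omega$, whereas you combine both into a single measure-of-support bound; the derivatives $\partial_{\xi_1}\Omega\sim 2^{3k_{max}+k_{min}}$ and $\partial_{\xi_2}\Omega\sim 2^{4k_{max}}$ you compute are exactly the ones the paper uses. The genuine gap is your derivation of \eqref{L2bilin2b}. You claim it "follows directly from \eqref{L2bilin1} combined with the resonance inequality $2^{j_{max}}\gtrsim 2^{4k_{max}+k_{min}}$", and you assert this covers $k_{min}=0$. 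It does not: Lemma \ref{lresonance} gives $|\Omega|\sim |\xi_{max}|^4|\xi_{min}|$ in terms of the \emph{actual} frequencies, and on the block $I_0$ the quantity $|\xi_{min}|$ can be arbitrarily small (e.g.\ $|\xi_{min}|\sim 2^{-4k_{max}}$, in which case $|\Omega|\sim 1$ and no lower bound on $j_{max}$ is available). Your chain then yields only $2^{j_{min}/2}\prod\|f_i\|$, which misses the target $2^{(j_{min}+j_{max})/2}2^{-2k_{max}}\prod\|f_i\|$ by a factor $2^{2k_{max}}$ when all modulations are $O(1)$. The case $k_{min}=0$ of \eqref{L2bilin2b} is genuinely needed downstream (e.g.\ in Lemma \ref{high-low} with $k_1=0$). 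The repair is already in your own toolkit: run the same measure-of-support argument fixing the variables of the factor carrying $j_{med}$ (one of the two high-frequency factors) and parametrizing the remaining pair by the other high-frequency variable $\xi_a$; then $|\partial_{\xi_a}\Omega|=5|\xi_a^4-\xi_{min}^4|\sim 2^{4k_{max}}$ uniformly in $k_{min}$, the $\Omega$-interval has length $\sim 2^{j_{max}}$ and the $\tau$-measure is $\le 2^{j_{min}}$, which gives exactly \eqref{L2bilin2b}. This is also what the paper does (it rewrites $I=\int(\widetilde{f}_1\ast f_3)\cdot f_2$ and reuses the $2^{-2k_{max}}$ change of variables).

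Your part (c) is correct but takes a genuinely different route from the paper. The paper splits the frequency domain into three regions according to the sign of $\xi_1\xi_2$ and the size of $|\xi_1-\xi_2|$ relative to a parameter $R$, uses the bilinear Strichartz estimate of Lemma \ref{bilinStrichartz} on the nonresonant region, and optimizes over $R$. You instead foliate each $f_i$ over its modulation variable, transfer the linear Strichartz estimate \eqref{strichartz1} with $(q,r,s)=(4,\infty,3/4)$ to get $\|F_i\|_{L^4_tL^\infty_x}\lesssim 2^{j_i/2-3k_{max}/4}\|f_i\|_{L^2}$, interpolate $L^\infty_tL^2_x$ with $L^2_{x,t}$ for the $L^4_tL^2_x$ bound, and conclude by H\"older with exponents $(4,4,2)$ in time and $(\infty,2,2)$ in space, assigning $j_{min}$, $j_{med}$, $j_{max}$ to the three slots; the numerology matches \eqref{L2bilin3b} exactly. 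This is simpler (no bilinear refinement, no optimization) and is a perfectly acceptable substitute; the paper's bilinear-Strichartz route is the one that generalizes when the three frequencies are comparable but one wants to exploit the separation $|\xi_1-\xi_2|$, which is not needed for the stated bound.
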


\begin{proof} First, we begin with the proof of item (a).  We observe that 
\begin{equation} \label{L2bilin4} 
I:=\int_{\mathbb R^2}\big(f_1\ast f_2\big)\cdot f_3=\int_{\mathbb R^2}\big(\widetilde{f_1}\ast f_3\big)\cdot f_2
=\int_{\mathbb R^2}\big(\widetilde{f_2}\ast f_3\big)\cdot f_1,
\end{equation}
where $\widetilde{f_i}(\xi,\tau)=f_i(-\xi,-\tau)$. Therefore, we can always assume that $j_1=j_{min}$. Moreover, 
let us define $f^{\sharp}_i(\xi,\theta)=f_i(\xi,\theta+w(\xi))$, for $i=1,2,3$. In view of the assumptions on $f_i$, the functions $f^{\sharp}_i$ are supported in the sets 
\begin{displaymath} 
D_{k_i,j_i}^{\sharp}=\big\{(\xi,\theta) : \xi \in I_{k_i} \ \text{and} \ |\theta| \le 2^{j_i} \big\}.
\end{displaymath}
We also note that $\|f_i\|_{L^2}=\|f_i^{\sharp}\|_{L^2}$. Then, it follows changing variables that 
\begin{equation} \label{L2bilin5}
I=
\int_{\mathbb R^4}f^{\sharp}_1(\xi_1,\theta_1)f^{\sharp}_2(\xi_2,\theta_2)
f^{\sharp}_3(\xi_1+\xi_2,\theta_1+\theta_2+\Omega(\xi_1,\xi_2))d\xi_1d\xi_2d\theta_1d\theta_2,
\end{equation}
where $\Omega(\xi_1,\xi_2)$ is defined in \eqref{resonnance}.
For $i=1,2,3$, we define $F_i(\xi)=\big(\int_{\mathbb R}f^{\sharp}_i(\xi,\theta)^2d\theta \big)^{\frac12}$. 
Thus, it follows by applying the Cauchy-Schwarz and Young inequalities in the $\theta$ variables that 
\begin{equation} \label{L2bilin6} 
\begin{split}
I &\le \int_{\mathbb R^2}\|f^{\sharp}_1(\xi_1,\cdot)\|_{L^1_{\theta}}F_2(\xi_2)F_3(\xi_1+\xi_2)d\xi_1d\xi_2 \\ 
& \lesssim 2^{j_{min}/2} \int_{\mathbb R^2}F_1(\xi_1)F_2(\xi_2)F_3(\xi_1+\xi_2)d\xi_1d\xi_2.
\end{split}
\end{equation}
 Estimate \eqref{L2bilin1} is deduced from \eqref{L2bilin6} by applying the same arguments in the $\xi$ variables.

Next we turn to the proof of item (b). According to \eqref{L2bilin4}, we can assume that $j_3=j_{max}$. Moreover, it is enough to consider the two cases $k_{min}=k_2$ and $k_{min}=k_3$  (since by symmetry the case $k_{min}=k_1$ is equivalent to the case $k_{min}=k_2$). 

We prove estimate \eqref{L2bilin3} in the case $j_3=j_{max}$ and $k_{min}=k_2$. It suffices to prove that if $g_i:\mathbb R \to \mathbb R_+$ are $L^2$ functions supported in $I_{k_i}$ for $i=1,2$ and $g: \mathbb R^2 \to \mathbb R_+$ is an $L^2$ function supported in $I_{k_3} \times [-2^{j_3},2^{j_3}]$, then 
\begin{equation} \label{L2bilin7} 
J(g_1,g_2,g):=\int_{\mathbb R^2}g_1(\xi_1)g_2(\xi_2)g(\xi_1+\xi_2,\Omega(\xi_1,\xi_2))d\xi_1d\xi_2
\end{equation} 
satisfies that 
\begin{equation} \label{L2bilin8}
J(g_1,g_2,g) \lesssim  2^{-2k_{max}}\|g_1\|_{L^2}\|g_2\|_{L^2}\|g\|_{L^2}.
\end{equation} 
Indeed, if estimate \eqref{L2bilin8} holds,  let us define $g_i(\xi_i)=f_i^{\sharp}(\xi_i,\theta_i)$, $i=1,2$,  and $g(\xi,\Omega)=f_3^{\sharp}(\xi,\theta_1+\theta_2+\Omega)$, for $\theta_1$ and $\theta_2$ fixed. Hence, we would deduce applying \eqref{L2bilin8} and the Cauchy-Schwarz inequality to \eqref{L2bilin5} that 
\begin{equation} \label{L2bilin9} 
\begin{split}
I &\lesssim 2^{-2k_{max}}\|f_3^{\sharp}\|_{L^2_{\xi,\theta}}\int_{\mathbb R^2}\|f_1^{\sharp}(\cdot,\theta_1)\|_{L^2_{\xi}}\|f_2^{\sharp}(\cdot,\theta_2)\|_{L^2_{\xi}}d\theta_1d\theta_2 \\ 
& \lesssim 2^{-2k_{max}}2^{(j_1+j_2)/2}\|f_1^{\sharp}\|_{L^2_{\xi,\theta}}\|f_2^{\sharp}\|_{L^2_{\xi,\theta}}\|f_3^{\sharp}\|_{L^2_{\xi,\theta}},
\end{split}
\end{equation}
which is estimate \eqref{L2bilin3} in this case. To prove estimate \eqref{L2bilin8}, we apply twice the Cauchy-Schwarz inequality to get that 
\begin{displaymath}  
J(g_1,g_2,g) \le \|g_1\|_{L^2}\|g_2\|_{L^2}\Big( \int_{\mathbb R^2}g(\xi_1+\xi_2,\Omega(\xi_1,\xi_2))^2d\xi_1d\xi_2\Big)^{\frac12}.
\end{displaymath}
Then we change variables $(\xi_1',\xi_2')=(\xi_1+\xi_2,\xi_2)$, so that 
\begin{equation} \label{L^2bilin10}
J(g_1,g_2,g) \le \|g_1\|_{L^2}\|g_2\|_{L^2}
=\Big( \int_{\mathbb R^2}g(\xi_1',\Omega(\xi_1'-\xi_2',\xi_2'))^2d\xi_1'd\xi_2'\Big)^{\frac12}.
\end{equation}
We observe that 
\begin{displaymath}
\Big|\frac{\partial }{\partial \xi_2'}\Omega(\xi_1'-\xi_2',\xi_2') \Big|=5\big|(\xi_2')^4-(\xi_1'-\xi_2')^4 \big| \sim 2^{4k_{max}},
\end{displaymath}
since $2^{k_1}\sim 2^{k_{max}}$ by the frequency localization. Then, the change of variables $\mu_1=\xi_1'$ and $\mu_2=\Omega(\xi_1'-\xi_2',\xi_2')$ in \eqref{L^2bilin10} yields \eqref{L2bilin8}, which concludes the proof of estimate \eqref{L2bilin3} in this case. 

To prove estimate \eqref{L2bilin2} in the case $(k_{min},j_{max})=(k_3,j_3)$ and $k_3 \ge 1$, we observe arguing as above that it suffices to prove that 
\begin{equation} \label{L2bilin11} 
J(g_1,g_2,g) \lesssim  2^{-(3k_{max}+k_{min})/2}\|g_1\|_{L^2}\|g_2\|_{L^2}\|g\|_{L^2}, 
\end{equation}
 where $J(g_1,g_2,g)$ is defined in \eqref{L2bilin7}. First, we change variables $\xi_1'=\xi_1$ and $\xi_2'=\xi_1+\xi_2$, so that 
\begin{displaymath} 
J(g_1,g_2,g)=\int_{\mathbb R^2}g_1(\xi_1')g_2(\xi_2'-\xi_1')g(\xi_2',\Omega(\xi_1',\xi_2'-\xi_1'))d\xi_1'd\xi_2'.
\end{displaymath}
The Cauchy-Schwarz inequality implies that
\begin{equation} \label{L^2bilin12} 
J(g_1,g_2,g) \le \|g_1\|_{L^2}\|g_2\|_{L^2}\Big( \int_{\mathbb R^2}g(\xi_2',\Omega(\xi_1',\xi_2'-\xi_1'))^2d\xi_1'd\xi_2'\Big)^{\frac12}.
\end{equation}
We compute that
\begin{displaymath}
\Big|\frac{\partial }{\partial \xi_1'}\Omega(\xi_1',\xi_2'-\xi_1') \Big|=5\big|(\xi_1')^4-(\xi_2'-\xi_1')^4 \big| \sim 2^{3k_{max}+k_{min}},
\end{displaymath}
since $|\xi_1'| \sim 2^{k_{max}}$ and $|\xi_2'|\sim 2^{k_{min}}$ due to the frequency localization. Therefore estimate \eqref{L2bilin11} is deduced by performing the change of variables $\mu_1'=\Omega(\xi_1',\xi_2'-\xi_1')$ and $\mu_2'=\xi_2'$ in \eqref{L^2bilin12}. On the other hand, by writing, 
\begin{displaymath} 
I=\int_{\mathbb R^2}(\widetilde{f}_1\ast f_3)\cdot f_2
\end{displaymath}
and arguing as in \eqref{L2bilin9}, we get estimate \eqref{L2bilin2b} in the case $(k_{min},j_{max})=(k_3,j_3)$. 

Estimate \eqref{L2bilin3b} is stated in Lemma  2.3 (c) of \cite{CGL} and its proof follows closely the one for the dispersion generalized BO in \cite{Guo}. However, for sake of completeness we will derive it here.  According to \eqref{L2bilin4}, we may assume that $j_{max}=j_3$. Furthermore, we have following \eqref{L2bilin5} that
\begin{equation} \label{L2bilin13} 
\begin{split}
I&=\sum_{i=1}^3\int_{\mathcal{R}_i}f^{\sharp}_1(\xi_1,\theta_1)f^{\sharp}_2(\xi_2,\theta_2)
f^{\sharp}_3(\xi_1+\xi_2,\theta_1+\theta_2+\Omega(\xi_1,\xi_2))d\xi_1d\xi_2d\theta_1d\theta_2 \\ & 
=: I_1+I_2+I_3,
\end{split}
\end{equation}
where 
\begin{displaymath}
\begin{split}
\mathcal{R}_1&=\big\{(\xi_1,\xi_2,\theta_1,\theta_2) \in \mathbb R^4 \ : \ \xi_1 \cdot \xi_2 <0 \big\}, \\ 
\mathcal{R}_2&=\big\{(\xi_1,\xi_2,\theta_1,\theta_2) \in \mathbb R^4 \ : \ \xi_1 \cdot \xi_2 >0 \ \text{and} \ |\xi_1-\xi_2|< R \big\}, \\
\mathcal{R}_3&=\big\{(\xi_1,\xi_2,\theta_1,\theta_2) \in \mathbb R^4 \ : \ \xi_1 \cdot \xi_2 >0 \ \text{and} \ |\xi_1-\xi_2| >R  \big\},
\end{split}
\end{displaymath}
and  $R$ is a positive number which will be chosen later.

First we prove that 
\begin{equation} \label{L2bilin14} 
I_1 \lesssim 2^{(j_{min}+j_{med})/2}2^{-2k_{max}} \|f_1\|_{L^2}\|f_2\|_{L^2}\|f_3\|_{L^2},
\end{equation}
which would imply
\begin{equation} \label{L2bilin15} 
I_1 \lesssim 2^{j_{min}/2}2^{j_{med}/4}2^{-k_{max}} \|f_1\|_{L^2}\|f_2\|_{L^2}\|f_3\|_{L^2},
\end{equation}
after interpolating with estimate \eqref{L2bilin1}. 

To prove \eqref{L2bilin14}, we argue as for \eqref{L2bilin3}, so that it suffices to prove 
\begin{equation} \label{L2bilin16} 
J(g_1,g_2,g) \lesssim 2^{-2k_{max}}\|g_1\|_{L^2}\|g_2\|_{L^2}\|g\|_{L^2},
\end{equation}
where $J(g_1,g_2,g)$ is defined as in \eqref{L2bilin8}. By symmetry, we can always assume that $|\xi_1| \le |\xi_2|$. We apply twice the Cauchy-Schwarz inequality and perform the change of variables $(\xi_1',\xi_2')=(\xi_1,\xi_1+\xi_2)$ to obtain that 
\begin{equation} \label{L2bilin17}
J(g_1,g_2,g) \le \|g_1\|_{L^2}\|g_2\|_{L^2}\Big( \int_{\mathbb R^2}g(\xi_2',\Omega(\xi_1',\xi_2'-\xi_1'))^2d\xi_1'd\xi_2'\Big)^{\frac12}.
\end{equation}
Now observe that 
\begin{displaymath}
\big|\frac{\partial}{\partial\xi_1'}\Omega(\xi_1',\xi_2'-\xi_1') \big| =5\big| (\xi_2')^4-4(\xi_2')^3\xi_1'+6(\xi_2')^2(\xi_1')^2-4\xi_2'(\xi_1')^3\big| \sim 2^{4k_{max}},
\end{displaymath}
due to the frequency localization and the restriction $\xi_1' \cdot \xi_2' \le 0$ (which is a consequence of the assumptions $\xi_1 \cdot \xi_2 <0$ and $|\xi_1| \le |\xi_2|$). Therefore, the change of variables $(\mu_1',\mu_2')=(\Omega(\xi_1',\xi_2'-\xi_1'),\xi_2')$ in \eqref{L2bilin17} yields estimate \eqref{L2bilin16}.

To deal with $I_2$, we get as in \eqref{L2bilin6} that
\begin{displaymath}
I_2 \lesssim 2^{j_{min}/2} \int_{|\xi_1-\xi_2| < R}F_1(\xi_1)F_2(\xi_2)F_3(\xi_1+\xi_2)d\xi_1d\xi_2.
\end{displaymath}
Then, we obtain by letting $(\xi_1',\xi_2')=(\xi_1-\xi_2,\xi_2)$ and applying twice the Cauchy-Schwarz inequality that 
\begin{equation} \label{L2bilin18} 
\begin{split}
I_2 & \lesssim 2^{j_{min}/2}\int_{|\xi_1'| < R}F_1(\xi_1'+\xi_2')F_2(\xi_2')F_3(\xi_1'+2\xi_2')d\xi_1'd\xi_2' \\ 
& \lesssim 2^{j_{min}/2} R^{1/2}\|f_1\|_{L^2}\|f_2\|_{L^2}\|f_3\|_{L^2}.
\end{split}
\end{equation}

Next, we observe that in the region $\mathcal{R}_3$, 
\begin{displaymath} 
\big|\xi_1^4-\xi_2^4\big|^{\frac12}=\Big(|\xi_1-\xi_2| \cdot \big|\xi_1^3+\xi_1^2\xi_2+\xi_1\xi_2^2+\xi_2^3 \big| \Big)^{\frac12} \ge cR^{1/2}2^{3k_{max}/2} \ge 2,
\end{displaymath}
since $R$ will be chosen large enough.  Thus, the Cauchy-Schwarz inequality implies that 
\begin{equation} \label{L2bilin19} 
\begin{split}
&I_3 \lesssim R^{-1/2}2^{-3k_{max}/2}\|f_3\|_{L^2} \\ & \times \Big\|\int_{\scriptsize{\begin{array}{l}\xi_1+\xi_2=\xi \\ \theta_1+\theta_2=\theta \end{array}}}\big|\xi_1^4-\xi_2^4\big|_1^{\frac12}f_1(\xi_1,\theta_1+w(\xi_1))f_2(\xi_2,\theta_2+w(\xi_2))d\xi_1d\theta_1\Big\|_{L^2_{\xi,\theta}} \! ,
\end{split}
\end{equation}
where the definition of $|\cdot|_1$ is given just before Lemma \ref{bilinStrichartz}. By Plancherel's identity, the $L^2$-norm of the integral on the right-hand side of \eqref{L2bilin19} is equal to 
\begin{displaymath} 
\Big\|\int_{\theta_1,\theta_2}e^{-it(\theta_1+\theta_2)}\int_{\xi_1+\xi_2=\xi }\big|\xi_1^4-\xi_2^4\big|_1^{\frac12}f_1(\xi_1,\theta_1+w(\xi_1))f_2(\xi_2,\theta_2+w(\xi_2))d\xi_1d\theta_1d\theta_2\Big\|_{L^2_{\xi,t}} \!.
\end{displaymath}
This implies after changing variables $\tau_i=\theta_i+w(\xi_i)$ for $i=1,2$ and using  Minkowski's inequality that
\begin{displaymath} 
\begin{split}
&I_3  \lesssim R^{-1/2}2^{-3k_{max}/2}\|f_3\|_{L^2} \\ & \times \int_{\tau_1, \tau_2} \eta_{\le j_1}(\tau_1)\eta_{\le j_2}(\tau_2)\big\|\mathcal{I}^2\big(e^{t\partial_x^5}\mathcal{F}^{-1}_{\xi}(f_1(\cdot,\tau_1)),e^{t\partial_x^5}\mathcal{F}^{-1}_{\xi}(f_2(\cdot,\tau_2))\big)\big\|_{L^2_{x,t}} d\tau_1d\tau_2 ,
\end{split}
\end{displaymath}
where the bilinear operator $\mathcal{I}^2$ is defined in Lemma \ref{bilinStrichartz}. Therefore, we deduce from estimate \eqref{bilinStrichartz1} and the Cauchy-Schwarz inequality that 
\begin{equation} \label{L2bilin20}
I_3  \lesssim R^{-1/2}2^{-3k_{max}/2}2^{j_{min}/2}2^{j_{med}/2}\|f_1\|_{L^2}\|f_2\|_{L^2}\|f_3\|_{L^2}.
\end{equation} 

Finally, we conclude estimate \eqref{L2bilin3b} gathering estimates \eqref{L2bilin13}, \eqref{L2bilin15}, \eqref{L2bilin18}, \eqref{L2bilin20} and choosing $R=2^{-3k_{max}/2}2^{j_{med}/2}$.

This finishes the proof of Lemma \ref{L2bilin}.
\end{proof}

As a consequence of Lemma \ref{L2bilin}, we have the following $L^2$ bilinear estimates.
\begin{corollary} \label{coroL2bilin} 
Assume that $k_1, \ k_2, \ k_3 \in \mathbb Z_+$, $j_1, \ j_2, \ j_3 \in \mathbb Z_+$ and $f_i:\mathbb R^2 \to \mathbb R_+ $ are $L^2$ functions supported in $D_{k_i,j_i}$ for $i=1,2$. 
\begin{itemize}
\item[(a)]
Then it follows that 
\begin{equation} \label{coroL2bilin1}
\big \| \textbf{1}_{D_{k_3,j_3}}\cdot \big(f_1 \ast f_2\big) \big \|_{L^2} \lesssim 2^{k_{min}/2}2^{j_{min}/2}\|f_1\|_{L^2}\|f_2\|_{L^2}.
\end{equation}
\item[(b)]
Let us suppose that $k_{min} \le k_{max}-5$. If we are in the case where $(k_i,j_i)=(k_{min},j_{max})$ for some $i \in \{1,2,3\}$, then it holds that
\begin{equation} \label{coroL2bilin2b}
\big \| \textbf{1}_{D_{k_3,j_3}}\cdot \big(f_1 \ast f_2\big) \big \|_{L^2} \lesssim 2^{(j_1+j_2+j_3)/2}2^{-j_{med}/2}2^{-2k_{max}}\|f_1\|_{L^2}\|f_2\|_{L^2}.
\end{equation}
If moreover $k_{min} \ge 1$, then
\begin{equation} \label{coroL2bilin2}
\big \| \textbf{1}_{D_{k_3,j_3}}\cdot \big(f_1 \ast f_2\big) \big \|_{L^2} \lesssim 2^{(j_1+j_2+j_3)/2}2^{-j_{max}/2}2^{-(3k_{max}+k_{min})/2}\|f_1\|_{L^2}\|f_2\|_{L^2}.
\end{equation}

In all the others cases, we have that 
\begin{equation} \label{coroL2bilin3}
\big \| \textbf{1}_{D_{k_3,j_3}}\cdot \big(f_1 \ast f_2\big) \big \|_{L^2} \lesssim 2^{(j_1+j_2+j_3)/2}2^{-j_{max}/2}2^{-2k_{max}}\|f_1\|_{L^2}\|f_2\|_{L^2}.
\end{equation}
\end{itemize}
\end{corollary}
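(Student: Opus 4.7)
The plan is to deduce Corollary \ref{coroL2bilin} from Lemma \ref{L2bilin} by a direct $L^2$ duality argument. Since $f_1$ and $f_2$ are nonnegative $L^2$ functions, the convolution $f_1 \ast f_2$ is also nonnegative, and the $L^2$ norm of a nonnegative function supported in $D_{k_3,j_3}$ can be computed by testing against nonnegative functions supported in the same set. More precisely, I would use the identity
\begin{equation*}
\big\| \mathbf{1}_{D_{k_3,j_3}} \cdot (f_1 \ast f_2) \big\|_{L^2}
= \sup_{f_3} \int_{\mathbb{R}^2} (f_1 \ast f_2) \cdot f_3,
\end{equation*}
where the supremum ranges over all nonnegative $f_3 \in L^2(\mathbb{R}^2)$ with $\operatorname{supp} f_3 \subset D_{k_3,j_3}$ and $\|f_3\|_{L^2} \le 1$. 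The restriction to such test functions $f_3$ is legitimate because the indicator $\mathbf{1}_{D_{k_3,j_3}}$ on the left already forces the relevant integrand to vanish outside $D_{k_3,j_3}$.

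Once this identification is made, each of the four inequalities \eqref{coroL2bilin1}, \eqref{coroL2bilin2b}, \eqref{coroL2bilin2} and \eqref{coroL2bilin3} follows immediately from the corresponding estimate \eqref{L2bilin1}, \eqref{L2bilin2b}, \eqref{L2bilin2} and \eqref{L2bilin3} in Lemma \ref{L2bilin}: I apply the trilinear estimate to the admissible $f_3$, note that $\|f_3\|_{L^2} \le 1$, and then take the supremum. The hypotheses on the dyadic parameters $(k_1,k_2,k_3)$ and $(j_1,j_2,j_3)$ (namely $k_{min} \le k_{max} - 5$, the identification of the pair $(k_{min},j_{max})$ with some $(k_i,j_i)$, and so on) transfer verbatim between the two statements since they involve only symmetric data.

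There is no serious obstacle here; the corollary is purely a reformulation of Lemma \ref{L2bilin} dualizing one of the three slots of the trilinear form. The only minor care required is to observe that one really may take $f_3$ nonnegative in the supremum, which is precisely what ensures that the dualized expression coincides with the trilinear integral $\int (f_1 \ast f_2)\cdot f_3$ estimated in the lemma, rather than its absolute value.
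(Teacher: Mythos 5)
Your proposal is correct and is exactly the duality argument the paper invokes (the paper's proof consists of the single sentence that the corollary ``follows directly from Lemma \ref{L2bilin} by using a duality argument''); you have simply spelled out the details, including the legitimate restriction to nonnegative test functions $f_3$ supported in $D_{k_3,j_3}$. No further comment is needed.
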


\begin{proof} Corolloray \ref{coroL2bilin} follows directly from Lemma \ref{L2bilin} by using a duality argument. 
\end{proof}

\subsection{$L^2$ trilinear estimates}
Now, we prove the $L^2$ trilinear estimates.  In this case, the resonance function $\widetilde{\Omega}:=\widetilde{\Omega}(\xi_1,\xi_2,\xi_3)$ is given by
\begin{equation} \label{resonancetilde} 
\widetilde{\Omega}(\xi_1,\xi_2,\xi_3):=w(\xi_1)+w(\xi_2)+w(\xi_3)-w(\xi_1+\xi_2+\xi_3).
\end{equation}

\begin{lemma} \label{L2trilin}
Assume that $k_1, \ k_2, \ k_3, \ k_4 \in \mathbb Z_+$, $j_1, \ j_2, \ j_3, \ j_4 \in \mathbb Z_+$ and $f_i:\mathbb R^2 \to \mathbb R_+ $ are $L^2$ functions supported in $D_{k_i,j_i}$ for $i=1,2,3,4$. 
\begin{itemize}
\item[(a)]
Then it follows that 
\begin{equation} \label{L2trilin1}
\int_{\mathbb R^2}\big(f_1\ast f_2 \ast f_3\big)\cdot f_4 \lesssim 2^{(k_{min}+k_{thd})/2}2^{(j_{min}+j_{thd})/2}\|f_1\|_{L^2}\|f_2\|_{L^2}\|f_3\|_{L^2}\|f_4\|_{L^2}.
\end{equation}
\item[(b)]
Moreover, let us suppose that $k_{thd} \le k_{max}-5$. If we are in the case where $(k_i,j_i)=(k_{thd},j_{max})$ for some $i \in \{1,2,3,4\}$, then it holds that
\begin{equation} \label{L2trilin2}
\begin{split}
&\int_{\mathbb R^2}\big(f_1\ast f_2 \ast f_3\big)\cdot f_4 \\ & \quad \lesssim 2^{(j_1+j_2+j_3+j_4)/2}2^{-j_{max}/2}2^{k_{thd}/2}2^{-2k_{max}}\|f_1\|_{L^2}\|f_2\|_{L^2}\|f_3\|_{L^2}\|f_4\|_{L^2},
\end{split}
\end{equation}
and 
\begin{equation} \label{L2trilin3}
\begin{split}
&  \int_{\mathbb R^2}\big(f_1\ast f_2 \ast f_3\big)\cdot f_4 \\ & \quad \lesssim 2^{(j_1+j_2+j_3+j_4)/2}2^{-j_{med}/2}2^{k_{min}/2}2^{-2k_{max}}\|f_1\|_{L^2}\|f_2\|_{L^2}\|f_3\|_{L^2}\|f_4\|_{L^2}.
\end{split}\end{equation}
In all the others cases, we have that 
\begin{equation} \label{L2trilin4}
\begin{split}
&\int_{\mathbb R^2}\big(f_1\ast f_2 \ast f_3\big)\cdot f_4 \\ & \quad \lesssim 2^{(j_1+j_2+j_3+j_4)/2}2^{-j_{max}/2}2^{k_{min}/2}2^{-2k_{max}}\|f_1\|_{L^2}\|f_2\|_{L^2}\|f_3\|_{L^2}\|f_4\|_{L^2}.
\end{split}
\end{equation}
\end{itemize}
\end{lemma}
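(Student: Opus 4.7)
I would reduce the trilinear $L^2$ estimates to two consecutive applications of the bilinear bounds from Lemma \ref{L2bilin} and Corollary \ref{coroL2bilin} via the associativity identity $f_1*f_2*f_3 = (f_1*f_2)*f_3$ together with a dyadic decomposition of the intermediate convolution in frequency and modulation. Writing
\[ \int_{\mathbb{R}^2}(f_1 * f_2 * f_3)\cdot f_4 = \sum_{k_5,j_5 \ge 0} \int_{\mathbb{R}^2} \big[\big(\mathbf{1}_{D_{k_5,j_5}}(f_1*f_2)\big) * f_3\big] \cdot f_4, \]
one first notes that the sum is effectively finite: the frequency support of $f_1*f_2$ lies in $I_{k_1}+I_{k_2}$, forcing $k_5 \le \max(k_1,k_2)+3$, while for $(\xi,\tau)$ in that support the decomposition $\tau + w(\xi) = (\tau_1+w(\xi_1))+(\tau_2+w(\xi_2)) - \Omega(\xi_1,\xi_2)$ combined with Lemma \ref{lresonance} forces $2^{j_5} \lesssim 2^{j_1}+2^{j_2}+2^{4\max(k_1,k_2)+\min(k_1,k_2)}$.

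\textbf{Applying the bilinear estimates.} For each $(k_5,j_5)$, Corollary \ref{coroL2bilin} controls the $L^2$-norm of $h_{k_5,j_5} := \mathbf{1}_{D_{k_5,j_5}}(f_1*f_2)$ in terms of $\|f_1\|_{L^2}\|f_2\|_{L^2}$, while Lemma \ref{L2bilin} controls the remaining trilinear integral $\int (h_{k_5,j_5}*f_3)\cdot f_4$. For part (a), I would use the basic estimates \eqref{coroL2bilin1} and \eqref{L2bilin1}, and exploiting the freedom to reshuffle the four functions via the identities \eqref{L2bilin4}, choose the initial pair to contain the two indices realizing $k_{min}$ and $k_{thd}$ (and similarly for $j$); the minima appearing in the two bilinear bounds then produce exactly $2^{(k_{min}+k_{thd})/2}2^{(j_{min}+j_{thd})/2}$ after geometric summation in $k_5, j_5$ against the effective ranges described above.

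\textbf{Part (b).} When $k_{thd} \le k_{max} - 5$, one has a genuine high-low frequency separation, and the pair $(f_1,f_2)$ must be chosen so that the two high-frequency functions are aligned with the resonance-gaining bilinear estimates \eqref{L2bilin2}, \eqref{L2bilin2b}, or \eqref{L2bilin3}, each of which produces the factor $2^{-2k_{max}}$ (or $2^{-(3k_{max}+k_{min})/2}$) via the Jacobian computation involving $\Omega$. The three stated estimates \eqref{L2trilin2}, \eqref{L2trilin3}, \eqref{L2trilin4} track where $j_{max}$ sits relative to the $k_{thd}$-frequency: if they coincide, the sharper bounds \eqref{coroL2bilin2b}/\eqref{coroL2bilin2} must be used in the corresponding step, yielding \eqref{L2trilin2} in general and \eqref{L2trilin3} under the additional constraint $k_{min}\ge 1$; otherwise \eqref{coroL2bilin3} suffices and produces \eqref{L2trilin4}.

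\textbf{Main obstacle.} The real work is the case analysis in part (b): one must verify that in every configuration, a pairing exists so that Corollary \ref{coroL2bilin}(b) applies sharply to the first bilinear step (bounding $\|h_{k_5,j_5}\|_{L^2}$) while Lemma \ref{L2bilin}(b) applies sharply to the second, and that the resulting sum over $k_5, j_5$ stays geometric with the correct exponent. The delicate point is to check that the intermediate frequency $k_5$ lies strictly below $k_{max}$ (so the $2^{-2k_{max}}$ gain survives), and that when $j_{max}$ is realized by $h_{k_5,j_5}$ itself, the effective bound on $j_5$ from the resonance is small enough that the sum is still controlled by the product on the right of \eqref{L2trilin2}--\eqref{L2trilin4}.
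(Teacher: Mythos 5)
Your strategy---iterating the bilinear bounds of Lemma \ref{L2bilin}/Corollary \ref{coroL2bilin} through a dyadic decomposition of the intermediate convolution---is genuinely different from what the paper does, and as written it has two concrete gaps. The paper never composes bilinear estimates: part (a) is proved by the same direct Cauchy--Schwarz/Young computation as \eqref{L2bilin1} on the six-fold integral in the $\sharp$-variables (taking $L^1$ norms of the two functions realizing $j_{min},j_{thd}$ in $\theta$ and, \emph{independently}, of the two realizing $k_{min},k_{thd}$ in $\xi$), and part (b) is proved by a case analysis on whether $k_4\sim k_{max}$, $k_4=k_{thd}$ or $k_4=k_{min}$, in each case changing variables in the \emph{trilinear} resonance $\widetilde\Omega$ using $|\partial_{\xi_i}\widetilde\Omega|\sim 2^{4k_{max}}$ (which is always available for some internal variable because $k_{sub}\sim k_{max}$). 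The key feature of the direct argument is that the $\xi$- and $\theta$-reductions are decoupled; this is exactly what your intermediate decomposition destroys.

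The first gap is in your pairing: you ask that the initial pair contain the two indices realizing $k_{min}$ and $k_{thd}$ ``(and similarly for $j$)'', but these are in general two \emph{different} pairs of indices, and no single partition of $\{1,2,3,4\}$ into first pair and second pair optimizes both. For instance, take $(k_1,k_2,k_3,k_4)=(0,1,10,11)$ and $(j_1,j_2,j_3,j_4)=(100,99,1,0)$: pairing $\{1,2\}$ first gives the correct $k$-factor but the two bilinear minima in $j$ produce $2^{(j_{min}+j_{sub})/2}=2^{99/2}$ against the target $2^{(j_{min}+j_{thd})/2}=2^{1/2}$, while either of the other two pairings trades this for a loss in the count of admissible $j_5$. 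That count is the second gap: your summation in $j_5$ is not geometric. Once $j_5$ exceeds $\min(j_a,j_b)$ the bound on $\|h_{k_5,j_5}\|_{L^2}$ from \eqref{coroL2bilin1} saturates at $2^{\min(j_a,j_b)/2}$, the second-step bound saturates as well, and $j_5$ ranges up to $\sim\max(j_a,j_b,\,4k_{max}+k_{min})$, so the naive sum picks up an unbounded multiplicative factor (of order $j_{max}$ or $k_{max}$). Repairing this requires an $\ell^2$ almost-orthogonality argument in $j_5$ (Cauchy--Schwarz against $\sum_{j_5}\|h_{k_5,j_5}\|^2\lesssim\|\mathbf 1_{I_{k_5}}(f_a*f_b)\|^2$ together with the bilinear bound with no output-modulation restriction, and a combinatorial choice of pairing separating $\{k_{min},k_{thd}\}$ and $\{j_{min},j_{thd}\}$ simultaneously), none of which appears in your sketch. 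For part (b) the same $j_5$-bookkeeping must be redone in each configuration while preserving the $2^{-2k_{max}}$ and $2^{k_{min}/2}$ (resp.\ $2^{k_{thd}/2}$) factors; you explicitly defer this (``the real work is the case analysis'') but that case analysis \emph{is} the proof, and the paper's route through $\widetilde\Omega$ avoids it entirely.
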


\begin{proof} Estimate \eqref{L2trilin1} can be proved exactly as estimate \eqref{L2bilin1}. To prove part (b), we follow closely the arguments of Guo for the mBO equation \cite{Guo1}. Let us define 
\begin{equation} \label{L2trilin5}
\widetilde{I}:=\int_{\mathbb R^2}\big(f_1 \ast f_2 \ast f_3\big) \cdot f_4.
\end{equation}
Observe that 
\begin{equation} \label{L2trilin6} 
\widetilde{I} =\int_{\mathbb R^2} \big(\widetilde{f}_1 \ast \widetilde{f}_2 \ast f_4\big) \cdot f_3 = \int_{\mathbb R^2} \big(\widetilde{f}_2 \ast \widetilde{f}_2 \ast f_4\big) \cdot f_1= \int_{\mathbb R^2} \big(\widetilde{f}_1 \ast \widetilde{f}_3 \ast f_4\big) \cdot f_2,
\end{equation}
where $\widetilde{f}_i(\xi,\tau)=f_i(-\xi,-\tau)$. Therefore, we can always assume that $j_{max}=j_4$.  Moreover, we introduce 
$f^{\sharp}_i(\xi,\theta)=f_i(\xi,\theta+w(\xi))$, for $i=1,2,3$. In view of the assumptions on $f_i$, the functions $f^{\sharp}_i$ are supported in the sets 
\begin{displaymath} 
D_{k_i,j_i}^{\sharp}=\big\{(\xi,\theta) : \xi \in I_{k_i} \ \text{and} \ |\theta| \le 2^{j_i} \big\}.
\end{displaymath}
We also note that $\|f_i\|_{L^2}=\|f_i^{\sharp}\|_{L^2}$. Then, it follows changing variables that 
\begin{equation} \label{L2trilin7}
\widetilde{I}=
\int_{\mathbb R^6}f^{\sharp}_1(\xi_1,\theta_1)f^{\sharp}_2(\xi_2,\theta_2)f^{\sharp}_3(\xi_3,\theta_3)
f^{\sharp}_4(\xi_1+\xi_2+\xi_3,\theta_1+\theta_2+\theta_3+\widetilde{\Omega}(\xi_1,\xi_2,\xi_3))d\nu,
\end{equation}
where $d\nu=d\xi_1d\xi_2d\xi_3d\theta_1d\theta_2d\theta_3$ and $\tilde{\Omega}(\xi_1,\xi_2)$ is defined in \eqref{resonancetilde}. 

Since $k_{thd} \le k_{max}-5$ by hypothesis, we always have that $k_{max} \sim k_{sub}$. Thus, we only need to treat the following cases: $k_4 \sim k_{max}$, $k_4=k_{thd}$ and $k_4=k_{min}$.  

\noindent \textit{Case $k_4 \sim k_{max}$.} By symmetry, we can assume that $k_1 \le k_2 \le k_3 \le k_4$ in this case. 
For $g_i:\mathbb R \to \mathbb R_+$, $L^2$ functions supported in $I_{k_i}$ for $i=1,2,3$ and $g: \mathbb R^2 \to \mathbb R_+$, an $L^2$ function supported in $I_{k_4} \times [-2^{j_4},2^{j_4}]$, let us define
\begin{equation} \label{L2trilin8} 
\widetilde{J}(g_1,g_2,g_3,g) \!:=\int_{\mathbb R^3}g_1(\xi_1)g_2(\xi_2)g_3(\xi_3)g(\xi_1+\xi_2+\xi_3,\widetilde{\Omega}(\xi_1,\xi_2,\xi_3))d\xi_1d\xi_2d\xi_3.
\end{equation} 
Then, arguing as in \eqref{L2trilin9}, it suffices to show that
\begin{equation} \label{L2trilin9}
\widetilde{J}(g_1,g_2,g_3,g) \lesssim  2^{-2k_{max}}2^{k_{min}/2}\|g_1\|_{L^2}\|g_2\|_{L^2}\|g_3\|_{L^2}\|g\|_{L^2}.
\end{equation} 
in order to prove \eqref{L2trilin4} in this case. To prove estimate \eqref{L2trilin9}, we change variables $(\xi_1',\xi_2',\xi_3')=(\xi_1,\xi_2,\xi_1+\xi_2+\xi_3)$ and apply twice the Cauchy-Schwarz inequality in the $\xi_1'$ and $\xi_2'$ to deduce that 
\begin{equation} \label{L2trilin10} 
\widetilde{J}(g_1 ,g_2,g_3,g)  \lesssim  \|g_2\|_{L^2}\|g_3\|_{L^2} 
\int_{|\xi_1'| \sim 2^{k_1}} g_1(\xi_1') \Big( \int_{\mathbb R^2} g(\xi_3',\widetilde{\Omega})^2d\xi_2'd\xi_3 \Big)^{\frac12}d\xi_1.
\end{equation}
We observe that 
\begin{displaymath} 
\Big| \frac{\partial}{\partial \xi_2'}\widetilde{\Omega}(\xi_1',\xi_2',\xi_3'-\xi_2'-\xi_1') \Big|=5\big|(\xi_2')^4-(\xi_3'-\xi_2'+\xi_1')^4 \big| \sim 2^{4k_{max}},
\end{displaymath}
by using the frequency localization. Thus estimate \eqref{L2trilin9} is deduced by performing the change of variables $(\mu_2,\mu_3)=(\widetilde{\Omega},\xi_3')$ in the inner integral on the right-hand side of \eqref{L2trilin10} and by applying the Cauchy-Schwarz inequality in the variable $\xi_1'$.

\noindent \textit{Case $k_4=k_{min}$.} In this case, we can assume without loss of generality that $k_4 \le k_1 \le k_2 \le k_3$. It suffices to show that estimate \eqref{L2trilin9} remains valid in this case. First, we change variables $(\xi_1',\xi_2',\xi_3')=(\xi_1,\xi_2,\xi_1+\xi_2+\xi_3)$, so that $|\xi_1'| \sim 2^{k_{thd}}$, $|\xi_2'| \sim 2^{k_{max}}$, $|\xi_3'| \sim 2^{k_{min}}$ and $\widetilde{J}$ becomes 
\begin{displaymath}  
\begin{split}
\widetilde{J}&(g_1,g_2,g_3,g) \\ & =\int_{\mathbb R^3}g_1(\xi'_1)g_2(\xi'_2)g_3(\xi'_3-\xi'_1-\xi'_2)g(\xi_3',\widetilde{\Omega}(\xi'_1,\xi'_2,\xi'_3-\xi'_1-\xi'_2))d\xi'_1d\xi'_2d\xi_3'.
\end{split}
\end{displaymath}  
Thus the Cauchy-Schwarz inequality in $\xi'_1$ implies that 
\begin{equation} \label{L2trilin11} 
\begin{split}
&\widetilde{J}(g_1,g_2,g_3,g)  \\ &\le \int_{\mathbb R^2} g_2(\xi_2') \|g_1(\xi_1')g_3(\xi_3'-\xi_2'-\xi_1')\|_{L^2_{\xi_1'}}
\big\|g(\xi_3',\widetilde{\Omega}(\xi'_1,\xi'_2,\xi'_3-\xi'_1-\xi'_2)) \big\|_{L^2_{\xi_1'}}d\xi'_2d\xi_3'.
\end{split}
\end{equation}
Moreover, we have that 
\begin{displaymath}
\Big| \frac{\partial}{\partial \xi_1'}\widetilde{\Omega}(\xi_1',\xi_2',\xi_3'-\xi_2'-\xi_1') \Big|=5\big|(\xi'_1)^4-(\xi'_3-\xi'_1-\xi'_2)^4 \big| \sim 2^{4k_{max}},
\end{displaymath}
due to the frequency localization, so that we deduce through the change of variable $\mu_1'=\widetilde{\Omega}$ that 
\begin{equation} \label{L2trilin12}
\big\|g(\xi_3',\widetilde{\Omega}(\xi'_1,\xi'_2,\xi'_3-\xi'_1-\xi'_2)) \big\|_{L^2_{\xi_1'}} = c2^{-2k_{max}} \|g(\xi_3',\cdot)\|_{L^2}.
\end{equation}
Therefore, we deduce inserting \eqref{L2trilin12} in \eqref{L2trilin11} and applying twice the Cauchy-Schwarz inequality that 
\begin{displaymath} 
\begin{split}
\widetilde{J}(g_1,g_2,g_3,g) & \lesssim  2^{-2k_{max}}\|g_1\|_{L^2}\|g_2\|_{L^2}\|g_3\|_{L^2}
\int_{|\xi'_3| \sim 2^{k_{min}}}\|g(\xi_3',\cdot)\|_{L^2}d\xi'_3 \\
& \lesssim 2^{-2k_{max}}2^{k_{min}/2}\|g_1\|_{L^2}\|g_2\|_{L^2}\|g_3\|_{L^2}\|g\|_{L^2},
\end{split}
\end{displaymath}
which is exactly \eqref{L2trilin9}. 

\noindent \textit{Case $k_4=k_{thd}$.}
Estimate \eqref{L2trilin2} follows arguing exactly as in the case $k_4=k_{min}$. On the other hand, estimate \eqref{L2trilin3} can also be proved applying the arguments of the cases $k_4 \sim k_{max}$ or $k_4 = k_{min}$, depending on wether $j_{med}=j_1$, $j_2$ or $j_3$ and using the symmetry relation \eqref{L2trilin6}.
\end{proof} 

As a consequence of Lemma \ref{L2trilin}, we have the following $L^2$ trilinear estimates.
\begin{corollary} \label{coroL2trilin} 
Assume that $k_1, \ k_2, \ k_3, \ k_4\in \mathbb Z_+$, $j_1, \ j_2, \ j_3, \ j_4 \in \mathbb Z_+$ and $f_i:\mathbb R^2 \to \mathbb R_+ $ are $L^2$ functions supported in $D_{k_i,j_i}$ for $i=1,2,3$. 
\begin{itemize}
\item[(a)]
Then it follows that 
\begin{equation} \label{coroL2trilin1}
\big\| \textbf{1}_{D_{k_4,j_4}}\cdot \big(f_1 \ast f_2 \ast f_3 \big) \big\|_{L^2} \lesssim 2^{(k_{min}+k_{thd})/2}2^{(j_{min}+j_{thd})/2}\|f_1\|_{L^2}\|f_2\|_{L^2}\|f_3\|_{L^2}.
\end{equation}
\item[(b)]
Let us suppose that $k_{thd} \le k_{max}-5$. If we are in the case where $(k_i,j_i)=(k_{thd},j_{max})$ for some $i \in \{1,2,3,4\}$, then it holds that
\begin{equation} \label{coroL2trilin2} 
\begin{split}
\big\| \textbf{1}_{D_{k_4,j_4}}&\cdot \big(f_1 \ast f_2 \ast f_3\big) \big\|_{L^2} \\ & \lesssim 2^{(j_1+j_2+j_3+j_4)/2}2^{-j_{max}/2}2^{k_{thd}/2}2^{-2k_{max}}\|f_1\|_{L^2}\|f_2\|_{L^2}\|f_3\|_{L^2}.
\end{split}
\end{equation}
and
\begin{equation} \label{coroL2trilin3}
\begin{split}
\big\| \textbf{1}_{D_{k_4,j_4}}&\cdot \big(f_1 \ast f_2 \ast f_3\big) \big\|_{L^2} \\ & \lesssim 2^{(j_1+j_2+j_3+j_4)/2}2^{-j_{med}/2}2^{k_{min}/2}2^{-2k_{max}}\|f_1\|_{L^2}\|f_2\|_{L^2}\|f_3\|_{L^2}.
\end{split}
\end{equation}

In all the others cases, we have that 
\begin{equation} \label{coroL2trilin4}
\begin{split}
\big\| \textbf{1}_{D_{k_4,j_4}}&\cdot \big(f_1 \ast f_2 \ast f_3\big) \big \|_{L^2} \\ & \lesssim 2^{(j_1+j_2+j_3+j_4)/2}2^{-j_{max}/2}2^{k_{min}/2}2^{-2k_{max}}\|f_1\|_{L^2}\|f_2\|_{L^2}\|f_3\|_{L^2}.
\end{split}
\end{equation}
\end{itemize}
\end{corollary}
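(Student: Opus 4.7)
The plan is to obtain Corollary \ref{coroL2trilin} as a direct dual reformulation of Lemma \ref{L2trilin}. The key observation is that for any nonnegative $L^2$ function $g$ supported in $D_{k_4,j_4}$, one has by Plancherel
\begin{equation*}
\int_{\mathbb R^2} \bigl(f_1 \ast f_2 \ast f_3\bigr) \cdot g \, d\xi d\tau = \int_{\mathbb R^2} (f_1 \ast f_2 \ast f_3)(\xi,\tau) g(\xi,\tau) \, d\xi d\tau,
\end{equation*}
and the left-hand side is bounded by Lemma \ref{L2trilin} in terms of $\|f_1\|_{L^2}\|f_2\|_{L^2}\|f_3\|_{L^2}\|g\|_{L^2}$.

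More precisely, to prove, say, estimate \eqref{coroL2trilin1}, I set
\begin{equation*}
A := \big\| \textbf{1}_{D_{k_4,j_4}} \cdot (f_1 \ast f_2 \ast f_3) \big\|_{L^2},
\end{equation*}
and use that
\begin{equation*}
A = \sup \Big\{ \int_{\mathbb R^2} (f_1 \ast f_2 \ast f_3) \cdot f_4 \, d\xi d\tau \ : \ f_4 \ge 0, \ \mathrm{supp}\,f_4 \subset D_{k_4,j_4},\ \|f_4\|_{L^2} \le 1\Big\}.
\end{equation*}
This is legitimate because $f_1 \ast f_2 \ast f_3 \ge 0$ (since each $f_i \ge 0$), so the $L^2$ norm of its restriction to $D_{k_4,j_4}$ is tested against nonnegative test functions on the same set. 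Applying \eqref{L2trilin1} to each such admissible $f_4$ yields \eqref{coroL2trilin1}, and the same scheme applied to \eqref{L2trilin2}, \eqref{L2trilin3}, \eqref{L2trilin4} produces \eqref{coroL2trilin2}, \eqref{coroL2trilin3}, \eqref{coroL2trilin4} respectively. One must just verify in each case that $f_4$ may indeed be inserted as the fourth function in Lemma \ref{L2trilin}, which is immediate from the support hypothesis on $f_4$.

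There is essentially no obstacle: the only mild subtlety is checking in the non-symmetric estimates (b) of Lemma \ref{L2trilin} that when the role singled out is $i=4$, the required bound is given by the statement of the lemma exactly as written; when the role singled out is $i \in \{1,2,3\}$, one uses the symmetry identities \eqref{L2trilin6} to permute $f_4$ into position $i$, which leaves the right-hand side unchanged since the estimate is symmetric in $\|f_i\|_{L^2}$ and the $(k_i,j_i)$ data. With that observation in hand, all four assertions follow at once from the corresponding estimates in Lemma \ref{L2trilin} by duality, and there are no further computations to perform.
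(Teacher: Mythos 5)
Your proposal is correct and is exactly the argument the paper intends: the authors dispose of Corollary \ref{coroL2trilin} in one line by saying it "follows directly from Lemma \ref{L2trilin} by using a duality argument," and your write-up simply makes that duality explicit (testing the nonnegative function $\textbf{1}_{D_{k_4,j_4}}\cdot(f_1\ast f_2\ast f_3)$ against nonnegative $f_4$ supported in $D_{k_4,j_4}$ with $\|f_4\|_{L^2}\le 1$ and invoking the matching estimate from the lemma). No discrepancy to report.
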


\begin{proof} Corolloray \ref{coroL2trilin} follows directly from Lemma \ref{L2trilin} by using a duality argument. 
\end{proof}

\section{Short time bilinear estimates} \label{STBL}
The main results of this section are the following bilinear estimates in the $F^s_{\alpha}(T)$ spaces. Note that to overcome the high-low frequency interaction problem (c.f. \cite{Pi}), we need to work with $\alpha=2$ (see Lemma \ref{high-low} below). Therefore, we will fix $\alpha=2$ in the rest of the paper and denote respectively $F^s_2(T), \ N^s_2(T), \ F^s_2, \ N^s_2, \ F_{k,2}$ and $N_{k,2}$ by $F^s(T), \ N^s(T), \ F^s, \ N^s, \ F_k$ and $N_k$. The main results of this section are the bilinear estimates at the $H^s$ and $L^2$ level.
\begin{proposition} \label{bilinE} 
Let $s>1$ and $T \in (0,1]$ be given. Then, it holds that 
\begin{equation} \label{bilinE1} 
\|\partial_x(u\partial_x^2v)\|_{N^s(T)} \lesssim \|u\|_{F^s(T)}\|v\|_{F^1(T)}+\|u\|_{F^1(T)}\|v\|_{F^s(T)},
\end{equation}
and 
\begin{equation} \label{bilinE2}
\|\partial_xu\partial_x^2v\|_{N^s(T)} \lesssim \|u\|_{F^s(T)}\|v\|_{F^1(T)}+\|u\|_{F^1(T)}\|v\|_{F^s(T)},
\end{equation}
for all $u, \ v \in F^s(T)$.
\end{proposition}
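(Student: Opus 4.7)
The plan is to reduce the bilinear estimates \eqref{bilinE1}--\eqref{bilinE2} to the $L^2$ convolution bounds of Corollary \ref{coroL2bilin} via a dyadic frequency decomposition, following the short-time $X^{s,b}$ framework of Ionescu--Kenig--Tataru. Choosing near-optimal extensions $\widetilde u, \widetilde v\in F^s$ of $u, v$, the $\ell^2$ structure of $N^s$ and the Littlewood--Paley decompositions $\widetilde u = \sum_{k_1} P_{k_1}\widetilde u$, $\widetilde v = \sum_{k_2} P_{k_2}\widetilde v$ reduce both estimates to dyadic frequency-localized bounds of the shape
$$\bigl\|P_{k_3}\partial_x\bigl(P_{k_1}\widetilde u \cdot \partial_x^2 P_{k_2}\widetilde v\bigr)\bigr\|_{N_{k_3}} \;\leq\; C(k_1,k_2,k_3)\,\|P_{k_1}\widetilde u\|_{F_{k_1}}\,\|P_{k_2}\widetilde v\|_{F_{k_2}},$$
with $C(k_1,k_2,k_3)$ summable against the relevant dyadic weights. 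The frequency constraint on the output forces one of three interaction regimes: (i) \emph{high-high to low}, $k_1\sim k_2\geq k_3+5$; (ii) \emph{comparable}, $|k_i-k_j|\leq 10$ for all pairs; and (iii) \emph{high-low to high}, $k_3\sim k_{\max}$ and $k_{\min}\leq k_{\max}-5$.

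Unfolding the $N_{k_3}$ norm at a generic $\tilde t\in\mathbb{R}$ localizes the problem to a time window of length $2^{-2k_3}$. Using Corollary \ref{coro3} to transfer the cutoff $\eta_0(2^{2k_3}(t-\tilde t))$ onto the factors $P_{k_1}\widetilde u$ and $P_{k_2}\widetilde v$, then performing a modulation decomposition on both factors and on the output, converts the problem into an $L^2$ convolution estimate on the Fourier side. Corollary \ref{coro2} plays the key role here: contributions from modulations $j_i<2k_3$ are already controlled by the $F_{k_i}$ norm, so we may restrict attention to modulations $j_i\geq 2k_3$. Combined with the resonance identity $|\Omega|\sim 2^{4k_{\max}+k_{\min}}$ from Lemma \ref{lresonance}, this forces $j_{\max}\geq 4k_{\max}+k_{\min}$ in the non-resonant regimes (i) and (iii).

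Regime (i) is handled by the $(k_{\min},j_{\max})$ sub-case of Corollary \ref{coroL2bilin}(b): the factor $2^{-2k_{\max}}$ overcomes the derivative weight $2^{k_3+2k_2}$ (or $2^{k_1+2k_2}$ for \eqref{bilinE2}), and the sum over $k_3\leq k_2$ converges. Regime (ii) is handled by the basic bound \eqref{coroL2bilin1}; the derivative excess is absorbed by the output weight $2^{sk_3}$, with room to spare since $s>1$. Regime (iii) splits into a benign sub-case, in which the factor carrying $\partial_x^2$ is the low-frequency one and the derivative scaling is favourable, and a dangerous sub-case where $\partial_x^2$ falls on the high-frequency factor.

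The main obstacle is exactly this dangerous sub-case of (iii); it is the high-low interaction responsible for the $C^2$ ill-posedness proved in \cite{Pi}, and no standard $X^{s,b}$ argument can close it. The choice $\alpha=2$ is essential: only at this calibration does the short-time modulation threshold $2^{2k_{\max}}$ combine with the resonance-forced large modulation $j_{\max}\geq 4k_{\max}+k_{\min}$ and the $2^{-2k_{\max}}$ factor of Corollary \ref{coroL2bilin}(b) to absorb the derivative weight $2^{k_3+2k_{\max}}$ and produce a geometric series in $k_{\max}$ that converges precisely when $s>1$. Estimate \eqref{bilinE2} is handled by the same scheme; redistributing the outer derivative onto one of the factors does not change the analysis of the worst sub-case, since $k_1\leq k_3=k_{\max}$ there, and the bound for \eqref{bilinE1} dominates.
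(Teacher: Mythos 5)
Your overall skeleton (dyadic decomposition, reduction to the $L^2$ convolution bounds of Corollary \ref{coroL2bilin}, use of Corollary \ref{coro2} to discard modulations below $2^{2k}$, and the resonance identity of Lemma \ref{lresonance}) is the same as the paper's, but the quantitative accounting is wrong in the two regimes that actually matter, and in particular you have misplaced the origin of the restriction $s>1$. The high $\times$ low $\to$ high interaction with $\partial_x^2$ on the high-frequency factor --- your ``dangerous sub-case of (iii)'' --- closes with \emph{no} derivative loss: the weight $2^{3k_{\max}}$ is exactly cancelled by the factor $2^{-2k_{\max}}$ from \eqref{coroL2bilin2b} and \eqref{coroL2bilin3} together with $\sum_{j\ge 2k_{\max}}2^{-j/2}\lesssim 2^{-k_{\max}}$ (this is where the calibration $\alpha=2$ is used), and the subsequent sum over the low frequency costs only $\|u\|_{F^{0+}}$. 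No geometric series in $k_{\max}$ requiring $s>1$ appears there.

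The constraint $s>1$ comes instead from the high $\times$ high $\to$ low regime, which you dismiss too quickly. There the $N_{k_3}$ norm localizes time at the coarse scale $2^{-2k_3}$ of the \emph{output}, whereas Corollary \ref{coro2} (which needs $l+5\ge k_i$) only lets you restrict the high-frequency inputs to modulations $\ge 2k_i$ after the time cutoff has been refined to the scale $2^{-2k_{\max}}$ of the inputs. That refinement forces a decomposition into $\sim 2^{2(k_{\max}-k_3)}$ subintervals, and after taking the supremum one is left with a loss of order $k_{\max}2^{2(k_{\max}-k_3)}$ for the term $\partial_xu\,\partial_x^2v$; it is precisely this two-derivative loss measured against the output frequency that makes the final Cauchy--Schwarz in $k_{\max}$ converge only for $s>1$, with one factor placed in $F^1$. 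Your claim that in this regime ``the factor $2^{-2k_{\max}}$ overcomes the derivative weight and the sum over $k_3\le k_2$ converges'' skips this step entirely. A smaller point: in the comparable-frequency regime you assert the derivative excess is absorbed by the output weight ``since $s>1$''; in fact all three frequencies are comparable, so the Sobolev weights cancel, and what closes that case is the resonance bound $j_{\max}\gtrsim 5k_{\max}$ --- it holds for any $s\ge 0$.
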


\begin{proposition} \label{bilinL2E} 
Let T $\in (0,1]$ be given. Then, it holds that 
\begin{equation} \label{bilinL2E1} 
\|\partial_x(u\partial_x^2v)\|_{N^0(T)}+\|\partial_x(v\partial_x^2u)\|_{N^0(T)} \lesssim \|u\|_{F^2(T)}\|v\|_{F^0(T)},
\end{equation}
and 
\begin{equation} \label{bilinL2E2}
\|\partial_x\big(\partial_xu\partial_xv\big)\|_{N^0(T)} \lesssim \|u\|_{F^2(T)}\|v\|_{F^0(T)},
\end{equation}
for all $u \in F^2(T)$ and $v \in F^0(T)$.
\end{proposition}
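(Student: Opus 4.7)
The plan is to follow the short-time bilinear-estimate strategy of Ionescu--Kenig--Tataru developed earlier in the excerpt: reduce to frequency-localized pieces, pass to modulation-decomposed $L^2_{\xi,\tau}$ quantities using the $X_k$ structure, then apply the $L^2$ convolution estimates of Corollary \ref{coroL2bilin}. First I would unfold the $N^0(T)$ norm as a square-sum over output frequencies $k_3$. For each $k_3$ I choose extensions $\widetilde u, \widetilde v$ of $u,v$ that essentially realize the $F^2$ and $F^0$ norms, and insert a time cutoff $\eta_0(2^{2k_3}(t-\tilde t))$ from the definition of $N_{k_3}$. Decomposing $u = \sum_{k_1} P_{k_1} u$ and $v = \sum_{k_2} P_{k_2} v$, I reduce to bounding
\[
\bigl\|(\tau-w(\xi)+i2^{2k_3})^{-1}\mathcal{F}\bigl[\eta_0(2^{2k_3}(t-\tilde t))P_{k_3}\partial_x(P_{k_1}u\,\partial_x^2 P_{k_2}v)\bigr]\bigr\|_{X_{k_3}}
\]
by a suitable $\|P_{k_1}u\|_{F_{k_1}}\|P_{k_2}v\|_{F_{k_2}}$ with derivative and modulation weights that after summation recover $\|u\|_{F^2(T)}\|v\|_{F^0(T)}$.

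Next, the key technical step: when $2^{-2k_3} \ll 2^{-2k_i}$ one cannot simply replace $\eta_0(2^{2k_3}(t-\tilde t))P_{k_i}w$ by a function whose $X_{k_i}$ norm is controlled by $\|P_{k_i}w\|_{F_{k_i}}$, so I would further chop the wider time cutoff $\eta_0(2^{2k_i}(t-\tilde t_i))$ into $\sim 2^{2(k_3-k_i)}$ translates of the short cutoff (as in the proof of Corollary \ref{Bstrichartz}), picking up a factor $2^{k_3-k_i}$ in the relevant $\ell^\infty\to\ell^2$ reorganization. After this chopping I can replace each $P_{k_i}w$ factor by an $X_{k_i}$-bounded object, apply Corollary \ref{coro2} to extract modulation-localized $L^2_{\xi,\tau}$ pieces $f_i$ supported in $D_{k_i,j_i}$, and use Corollary \ref{coroL2bilin} to bound the output at modulation $2^{j_3}$ against $2^{(j_1+j_2+j_3)/2}2^{-j_{max}/2}$ times appropriate powers of the $k_i$. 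The factor $(\tau-w(\xi)+i2^{2k_3})^{-1}$ gives an additional $2^{-\max(j_3,2k_3)}$, and the $X_{k_3}$ sum contributes $2^{j_3/2}$; summing in $j_3$ converges because of the $-j_{max}/2$ gain. This is the place where the choice $\alpha=2$ is crucial: it is exactly large enough that modulations $j<2k$ can be truncated away via Corollary \ref{coro2} and the resonance lower bound $|\Omega|\sim N_{max}^4 N_{min}$ of Lemma \ref{lresonance} allows the remaining case $j_{max}\ge 2k_{max}+k_{min}-O(1)$ to be exploited.

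I would then run the case analysis over the relative sizes of $k_1,k_2,k_3$. The \emph{high--high to low} regime $k_1\sim k_2\gg k_3$ is controlled using \eqref{coroL2bilin2b}: the derivative loss $2^{k_3+2k_2}$ is absorbed by the $2^{-2k_{max}}=2^{-2k_2}$ gain together with the weight $2^{2k_1}$ coming from $\|u\|_{F^2}$, and the summation over $k_3\le k_1$ is square-summable thanks to $\|v\|_{F^0}$. The \emph{high--low to high} regime $k_1\sim k_3\gg k_2$ is the easiest: one uses \eqref{coroL2bilin3} (or \eqref{coroL2bilin2} when $k_{min}\ge 1$), and the derivative loss $2^{k_1+2k_2}$ is dominated by $2^{2k_1}$ since $k_2\le k_1$. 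The genuinely dangerous case is the \emph{low--high to high} regime $k_2\sim k_3\gg k_1$, where the two derivatives fall on the high-frequency factor $v$: the derivative count is $2^{3k_2}$ against an allowed $2^{2k_1}\ll 2^{2k_2}$; one recovers a gain by noting that in this configuration the resonance relation $|\Omega|\sim 2^{4k_2+k_1}$ forces at least one modulation to be $\gtrsim 2^{4k_2+k_1}$, and the factor $2^{-j_{max}/2}$ in \eqref{coroL2bilin2b} together with \eqref{coroL2bilin2} produces exactly the required gain to close the estimate.

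The main obstacle I expect is this last low--high to high case, and in particular handling the diagonal $k_1\sim k_2\sim k_3$ where there is no size separation to exploit. In that regime the high modulation lower bound becomes $|\Omega|\sim 2^{5k_{max}}$ only when $k_{min}\ge 1$, so I would treat the very-low $k_1\le O(1)$ subcase separately, using the time-chopping factor $2^{k_3-k_1}$ and the crude estimate \eqref{coroL2bilin1}, while for $k_{min}\ge 1$ I would use \eqref{coroL2bilin2} which contains the full resonance gain $2^{-(3k_{max}+k_{min})/2}$. Finally, \eqref{bilinL2E2} is handled identically, with the same counting (three total derivatives, one outside and one on each factor) so the resulting bounds are the same. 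Summing the pieces in $k_1,k_2,k_3$ using Cauchy--Schwarz against the $\ell^2_{k_i}$ structure of $\|u\|_{F^2}$ and $\|v\|_{F^0}$ yields the desired bound.
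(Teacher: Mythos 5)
Your overall strategy is the paper's own: the authors prove Proposition \ref{bilinL2E} by the same decomposition into frequency-interaction regimes used for Proposition \ref{bilinE} (Lemmas \ref{high-low}--\ref{low-low}), built on Corollary \ref{coro2}, the $L^2$ bilinear estimates of Corollary \ref{coroL2bilin}, the resonance identity of Lemma \ref{lresonance}, and a final Cauchy--Schwarz summation in which the $2^{2k}$ weight of $\|u\|_{F^2}$ absorbs the losses. All the right tools are named, and your regime-by-regime conclusions are essentially the ones the paper's lemmas deliver.

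There are, however, three concrete slips you should fix. First, the time-interval chopping is stated backwards: Corollary \ref{coro2} already handles the case where the cutoff $\eta_0(2^{2k_3}(\cdot-\tilde t))$ is \emph{narrower} than the inputs' natural scale (i.e.\ $k_3\gtrsim k_i$); the chopping is needed precisely when the output frequency is \emph{lower} than the inputs ($k_3\ll k_i$, the high$\times$high$\to$low regime), and it costs a multiplicative factor of order $2^{2(k_i-k_3)}$ (the number of subintervals, cf.\ the sum over $m$ in the proof of Lemma \ref{high-high-low}), not a gain $2^{k_3-k_i}$. Absorbing that large factor is exactly what produces the losses $k_22^{k_2-k}$ in \eqref{high-high-low1}--\eqref{high-high-low1b} and what the $F^2$ weight on $u$ is spent on; note that both nonlinearities in Proposition \ref{bilinL2E} are in divergence form, so the milder loss \eqref{high-high-low1} rather than \eqref{high-high-low2} is what is needed. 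Second, estimate \eqref{coroL2bilin2} is only stated under the hypothesis $k_{min}\le k_{max}-5$, so it cannot be invoked in the balanced regime $k_1\sim k_2\sim k_3$; there one uses \eqref{coroL2bilin1} together with the relation $2^{j_{max}}\sim\max\{2^{j_{med}},2^{5k}\}$, as in Lemma \ref{high-high}. Third, the low$\times$high$\to$high case you single out as "genuinely dangerous" is in fact the benign one: the bookkeeping $2^{3k}\cdot 2^{-2k_{max}}\cdot\sum_{j\ge 2k}2^{-j/2}\sim 1$ in Lemma \ref{high-low} closes it with no weight at all, and no appeal to $|\Omega|\sim 2^{4k_{max}}2^{k_{min}}$ is required (though using it does no harm). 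None of these invalidates the approach, but as written the accounting in the high$\times$high$\to$low regime would not assemble into a proof without correcting the direction and size of the chopping factor.
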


We split the proof of Propositions \ref{bilinE} and \ref{bilinL2E} in several technical lemmas. 
\begin{lemma} \label{high-low}  [high $\times$ low $\rightarrow$ high] 
Assume that $k, \ k_1, \ k_2 \in \mathbb Z_+$ satisfy $|k-k_2| \le 3$ and $0 \le k_1 \le \max(k,k_2)-5$. Then, 
\begin{equation} \label{high-low1} 
\big\| P_k\partial_x\big(u_{k_1}\partial_x^2v_{k_2} \big)\big\|_{N_k} \lesssim \|u_{k_1}\|_{F_{k_1}}\|v_{k_2}\|_{F_{k_2}},
\end{equation}
and 
\begin{equation} \label{high-low2} 
\big\| P_k\big(\partial_xu_{k_1}\partial_x^2v_{k_2} \big)\big\|_{N_k} \lesssim \|u_{k_1}\|_{F_{k_1}}\|v_{k_2}\|_{F_{k_2}},
\end{equation}
for all $u_{k_1} \in F_{k_1}$ and $v_{k_2} \in F_{k_2}$.
\end{lemma}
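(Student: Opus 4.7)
The plan is to follow the standard short-time bilinear estimate strategy of \cite{IKT}, exploiting the dispersion relation $w(\xi)=\xi^5$. Since $k_1\le k-5$ and $|k-k_2|\le 3$, we are in the high-low $\to$ high regime with $k_{\min}=k_1$ and $k_{\max}\sim k\sim k_2$, so the symbols give prefactors $|\xi\,\xi_2^2|\sim 2^{3k}$ for \eqref{high-low1} and $|\xi_1\xi_2^2|\sim 2^{2k+k_1}$ for \eqref{high-low2}. By the definition of $N_k$ in \eqref{Nk}, I must bound, uniformly in $\tilde t$, the $X_k$-norm of $(\tau-w(\xi)+i2^{2k})^{-1}\mathcal{F}[\eta_0(2^{2k}(t-\tilde t))\cdot P_k\partial_x(u_{k_1}\partial_x^2v_{k_2})]$. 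Introducing an auxiliary bump $\widetilde\eta_0\equiv 1$ on $\mathrm{supp}(\eta_0)$, I distribute the time cutoff onto each factor by setting $\widetilde u=\eta_0(2^{2k}(\cdot-\tilde t))u_{k_1}$ and $\widetilde v=\widetilde\eta_0(2^{2k}(\cdot-\tilde t))v_{k_2}$. Corollary \ref{coro2} applied with $l=k$ (admissible since $k_1\le k-5$ and $k_2\le k+3$) then yields a dyadic modulation decomposition $\mathcal F\widetilde u=\sum_{j_1\ge 2k} f_{1,j_1}$, satisfying $\sum_{j_1\ge 2k}2^{j_1/2}\|f_{1,j_1}\|_{L^2}\lesssim\|u_{k_1}\|_{F_{k_1}}$ (with the convention $j_1=2k$ denoting the aggregated low-modulation piece), and analogously for $\widetilde v$.

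Next, I would exploit the resonance identity provided by Lemma \ref{lresonance}: $|\Omega(\xi_1,\xi_2)|\sim 2^{4k}\cdot 2^{k_1}$ on the relevant Fourier support. Via $\tau-w(\xi)=(\tau_1-w(\xi_1))+(\tau_2-w(\xi_2))-\Omega$, this forces $j_{\max}:=\max(j_1,j_2,j_3)\ge 4k+k_1-O(1)$, where $j_3$ denotes the output modulation. Splitting the $N_k$-norm as
\[
\|g\|_{N_k}\lesssim \sum_{j_3\le 2k}2^{j_3/2-2k}\|\eta_{j_3}(\tau-w)\mathcal F g\|_{L^2}+\sum_{j_3>2k}2^{-j_3/2}\|\eta_{j_3}(\tau-w)\mathcal F g\|_{L^2},
\]
I would apply Corollary \ref{coroL2bilin} to each $\|\eta_{j_3}\mathcal Fg\|_{L^2}$. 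In the generic case where $j_{\max}\in\{j_2,j_3\}$, one has $(k_i,j_i)\ne(k_{\min},j_{\max})$, so \eqref{coroL2bilin3} gives the clean bound $\|\eta_{j_3}\mathcal F g\|_{L^2}\lesssim 2^k\cdot 2^{(j_1+j_2)/2}\|f_{1,j_1}\|\|f_{2,j_2}\|$. Combined with the summation $\sum_{j_3\ge 4k+k_1}2^{-j_3/2}\lesssim 2^{-2k-k_1/2}$ coming from the resonance lower bound, this yields a total contribution $\lesssim 2^{-k-k_1/2}\|u_{k_1}\|_{F_{k_1}}\|v_{k_2}\|_{F_{k_2}}$, well under the desired estimate.

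The main obstacle is the residual configuration $j_{\max}=j_1$, in which the maximum modulation falls on the low-frequency factor $u_{k_1}$: here $(k_1,j_1)=(k_{\min},j_{\max})$ triggers the weaker estimate \eqref{coroL2bilin2b}, which provides only $2^{-j_{med}/2}$ decay rather than the full $2^{-j_{\max}/2}$. I expect to close this case by inserting \eqref{coroL2bilin2b} together with the individual modulation bound $\|f_{1,j_1}\|_{L^2}\le 2^{-j_1/2}\|u_{k_1}\|_{F_{k_1}}$, and by summing over $j_3$ in the intermediate range $(2k,j_2)$ via the elementary estimate $\sum_{j_2>2k}(j_2-2k)\|f_{2,j_2}\|_{L^2}\lesssim 2^{-k}\|v_{k_2}\|_{F_{k_2}}$, which absorbs the apparent logarithmic loss from the length of the intermediate modulation range. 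The lower bound $j_1\ge 4k+k_1$ supplies the remaining decay needed to cancel the $2^{3k}$ symbol. Regions where the two bounds are comparable can alternatively be treated by using \eqref{coroL2bilin2} (available when $k_{\min}\ge 1$), which trades the weaker $k$-gain $2^{-(3k+k_1)/2}$ for the sharper modulation decay $2^{-j_{\max}/2}$. Finally, \eqref{high-low2} follows from exactly the same case analysis with the smaller prefactor $2^{2k+k_1}$ in place of $2^{3k}$, giving an even wider margin.
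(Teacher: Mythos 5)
Your proposal is correct and follows essentially the same route as the paper: reduce via the definition of $N_{k}$ and Corollary \ref{coro2} to the frequency- and modulation-localized estimate \eqref{high-low5} with all modulations effectively $\ge 2k$, then apply the $L^2$ bilinear bounds \eqref{coroL2bilin2b}--\eqref{coroL2bilin3} and sum the output modulation. The only real difference is cosmetic: the resonance lower bound $j_{\max}\gtrsim 4k+k_1$ that you invoke is not actually needed (the factor $2^{-2k_{\max}}$ from Corollary \ref{coroL2bilin} together with $\sum_{j\ge 2k}2^{-j/2}\lesssim 2^{-k}$ already beats the $2^{3k}$ symbol), and the case $j_{\max}=j_1$ that you single out as the main obstacle closes by the same direct summation, the logarithmic loss $(j_2-2k)$ being absorbed by $2^{-(j_2-2k)/2}$ exactly as you indicate.
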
 

\begin{remark} In the case $k_1=0$, the function $u_0 \in F_0$ is localized in spatial low frequencies corresponding to the projection $P_{\le 0}$, since we choose to use a nonhomogeneous dyadic partition of the unity to define the function spaces $F^s$ and $N^s$ (see Section 2).
\end{remark}

\begin{remark} \label{remahigh-low} Lemma \ref{high-low} still holds true under the assumptions $k, \ k_1, \ k_2 \in \mathbb Z_+$, $|k-k_1| \le 3$ and $0 \le k_2 \le \max(k,k_1)-5$. The proof is exactly the same, therefore we will omit it.
\end{remark}

\begin{proof}[Proof of Lemma \ref{high-low}] We only prove estimate \eqref{high-low1}, since the proof of estimate \eqref{high-low2} is similar (and even easier). First, observe from the definition of $N_k$ in \eqref{Nk} that
\begin{equation} \label{high-low3}
\big\| P_k\partial_x\big(u_{k_1}\partial_x^2v_{k_2} \big)\big\|_{N_k}  \lesssim  
 \sup_{t_k \in \mathbb R} \big\|\big(\tau-w(\xi)+i2^{2k} \big)^{-1}2^{3k}\textbf{1}_{I_k}f_{k_1}\ast f_{k_2}\big\|_{X_k}, 
 \end{equation} 
 where  
 \begin{displaymath}
 f_{k_1}=\big|\mathcal{F}\big(\eta_0(2^{2 k}(\cdot-t_k))u_{k_1}\big)\big| \quad \text{and} \quad f_{k_2}=\big|\mathcal{F}\big(\widetilde{\eta}_0(2^{2 k}(\cdot-t_k))v_{k_2}\big)\big| .
\end{displaymath}
Now, we set 
\begin{displaymath} 
f_{k_i,2k}(\xi,\tau)=\eta_{\le 2k}(\tau-w(\xi))f_{k_i}(\xi,\tau)\ \text{and} \ f_{k_i,j_i}(\xi,\tau)=\eta_j(\tau-w(\xi))f_{k_i}(\xi,\tau),
\end{displaymath}
for $j_i>2k$. Thus, we deduce from \eqref{high-low3} and the definition of $X_k$ that 
\begin{equation} \label{high-low4} 
\big\| P_k\partial_x\big(u_{k_1}\partial_x^2v_{k_2} \big)\big\|_{N_k} \lesssim \sup_{t_k \in \mathbb R}2^{3k}\sum_{j, j_1, j_2 \ge 2k}2^{-j/2}\|\textbf{1}_{D_{k,j}}\cdot f_{k_1,j_1}\ast f_{k_2,j_2} \|_{L^2_{\xi,\tau}},
\end{equation}
where $D_{k,j}$ is defined in \eqref{D}. Here, we use that since $\big| \big(\tau-w(\xi)+i2^{2k} \big)^{-1}\big| \le 2^{-2k}$ the sum from $j=0$ to $2k-1$ appearing implicitely on the right-hand side of \eqref{high-low3} is controlled by the term corresponding to $j=2k$ on right-hand side of \eqref{high-low4}. 
Therefore, according to Corollary \ref{coro2} and estimate \eqref{high-low4} it suffices to prove that 
\begin{equation} \label{high-low5} 
2^{3k}\sum_{j \ge 2k}2^{-j/2}\big\|\textbf{1}_{D_{k,j}}\cdot \big(f_{k_1,j_1}\ast f_{k_2,j_2}\big) \big\|_{L^2_{\xi,\tau}} \lesssim 2^{j_1/2}\|f_{k_1,j_1}\|_{L^2}2^{j_2/2}\|f_{k_2,j_2}\|_{L^2},
\end{equation}
with $j_1, \ j_2 \ge 2k$, in order to prove estimate \eqref{high-low1}.

But, we deduce from estimates \eqref{coroL2bilin2b} and \eqref{coroL2bilin3} that 
\begin{displaymath} 
\begin{split}
2^{3k}\sum_{j \ge 2k}2^{-j/2}\big\|\textbf{1}_{D_{k,j}}&\cdot \big(f_{k_1,j_1}\ast f_{k_2,j_2}\big) \big\|_{L^2_{\xi,\tau}} 
\\ & \lesssim 2^{3k}\sum_{j \ge 2k}2^{-j/2}2^{-2k}2^{j_1/2}\|f_{k_1,j_1}\|_{L^2}2^{j_2/2}\|f_{k_2,j_2}\|_{L^2},
\end{split}
\end{displaymath}
which implies estimate \eqref{high-low5} after summing over $j$. This finishes the proof of Lemma \ref{high-low}.
\end{proof}

\begin{lemma} \label{high-high}  [high $\times$ high $\rightarrow$ high] 
Assume that $k, \ k_1, \ k_2 \in \mathbb Z_+$ satisfy $k \ge 20$, $|k-k_2| \le 5$ and $| k_1- k_2| \le 5$. Then, 
\begin{equation} \label{high-high1} 
\big\| P_k\partial_x\big(u_{k_1}\partial_x^2v_{k_2} \big)\big\|_{N_k} \lesssim \|u_{k_1}\|_{F_{k_1}}\|v_{k_2}\|_{F_{k_2}},
\end{equation}
and 
\begin{equation} \label{high-high2} 
\big\| P_k\big(\partial_xu_{k_1}\partial_x^2v_{k_2} \big)\big\|_{N_k} \lesssim \|u_{k_1}\|_{F_{k_1}}\|v_{k_2}\|_{F_{k_2}},
\end{equation}
for all $u_{k_1} \in F_{k_1}$ and $v_{k_2} \in F_{k_2}$.
\end{lemma}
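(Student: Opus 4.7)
The proof would follow the template of Lemma \ref{high-low}, with one essential new input: the high-high resonance. First, mimicking \eqref{high-low3}--\eqref{high-low5} (Corollary \ref{coro2}, the dual characterization of $N_k$, and the modulation decomposition $f_{k_i}=\sum_{j_i\ge 2k}f_{k_i,j_i}$ with $f_{k_i,j_i}$ supported on $D_{k_i,j_i}$), estimate \eqref{high-high1} reduces, for each fixed $j_1,j_2\ge 2k$, to proving
\begin{equation*}
2^{3k}\sum_{j\ge 2k}2^{-j/2}\big\|\mathbf{1}_{D_{k,j}}\cdot(f_{k_1,j_1}\ast f_{k_2,j_2})\big\|_{L^2}\lesssim 2^{j_1/2}\|f_{k_1,j_1}\|_{L^2}\cdot 2^{j_2/2}\|f_{k_2,j_2}\|_{L^2}.
\end{equation*}

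The difference from Lemma \ref{high-low} is that the frequencies are now comparable, so the frequency gain of \eqref{coroL2bilin2b}--\eqref{coroL2bilin3} is unavailable. Instead, since $k_{max}\sim k_{min}\sim k\ge 20\ge 10$, the bilinear Strichartz estimate Lemma \ref{L2bilin}(c) applies; dualizing as in the proof of Corollary \ref{coroL2bilin} yields the convolution form
\begin{equation*}
\big\|\mathbf{1}_{D_{k,j}}\cdot(f_{k_1,j_1}\ast f_{k_2,j_2})\big\|_{L^2}\lesssim 2^{j_{min}/2}\,2^{j_{med}/4}\,2^{-3k/4}\|f_{k_1,j_1}\|_{L^2}\|f_{k_2,j_2}\|_{L^2},
\end{equation*}
where $j_{min}\le j_{med}\le j_{max}$ is the reordering of $\{j,j_1,j_2\}$. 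Combined with the resonance identity -- Lemma \ref{lresonance} gives $|\Omega(\xi_1,\xi_2)|\sim 2^{5k}$, and the relation $\tau-w(\xi)=(\tau_1-w(\xi_1))+(\tau_2-w(\xi_2))+\Omega$ on the support of the convolution forces $j_{max}\ge 5k-C$ for some absolute constant $C$ -- the curvature gain $2^{-3k/4}$ and the modulation lower bound together absorb the $2^{3k}$ derivative loss.

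I would then split into three sub-cases according to which of $\{j,j_1,j_2\}$ realises $j_{max}$. (i) If $j=j_{max}\ge 5k-C$, the sum $\sum_{j\ge 5k-C}2^{-j/2}\sim 2^{-5k/2}$ combines with $2^{3k}\cdot 2^{-3k/4}=2^{9k/4}$ to leave the favourable factor $2^{-k/4}\cdot 2^{(j_1+j_2)/2}$. (ii) If $j_{max}=j_1$ (WLOG) and $j\le j_2$, so $j_{min}=j$ and $j_{med}=j_2$, the $j$-sum of $2^{-j/2}\cdot 2^{j/2}=1$ over $j\in[2k,j_2]$ contributes a factor $(j_2-2k+1)$; together with $2^{-j_2/4}$ from the Strichartz gain and the bound $2^{-j_1/2}\le 2^{-5k/2+C/2}$, this closes the estimate. (iii) If $j_2\le j\le j_1$, the geometric sum $\sum 2^{-j/4}\lesssim 2^{-j_2/4}$ yields the analogous bound. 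Estimate \eqref{high-high2} follows identically, the total derivative count remaining $2^{3k}$.

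The principal obstacle is sub-case (ii): the $j$-summation produces a factor linear in $j_2$, rather than geometric decay, so one must verify that $(j_2+1)\cdot 2^{-j_2/4}$ is uniformly bounded in $j_2\ge 2k$ and is then beaten by the slack $2^{-k/4}$ coming from $j_{max}\ge 5k-C$. This is the tightest point of the argument and explains why the short-time parameter $\alpha=2$ is essential: it fixes the lower cutoff $j\ge 2k$, which keeps the number of summands (hence the size of the polynomial factor) under control.
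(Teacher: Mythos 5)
Your argument is correct, but the key $L^2$ input is genuinely different from the one the paper uses. After the common reduction to the frequency- and modulation-localized inequality (which you set up exactly as in Lemma \ref{high-low}, via Corollary \ref{coro2} and the decomposition $f_{k_i}=\sum_{j_i\ge 2k}f_{k_i,j_i}$), you invoke the dualized form of the bilinear Strichartz bound, Lemma \ref{L2bilin}(c), giving $2^{j_{min}/2}2^{j_{med}/4}2^{-3k_{max}/4}$; its hypotheses ($|k_{min}-k_{max}|\le 10$, $k_{min}\ge 10$) are indeed satisfied here, the duality step is legitimate, and your three-case bookkeeping combined with the resonance lower bound $j_{max}\ge 5k-C$ closes, including the tight point $(j_2-2k+1)2^{-j_2/4}\lesssim 1$. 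The paper instead gets by with only the elementary Cauchy--Schwarz bound of Corollary \ref{coroL2bilin}(a), namely $2^{k_{min}/2}2^{j_{min}/2}$, together with the relation $2^{j_{max}}\sim\max\{2^{j_{med}},2^{5k}\}$: in the case $j_{max}=j_1$, say, one bounds $2^{3k}\cdot 2^{k/2}\sum_{j\ge 2k}2^{-j/2}2^{\min(j,j_2)/2}\lesssim 2^{7k/2}(j_2-2k+2)$ against $2^{j_1/2+j_2/2}\gtrsim 2^{7k/2}2^{(j_2-2k)/2}$, so the same kind of polynomial-versus-exponential comparison absorbs the loss. Your route buys a cleaner power gain ($2^{-3k/4}$ in place of $2^{+k/2}$) at the cost of importing the machinery of Lemma \ref{bilinStrichartz}, which the paper reserves for the energy estimates (estimate \eqref{tecEE2}), where the elementary bound genuinely fails. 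One side remark in your closing paragraph is off: the uniform boundedness of $(j_2+1)2^{-j_2/4}$ has nothing to do with the choice $\alpha=2$ --- it holds for any lower cutoff, and in this high$\times$high interaction the constraint $j\ge 2k$ is not the binding one since the resonance already forces $j_{max}\ge 5k-C$; the value $\alpha=2$ is dictated by the high$\times$low interaction of Lemma \ref{high-low}.
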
 

\begin{proof} Once again we only prove estimate \eqref{high-high1}. Arguing as in the proof of Lemma \ref{high-low}, it is enough to prove that 
\begin{equation} \label{high-high3} 
2^{3k}\sum_{j \ge 2k}2^{-j/2}\big\|\textbf{1}_{D_{k,j}}\cdot \big(f_{k_1,j_1}\ast f_{k_2,j_2}\big) \big\|_{L^2_{\xi,\tau}} \lesssim 2^{j_1/2}\|f_{k_1,j_1}\|_{L^2}2^{j_2/2}\|f_{k_2,j_2}\|_{L^2},
\end{equation}
where  $f_{k_i,j_i}$ is localized in $D_{k_i,j_i}$ with $j_i \ge 2k$ for $i=1, \ 2$.

We deduce by applying estimate \eqref{coroL2bilin1} to the left-hand side of \eqref{high-high3} that 
\begin{equation} \label{high-high4}
\begin{split}
2^{3k}\sum_{j \ge 2k}2^{-j/2}\big\|\textbf{1}_{D_{k,j}}&\cdot \big(f_{k_1,j_1}\ast f_{k_2,j_2}\big) \big\|_{L^2_{\xi,\tau}} \\ &  \lesssim 
2^{3k}\sum_{j \ge 2k}2^{-j/2}2^{k/2}2^{j_{min}/2}\|f_{k_1,j_1}\|_{L^2}\|f_{k_2,j_2}\|_{L^2}.
\end{split}
\end{equation}
According to Lemma \ref{lresonance} and the frequency localization, we have that 
\begin{equation} \label{high-high5}
2^{j_{max}} \sim \max\{2^{j_{med}},2^{5k}\}.
\end{equation}
Finally, we observe that \eqref{high-high4} and \eqref{high-high5} imply estimate \eqref{high-high3}. This is clear in 
the cases where $j_{max}=j_1$ or $j_2$ by using that $2^{j_{max}} \gtrsim 2^{5k}$ and summing over $j \ge 2k$. In the case where $j_{max}=j$, we have from \eqref{high-high5} that either $2^{j} \sim 2^{5k}$ or $2^{j} \sim 2^{j_{med}}$. When $2^{j} \sim 2^{5k}$, estimate \eqref{high-high3} follows directly from \eqref{high-high4} since we do not need to sum over $j$, whereas when $2^{j} \sim 2^{j_{med}}$, we can use one of the cases $2^{j_{max}}=2^{j_1}$ or $2^{j_{max}}=2^{j_2}$ to conclude.
\end{proof}

\begin{lemma} \label{high-high-low}  [high $\times$ high $\rightarrow$ low] 
Assume that $k, \ k_1, \ k_2 \in \mathbb Z_+$ satisfy $k_2 \ge 20$, $| k_1- k_2| \le 3$ and $0 \le k \le \max(k_1,k_2)-5$. Then, 
\begin{equation} \label{high-high-low1} 
\big\| P_k\partial_x\big(u_{k_1}\partial_x^2v_{k_2} \big)\big\|_{N_k} \lesssim k_22^{k_2-k}\|u_{k_1}\|_{F_{k_1}}\|v_{k_2}\|_{F_{k_2}},
\end{equation}
\begin{equation} \label{high-high-low1b} 
\big\| P_k\partial_x\big(\partial_xu_{k_1}\partial_xv_{k_2} \big)\big\|_{N_k} \lesssim k_22^{k_2-k}\|u_{k_1}\|_{F_{k_1}}\|v_{k_2}\|_{F_{k_2}},
\end{equation}
and 
\begin{equation} \label{high-high-low2} 
\big\| P_k\big(\partial_xu_{k_1}\partial_x^2v_{k_2} \big)\big\|_{N_k} \lesssim k_22^{2(k_2-k)}\|u_{k_1}\|_{F_{k_1}}\|v_{k_2}\|_{F_{k_2}},
\end{equation}
for all $u_{k_1} \in F_{k_1}$ and $v_{k_2} \in F_{k_2}$.
\end{lemma}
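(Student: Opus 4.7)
The plan is to adapt the strategy of Lemmas \ref{high-low} and \ref{high-high}: expand via the definition of $N_k$, decompose modulations dyadically, and reduce to the $L^2$ bilinear estimates of Corollary \ref{coroL2bilin} combined with the resonance bound from Lemma \ref{lresonance}. The essential new difficulty is that the time cutoff in $N_k$ is at scale $2^{-2k}$, but the natural scale for $F_{k_i}$-localized functions is $2^{-2k_i}\sim 2^{-2k_2}$. Since $k\le k_2-5$, Corollary \ref{coro2} cannot be applied directly to transfer $u_{k_1}$, $v_{k_2}$ to $X_{k_i}$-bounds under the wider cutoff, so I will refine the time cutoff into about $2^{2(k_2-k)}$ pieces at the matching scale $2^{-2k_2}$. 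This refinement is the ultimate source of the $2^{k_2-k}$ factor in the conclusion.

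After fixing $t_k$, I would introduce a smooth partition of unity $\sum_n \beta_n \equiv 1$ with $\beta_n(t)=\gamma(2^{2k_2}(t-t_n))$ on a lattice of spacing $2^{-2k_2}$, so that only $O(2^{2(k_2-k)})$ indices contribute once multiplied by $\eta_0(2^{2k}(\cdot-t_k))$. For each relevant $n$, choose $\tilde\beta_n\equiv 1$ on $\mathrm{supp}\,\beta_n$ at the same scale and distribute the cutoff as
\[
 \beta_n\cdot u_{k_1}\partial_x^2 v_{k_2}=(\tilde\beta_n u_{k_1})(\beta_n\partial_x^2 v_{k_2}).
\]
Corollary \ref{coro1} then yields $\|\mathcal{F}(\tilde\beta_n u_{k_1})\|_{X_{k_1}}\lesssim\|u_{k_1}\|_{F_{k_1}}$ and $\|\mathcal{F}(\beta_n\partial_x^2 v_{k_2})\|_{X_{k_2}}\lesssim 2^{2k_2}\|v_{k_2}\|_{F_{k_2}}$, the $2^{2k_2}$ factor coming from $|\xi|^2$ on $I_{k_2}$.

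Following the template of Lemma \ref{high-low}, the $n$th contribution to the target $X_k$-norm is then bounded by $2^k\sum_{j\ge 2k,\,j_1,j_2\ge 0}2^{-j/2}\|\textbf{1}_{D_{k,j}}(f^{(1)}_{j_1}*f^{(2)}_{j_2})\|_{L^2}$, with $2^k$ the symbol bound $|i\xi\,\textbf{1}_{I_k}|$. Lemma \ref{lresonance} gives $|\Omega|\sim 2^{4k_2+k}$, forcing $j_{\max}:=\max(j,j_1,j_2)\ge 4k_2+k-O(1)$. I would then split by which index realizes $j_{\max}$: when $j=j_{\max}$, estimate \eqref{coroL2bilin2} supplies the gain $2^{-(3k_2+k)/2}$ and the sum over $j$ collapses to $O(1)$ terms because of the convolution support; when $j_1$ or $j_2$ is maximal, estimate \eqref{coroL2bilin3} supplies the weaker gain $2^{-2k_2}$ but summing over $j\in[2k,j_{\max}]$ produces an $O(k_2)$ logarithmic loss, which is precisely the source of the factor $k_2$ in the conclusion. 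Combining the symbol $2^k$, the $2^{2k_2}$ from $\partial_x^2 v_{k_2}$, the bilinear gain, and the logarithmic loss yields a per-$n$ bound of order $k_2\cdot 2^{-3k_2/2}\|u_{k_1}\|_{F_{k_1}}\|v_{k_2}\|_{F_{k_2}}$; summing over the $O(2^{2(k_2-k)})$ values of $n$ then gives $\lesssim k_2\cdot 2^{k_2/2-2k}\|u_{k_1}\|_{F_{k_1}}\|v_{k_2}\|_{F_{k_2}}$, which is controlled by $k_2\cdot 2^{k_2-k}$ since $k_2/2-2k\le k_2-k$.

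Estimates \eqref{high-high-low1b} and \eqref{high-high-low2} follow the identical scheme. For \eqref{high-high-low1b} the symbol $|\xi\xi_1\xi_2|$ still has size $2^{k+2k_2}$, so the same bound is obtained. For \eqref{high-high-low2} the absence of the outer $\partial_x$ replaces the symbol factor $2^k$ by $2^{k_1}\sim 2^{k_2}$, producing an extra $2^{k_2-k}$ and hence the bound $k_2\cdot 2^{2(k_2-k)}$. The main obstacle is the careful case analysis on the location of $j_{\max}$ and the tight bookkeeping required to ensure that the $k_2$ logarithmic loss appears only once and that the expansion $2^{2(k_2-k)}$ from the partition is balanced by sufficiently negative powers from the $L^2$-bilinear gain and the $2^{4k_2+k}$-resonance.
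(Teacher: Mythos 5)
Your proposal follows essentially the same route as the paper's proof: the paper likewise refines the time localization into $O(2^{2(k_2-k)})$ pieces of length $2^{-2k_2}$ (via a partition of unity $\sum_m\gamma^2(\cdot-m)=1$ rather than your $\beta_n,\tilde\beta_n$ pair), reduces to the $L^2$ bilinear estimates of Corollary \ref{coroL2bilin} together with the resonance bound of Lemma \ref{lresonance}, and extracts the factor $k_2$ from the same logarithmic $j$-summation and the factor $2^{2(k_2-k)}$ (respectively $2^{k_2-k}$) from the count of subintervals. Two small points to tighten in a full write-up: after the refined localization the input modulations must be taken effectively $\ge 2k_2$ (this is what Lemma \ref{lemma2} buys you, and it is needed whenever you must fall back on \eqref{coroL2bilin2b}), and \eqref{coroL2bilin2} is only available for $k_{min}\ge 1$, so the case $k=0$ has to be run with \eqref{coroL2bilin2b}, where the per-piece bound degrades to $2^{k-k_2}$ but the total still closes.
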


\begin{remark} It is interesting to observe that the restriction $s>1$ in Proposition \ref{bilinE} appears in estimate \eqref{high-high-low2}.
\end{remark}

\begin{remark} Note that in the case $k=0$, by convention $P_0=P_{\le 0}$. 
\end{remark}

\begin{proof} We prove estimate \eqref{high-high-low2}, since estimates \eqref{high-high-low1} and  \eqref{high-high-low1b} could be proved in a similar way. Let $\gamma:\mathbb R \rightarrow [0,1]$ be a smooth function supported in $[-1,1]$ with the property that 
$$\sum_{m \in \mathbb Z}\gamma^2(x-m)=1, \quad \forall \ x \in \mathbb R.$$
We observe from the definition of $N_k$ in \eqref{Nk} that
\begin{equation} \label{high-high-low3}
\begin{split}
\big\| P_k\partial_x\big(u_{k_1}&\partial_x^2v_{k_2} \big)\big\|_{N_k}  \\ &\lesssim  
 \sup_{t_k \in \mathbb R} \big\|\big(\tau-w(\xi)+i2^{2k} \big)^{-1}2^{3k_2}\textbf{1}_{I_k}\sum_{|m| \lesssim 2^{2(k_2-k)}}f_{k_1}^m\ast f_{k_2}^m\big\|_{X_k}, 
 \end{split}
 \end{equation} 
 where  
 \begin{displaymath}
 f_{k_1}^m=\big|\mathcal{F}\big(\eta_0(2^{2 k}(\cdot-t_k))\gamma(2^{2k_2}(\cdot-t_k)-m)u_{k_1}\big)\big|,
 \end{displaymath}
 and
  \begin{displaymath}
 f_{k_2}^m=\big|\mathcal{F}\big(\widetilde{\eta}_0(2^{2 k}(\cdot-t_k))\gamma(2^{2k_2}(\cdot-t_k)-m)u_{k_2}\big)\big|,
 \end{displaymath}
 for $i=1,2$.  
 
 Now, we set 
\begin{displaymath} 
f_{k_i,2k_2}^m(\xi,\tau)=\eta_{\le 2k_2}(\tau-w(\xi))f_{k_i}^m(\xi,\tau)\ \text{and} \ f_{k_i,j_i}^m=\eta_j(\tau-w(\xi))f_{k_i}^m(\xi,\tau),
\end{displaymath}
for $j_i>2k_2$. Thus, we deduce from \eqref{high-high-low3} and the definition of $X_k$ that 
\begin{equation} \label{high-high-low4} 
\begin{split}
\big\| P_k&\partial_x\big(u_{k_1}\partial_x^2v_{k_2} \big)\big\|_{N_k}\\ & \lesssim \sup_{t_k \in \mathbb R, \ m \in \mathbb Z}2^{5k_2}2^{-2k}\sum_{j \ge 0}\sum_{j_1, j_2 \ge 2k_2}2^{-j/2}\|\textbf{1}_{D_{k,j}}\cdot f_{k_1,j_1}^m\ast f_{k_2,j_2}^m \|_{L^2_{\xi,\tau}}.
\end{split}
\end{equation}
Therefore, according to Lemma \ref{lemma2} and estimate \eqref{high-high-low4} it suffices to prove that 
\begin{equation} \label{high-high-low5} 
2^{3k_2}\sum_{j \ge 0}2^{-j/2}\big\|\textbf{1}_{\widetilde{D}_{k,j}}\cdot \big(f_{k_1,j_1}^m\ast f_{k_2,j_2}^m \big)\big\|_{L^2_{\xi,\tau}} \lesssim k_22^{j_1/2}\|f_{k_1,j_1}^m\|_{L^2}2^{j_2/2}\|f_{k_2,j_2}^m\|_{L^2},
\end{equation}
with $j_1, \ j_2 \ge 2k_2$, in order to prove estimate \eqref{high-high-low2}.

In the cases  $j_{max}=j_1$ or $j_{max}=j_2$, say for example $j_{max}=j_1$, we deduce from estimate \eqref{coroL2bilin3}  that 
\begin{displaymath}
\begin{split}
&2^{3k_2}\sum_{j \ge 0}2^{-j/2}\big\|\textbf{1}_{\widetilde{D}_{k,j}}\cdot \big(f_{k_1,j_1}^m\ast f_{k_2,j_2}^m\big) \big\|_{L^2_{\xi,\tau}} \\ & 
  \lesssim 2^{k_2}\sum_{j \ge 0}2^{-j/2}2^{j/2}\|f_{k_1,j_1}^m\|_{L^2}2^{j_2/2}\|f_{k_2,j_2}^m\|_{L^2}  \\ & \lesssim 
  k_22^{j_1/2}\|f_{k_1,j_1}^m\|_{L^2}2^{j_2/2}\|f_{k_2,j_2}^m\|_{L^2} +2^{k_2}\sum_{j \ge 2k_2}2^{-j/2}2^{j_1/2}\|f_{k_1,j_1}^m\|_{L^2}2^{j_2/2}\|f_{k_2,j_2}^m\|_{L^2},
\end{split}
\end{displaymath}
which implies estimate \eqref{high-high-low4} by summing over $j$.

In the case $j_{max}=j$,  we have that 
$2^{j} \sim \max\{2^{j_{med}},|\Omega|\}$, where $\Omega$ is defined in \eqref{resonnance}. If $2^{j} \sim 2^{j_{med}}$, then we are in one of the above cases, whereas in the case $2^{j} \sim |\Omega|$, we deduce from \eqref{lresonance1} that 
$j \le 4k_2+5$. Therefore, we get from \eqref{coroL2bilin2b} that
\begin{displaymath}
\begin{split}
2^{3k_2}\sum_{j \ge 0}2^{-j/2}\big\|\textbf{1}_{\widetilde{D}_{k,j}}&\cdot \big(f_{k_1,j_1}^m\ast f_{k_2,j_2}^m\big) \big\|_{L^2_{\xi,\tau}} \\ & \lesssim 2^{k_2}k_22^{-j/2}2^{(j+j_1+j_2)/2}2^{-j_{med}/2}\|f_{k_1,j_1}^m\|_{L^2}\|f_{k_2,j_2}^m\|_{L^2},  
\end{split}
\end{displaymath}
which yields \eqref{high-high-low4}, since $j_{med} \ge 2k_2$.
\end{proof}

\begin{lemma} \label{low-low}  [low $\times$ low $\rightarrow$ low] 
Assume that $k, \ k_1, \ k_2 \in \mathbb Z_+$ satisfy $0 \le k, \ k_1, \ k_2 \le 100$. Then, 
\begin{equation} \label{low-low1} 
\big\| P_k\partial_x\big(u_{k_1}\partial_x^2v_{k_2} \big)\big\|_{N_k} \lesssim \|u_{k_1}\|_{F_{k_1}}\|v_{k_2}\|_{F_{k_2}},
\end{equation}
and 
\begin{equation} \label{low-low2} 
\big\| P_k\big(\partial_xu_{k_1}\partial_x^2v_{k_2} \big)\big\|_{N_k} \lesssim \|u_{k_1}\|_{F_{k_1}}\|v_{k_2}\|_{F_{k_2}},
\end{equation}
for all $u_{k_1} \in F_{k_1}$ and $v_{k_2} \in F_{k_2}$.
\end{lemma}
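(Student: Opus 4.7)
The plan is to imitate the opening of the proof of Lemma \ref{high-low}, and observe that in the present regime every frequency is bounded by $2^{100}$, so all the spatial-derivative losses and the frequency-dependent constants appearing in the $L^2$ bilinear estimates of Corollary \ref{coroL2bilin} are absorbed into universal constants. Thus the lemma is essentially a bookkeeping exercise.

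Concretely, I would focus on \eqref{low-low1} (the argument for \eqref{low-low2} is identical). First, unwinding the definition of $N_k$ exactly as in \eqref{high-low3}, applying Corollary \ref{coro2} to each of the two $F_{k_i}$-factors, and writing
\[
f_{k_i,j_i}(\xi,\tau)=\eta_{j_i}(\tau-w(\xi))f_{k_i}(\xi,\tau), \quad f_{k_i,\le 2k}(\xi,\tau)=\eta_{\le 2k}(\tau-w(\xi))f_{k_i}(\xi,\tau),
\]
it suffices to prove, uniformly in $j_1,j_2\ge 0$, the dyadic estimate
\[
2^{3k_2}\sum_{j\ge 0}\min\!\left(2^{-j/2},2^{-k}\right)\bigl\|\mathbf{1}_{D_{k,j}}\cdot(f_{k_1,j_1}\ast f_{k_2,j_2})\bigr\|_{L^2}\lesssim 2^{j_1/2}\|f_{k_1,j_1}\|_{L^2}\,2^{j_2/2}\|f_{k_2,j_2}\|_{L^2},
\]
where the factor $\min(2^{-j/2},2^{-k})$ encodes the weight coming from $(\tau-w(\xi)+i2^{2k})^{-1}$ combined with the Besov weight $2^{j/2}$ in the definition of $X_k$ (as in \eqref{high-low4}).

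Since $k_2\le 100$, the prefactor $2^{3k_2}$ is $O(1)$. I then apply the $L^2$ bilinear estimate \eqref{coroL2bilin1}, which gives
\[
\bigl\|\mathbf{1}_{D_{k,j}}\cdot(f_{k_1,j_1}\ast f_{k_2,j_2})\bigr\|_{L^2}\lesssim 2^{k_{\min}/2}2^{\min(j,j_1,j_2)/2}\|f_{k_1,j_1}\|_{L^2}\|f_{k_2,j_2}\|_{L^2},
\]
and the factor $2^{k_{\min}/2}$ is again $O(1)$. For the summation in $j$ I split the two regimes: for $j>2k$ the product of weights is $2^{-j/2}2^{\min(j,j_1,j_2)/2}$, which is summable whenever $j>\min(j_1,j_2)$ and produces at most $O(\min(j_1,j_2))$ from the range $2k<j\le\min(j_1,j_2)$; for $j\le 2k$ a crude supremum gives $2^{-2k}\cdot 2^{j/2}\cdot 2^{\min(j,j_1,j_2)/2}$, which after summing in $j\le 2k$ is bounded by a constant. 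The total contribution is therefore $\lesssim (1+\min(j_1,j_2))\|f_{k_1,j_1}\|_{L^2}\|f_{k_2,j_2}\|_{L^2}$, and since $1+\min(j_1,j_2)\lesssim 2^{j_1/2}2^{j_2/2}$, the target inequality follows and we can sum over $j_1,j_2$ to recover the $X_{k_i}$ norms.

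There is no real obstacle here: the only thing to watch is that the logarithmic growth in $\min(j_1,j_2)$ produced when summing $2^{-j/2}2^{j/2}=1$ over a bounded dyadic range does not spoil the estimate, but this is immediately absorbed into $2^{j_1/2}2^{j_2/2}$. The mild gain from the multiplier $\min(2^{-j/2},2^{-k})$ over high modulations, combined with the fact that all $k_i\le 100$, is what makes the whole computation trivial compared to the high-frequency Lemmas \ref{high-low}--\ref{high-high-low}.
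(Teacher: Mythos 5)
Your proposal is correct and follows essentially the same route as the paper: reduce to a dyadic estimate via the definition of $N_k$ and Corollary \ref{coro2}, apply the basic $L^2$ bilinear bound \eqref{coroL2bilin1}, and observe that all frequency factors are $O(1)$ since $k,k_1,k_2\le 100$. The paper states the resulting summation over $j$ is a "direct consequence" of \eqref{coroL2bilin1}; you simply carry out the summation explicitly, including the harmless $1+\min(j_1,j_2)$ factor absorbed into $2^{j_1/2}2^{j_2/2}$.
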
 

\begin{proof} 
Once again we only prove estimate \eqref{low-low1}. Arguing as in the proof of Lemma \ref{high-low}, it is enough to prove that 
\begin{equation} \label{low-low3} 
\sum_{j \ge 0}2^{-j/2}\big\|\textbf{1}_{D_{k,j}}\cdot \big(f_{k_1,j_1}\ast f_{k_2,j_2}\big) \big\|_{L^2_{\xi,\tau}} \lesssim 2^{j_1/2}\|f_{k_1,j_1}\|_{L^2}2^{j_2/2}\|f_{k_2,j_2}\|_{L^2},
\end{equation}
where  $f_{k_i,j_i}$ is localized in $D_{k_i,j_i}$ with $j_i \ge 0$ for $i=1, \ 2$, which is a direct consequence of estimate \eqref{coroL2bilin1}.
\end{proof}

Finally, we give the proof of Proposition \ref{bilinE}.  Note that the proof of Proposition \ref{bilinL2E} would be similar.
\begin{proof}[Proof of Proposition \ref{bilinE}] We only prove estimate \eqref{bilinE2}, since the proof of estimate \eqref{bilinE1} would be similar.  We choose two extensions $\tilde{u}$ and $\tilde{v}$ of $u$ and $v$ satisfying 
\begin{equation} \label{bilinE3} 
\|\tilde{u}\|_{F^s} \le 2\|u\|_{F^s(T)} \quad \text{and} \quad \|\tilde{v}\|_{F^s} \le 2\|v\|_{F^s(T)}.
\end{equation}
Therefore $\partial_x\tilde{u}\partial_x^2\tilde{v}$ is an extension of $\partial_xu\partial_x^2v$ on $\mathbb R^2$ and we have from the definition of $N^s(T)$ and Minkowski inequality that
\begin{displaymath} 
\|\partial_xu\partial_x^2v\|_{N^s(T)}   
\le \Big(\sum_{k \ge 0}2^{2ks}\Big(\sum_{k_1,  k_2 \ge 0}\|P_k(\partial_xP_{k_1}\tilde{u}\partial_x^2P_{k_2}\tilde{v})\|_{N_k}\Big)^2 \Big)^{\frac12}.
\end{displaymath}
where we took the convention $P_0=P_{\le 0}$. Moreover,  we denote 
\begin{displaymath} 
\begin{array}{l}
A_1=\big\{(k_1,k_2) \in \mathbb Z_+^2 \ : \  |k_2-k| \le 3 \ \text{and} \ 0 \le k_1\le \max(k,k_2)-5\big\} ,\\ 
A_2=\big\{(k_1,k_2) \in \mathbb Z_+^2 \ : \  |k_1-k| \le 3 \ \text{and} \ 0 \le k_2\le \max(k,k_1)-5\big\}, \\ 
A_3=\big\{(k_1,k_2) \in \mathbb Z_+^2 \ : \  |k_1-k_2| \le 5, \ |k_1-k| \le 5 \ \text{and} \ k \ge 20\big\}, \\
A_4=\big\{(k_1,k_2) \in \mathbb Z_+^2 \ : \  |k_1-k_2| \le 3, \ 0 \le k \le \max(k_1,k_2)-5 \ \text{and} \ k_2 \ge 20\big\},\\ 
A_5=\big\{(k_1,k_2) \in \mathbb Z_+^2 \ : \ 0 \le k, \ k_1, \ k_2 \le 100\big\}.
\end{array}
\end{displaymath}
Note that for a given $k \in \mathbb Z_+$, some of these regions may be empty and others may overlap, but due to the frequency localization, we always have that 
\begin{equation} \label{bilinE4}
\begin{split}
\|\partial_xu\partial_x^2v\|_{N^s(T)}&  \lesssim 
\sum_{j=0}^5\Big(\sum_{k \ge 0}2^{2ks}\Big( \sum_{(k_1,k_2) \in A_j}\|P_k(\partial_xP_{k_1}\tilde{u}\partial_x^2P_{k_2}\tilde{v})\|_{N_k}\Big)^2\Big)^{\frac12} \\
& =: \sum_{j=0}^5S_j.
\end{split}
\end{equation}

To handle the sum $S_1$, we use estimate \eqref{high-low2} and the Cauchy-Schwarz inequality to obtain that 
\begin{equation} \label{bilinE5}
S_1 \lesssim \Big( \sum_{k \ge 0}2^{2ks}\Big(\sum_{k_1=0}^{k-5}\|P_{k_1}\tilde{u}\|_{F_{k_1}}\|P_{k}\tilde{v}\|_{F_{k}}\Big)^2\Big)^{\frac12} \lesssim \|\tilde{u}\|_{F^{0+}}\|\tilde{v}\|_{F^{s}},
\end{equation}
where we assumed without loss of generality that $\max(k,k_2)=k$. 
Similarly, we deduce from remark \ref{remahigh-low} that 
\begin{equation} \label{bilinE6} 
S_2 \lesssim \|\tilde{u}\|_{F^{s}}\|\tilde{v}\|_{F^{0+}}.
\end{equation}
Estimate \eqref{high-high2} leads to 
\begin{equation} \label{bilinE7}
S_3 \lesssim \Big( \sum_{k \ge 0}2^{2ks}\|P_{k_1}\tilde{u}\|_{F_{k_1}}^2\|P_{k}\tilde{v}\|_{F_{k}}^2\Big)^{\frac12} \lesssim \|\tilde{u}\|_{F^{0}}\|\tilde{v}\|_{F^{s}}.
\end{equation}
Next, we deal with the sum $S_4$.  Without loss of generality, assume that $\max(k_1,k_2)=k_2$. It follows from estimate \eqref{high-high-low2} and the Cauchy-Scwarz inequality in $k_2$ that 
\begin{equation} \label{bilinE8}
\begin{split} 
S_4 &\lesssim \Big( \sum_{k=0}^{k_2-5}2^{2k(s-2)}\Big(\sum_{k_2 \ge 0}k_22^{2k_2}\|P_{k_2}\tilde{u}\|_{F_{k_2}}\|P_{k_2}\tilde{v}\|_{F_{k_2}}\Big)^2\Big)^{\frac12} \\ & 
\lesssim \Big(\sum_{k_2 \ge 0}2^{2k_2s}\|P_{k_2}\tilde{u}\|_{F_{k_2}}^2 \Big)^{\frac12}\Big(\sum_{k_2 \ge 0}2^{2k_2}\|P_{k_2}\tilde{v}\|_{F_{k_2}}^2 \Big)^{\frac12} \\ & 
\lesssim \|\tilde{u}\|_{F^s}\|\tilde{v}\|_{F^1},
\end{split}
\end{equation}
since $s>1$. Finally, it is clear from estimate \eqref{low-low2} that
\begin{equation} \label{bilinE9} 
S_5 \lesssim \|\tilde{u}\|_{F^0}\|\tilde{v}\|_{F^0}.
\end{equation}
Therefore, we conclude the proof of estimate \eqref{bilinE2} gathering \eqref{bilinE3}--\eqref{bilinE9}.
 \end{proof}

\section{Energy estimates} \label{EE} 
As indicated in the introduction we assume for sake of simplicity that $c_3=0$. We also recall that, due to the short time bilinear estimates derived in the last section, we need to work with $\alpha=2$ in the definition of the spaces $F^s_{\alpha},$ $F^s_{\alpha}(T)$ and $F_{k,\alpha}$ and therefore we will omit the index $\alpha=2$ to simplify the notations. 

\subsection{Energy estimates for a smooth solution} \label{EE1}
Due to the linear estimate \eqref{linear1}, we need to control the norm $\|\cdot\|_{B^s(T)}$ of a solution $u$ to \eqref{5KdV}
as a function of $\|u_0\|_{H^s}$ and $\|u\|_{F^s(T)}$.  However, we are not able to estimate $\|u\|_{B^s(T)}$ directly. We need to modify the energy by a cubic term to cancel some bad terms appearing after a commutator estimate (see Remark \ref{rematecEE} below). 

Let us define $\psi(\xi):=\xi\eta'(\xi)$, where $\eta$ is defined in \eqref{eta} and $'$ denote the derivative, \textit{i.e.} $\eta'(\xi)=\frac{d}{d\xi}\eta(\xi)$. Then, for $k \ge 1$, we define $\psi_k(\xi)=\psi(2^{-k}\xi)$. We also denote by $Q_k$ the Littlewood-Paley multiplier of symbol $\psi_k$, \textit{i.e.} $Q_ku=\mathcal{F}_x\big(\psi_k\mathcal{F}_xu\big)$. From the definition of $\eta_k$ in \eqref{eta}, we observe that 
\begin{equation} \label{psik}
\psi_k(\xi)=2^{-k}\xi\eta'(2^{-k}\xi)=\xi \frac{d}{d\xi}\big(\eta(2^{-k}\xi) \big)=\xi \eta_k'(\xi).
\end{equation}
Finally, we define the new energy  by 
\begin{equation} \label{Ek} 
\begin{split}
E_k(u)(t)=&\|P_ku(\cdot,t)\|_{L^2}^2+\alpha\int_{\mathbb R}\big(uP_k\partial_x^{-1}uQ_{k}\partial_x^{-1}u\big)(x,t)dx 
\\ & +\beta\int_{\mathbb R}\big(uP_k\partial_x^{-1}uP_{k}\partial_x^{-1}u\big)(x,t)dx,
\end{split}
\end{equation}
for any $k \ge 1$, and
\begin{equation} \label{Es} 
E^s_T(u)=\|P_{\le 0}u(\cdot,0)\|_{L^2}^2+\sum_{k\ge 1}2^{2ks} \sup_{t_k \in [-T,T]}E_k(u)(t_k),
\end{equation}
where $\alpha$ and $\beta$ are two real numbers which will be fixed later. This modified energy may be seen as a localized version of the one introduced by Kwon in \cite{Kw}. The next lemma states that when $\|u\|_{L^{\infty}_TH^s_x}$ is small, then $E^s_T(u)$ and $\|u\|_{B^s(T)}^2$ are comparable.
\begin{lemma} \label{lemmaEE} 
Let $s>\frac12$. Then, there exists $0<\delta_0$ such that 
\begin{equation} \label{lemmaEE1} 
\frac12 \|u\|_{B^s(T)}^2 \le E^s_T(u) \le \frac32 \|u\|_{B^s(T)}^2,
\end{equation}
for all $u \in B^s(T) \cap C([-T,T];H^s(\mathbb R))$ satisfying $\|u\|_{L^{\infty}_TH^s_x} \le \delta_0$.
\end{lemma}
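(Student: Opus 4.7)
\smallskip

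\noindent\textbf{Proof plan for Lemma~\ref{lemmaEE}.} The strategy is to show that the two cubic correction terms in $E_k(u)(t)$ are small perturbations of the quadratic term $\|P_k u(\cdot,t)\|_{L^2}^2$ provided $\|u\|_{L^{\infty}_T H^s_x}$ is sufficiently small, and then to pass to the supremum and sum. Since $\|P_{\le 0}u(\cdot,0)\|_{L^2}^2$ appears identically in both $E^s_T(u)$ and $\|u\|_{B^s(T)}^2$, all the work is done on the terms indexed by $k\ge 1$.

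\smallskip

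\noindent\textbf{Step 1 (pointwise-in-$t$ bound on the cubic terms).} I fix $k\ge 1$ and $t\in[-T,T]$. Applying H\"older's inequality,
\begin{displaymath}
\Bigl|\int_{\mathbb R}u P_k\partial_x^{-1}u Q_k\partial_x^{-1}u\,dx\Bigr|
\le \|u(\cdot,t)\|_{L^{\infty}}\|P_k\partial_x^{-1}u(\cdot,t)\|_{L^2}\|Q_k\partial_x^{-1}u(\cdot,t)\|_{L^2},
\end{displaymath}
and similarly for the $\beta$-term. Since the symbols $\eta_k$ and $\psi_k$ are supported in $I_k$, Plancherel's identity yields $\|P_k\partial_x^{-1}u\|_{L^2}\lesssim 2^{-k}\|P_k u\|_{L^2}$ and $\|Q_k\partial_x^{-1}u\|_{L^2}\lesssim 2^{-k}\|\widetilde P_k u\|_{L^2}$, where $\widetilde P_k$ is the fattened Littlewood--Paley projector with symbol $\widetilde{\eta}_k$. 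Combining these,
\begin{displaymath}
|E_k(u)(t)-\|P_k u(\cdot,t)\|_{L^2}^2|
\;\lesssim\; 2^{-2k}\|u(\cdot,t)\|_{L^{\infty}}\bigl(\|P_k u(\cdot,t)\|_{L^2}^2+\|\widetilde P_k u(\cdot,t)\|_{L^2}^2\bigr).
\end{displaymath}

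\smallskip

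\noindent\textbf{Step 2 (control of $\|u\|_{L^{\infty}}$).} Since $s>1/2$, the Sobolev embedding $H^s(\mathbb R)\hookrightarrow L^{\infty}(\mathbb R)$ gives $\|u(\cdot,t)\|_{L^{\infty}}\lesssim \|u(\cdot,t)\|_{H^s}\le \|u\|_{L^{\infty}_T H^s_x}\le \delta_0$ for every $t\in[-T,T]$.

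\smallskip

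\noindent\textbf{Step 3 (sum over $k$).} Multiplying by $2^{2ks}$, taking $\sup_{t_k\in[-T,T]}$, and summing, I use $2^{2ks}\cdot 2^{-2k}=2^{2k(s-1)}\le 2^{2ks}$ for $k\ge 1$ together with Cauchy--Schwarz in $k$ to get
\begin{displaymath}
\sum_{k\ge 1}2^{2ks}\sup_{t_k}\bigl|E_k(u)(t_k)-\|P_k u(\cdot,t_k)\|_{L^2}^2\bigr|
\;\le\; C\delta_0\|u\|_{B^s(T)}^2,
\end{displaymath}
where the $\widetilde P_k$ terms are reassembled into $\|u\|_{B^s(T)}^2$ using that $\widetilde\eta_k$ is supported on at most three adjacent dyadic annuli.

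\smallskip

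\noindent\textbf{Step 4 (passing between $\sup E_k$ and $\sup\|P_k u\|_{L^2}^2$).} The one mildly delicate point is that $E^s_T(u)$ involves $\sup_{t_k}E_k(u)(t_k)$ (sum at a common time) rather than $\sup_{t_k}\|P_k u\|_{L^2}^2+\sup_{t_k}|\text{cubic}|$. This is handled by the elementary observation that if $E_k(t)=A(t)+B(t)$ with $|B(t)|\le \beta_k$ for all $t$, then $|\sup_t E_k(t)-\sup_t A(t)|\le \beta_k$; applying this with $\beta_k$ the pointwise bound from Step~1 and summing as in Step~3 yields
\begin{displaymath}
\bigl|E^s_T(u)-\|u\|_{B^s(T)}^2\bigr|\le C\delta_0\|u\|_{B^s(T)}^2.
\end{displaymath}
Choosing $\delta_0>0$ with $C\delta_0\le 1/2$ then gives \eqref{lemmaEE1}.

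\smallskip

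I expect no serious obstacle here: the decay factor $2^{-2k}$ coming from the two $\partial_x^{-1}$'s is what makes the cubic correction genuinely subcritical, and Sobolev at $s>1/2$ is precisely the right tool for the $L^{\infty}$ estimate on $u$. The only point to be careful about is the sup/sum interchange in Step~4, which is purely bookkeeping.
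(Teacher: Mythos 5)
Your proposal is correct and follows essentially the same route as the paper: H\"older plus the Sobolev embedding $H^s(\mathbb R)\hookrightarrow L^{\infty}(\mathbb R)$ (valid since $s>\frac12$) to bound the cubic corrections by $\delta_0$ times $L^2$-norms of $P_{k'}u$ over adjacent dyadic blocks, followed by absorption into the quadratic term after taking suprema and summing in $k$. The only cosmetic difference is that you track the extra $2^{-2k}$ gain from the two $\partial_x^{-1}$'s, which the paper does not need and simply omits.
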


\begin{proof} First observe that, due to the Sobolev embedding $H^s(\mathbb R) \hookrightarrow L^{\infty}(\mathbb R)$,
\begin{displaymath} 
\Big|\int_{\mathbb R}\big(uP_k\partial_x^{-1}uQ_{k}\partial_x^{-1}u\big)(x,t_k)dx\Big|
\lesssim \|u\|_{L^{\infty}_TH^s_x}\sum_{|k-k'|\le 3}\|P_{k'}u(\cdot,t_k)\|_{L^2}^2,
\end{displaymath}
for all $k\ge 1$. It follows that  
\begin{displaymath} 
\begin{split}
E_k(u)(t) & \ge \|P_ku(t)\|_{L^2}^2-c|\alpha|\|u\|_{L^{\infty}_TH^s_x}\|P_k(u)(t)\|_{L^2}\sum_{|k-k'|\le 3}\|P_{k'}u(\cdot,t_k)\|_{L^2}^2\\ & \quad
-c|\beta|\|u\|_{L^{\infty}_TH^s_x}\|P_k(u)(t)\|_{L^2}^2,
\end{split}
\end{displaymath}
for any $t \in [-T,T]$ and $k \ge 1$.  Thus, if we choose $\|u\|_{L^{\infty}_TH^s} \le \delta_0$ with $\delta_0$ small enough, we obtain that 
\begin{displaymath} 
\begin{split}
E_k(u)(t) & \ge \frac34\|P_ku(t)\|_{L^2}^2-\frac1{50}\sup_{t_{k+1} \in [-T,T]}\|P_k(u)(t_{k+1})\|_{L^2}^2 \\ & \quad
-\frac1{50}\sup_{t_{k-1} \in [-T,T]}\|P_k(u)(t_{k-1})\|_{L^2}^2,
\end{split}
\end{displaymath}
which implies the first inequality in \eqref{lemmaEE1} after taking the supreme over $t \in [-T,T]$ and summing in $k \ge 1$. The second inequality in \eqref{lemmaEE1} follows similarly.
\end{proof}

\begin{proposition} \label{propEE}
Assume $s \ge 1$ and $T \in (0,1]$. Then, if $u \in C([-T,T];H^{\infty}(\mathbb R))$ is a solution to \eqref{5KdV} with $c_3=0$, we have that 
\begin{equation} \label{propEE1} 
E^s_T(u) \lesssim (1+\|u_0\|_{H^s})\|u_0\|_{H^s}^2+\big(1+ \|u\|_{F^{\frac34+}(T)}\big)\|u\|_{F^{\frac54}(T)}\|u\|_{F^s(T)}^2+\|u\|_{F^{\frac34+}(T)}^2\|u\|_{B^s(T)}^2.
\end{equation}
\end{proposition}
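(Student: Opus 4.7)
The plan is to derive a differential identity for $E_k(u)(t)$ for each $k\ge 1$, bound its time derivative after exploiting the cancellations built into the cubic correctors, and then integrate in time and sum in $k$ with weights $2^{2ks}$. The contribution of $P_{\le 0}u(\cdot,0)$ directly accounts for $\|u_0\|_{H^s}^2$. First I would apply $P_k$ to the equation, take the $L^2$ inner product with $P_k u$, and note that the dispersive term $\partial_x^5u$ contributes zero. This yields
\begin{equation*}
\frac{d}{dt}\|P_ku\|_{L^2}^2 =2c_1\int_{\mathbb R}P_k u\cdot P_k(\partial_x u\,\partial_x^2 u)\,dx+2c_2\int_{\mathbb R}P_k u\cdot P_k\partial_x(u\,\partial_x^2 u)\,dx.
\end{equation*}
Expanding each factor via a Littlewood--Paley decomposition and isolating the high-high-low interactions where one factor is localized near frequency $2^k$, commutator identities of the form $P_k(fg)=fP_kg+[P_k,f]g$ extract the harmless main terms and leave trilinear ``dangerous'' remainders of schematic form $\int(\partial_x u)\,P_ku\cdot Q_k\partial_x u\,dx$ and $\int(\partial_x u)(P_k\partial_xu)^2\,dx$ that cannot be bounded by the $L^2$ bilinear estimates of Section~3 alone (this is precisely the obstruction alluded to in the remark following Lemma \ref{tec2EE}).

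The cubic correctors in \eqref{Ek} are engineered to cancel those remainders. Differentiating the two cubic integrals in time, substituting $\partial_tu=\partial_x^5u+c_1\partial_xu\,\partial_x^2u+c_2\partial_x(u\,\partial_x^2u)$, and redistributing the five derivatives in the dispersive piece via repeated integration by parts produces trilinear terms whose principal parts coincide, up to constants depending only on $c_1,c_2$, with the dangerous remainders from the previous paragraph. Solving two linear equations in $\alpha,\beta$ in terms of $c_1,c_2$ yields an exact pointwise cancellation, uniformly in $k$. This can be viewed as a frequency-localized analogue of Kwon's construction in \cite{Kw}.

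After this cancellation, $\frac{d}{dt}E_k(u)$ splits into (a) well-behaved trilinear terms in which no factor carries the full weight of $\partial_x^3$, and (b) quadrilinear terms produced by feeding the nonlinearity of \eqref{5KdV} back into the cubic correctors. To bound (a), I would localize the three factors at frequencies $2^{k_1},2^{k_2},2^{k_3}$, use Corollary \ref{coro2} to pass from $F_{k_i}$ norms to short-time space-time $L^2$ bounds on intervals of length $2^{-2k}$, and invoke the $L^2$ bilinear estimates of Corollary \ref{coroL2bilin}, together with the refined Strichartz bound \eqref{Bstrichartz1b} when the convolution structure is unfavourable. Summing dyadically in $k_1,k_2,k_3$ against the weight $2^{2ks}$ gives the bound $(1+\|u\|_{F^{3/4+}(T)})\|u\|_{F^{5/4}(T)}\|u\|_{F^s(T)}^2$; the exponents $5/4$ and $3/4+$ come from combining the gains $2^{-2k_{\max}}$ in \eqref{coroL2bilin2}--\eqref{coroL2bilin3} with the $2^{k/4}$ Strichartz loss at $\alpha=2$. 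The quadrilinear terms in (b) are treated by Corollary \ref{coroL2trilin}; when two of the four factors are localized at the output frequency $\sim 2^k$, their $L^\infty_t L^2_x$ norms may be replaced by the supremum-in-time entering $\|u\|_{B^s(T)}^2$, producing the last summand $\|u\|_{F^{3/4+}(T)}^2\|u\|_{B^s(T)}^2$. Integrating over the short intervals of length $2^{-2k}$ covering $[-T,T]$ and taking the supremum over $t_k\in[-T,T]$ closes the estimate.

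The main obstacle is the cancellation step. The dangerous remainders involve multipliers of the form $\eta_{k_1}(\xi_1)\eta_{k_2}(\xi_2)\eta_{k}(\xi_1+\xi_2)$ at low-high-high configurations and must be matched term-by-term against the ones produced by distributing $\partial_x^5$ onto the three factors of the cubic correctors, which in turn generate the symbols $\psi_k=\xi\eta_k'$ whenever a derivative hits a frequency cutoff. Verifying that the sum telescopes to zero for a single choice of $\alpha,\beta$, uniformly in $k$, is a delicate algebraic computation and is the heart of the argument.
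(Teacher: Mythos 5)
Your proposal follows essentially the same route as the paper: differentiate the modified energy $E_k(u)$, use frequency-localized commutator identities to isolate the dangerous trilinear remainders $\int P_k\partial_xu\,P_k\partial_xu\,P_{k_1}\partial_xu$ and $\int P_k\partial_xu\,Q_k\partial_xu\,P_{k_1}\partial_xu$, cancel them by fixing $\alpha,\beta$ in the cubic correctors (the paper's choice is $\alpha=-\tfrac{2c_2}{5}$, $\beta=\tfrac{c_2-4c_1}{5}$, obtained after integrating $\partial_x^5$ by parts in $\mathcal{L}_k^1,\mathcal{L}_k^2$), and then control the surviving trilinear terms via the short-time $L^2$ bilinear estimates and the quartic terms via the trilinear estimates and the refined Strichartz bound, which is exactly where the exponents $\tfrac54$ and $\tfrac34+$ and the $\|u\|_{B^s(T)}^2$ factor arise. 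The plan, the decomposition, and the key lemmas you invoke all match the paper's proof.
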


As a Corollary to Lemma \ref{lemmaEE} and Proposition \ref{propEE}, we deduce an \textit{a priori} estimate in $\|\cdot\|_{B^s(T)}$ for smooth solutions to \eqref{5KdV}. 
\begin{corollary} \label{coroEE}
Assume $s\ge 1$ and $T \in (0,1]$. Then, there exists $0<\delta_0 \le 1$ such that
\begin{equation} \label{coroEE1} 
\|u\|_{B^s(T)}^2 \lesssim \|u_0\|_{H^s}^2+\big(1+ \|u\|_{F^{\frac34+}(T)}\big)\|u\|_{F^{\frac54}(T)}\|u\|_{F^s(T)}^2+\|u\|_{F^{\frac34+}(T)}^2\|u\|_{B^s(T)}^2,
\end{equation}
for all solutions $u$ to \eqref{5KdV} with $c_3=0$ and satisfying $u \in C([-T,T];H^{\infty}(\mathbb R))$ and $\|u\|_{L^{\infty}_TH^{\frac12+}_x} < \delta_0$.
\end{corollary}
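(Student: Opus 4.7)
The plan is to derive Corollary \ref{coroEE} as a direct combination of Lemma \ref{lemmaEE} and Proposition \ref{propEE}, both of which are already established in the excerpt. The strategy has three steps.

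First, I would verify that Lemma \ref{lemmaEE} can be invoked under the hypothesis $\|u\|_{L^\infty_T H^{\frac12+}_x} < \delta_0$ rather than under smallness of the full $H^s$ norm. Inspecting the proof of Lemma \ref{lemmaEE}, the quantity $\|u\|_{L^\infty_T H^s_x}$ enters only through a pointwise bound on the outer factor $u$ in the cubic correction; after applying Hölder's inequality together with the gain of two factors of $2^{-k}$ coming from $P_k\partial_x^{-1}$ and $Q_k\partial_x^{-1}$, the needed smallness is really of $\|u\|_{L^\infty_T L^\infty_x}$. Via the Sobolev embedding $H^{\frac12+}(\mathbb R)\hookrightarrow L^\infty(\mathbb R)$, this is controlled by $\|u\|_{L^\infty_T H^{\frac12+}_x}$. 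So by choosing $\delta_0>0$ small enough, Lemma \ref{lemmaEE} yields
\begin{equation*}
\tfrac12\|u\|_{B^s(T)}^2 \le E^s_T(u) \le \tfrac32\|u\|_{B^s(T)}^2.
\end{equation*}

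Second, since $u\in C([-T,T];H^\infty(\mathbb R))$ solves \eqref{5KdV} with $c_3=0$, Proposition \ref{propEE} applies and gives
\begin{equation*}
E^s_T(u) \lesssim (1+\|u_0\|_{H^s})\|u_0\|_{H^s}^2 + \bigl(1+\|u\|_{F^{\frac34+}(T)}\bigr)\|u\|_{F^{\frac54}(T)}\|u\|_{F^s(T)}^2 + \|u\|_{F^{\frac34+}(T)}^2\|u\|_{B^s(T)}^2.
\end{equation*}
Combining with the lower bound from the first step yields the same inequality for $\|u\|_{B^s(T)}^2$ up to a harmless constant.

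Third, to match the exact form stated in Corollary \ref{coroEE}, I would dispose of the prefactor $(1+\|u_0\|_{H^s})$ by decomposing $(1+\|u_0\|_{H^s})\|u_0\|_{H^s}^2 = \|u_0\|_{H^s}^2 + \|u_0\|_{H^s}^3$. From the definition \eqref{Bs} of $B^s(T)$ we have $\|u_0\|_{H^s} \le \|u\|_{B^s(T)}$ (the right-hand side contains $\|P_{\le 0}u(\cdot,0)\|_{L^2}^2$ and bounds $\|P_ku(\cdot,0)\|_{L^2}^2$ for $k\ge 1$ via the supremum). Hence $\|u_0\|_{H^s}^3 \le \|u_0\|_{H^s}^2\|u\|_{B^s(T)}$, and Young's inequality turns this into $\tfrac12\|u_0\|_{H^s}^2 + \tfrac12\|u_0\|_{H^s}^2\|u\|_{B^s(T)}^2$; taking $\delta_0$ small further absorbs the last piece into the left-hand side. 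The only point requiring care is that this absorption is not circular: the hypothesis governs $H^{\frac12+}$-smallness, not $B^s(T)$-smallness, so the $\|u\|_{B^s(T)}^2$ term must be dealt with only through the explicit factor $\|u\|_{F^{\frac34+}(T)}^2$ that already appears on the right, together with the Young-type splitting just described. Nothing in this argument is truly an obstacle; all the real analytic work has been done in proving Proposition \ref{propEE} and the linear/bilinear/trilinear estimates from earlier sections.
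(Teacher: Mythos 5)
Your first two steps are exactly the paper's argument: the paper gives no separate proof of Corollary \ref{coroEE}, deducing it directly from Lemma \ref{lemmaEE} and Proposition \ref{propEE}, and your observation that the proof of Lemma \ref{lemmaEE} uses $\|u\|_{L^\infty_T H^s_x}$ only through the embedding $H^{\frac12+}(\mathbb R)\hookrightarrow L^\infty(\mathbb R)$ is precisely why the corollary can be stated under the weaker hypothesis $\|u\|_{L^{\infty}_TH^{\frac12+}_x}<\delta_0$.

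The third step, however, contains a genuine error. After writing $(1+\|u_0\|_{H^s})\|u_0\|_{H^s}^2=\|u_0\|_{H^s}^2+\|u_0\|_{H^s}^3$ and bounding $\|u_0\|_{H^s}^3\le \tfrac12\|u_0\|_{H^s}^2+\tfrac12\|u_0\|_{H^s}^2\|u\|_{B^s(T)}^2$, you propose to absorb the last term into the left-hand side ``by taking $\delta_0$ small''. But the coefficient of $\|u\|_{B^s(T)}^2$ in that term is $\|u_0\|_{H^s}^2$, and for $s\ge 1$ this is \emph{not} controlled by the hypothesis $\|u\|_{L^{\infty}_TH^{\frac12+}_x}<\delta_0$: the datum can have arbitrarily large $H^s$ norm while its $H^{\frac12+}$ norm is small. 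Nor is this term of the form $\|u\|_{F^{\frac34+}(T)}^2\|u\|_{B^s(T)}^2$, so it cannot be hidden in the last term of \eqref{coroEE1} either; your own caveat about non-circularity identifies the tension but does not resolve it. The cubic term $\|u_0\|_{H^s}^3$ simply cannot be removed under the stated hypotheses alone: the right-hand side of \eqref{coroEE1} should be read with the prefactor $(1+\|u_0\|_{H^s})$ inherited from Proposition \ref{propEE}, which is harmless in every application because the paper first rescales so that $\|u_0\|_{H^s}\le\epsilon\le 1$ (see the proof of Proposition \ref{apriori} and estimate \eqref{apriori2}, where this factor is just bounded by $2$). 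If you want the corollary exactly as printed, you must either keep that prefactor or add the assumption $\|u_0\|_{H^s}\lesssim 1$; with only the $H^{\frac12+}$ smallness, your absorption step fails.
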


We split the proof of Proposition \ref{propEE} in several lemmas. 
\begin{lemma} \label{tecEE} 
Assume that $T \in (0,1]$, $k_1, \ k_2, \ k_3 \in \mathbb Z_+$  and that $u_j \in F_{k_j}$ for $j=1,2,3$. 
\begin{itemize} 
\item[(a)] In the case $k_{min} \le k_{max}-5$, it holds that
\begin{equation} \label{tecEE1}
\Big|\int_{\mathbb R \times [0,T]}u_1u_2u_3 dxdt \Big| \lesssim 2^{-k_{max}} \prod_{j=1}^3\|u_j\|_{F_{k_j}}.
\end{equation}
If moreover $k_{min} \ge 1$, we also have that
\begin{equation} \label{tecEE1b}
\Big|\int_{\mathbb R \times [0,T]}u_1u_2u_3 dxdt \Big| \lesssim 2^{-\frac32 k_{max}}2^{-k_{min}} \prod_{j=1}^3\|u_j\|_{F_{k_j}}.
\end{equation}
\item[(b)]In the case $|k_{min}-k_{max}| \le 10$,  it holds that
\begin{equation} \label{tecEE2}
\Big|\int_{\mathbb R \times [0,T]}u_1u_2u_3 dxdt \Big| \lesssim 2^{-7k_{max}/4} \prod_{j=1}^3\|u_j\|_{F_{k_j}}.
\end{equation}
\end{itemize}
\end{lemma}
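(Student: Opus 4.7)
My plan is to reduce each space-time integral to a Fourier-side trilinear expression, apply the $L^2$ bilinear estimates of Corollary \ref{coroL2bilin}, and sum the resulting modulation decomposition using Corollary \ref{coro2} together with the resonance identity. Concretely, I would fix extensions $\tilde u_j$ realizing the $F_{k_j}$ norm, partition $\mathbf{1}_{[0,T]}(t) = \sum_m \gamma(2^{2k_{max}}t - m)$ at the highest-frequency scale (so that only $O(2^{2k_{max}})$ slices contribute, using $T \le 1$), and use $\widetilde{\eta}_0 \cdot \gamma_m = \gamma_m$ together with Corollary \ref{coro3} to place a cutoff $\eta_0(2^{2k_{max}}(\cdot - t_m))$ onto each factor. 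Parseval then rewrites each slice as $\int(f_1^m \ast f_2^m)\, f_3^m$ with $f_j^m = |\mathcal F[\eta_0(2^{2k_{max}}(\cdot-t_m))\tilde u_j]|$, and I decompose $f_j^m = \sum_{j_j\ge 0} f^m_{j,j_j}$ in modulation.

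The two inputs that make the bookkeeping close are Corollary \ref{coro2}, which yields $\|f^m_{j,\le 2k_{max}}\|_{L^2}\lesssim 2^{-k_{max}}\|\tilde u_j\|_{F_{k_j}}$ and $\sum_{j_j > 2k_{max}} 2^{j_j/2}\|f^m_{j,j_j}\|_{L^2}\lesssim \|\tilde u_j\|_{F_{k_j}}$, and the resonance relation: on $\sum \xi_i = \sum \tau_i = 0$ one has $\sum_i(\tau_i-w(\xi_i)) = -\Omega(\xi_1,\xi_2)$, so Lemma \ref{lresonance} forces $j_{max}\gtrsim 4k_{max}+k_{min}$ in case (a) and $j_{max}\gtrsim 5k_{max}$ in case (b). Extracting the $2^{j/2}$-weight from the max-modulation factor then produces the gain $\sum_{j_{max}\gtrsim 4k_{max}+k_{min}}\|f^m_{j_{max}}\|_{L^2}\lesssim 2^{-2k_{max}-k_{min}/2}\|\tilde u\|_{F_k}$.

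For case (a) I would split into two subcases: if the smallest-frequency function carries $j_{max}$ and $k_{min}\ge 1$, I apply \eqref{L2bilin2} whose prefactor $2^{-(3k_{max}+k_{min})/2}$, combined with the resonance gain on that factor and with $\|\tilde u\|_{F_k}$ on the other two factors (at cost $2^{j/2}$ each), gives $2^{-7k_{max}/2-k_{min}}$ per slice, hence $2^{-3k_{max}/2-k_{min}}\prod\|u_j\|_{F_{k_j}}$ after accounting for the $2^{2k_{max}}$ slices; otherwise I use \eqref{L2bilin3} with prefactor $2^{-2k_{max}}$, and the same resonance argument (now applied to a max-modulation factor of frequency $\sim 2^{k_{max}}$) actually yields $2^{-2k_{max}-k_{min}/2}\prod\|u_j\|_{F_{k_j}}$, which is sharper than required. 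Estimate \eqref{tecEE1} falls out of the same scheme with \eqref{L2bilin2b} taking the role of \eqref{L2bilin2} when $k_{min}=0$, dropping the resonance gain. For case (b), the subcase $k_{max}\le 20$ is handled trivially by frequency-localized Sobolev embeddings; for $k_{max}\ge 10$ I would use \eqref{L2bilin3b}, absorb the $2^{j_{med}/4}$ factor by splitting the $j_{med}$ sum at $2k_{max}$ (low part: $\sup 2^{j/4}\lesssim 2^{k_{max}/2}$ against the $2^{-k_{max}}$ gain of Corollary \ref{coro2}(a); high part: $2^{-j/4}\lesssim 2^{-k_{max}/2}$ against the high-regime bound), and use the $j_{max}\gtrsim 5k_{max}$ gain $2^{-5k_{max}/2}$ on the max-modulation factor; the product is $2^{-15k_{max}/4}$ per slice, yielding $2^{-7k_{max}/4}\prod\|u_j\|_{F_{k_j}}$ overall. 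The main technical obstacle throughout is the combinatorial matching: one must select the right $L^2$ bilinear estimate among \eqref{L2bilin2b}, \eqref{L2bilin2}, \eqref{L2bilin3}, \eqref{L2bilin3b} according to which function carries the maximum modulation and whether it sits above or below the $2k_{max}$ threshold of Corollary \ref{coro2}, while simultaneously using the resonance lower bound on $j_{max}$ to turn bare $L^2$-sums into the required exponents in $k_{max}$ and $k_{min}$.
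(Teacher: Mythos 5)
Your decomposition scale, choice of $L^2$ estimates, and exponent bookkeeping all match the paper's proof, and the arithmetic checks out in every subcase (including the observation that \eqref{L2bilin3} gives something sharper than needed when the minimal-frequency factor does not carry the maximal modulation). However, there is one genuine gap: the treatment of the sharp time cutoff. The identity $\mathbf{1}_{[0,T]}(t)=\sum_m\gamma(2^{2k_{max}}t-m)$ is false as written (the right-hand side is identically $1$), and if you instead write $\mathbf{1}_{[0,T]}=\mathbf{1}_{[0,T]}\sum_m\gamma_m^3$, then for the $O(1)$ boundary slices $m$ with $\mathbf{1}_{[0,T]}\gamma_m\neq\gamma_m$ and $\mathbf{1}_{[0,T]}\gamma_m\neq 0$ the sharp truncation survives. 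For those slices the bound $\sum_{j>2k_{max}}2^{j/2}\|f^m_{j}\|_{L^2}\lesssim\|\tilde u\|_{F_{k}}$ from Corollary \ref{coro2} fails: multiplication by $\mathbf{1}_{[0,T]}$ spreads the time frequency and only an $\ell^\infty$-type bound $\sup_j 2^{j/2}\|\eta_j(\tau-w(\xi))\mathcal{F}(\mathbf{1}_I f)\|_{L^2}\lesssim\|\mathcal{F}(f)\|_{X_k}$ survives (this is Lemma \ref{interval} in the paper). Your scheme, which relies on extracting the full $\ell^1$ modulation sum from each of the three factors, therefore does not close on the boundary slices.

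The paper resolves this by splitting the slices into the interior set $\mathcal{A}$ (where your argument applies verbatim) and the boundary set $\mathcal{B}$ with $\#\mathcal{B}\le 4$, and on $\mathcal{B}$ it replaces the $\ell^1$ sums by suprema via Lemma \ref{interval} and makes the triple modulation sum converge using the factor $2^{-j_{med}/2}$ from \eqref{L2bilin3} together with the dichotomy $2^{j_{max}}\sim\max(2^{j_{med}},|\Omega|)$, which caps $j_{max}\le 5k_{max}+O(1)$ in the resonant case; this costs only a factor $k_{max}$, which is harmless against $2^{-2k_{max}}$. You need to add this step (or an equivalent device) to each of \eqref{tecEE1}, \eqref{tecEE1b} and \eqref{tecEE2}; with it, your argument coincides with the paper's. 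A cosmetic point: the tool that transfers the $F_{k_j}$ window (at scale $2^{-2k_j}$) to the finer window at scale $2^{-2k_{max}}$ is Corollary \ref{coro2} (which needs $k_{max}+5\ge k_j$, satisfied here), rather than Corollary \ref{coro3}.
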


The following technical result will be needed in the proof of Lemma \ref{tecEE}. 
\begin{lemma} \label{interval}
Assume $k \in \mathbb Z_+$ and $I \subset \mathbb R$ is an interval. Then 
\begin{equation} \label{interval1}
\sup_{j \in \mathbb Z_+} 2^{j/2} \big\| \eta_j(\tau-w(\xi))\mathcal{F}(\mathbf{1}_I(t) f)\big\|_{L^2} \lesssim \|\mathcal{F}(f)\|_{X_k},
\end{equation}
for all $f$ such that $\mathcal{F}(f) \in X_k$.
\end{lemma}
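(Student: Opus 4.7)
My plan is to analyze $\mathcal{F}(\mathbf{1}_I f)=\widehat{\mathbf{1}_I}\ast_\tau \mathcal{F}(f)$ through the modulation decomposition built into the $X_k$-norm. Writing $F:=\mathcal{F}(f)$ and $f_q(\xi,\tau):=\eta_q(\tau-w(\xi))F(\xi,\tau)$ so that $\|F\|_{X_k}=\sum_{q\ge 0}2^{q/2}\|f_q\|_{L^2}$, the triangle inequality reduces the conclusion \eqref{interval1} to the bound
\begin{equation*}
\sum_{q\ge 0}2^{j/2}\big\|\eta_j(\tau-w(\xi))(\widehat{\mathbf{1}_I}\ast_\tau f_q)\big\|_{L^2_{\xi,\tau}}\lesssim\sum_{q\ge 0}2^{q/2}\|f_q\|_{L^2},
\end{equation*}
uniformly in $j\ge 0$. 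The two inputs I will exploit are the pointwise decay $|\widehat{\mathbf{1}_I}(\tau)|\lesssim 1/|\tau|$ valid away from $\tau=0$ for any interval $I\subset\mathbb R$ (bounded or not), and the trivial operator bound $\|\mathbf{1}_I u\|_{L^2_t}\le\|u\|_{L^2_t}$.

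The argument then splits into two regimes according to the comparison of $j$ with $q$. In the resonant regime $q\ge j-5$, Plancherel's identity combined with the $L^2$-boundedness of multiplication by $\mathbf{1}_I$ yields $\|\widehat{\mathbf{1}_I}\ast_\tau f_q\|_{L^2_{\xi,\tau}}\le\|f_q\|_{L^2}$, whence
\begin{equation*}
2^{j/2}\big\|\eta_j(\tau-w(\xi))(\widehat{\mathbf{1}_I}\ast_\tau f_q)\big\|_{L^2}\le 2^{j/2}\|f_q\|_{L^2}\le 2^{5/2}\cdot 2^{q/2}\|f_q\|_{L^2},
\end{equation*}
and summing over $q\ge j-5$ already produces a bound of order $\|F\|_{X_k}$. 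In the non-resonant regime $q\le j-5$, the support conditions $|\tau-w(\xi)|\sim 2^j$ and $|\tau'-w(\xi)|\sim 2^q$ force $|\tau-\tau'|\sim 2^j$, so $|\widehat{\mathbf{1}_I}(\tau-\tau')|\lesssim 2^{-j}$ in the relevant region. A Cauchy--Schwarz in $\tau'$ over the set of measure $\sim 2^q$ supporting $f_q(\xi,\cdot)$ then gives the pointwise bound
\begin{equation*}
\big|\eta_j(\tau-w(\xi))(\widehat{\mathbf{1}_I}\ast_\tau f_q)(\xi,\tau)\big|\lesssim 2^{-j}2^{q/2}\|f_q(\xi,\cdot)\|_{L^2_\tau}.
\end{equation*}
Integrating the square over the $\tau$-slab of length $\sim 2^j$ and multiplying by $2^{j/2}$ produces a contribution of order $2^{q/2}\|f_q\|_{L^2}$, which also sums over $q\le j-5$ to $\|F\|_{X_k}$.

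The main subtlety I anticipate is the distributional nature of $\widehat{\mathbf{1}_I}$ when $I$ is a half-line, since then $\widehat{\mathbf{1}_I}$ carries a Dirac mass at the origin in addition to the principal-value kernel on which my bound $|\widehat{\mathbf{1}_I}(\tau)|\lesssim 1/|\tau|$ relies. In the non-resonant regime this Dirac contribution is harmless: convolution by $\delta$ acts diagonally in $\tau$, and the set $\{\tau=\tau'\}$ is disjoint from the modulation layer $|\tau-w(\xi)|\sim 2^j$ once $q\le j-5$, so the resulting term vanishes identically. In the resonant regime it is absorbed automatically by the $L^2$-boundedness argument. Taking the supremum over $j\ge 0$ then delivers \eqref{interval1}.
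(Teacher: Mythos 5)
Your proof is correct and follows essentially the same route as the paper's: the same modulation decomposition $f=\sum_q f_q$, the same split at $q\approx j$, Plancherel plus $L^2$-boundedness of multiplication by $\mathbf{1}_I$ in the resonant regime, and the kernel bound $|\widehat{\mathbf{1}_I}(\tau)|\lesssim 1/|\tau|$ with Cauchy--Schwarz in $\tau'$ in the non-resonant regime. Your additional remark handling the Dirac mass in $\widehat{\mathbf{1}_I}$ for half-lines is a valid refinement that the paper does not spell out, but it does not change the argument.
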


\begin{proof} Fix $j \in \mathbb Z_+$. We can also assume that $j \ge 5$. By writing  
\begin{displaymath} 
f=\sum_{q \ge 0}\mathcal{F}^{-1}\big[\eta_q(\tau-w(\xi))\mathcal{F}(f)(\xi,\tau)  \big]=: \sum_{q \ge 0}f_q,
\end{displaymath}
we have that
\begin{equation} \label{interval2}
2^{j/2} \big\| \eta_j(\tau-w(\xi))\mathcal{F}(\mathbf{1}_I(t) f)\big\|_{L^2} \le 2^{j/2}\sum_{q \ge 0} \big\| \eta_j(\tau-w(\xi))\mathcal{F}(\mathbf{1}_I(t) f_q)\big\|_{L^2}.
\end{equation}
On the one hand, Plancherel's identity implies that 
\begin{equation} \label{interval3}
2^{j/2}\sum_{q \ge j-5} \big\| \eta_j(\tau-w(\xi))\mathcal{F}(\mathbf{1}_I(t) f_q)\big\|_{L^2}
\lesssim \sum_{q \ge j-5}2^{q/2}\big\|\eta_q(\tau-w(\xi))\mathcal{F}(f) \big\|_{L^2}.
\end{equation}
On the other hand, we have that $|\mathcal{F}_t(\textbf{1}_I)(\tau)| \lesssim \frac1{|\tau|}$, since $I$ is an interval of $\mathbb R$. Thus, we deduce by applying the Cauchy-Schwarz inequality in $\tau'$ that 
\begin{equation} \label{interval4}
\begin{split} 
2^{j/2}\sum_{q =0}^{j-4} \big\| &\eta_j(\tau-w(\xi))\mathcal{F}(\mathbf{1}_I(t) f_q)\big\|_{L^2} 
\\ & \lesssim 2^{j/2}\sum_{q =0}^{j-4} \big\| \eta_j(\tau-w(\xi))\int_{\mathbb R}|\mathcal{F}(f)(\xi,\tau')|\frac{\eta_q(\tau'-w(\xi))}{|\tau-\tau'|}d\tau'\big\|_{L^2_{\xi,\tau}} \\ & \lesssim
\sum_{q =0}^{j-4}2^{q/2}\big\|\eta_q(\tau-w(\xi))\mathcal{F}(f) \big\|_{L^2},
\end{split}
\end{equation}
since $|\tau-\tau'| \sim 2^j$ in this case.

We deduce estimate \eqref{interval1} gathering \eqref{interval2}--\eqref{interval4} and taking the supreme in $j$.
\end{proof}

\begin{proof}[Proof of Lemma \ref{tecEE}] Assume without loss of generality that $k_1 \le k_2 \le k_3$. Moreover, due to the frequency localization,  we must have $|k_2-k_3|\le 4$. We first prove estimate \eqref{tecEE1}. Let $\beta: \mathbb R \rightarrow [0,1]$ be a smooth function supported in $[-1,1]$ with the property that 
\begin{displaymath} 
\sum_{m \in \mathbb Z}\beta^3(x-m)=1, \quad \forall \, x \in \mathbb R.
\end{displaymath}
Then, it follows that 
\begin{equation} \label{tecEE3}
\begin{split}
\Big|\int_{\mathbb R \times [0,T]}u_1u_2u_3 dxdt  \Big|  \lesssim \!
\sum_{|m| \le C2^{2k_3}}\Big| \int_{\mathbb R^2}\prod_{i=1}^3\big(\beta(2^{2k_3}t-m)\textbf{1}_{[0,T]}u_i\big)dxdt\Big|.
\end{split}
\end{equation}
Now we observe that the sum on the right-hand side of \eqref{tecEE2} is taken over the two disjoint sets
\begin{displaymath} 
\mathcal{A} = \big\{ m \in \mathbb Z \ : \ \beta(2^{2k_3}t-m)\textbf{1}_{[0,T]}=\beta(2^{2k_3}t-m)\big\},
\end{displaymath}
and 
\begin{displaymath} 
\mathcal{B} = \big\{ m \in \mathbb Z \ : \ \beta(2^{2k_3}t-m)\textbf{1}_{[0,T]} \neq\beta(2^{2k_3}t-m) \ \text{and} \  
\beta(2^{2k_3}t-m)\textbf{1}_{[0,T]} \neq 0\big\}.
\end{displaymath}

To deal with the sum over $\mathcal{A}$, we set
\begin{displaymath} 
f_{k_i,2k_3}^m=\eta_{\le 2k_3}(\tau-w(\xi))\big|\mathcal{F}\big(\beta(2^{2k_3}t-m)u_i\big) \big| 
\end{displaymath}
and 
\begin{displaymath} 
f_{k_i,j}^m=\eta_{j}(\tau-w(\xi))\big|\mathcal{F}\big(\beta(2^{2k_3}t-m)u_i\big) \big|, \quad \text{for} \ j > 2k_3
\end{displaymath}
for each $m \in \mathcal{A}$ and $i \in \{1,2,3\}$. Therefore, we deduce by using Plancherel's identity and estimates \eqref{L2bilin2b}, \eqref{L2bilin3}  that 
\begin{equation} \label{tecEE3b}
\begin{split}
\sum_{m \in \mathcal{A}}\Big| \int_{\mathbb R^2}\prod_{i=1}^3&\big(\beta(2^{2k_3}t-m)\textbf{1}_{[-T,T]}u_i\big)dxdt\Big| \\ & \lesssim \sup_{m \in \mathcal{A}}2^{2k_3} \sum_{j_1, j_2, j_3 \ge 2k_3}\int_{\mathbb R^2} f_{k_1,j_1}^m\ast f_{k_2,j_2}^m\cdot f_{k_3,j_3}^md\xi d\tau \\ & 
\lesssim \sup_{m \in \mathcal{A}}2^{-k_3}\prod_{i=1}^3\sum_{j_i\ge 2k_3}2^{j_i/2}\|f_{k_i,j_i}^m\|_{L^2}.
\end{split}
\end{equation}
This implies together with Lemma \ref{lemma2} that 
\begin{equation} \label{tecEE4}
\sum_{m \in \mathcal{A}}\Big| \int_{\mathbb R^2}\prod_{i=1}^3\big(\beta(2^{2k_3}t-m)\textbf{1}_{[0,T]}u_i\big)dxdt\Big| \lesssim  2^{-k_3} \prod_{j=1}^3\|u_j\|_{F_{k_j}}.
\end{equation}

Now observe that $\# \mathcal{B} \le 4$.  We set 
\begin{displaymath} 
g_{k_i,j}^m=\eta_{j}(\tau-w(\xi))\big|\mathcal{F}\big(\beta(2^{2k_3}t-m)\textbf{1}_{[0,T]}u_i\big) \big|, \end{displaymath}
for $i \in \{1,2,3\}$, $j \ge 0$ and $m \in \mathcal{B}$. Then, we deduce arguing as above and using Lemma \ref{interval} that 
\begin{equation} \label{tecEE5}
\begin{split}
\sum_{m \in \mathcal{B}}\Big| \int_{\mathbb R^2}\prod_{i=1}^3&\big(\beta(2^{2k_3}t-m)\textbf{1}_{[0,T]}u_i\big)dxdt\Big| \\ & \lesssim \sup_{m \in \mathcal{B}} \sum_{j_1, j_2, j_3 \ge 0}\int_{\mathbb R^2} g_{k_1,j_1}^m\ast g_{k_2,j_2}^m\cdot g_{k_3,j_3}^md\xi d\tau \\ & 
\lesssim \sup_{m \in \mathcal{B}}2^{-2k_3}\sum_{j_1, j_2, j_3 \ge 0}2^{-j_{med}/2}\prod_{i=1}^3\sup_{j_i \in \mathbb Z_+}2^{j_i/2}\|g_{k_i,j_i}^m\|_{L^2} \\ & 
\lesssim  2^{-2k_3}k_3 \prod_{j=1}^3\|u_j\|_{F_{k_j}}.
\end{split}
\end{equation}
Note that in the last step of \eqref{tecEE5}, we use the fact that $2^{j_{max}} \sim \max (2^{j_{med}},\Omega)$ to control the sum over $j_{max}$. Indeed, the case $2^{j_{max}} \sim 2^{j_{med}}$ is trivial, whereas in the case $2^{j_{max}} \sim \Omega$, we observe from \eqref{lresonance1} that $j_{max} \le 5k_3+6$.

We deduce estimate \eqref{tecEE1} gathering \eqref{tecEE3}--\eqref{tecEE5}.  
Note that estimate \eqref{tecEE1b} is obtained arguing as in \eqref{tecEE3}--\eqref{tecEE5} and by using \eqref{L2bilin2} instead of \eqref{L2bilin2b} and the fact that $2^{j_{max}} \gtrsim 2^{4k_3}2^{k_1}$ (c.f. Lemma \ref{lresonance}).

Finally, we only give a sketch of the proof of estimate \eqref{tecEE2} since it follows the same lines as the proof of estimate \eqref{tecEE1}. Note that under the assumption $|k_{min}-k_{med}| \le 4$, we have that $2^{k_1} \sim 2^{k_2} \sim 2^{k_3}$. Moreover, we can assume that $k_1 \ge 10$, since the proof is trivial otherwise by using \eqref{L2bilin1}. We introduce the same decomposition as in \eqref{tecEE3} and split the summation domain in $\mathcal{A}$ and $\mathcal{B}$. The estimates for the sum over the regions $\mathcal{A}$ and $\mathcal{B}$ follow by using \eqref{L2bilin3b} instead of \eqref{L2bilin2b} and \eqref{L2bilin3} 
and the fact that $2^{j_{max}} \gtrsim 2^{\frac52k_3}$ (c.f. Lemma \ref{lresonance}).

\end{proof}

\begin{lemma} \label{tec2EE} 
Assume that $T \in (0,1]$, $k, \ k_1 \in \mathbb Z_+$ satisfy $k_1 \le k-6$, $u \in F_{k_1}$ and $v \in F^0$. Then, it holds that 
\begin{equation} \label{tec2EE1}
\begin{split}
&\Big|\int_{\mathbb R \times [0,T]}P_kvP_k\partial_x\big(\partial_x^2vP_{k_1}u  \big) dxdt\\ -&\frac12 
\int_{\mathbb R \times [0,T]}P_k\partial_xvP_k\partial_xvP_{k_1}\partial_xu   dxdt  \\+&
 \int_{\mathbb R \times [0,T]}P_k\partial_xvQ_k\partial_xvP_{k_1}\partial_xu  dxdt \Big|\lesssim \Theta(k,k_1) \|P_{k_1}u\|_{F_{k_1}}\sum_{|k'-k| \le 3}\|P_{k'}v\|_{F_{k'}}^2,
\end{split}
\end{equation}
and
\begin{equation} \label{tec2EE2}
\begin{split}
\Big|\int_{\mathbb R \times [0,T]}P_kvP_k\big(P_{k_1}\partial_xu\partial_x^2v \big) &dxdt+
\int_{\mathbb R \times [0,T]}P_k\partial_xvP_k\partial_xvP_{k_1}\partial_xu  dxdt  \Big| \\ &
\lesssim \Theta(k,k_1) \|P_{k_1}u\|_{F_{k_1}}\sum_{|k'-k| \le 3}\|P_{k'}v\|_{F_{k'}}^2,
\end{split}
\end{equation}
where $\Theta(k,k_1)=2^{2k_1}$. Moreover, if $k_1 \ge 1$, we can choose $\Theta(k,k_1)=2^{k_1}2^{-k/2}$.
\end{lemma}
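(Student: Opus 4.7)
The plan is to carry out two successive cancellations for \eqref{tec2EE1}, and one for \eqref{tec2EE2}, and then estimate the remainder with the $L^2$ trilinear bound of Lemma \ref{tecEE}. Writing $f=P_kv$ and $g=P_{k_1}u$, a direct integration by parts yields
\[
\int f\,\partial_x(g\,\partial_x^2 f)\,dx = -\int g\,\partial_xf\,\partial_x^2 f\,dx = \tfrac12\int \partial_x g\,(\partial_xf)^2\,dx = \tfrac12 A,
\]
so the ``uncommuted'' part of $\int f\,P_k\partial_x(g\,\partial_x^2 v)\,dx$ exactly cancels the term $-\tfrac12 A$ in \eqref{tec2EE1}. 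What remains is the contribution of the commutator $[P_k,g]\partial_x^2 v$ inside $P_k(g\,\partial_x^2 v)$. Since $g$ has spatial frequency $\lesssim 2^{k_1}\ll 2^k$, the Taylor expansion $\eta_k(\xi)-\eta_k(\eta) = (\xi-\eta)\eta_k'(\eta) + \tfrac12(\xi-\eta)^2\eta_k''(\widetilde\eta)$ decomposes this commutator into a first-order principal operator of symbol $\partial_xg\cdot\eta_k'(D)$ plus a second-order remainder of relative size $2^{2(k_1-k)}$. Using the identity $\eta_k'(D)\partial_x = iQ_k$ together with one further integration by parts in $x$, the integral of the first-order part against $P_kv$ is seen to equal precisely $-B$. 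Hence the LHS of \eqref{tec2EE1} reduces to a finite sum of integrals involving the second-order commutator remainders.

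For \eqref{tec2EE2} the algebraic identity is simpler: integration by parts gives
\[
\int f\,\partial_x g\,\partial_x^2 f\,dx = -\int\partial_xg\,(\partial_xf)^2\,dx + \tfrac12\int\partial_x^3 g\cdot f^2\,dx = -A + \tfrac12\int \partial_x^3 g\cdot f^2\,dx,
\]
so, up to commutator terms of the same type as above, the LHS of \eqref{tec2EE2} reduces to the low-frequency remainder $\tfrac12\int\partial_x^3 g\cdot f^2\,dx\,dt$.

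Each surviving term is a trilinear expression in frequency-localized pieces of $v$ (at $2^k$) and $P_{k_1}u$ (at $2^{k_1}$), so Lemma \ref{tecEE} applies directly. A careful derivative count shows the total order of the surviving operators to be at most $k+2k_1$ (down from the naive $3k$); combining with the bound $2^{-3k/2-k_1}$ of \eqref{tecEE1b} yields the sharp $\Theta=2^{k_1-k/2}$ when $k_1\ge 1$, while the low-frequency case $k_1=0$ is handled instead by \eqref{tecEE1}, producing $\Theta=2^{2k_1}=1$ as stated.

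The main obstacle is verifying the second cancellation in the first step, that is, that the first-order commutator remainder, after subtraction of $+B$, still gains a further factor $2^{k_1-k}$. This relies on the even/odd structure of $\eta_k$ and $\eta_k'$ and on tracking the Taylor remainder of $\eta_k$ beyond first order. A cleaner alternative is to move to the Fourier side, write the combined LHS as a single trilinear integral over $\xi_1+\xi_2+\xi_3=0$ with multiplier symmetrized in the two $v$-variables, and verify directly, using $\xi_3=-\xi_1-\xi_2$, that both the $O(1)$ and $O(\xi_2/\xi_1)$ contributions cancel, leaving an $O((\xi_2/\xi_1)^2)$ remainder of multiplier size $2^{2k_1+k}$.
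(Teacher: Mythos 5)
Your proposal is correct and follows essentially the same route as the paper: integrate by parts to expose the commutator $[P_k,P_{k_1}u]\partial_x^2v$, cancel the zeroth- and first-order Taylor contributions of $\eta_k$ against the $-\tfrac12\int(P_k\partial_xv)^2P_{k_1}\partial_xu$ and $+\int P_k\partial_xv\,Q_k\partial_xv\,P_{k_1}\partial_xu$ terms (using that $Q_k$ has symbol $\xi\eta_k'(\xi)$), and estimate the second-order remainder via Lemma \ref{tecEE}, with the derivative count $2^{k+2k_1}$ combined with \eqref{tecEE1} or \eqref{tecEE1b} giving the two values of $\Theta$. The "cleaner alternative" you mention at the end — writing the combined expression as a single trilinear form and checking the multiplier is an $O((\xi_1/\xi)^2)$ Taylor remainder, hence bounded after absorbing $\partial_x^2$ onto the low-frequency factor — is precisely the paper's own formulation via the bounded symbol $m(\xi,\xi_1)=\big(\eta_k(\xi)-\eta_k(\xi-\xi_1)-\eta_k'(\xi-\xi_1)\xi_1\big)(\xi-\xi_1)^2/\xi_1^2$.
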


\begin{remark} \label{rematecEE} 
Lemma \ref{tecEE} does not permit to control the terms
\begin{displaymath}
\Big| \int_{\mathbb R \times [0,T]}P_k\partial_xvP_k\partial_xvP_{k_1}\partial_xu  dxdt \Big| \ 
\text{and} \  \Big| \int_{\mathbb R \times [0,T]}P_k\partial_xvQ_k\partial_xvP_{k_1}\partial_xu dxdt \Big|
\end{displaymath}
without loosing a $2^k$ factor, which would not be good to derive the energy estimates. For that reason, we need to modify the energy by a cubic term (c.f. \eqref{Es}) in order to cancel those two terms.
\end{remark}

\begin{proof}[Proof of Lemma \ref{tec2EE}] We first prove estimate \eqref{tec2EE1}. After integrating by part, we rewrite the term on the left-hand side of \eqref{tec2EE1} as 
\begin{displaymath} 
-\int_{\mathbb R \times [0,T]}P_k\partial_xv \Big( \big[P_k,P_{k_1}u \big]\partial_x^2v-Q_k\partial_xvP_{k_1}\partial_xu\Big) dxdt,
\end{displaymath}
where $[A,B]=AB-BA$ denotes the commutator of $A$ and $B$. Now, straightforward computations using \eqref{psik} lead to 
\begin{displaymath}
\begin{split}
\mathcal{F}\Big( \big[P_k,P_{k1}u &\big]\partial_x^2v-Q_k\partial_xvP_{k_1}\partial_xu\Big)(\xi,\tau) \\& = 
c\int_{\mathbb R^2}m(\xi,\xi_1) \mathcal{F}(P_{k_1}\partial_x^2u)(\xi_1,\tau_1) \mathcal{F}(v)(\xi-\xi_1,\tau-\tau_1) d\xi_1d\tau_1,
\end{split}
\end{displaymath}
where 
\begin{displaymath} 
\big|m(\xi,\xi_1)\big| := \Big|\frac{\eta_k(\xi)-\eta_k(\xi-\xi_1)-\eta_k'(\xi-\xi_1)\xi_1}{\xi_1^2}(\xi-\xi_1)^2 \Big| 
\lesssim 1,
\end{displaymath}
due to the Taylor-Lagrange theorem and the frequency localization on $\xi$ and $\xi_1$. Therefore estimate \eqref{tec2EE1} follows arguing exactly as in the proof of Lemma \ref{tecEE}.

To prove of estimate \eqref{tec2EE2}, we first observe integrating by parts that 
\begin{displaymath} 
\begin{split}
\int_{\mathbb R \times [0,T]}P_k\partial_xvP_k\partial_xvP_{k_1}\partial_xu  dxdt&=
-\int_{\mathbb R \times [0,T]}P_kvP_k\partial_x^2vP_{k_1}\partial_xu  dxdt \\ & \quad
-\int_{\mathbb R \times [0,T]}P_kvP_k\partial_xvP_{k_1}\partial_x^2u  dxdt
\end{split}
\end{displaymath} 
First, we apply estimates \eqref{tecEE1} and \eqref{tecEE1b} to obtain that
\begin{displaymath} 
\Big| \int_{\mathbb R \times [0,T]}P_kvP_k\partial_xvP_{k_1}\partial_x^2u_1  dxdt\Big| \lesssim \theta(k,k_1)\|P_{k_1}u\|_{F_{k_1}}\|P_kv\|_{F_k}^2.
\end{displaymath}
On the other hand, we observe that 
\begin{displaymath} 
\begin{split}
&\int_{\mathbb R \times [0,T]}P_kvP_k\big(\partial_x^2vP_{k_1}\partial_xu \big) dxdt-
\int_{\mathbb R \times [0,T]}P_kvP_k\partial_x^2vP_{k_1}\partial_xu  dxdt\\ 
&=\int_{\mathbb R \times [0,T]}P_kv \Big(\big[P_k,P_{k_1}\partial_xu \big]\partial_x^2v \Big)dxdt.
\end{split}
\end{displaymath}
An easy computation gives 
\begin{displaymath}
\begin{split}
\mathcal{F}\big( \big[P_k,P_{k1}\partial_xu &\big]\partial_x^2v\big)(\xi,\tau) \\& = 
c\int_{\mathbb R^2}\widetilde{m}(\xi,\xi_1) \mathcal{F}(P_{k_1}\partial_x^2u)(\xi_1,\tau_1) \mathcal{F}(\partial_xv)(\xi-\xi_1,\tau-\tau_1) d\xi_1d\tau_1,
\end{split}
\end{displaymath}
where 
\begin{displaymath} 
\big|\widetilde{m}(\xi,\xi_1)\big|=\Big| \frac{\eta_k(\xi)-\eta_k(\xi-\xi_1)}{\xi_1}(\xi-\xi_1)\Big| \lesssim 1,
\end{displaymath}
due to the mean value theorem and the frequency localization on $\xi$ and $\xi_1$. We finish the proof of estimate \eqref{tec2EE2} arguing exactly as in the proof of Lemma \ref{tec2EE}.
\end{proof}

\begin{lemma} \label{tec3EE} 
Assume that $T \in (0,1]$, $k_1, \ k_2, \ k_3, k_4 \in \mathbb Z_+$  and that $u_j \in F_{k_j}$ for $j=1,2,3,4$. 
If $k_{thd} \le k_{max}-5$, then it holds that
\begin{equation} \label{tec3EE1}
\Big|\int_{\mathbb R \times [0,T]}u_1u_2u_3u_4 dxdt \Big| \lesssim 2^{-k_{max}}2^{k_{min}/2} \prod_{j=1}^4\|u_j\|_{F_{k_j}}.
\end{equation}
If instead, $k_{min} \ll k_{thd} \sim k_{sub} \sim k_{max}$, then it holds that
\begin{equation} \label{tec3EE2}
\Big|\int_{\mathbb R \times [0,T]}u_1u_2u_3u_4 dxdt \Big| \lesssim 2^{-k_{max}}2^{k_{min}/2} \prod_{j=1}^4\|u_j\|_{F_{k_j}}.
\end{equation}
\end{lemma}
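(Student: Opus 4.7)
The proof runs along the same lines as that of Lemma \ref{tecEE}, with the quadrilinear $L^{2}$ estimates of Corollary \ref{coroL2trilin} playing the role of the bilinear ones. I relabel the functions so that $k_{1}\le k_{2}\le k_{3}\le k_{4}$, so that $k_{min}=k_{1}$, $k_{thd}=k_{2}$, $k_{sub}=k_{3}$, $k_{max}=k_{4}$; the Plancherel constraint $\xi_{1}+\xi_{2}+\xi_{3}+\xi_{4}=0$ automatically forces $k_{3}\sim k_{4}$. Fix a smooth $\beta$ supported in $[-1,1]$ with $\sum_{m}\beta^{4}(x-m)=1$, and decompose
$$\int_{\mathbb{R}\times[0,T]} u_{1}u_{2}u_{3}u_{4}\,dxdt=\sum_{|m|\lesssim 2^{2k_{max}}}\int_{\mathbb{R}^{2}}\prod_{i=1}^{4}\bigl(\beta(2^{2k_{max}}t-m)\mathbf{1}_{[0,T]}u_{i}\bigr)dxdt,$$
and split the $m$-sum into an interior set $\mathcal{A}$ of cardinality $\lesssim 2^{2k_{max}}$ (for which the cutoff $\mathbf{1}_{[0,T]}$ is inactive) and a boundary set $\mathcal{B}$ of at most two indices.

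For $m\in\mathcal{A}$ in Case 1 ($k_{2}\le k_{4}-5$), set $f_{k_{i},j_{i}}^{m}=\eta_{j_{i}}(\tau-w(\xi))|\mathcal{F}(\beta(2^{2k_{max}}t-m)u_{i})|$, which is supported in $D_{k_{i},j_{i}}$ with $j_{i}\ge 2k_{max}$ by the time-support of $\beta$. Applying Plancherel and Corollary \ref{coroL2trilin}(b) --- estimate \eqref{coroL2trilin3} in the exceptional subcase $(k_{i},j_{i})=(k_{thd},j_{max})$ and \eqref{coroL2trilin4} otherwise --- produces the factor $2^{k_{min}/2}2^{-2k_{max}}$ together with an extra $2^{-j_{med}/2}$ (resp.\ $2^{-j_{max}/2}$), which contributes $2^{-k_{max}}$ since $j_{med},\,j_{max}\ge 2k_{max}$. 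Summing the remaining $\prod 2^{j_{i}/2}\|f_{k_{i},j_{i}}^{m}\|_{L^{2}}$ over $j_{i}$'s via Corollary \ref{coro2} replaces each factor by $\|u_{i}\|_{F_{k_{i}}}$, yielding
$$\sum_{m\in\mathcal{A}}|\cdots|\lesssim 2^{2k_{max}}\cdot 2^{-2k_{max}}\cdot 2^{-k_{max}}\cdot 2^{k_{min}/2}\prod_{i=1}^{4}\|u_{i}\|_{F_{k_{i}}}.$$
The $\mathcal{B}$-contribution is handled analogously, with Lemma \ref{interval} replacing Corollary \ref{coro2}; the loss of the $j_{i}\ge 2k_{max}$ lower bound is offset by $\#\mathcal{B}=O(1)$ (which removes the counting factor $2^{2k_{max}}$) and by geometric summability after noting $2^{j_{max}}\gtrsim \max(2^{j_{med}},|\widetilde{\Omega}|)$ on the support of the convolution.

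Case 2, where $k_{1}\ll k_{2}\sim k_{3}\sim k_{4}$, is the main obstacle since the hypothesis $k_{thd}\le k_{max}-5$ of Corollary \ref{coroL2trilin}(b) fails, and the crude \eqref{coroL2trilin1} alone falls short of the target by $2^{3k_{max}/2}$. The saving ingredient is a quadrilinear resonance bound: on the hyperplane $\xi_{1}+\xi_{2}+\xi_{3}+\xi_{4}=0$, expanding $\xi_{2}^{5}+\xi_{3}^{5}+\xi_{4}^{5}$ with $\xi_{2}+\xi_{3}+\xi_{4}=-\xi_{1}$ and using $\Omega(y,z)=-5yz(y+z)(y^{2}+yz+z^{2})$ gives
$$\widetilde{\Omega}:=\sum_{i=1}^{4}\xi_{i}^{5}=5\xi_{2}\xi_{3}\xi_{4}(\xi_{2}^{2}+\xi_{2}\xi_{3}+\xi_{3}^{2})+O(|\xi_{1}|\cdot 2^{4k_{max}})+\xi_{1}^{5}.$$
Since $k_{min}\ll k_{max}$ forces $|\xi_{2}+\xi_{3}|=|\xi_{4}+\xi_{1}|\sim 2^{k_{max}}$, Lemma \ref{lresonance}'s argument shows $|\xi_{2}^{2}+\xi_{2}\xi_{3}+\xi_{3}^{2}|\sim 2^{2k_{max}}$, so the dominant term is of size $2^{5k_{max}}$ and $|\widetilde{\Omega}|\sim 2^{5k_{max}}$. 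Because $\sum_{i}(\tau_{i}-w(\xi_{i}))=-\widetilde{\Omega}$ on the support of the relevant convolution, this forces $j_{max}\gtrsim 5k_{max}$. Combined with $j_{sub}\ge 2k_{max}$, we gain $2^{-(j_{sub}+j_{max})/2}\le 2^{-7k_{max}/2}$, which inserted into \eqref{coroL2trilin1} after expressing $2^{(j_{min}+j_{thd})/2}\prod\|f_{k_{i},j_{i}}^{m}\|_{L^{2}}=2^{-(j_{sub}+j_{max})/2}\prod 2^{j_{i}/2}\|f_{k_{i},j_{i}}^{m}\|_{L^{2}}$ and summing via Corollary \ref{coro2} yields precisely the required $2^{-k_{max}}2^{k_{min}/2}\prod\|u_{i}\|_{F_{k_{i}}}$ bound after the $2^{2k_{max}}$ interval counting.
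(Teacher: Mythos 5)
Your proof is correct and follows essentially the same route as the paper's (very terse) argument: Case 1 by repeating the time-interval decomposition of Lemma \ref{tecEE} with the quadrilinear estimates \eqref{L2trilin3}--\eqref{L2trilin4} in place of the bilinear ones, and Case 2 by combining \eqref{L2trilin1} with the resonance bound $|\widetilde{\Omega}|\sim 2^{5k_{max}}$, which forces $j_{max}\ge 5k_{max}-C$. The only cosmetic difference is that you invoke the dual formulation of Corollary \ref{coroL2trilin} rather than Lemma \ref{L2trilin} itself; the bookkeeping of powers of $2^{k_{max}}$ in both cases matches the paper's.
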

\begin{proof} The proof of estimates \eqref{tec3EE1}--\eqref{tec3EE2} follows arguing exactly as in the proof of  \eqref{tecEE1}. To prove estimate \eqref{tec3EE1}, we use estimates \eqref{L2trilin3}--\eqref{L2trilin4} instead of estimates \eqref{L2bilin2b} and \eqref{L2bilin3}. To prove estimate \eqref{tec3EE2}, we use estimate \eqref{L2trilin1} and observe that due to the frequency localization $\widetilde{\Omega} \sim 2^{5k_{max}}$, so that $j_{max} \ge 5k_{max}-C$, where $C$ is a fixed positive constant depending only on the frequency localization and $\widetilde{\Omega}$ was defined in \eqref{resonancetilde}. 
\end{proof}

Now we give the proof of Proposition \ref{propEE}. 
\begin{proof}[Proof of Proposition \ref{propEE}] 
Let $u \in C([-T,T];H^{\infty}(\mathbb R))$ be a solution to \eqref{5KdV} with $c_3=0$. We choose an extension $\widetilde{u}$ of $u$ on $\mathbb R^2$ satisfying 
\begin{equation} \label{propEE1b}
\widetilde{u}_{|_{\mathbb R\times [-T,T]}}=u \quad \text{and} \quad \|\widetilde{u}\|_{F^s} \le 2\|u\|_{F^s(T)}.
\end{equation} 
Then, for any $k \in \mathbb Z_+ \cap [1,+\infty)$ and $t \in [-T,T]$, we differentiate $E_k(u)$ with respect to $t$ and deduce using \eqref{5KdV} that 
\begin{equation} \label{propEE2}
\frac{d}{dt}E_k(u) = \mathcal{I}_k(u)+\mathcal{J}_k(u)+\alpha \mathcal{L}_k^1(u)+\alpha \mathcal{N}_k^1(u)+\beta \mathcal{L}_k^2(u)+\beta \mathcal{N}_k^2(u) ,
\end{equation}
where 
\begin{displaymath}
\mathcal{I}_k(u)=2c_1\int_{\mathbb R}P_kuP_k\partial_x\big((\partial_xu)^2\big)dx,
\end{displaymath}
\begin{displaymath} 
\mathcal{J}_k(u)=2c_2\int_{\mathbb R}P_kuP_k\partial_x\big(u\partial_x^2u\big)dx,
\end{displaymath}
\begin{displaymath} 
\begin{split}
\mathcal{L}_k^1(u)&=\int_{\mathbb R}\partial_x^5uP_k\partial_x^{-1}uQ_k\partial_x^{-1}udx+
\int_{\mathbb R}uP_k\partial_x^4uQ_k\partial_x^{-1}udx\\ & \quad +
\int_{\mathbb R}uP_k\partial_x^{-1}uQ_k\partial_x^4udx,
\end{split}
\end{displaymath} 

\begin{displaymath}  
\begin{split}
\mathcal{N}_k^1(u)&=c_1\int_{\mathbb R}\partial_x\big((\partial_xu)^2\big)P_k\partial_x^{-1}uQ_k\partial_x^{-1}udx+
c_1\int_{\mathbb R}uP_k\big((\partial_xu)^2\big)Q_k\partial_x^{-1}udx\\ & \quad +
c_1\int_{\mathbb R}uP_k\partial_x^{-1}uQ_k\big((\partial_xu)^2\big)dx+c_2
\int_{\mathbb R}\partial_x\big(u\partial_x^2u\big)P_k\partial_x^{-1}uQ_k\partial_x^{-1}udx \\ &
\quad+c_2\int_{\mathbb R}uP_k\big(u\partial_x^2u\big)Q_k\partial_x^{-1}udx
+c_2\int_{\mathbb R}uP_k\partial_x^{-1}uQ_k\big(u\partial_x^2u\big)dx,
\end{split}
\end{displaymath}

\begin{displaymath}  
\mathcal{L}_k^2(u)=\int_{\mathbb R}\partial_x^5uP_k\partial_x^{-1}uP_k\partial_x^{-1}udx+
2\int_{\mathbb R}uP_k\partial_x^4uP_k\partial_x^{-1}udx,
\end{displaymath}
and 
\begin{equation} \label{propEE2b}
\begin{split}
\mathcal{N}_k^2(u)&=c_1\int_{\mathbb R}\partial_x\big((\partial_xu)^2\big)P_k\partial_x^{-1}uP_k\partial_x^{-1}udx+
2c_1\int_{\mathbb R}uP_k\big((\partial_xu)^2\big)P_k\partial_x^{-1}udx\\ & \quad +
c_2\int_{\mathbb R}\partial_x\big(u\partial_x^2u\big)P_k\partial_x^{-1}uP_k\partial_x^{-1}udx 
+2c_2\int_{\mathbb R}uP_k\big(u\partial_x^2u\big)P_k\partial_x^{-1}udx.
\end{split}
\end{equation}
Now, we fix $t_k \in [-T,T]$. Without loss of generality, we can assume that $0<t_k \le T$. Therefore, we obtain integrating \eqref{propEE2} between $0$ and $t_k$ that 
\begin{equation} \label{propEE3}
\begin{split}
&E_k(u)(t_k)-E_k(u)(0) \\ & \le \Big|\int_{[0,t_k]}\big(\mathcal{I}_k(u)+\mathcal{J}_k(u)+\alpha \mathcal{L}_k^1(u)+\alpha \mathcal{N}_k^1(u)+\beta \mathcal{L}_k^2(u)+\beta \mathcal{N}_k^2(u)\big)dt\Big|.
\end{split}
\end{equation}

Next we estimate the right-hand side of \eqref{propEE3}. \\

\noindent \textit{Estimates for the cubic terms.} We deduce after some integrations by parts that 
\begin{displaymath} 
\begin{split}
\mathcal{L}_k^1(u)&=\int_{\mathbb R}\partial_x^3uP_k\partial_xuQ_k\partial_x^{-1}udx+
2\int_{\mathbb R}\partial_x^3uP_kuQ_kudx+\int_{\mathbb R}\partial_x^3uP_k\partial_x^{-1}uQ_k\partial_xudx
\\ &\ +
\int_{\mathbb R}\partial_x^2uP_k\partial_x^2uQ_k\partial_x^{-1}udx+
2\int_{\mathbb R}\partial_xuP_k\partial_x^2uQ_kudx
+\int_{\mathbb R}uP_k\partial_x^2uQ_k\partial_xudx\\ &  \ +
\int_{\mathbb R}\partial_x^2uP_k\partial_x^{-1}uQ_k\partial_x^{2}udx
+2\int_{\mathbb R}\partial_xuP_kuQ_k\partial_x^2udx
+\int_{\mathbb R}uP_k\partial_xuQ_k\partial_x^2udx \\ & 
=5\int_{\mathbb R}\partial_x^3uP_kuQ_kudx-5\int_{\mathbb R}\partial_xuP_k\partial_xuQ_k\partial_xudx.
\end{split}
\end{displaymath}
Similarly it holds that
\begin{displaymath} 
\mathcal{L}_k^2(u)
=5\int_{\mathbb R}\partial_x^3uP_kuP_kudx-5\int_{\mathbb R}\partial_xuP_k\partial_xuP_k\partial_xudx.
\end{displaymath}
We choose $\alpha=-\frac{2c_2}5$ and $\beta=\frac{c_2-4c_1}5$. Then it follows, after performing a dyadic decomposition on $u$, that 
\begin{equation} \label{propEE3b}
\Big|\int_{[0,t_k]}\big(\mathcal{I}_k(u)+\mathcal{J}_k(u)+\alpha\mathcal{L}_k^1(u)+\beta\mathcal{L}_k^2(u)\big)dt\Big| 
\lesssim  \sum_{j=1}^7T_j(k),
\end{equation}
for each $k \ge 1$, with 
\begin{displaymath} T_1(k)=\sum_{0 \le k_1 \le k-6}\Big|\int_{\mathbb R \times [0,t_k]}\Big(P_kuP_k\big(\partial_x^2uP_{k_1}\partial_xu\big) +P_k\partial_xuP_k\partial_xuP_{k_1}\partial_xu \big) dxdt\Big|, 
\end{displaymath}
\begin{displaymath} 
T_2(k)=\sum_{0 \le k_1 \le k-6}\Big| \int_{\mathbb R \times [0,t_k]}P_k\partial_xu \Big( \big[P_k,P_{k_1}u \big]\partial_x^2u-Q_k\partial_xuP_{k_1}\partial_xu\Big) dxdt\Big|,
\end{displaymath}
\begin{displaymath} 
T_3(k)=\sum_{k_1 \ge k-5, k_2 \ge 0}\Big| \int_{\mathbb R \times [0,t_k]}P_k^2uP_{k_1}\partial_xuP_{k_2}\partial_x^2udt\Big|,
\end{displaymath}
\begin{displaymath} 
T_4(k)=\sum_{k_1 \ge k-5, k_2 \ge 0}\Big| \int_{\mathbb R \times [0,t_k]}P_k^2\partial_xuP_{k_1}uP_{k_2}\partial_x^2udt\Big|,
\end{displaymath}
\begin{displaymath} 
T_5(k)=\sum_{k-5\le k_1 \le k+4}\Big| \int_{\mathbb R \times [0,t_k]}P_{k_1}\partial_xuP_k\partial_xu\big(Q_k\partial_xu+P_k\partial_xu\big)dxdt\Big|,
\end{displaymath}
\begin{displaymath} 
T_6(k)=\sum_{k_1 \le k-5}\Big|\int_{\mathbb R \times [0,t_k]}P_{k_1}\partial_x^3uP_ku\big(Q_ku+P_ku\big)dx \Big| \end{displaymath}
and
\begin{displaymath} 
T_7(k)=\sum_{k-4 \le k_1 \le k+4}\Big|\int_{\mathbb R \times [0,t_k]}P_{k_1}\partial_x^3uP_ku\big(Q_ku+P_ku\big)dx \Big|.
\end{displaymath}
Clearly, Lemma \ref{tec2EE} and the Cauchy-Schwarz inequality imply that 
\begin{equation} \label{propEE4} 
\begin{split}
T_1(k)+T_2(k) & \lesssim \sum_{0 \le k_1 \le k-6}2^{k_1/2}\|P_{k_1}\widetilde{u}\|_{F_{k_1}}\sum_{|k-k'| \le3}\|P_{k'}\widetilde{u}\|_{F_{k'}}^2 \\ & \lesssim \|\widetilde{u}\|_{F^{1/2+}}\sum_{|k-k'| \le3}\|P_{k'}\widetilde{u}\|_{F_{k'}}^2 .
\end{split}
\end{equation}
Similarly, we get applying estimate \eqref{tecEE1} if $k_1=0$, and estimate \eqref{tecEE1b} if $k_1>0$, that 
\begin{equation} \label{propEE5} 
T_6(k)  \lesssim  \|\widetilde{u}\|_{F^{\frac12+}}\sum_{|k-k'| \le3}\|P_{k'}\widetilde{u}\|_{F_{k'}}^2 .
\end{equation} 
Now, estimate \eqref{tecEE2} leads to 
\begin{equation} \label{propEE6} 
T_5(k)+T_7(k)  \lesssim  \|\widetilde{u}\|_{F^{\frac54}}\|P_{k}\widetilde{u}\|_{F_{k}}^2 .
\end{equation}
To estimate $T_3(k)$, when $k \ge 1$ is given, we denote 
\begin{displaymath} 
\begin{array}{l}
B_1= \big\{ (k,k_1) \in \mathbb Z_+^2 \ : \ |k-k_1|\le 3 \ \text{and} \  0 \le k_2 \le \max(k,k_1)-5 \big\},  \\ 
B_2=\big\{ (k,k_1) \in \mathbb Z_+^2 \ : \ |k-k_1|\le 5 \ \text{and} \  |k_2-k_1| \le 5 \big\}, \\
B_3= \big\{ (k,k_1) \in \mathbb Z_+^2 \ : \ |k_2-k_1|\le 3 \ \text{and} \  1 \le k \le \max(k_1,k_2)-5 \big\}.
\end{array}
\end{displaymath}
Thus, we deduce from the frequency localization that 
\begin{displaymath} 
T_3(k)=\sum_{j=1}^3\sum_{(k_1,k_2) \in B_j}\Big| \int_{\mathbb R \times [0,t_k]}P_k^2uP_{k_1}\partial_xuP_{k_2}\partial_x^2udt\Big|.
\end{displaymath}
To estimate the sum over $B_1$, we use estimate \eqref{tecEE1} in the case $k_2=0$ and estimate \eqref{tecEE1b} in the case $k_2 \ge 1$. It  follows that 
\begin{displaymath} 
\sum_{(k_1,k_2) \in B_1}\Big| \int_{\mathbb R \times [0,t_k]}P_k^2uP_{k_1}\partial_xuP_{k_2}\partial_x^2udt\Big| \lesssim  \|\widetilde{u}\|_{F^{\frac12+}}\sum_{|k'-k| \le 3}\|P_{k'}\widetilde{u}\|_{F_{k'}}^2.
\end{displaymath} 
The sum over $B_2$ is treated by using estimate \eqref{tecEE2}, which gives 
\begin{displaymath} 
\sum_{(k_1,k_2) \in B_2}\Big| \int_{\mathbb R \times [0,t_k]}P_k^2uP_{k_1}\partial_xuP_{k_2}\partial_x^2udt\Big| \lesssim  \|\widetilde{u}\|_{F^{\frac54}}\sum_{|k'-k| \le 3}\|P_{k'}\widetilde{u}\|_{F_{k'}}^2.
\end{displaymath} 
Finally, estimate \eqref{tecEE1b} (recall here that $k \ge 1$)  yields
\begin{displaymath}
 \sum_{(k_1,k_2) \in B_3}\Big| \int_{\mathbb R \times [0,t_k]}P_k^2uP_{k_1}\partial_xuP_{k_2}\partial_x^2udt\Big| \lesssim  2^{-k}\|P_k\widetilde{u}\|_{F_k}\sum_{k_1 \ge k+5}2^{3k_1/2}\|P_{k_1}\widetilde{u}\|_{F_{k_1}}^2.
\end{displaymath}
Moreover, observe that  the same estimates also hold for $T_4(k)$ (with even a better bound when the sum is taken over $B_3$). This implies that 
\begin{equation} \label{propEE7}
\begin{split}
T_3(k)+T_4(k)  &\lesssim \|\widetilde{u}\|_{F^{\frac54}}\sum_{|k'-k| \le 3}\|P_{k'}\widetilde{u}\|_{F_k}^2 \\  & \quad+2^{-k}\|P_k\widetilde{u}\|_{F_k}\sum_{k_1 \ge k+5}2^{3k_1/2}\|P_{k_1}\widetilde{u}\|_{F_{k_1}}^2.
\end{split}
\end{equation}
Therefore, we deduce  gathering \eqref{propEE3b}--\eqref{propEE7}, taking the supreme over $t_k \in [0,T]$, summing in $k$ and using \eqref{propEE1b} that 
\begin{equation} \label{propEE8} 
\begin{split}
&\sum_{k \ge 1}2^{2ks} \sup_{t_k \in [0,T]}\Big|\int_{[0,t_k]}\big(\mathcal{I}_k(u)+\mathcal{J}_k(u)+\alpha\mathcal{L}_k^1(u)+\beta\mathcal{L}_k^2(u)\big)dt\Big| 
\\ & \  \lesssim \sum_{k \ge 1}2^{2ks}\|\widetilde{u}\|_{F^{\frac54}}\|P_{k}\widetilde{u}\|_{F_k}^2+\sum_{k \ge 1}2^{k(2s-1)}\|P_k\widetilde{u}\|_{F_k}\sum_{k_1 \ge k+5}2^{3k_1/2}\|P_{k_1}\widetilde{u}\|_{F_{k_1}}^2 \\ 
& \lesssim \|u\|_{F^{\frac54}(T)}\|u\|_{F^s(T)}^2+\|u\|_{F^{\frac12+}(T)}\|u\|_{F^s(T)}^2.
\end{split}
\end{equation}
Note that we use that $s \ge 1$ and apply the Cauchy-Schwarz inequality in $k$ and $k_1$ to obtain the last inequality in 
\eqref{propEE8}. \\

\noindent \textit{Estimates for the fourth order terms.} We estimate the fourth order term corresponding to $ \mathcal{N}_k^2(u) $. After a few integration by parts in \eqref{propEE2b}, we get that 
\begin{equation} \label{propEE8b}
\Big|\int_{[0,t_k]} \mathcal{N}_k^2(u) dt\Big|  \lesssim \sum_{i=1}^4X_i(k).
\end{equation}
for each $k \ge 1$, whith 
\begin{displaymath}
\begin{split}
X_1(k)&=\Big|\int_{\mathbb R \times [0,t_k]}\big(\partial_xu \big)^2P_k\partial_x^{-1}uP_ku dxdt\Big| + \Big|\int_{\mathbb R \times [0,t_k]}u\partial_xu P_kuP_ku dxdt\Big|  \\ 
& \quad +\Big|\int_{\mathbb R \times [0,t_k]}u\partial_xu P_k\partial_xuP_k\partial_x^{-1}u dxdt\Big| ,
\end{split}
\end{displaymath}
\begin{displaymath}
X_2(k)= \Big|\int_{\mathbb R \times [0,t_k]}uP_k\big((\partial_xu)^2\big)P_k\partial_x^{-1}u dxdt\Big| ,
\end{displaymath}
\begin{displaymath} 
X_3(k)=\Big|\int_{\mathbb R \times [0,t_k]}\partial_xuP_k\big(u\partial_xu\big)P_k\partial_x^{-1}u dxdt\Big| 
\end{displaymath}
and 
\begin{displaymath}
X_4(k)= \Big|\int_{\mathbb R \times [0,t_k]}uP_k\big(u\partial_xu\big)P_ku dxdt\Big| .
\end{displaymath}
We use the Strichartz estimate \eqref{Bstrichartz1} with $\alpha=2$, estimate \eqref{lemma1.1} and H\"older's inequality to deduce that 
\begin{equation} \label{propEE9} 
\begin{split}
\sum_{k \ge 1}2^{2ks} \sup_{t_k \in [0,T]} X_1(k)  & \lesssim \big( \|u\|_{L^2_TL^{\infty}_x}+\|\partial_xu\|_{L^2_TL^{\infty}_x}\big) \|\partial_xu\|_{L^2_TL^{\infty}_x}\sum_{k \ge 1}2^{2ks}\|P_ku\|_{L^{\infty}_TL^2_x}^2 \\
& \lesssim \big(\|u\|_{F^{\frac12+}(T)}+\|u\|_{F^{\frac34+}(T)})\|u\|_{F^{\frac34+}(T)}\|u\|_{B^s(T)}^2.
\end{split}
\end{equation}

To deal with $X_2(k)$, we perform dyadic decompositions over $u$ and $\partial_xu$. Then 
\begin{displaymath} 
X_2(k) \le \sum_{j=1}^4 \sum_{(k_1,k_2) \in D_j} 
\Big| \int_{\mathbb R \times [0,t_k]}uP_k\big(P_{k_1}\partial_xuP_{k_2}\partial_xu\big)P_k\partial_x^{-1}udxdt \Big|,
\end{displaymath}
where 
\begin{equation} \label{propEE9b} 
\begin{array}{l}
D_1= \big\{ (k_1,k_2) \in \mathbb Z_+^2 \ : \ |k-k_1|\le 3 \ \text{and} \  0 \le k_2 \le \max(k,k_1)-5 \big\},  \\ 
D_2= \big\{ (k_1,k_2) \in \mathbb Z_+^2 \ : \ |k-k_2|\le 3 \ \text{and} \  0 \le k_1 \le \max(k,k_2)-5 \big\},  \\ 
D_3=\big\{ (k_1,k_2) \in \mathbb Z_+^2 \ : \ |k-k_1|\le 5 \ \text{and} \  |k_2-k_1| \le 5 \big\}, \\
D_4= \big\{ (k_1,k_2) \in \mathbb Z_+^2 \ : \ |k_2-k_1|\le 3 \ \text{and} \  1 \le k \le \max(k_1,k_2)-5 \big\}.
\end{array}
\end{equation}
By using H\"older's inequality and the Cauchy-Schwarz inequality, we can bound the sum over $D_1 \cup D_2$ by
\begin{displaymath} 
\|u\|_{L^{\infty}_{T,x}}\|D_x^{1+}u\|_{\widetilde{L^2_TL^{\infty}_x}}\sum_{|k'-k| \le 3}\|P_{k'}u\|_{L^{\infty}_TL^2_x}^2.
\end{displaymath}
Thus,  it follows from  estimates \eqref{lemma1.1} and \eqref{Bstrichartz1b} that
\begin{displaymath} 
\begin{split}
\sum_{(k_1,k_2) \in D_1 \cup D_2} 
\Big| \int_{\mathbb R \times [0,t_k]}&uP_k\big(P_{k_1}\partial_xuP_{k_2}\partial_xu\big)P_k\partial_x^{-1}udxdt \Big|
\\ & \lesssim \|u\|_{F^{\frac12+}(T)}\|u\|_{F^{\frac34+}(T)}\sum_{|k'-k| \le 3}\|P_{k'}u\|_{L^{\infty}_TL^2_x}^2.
\end{split}
\end{displaymath}
A similar bound holds over $D_3$.  In the region $D_4$, we have that 
\begin{displaymath} 
\begin{split}
\sum_{(k_1,k_2) \in D_4} 
\Big| &\int_{\mathbb R \times [0,t_k]}uP_k\big(P_{k_1}\partial_xuP_{k_2}\partial_xu\big)P_k\partial_x^{-1}udxdt \Big|
\\ & \lesssim \|u\|_{F^{\frac12+}(T)}2^{-k}\|P_ku\|_{L^{\infty}_TL^2_x}\sum_{k_1 \ge k+5}2^{2k_1}\|P_{k_1}u\|_{L^2_TL^{\infty}_x}\|P_{k_1}u\|_{L^{\infty}_TL^2_x}.
\end{split}
\end{displaymath}
Hence, we deduce after taking the supreme of $t_k$ over $[0,T]$, summing over $k \in \mathbb Z_+ \cap [1,+\infty)$ and using estimate \eqref{Bstrichartz1b} that 
\begin{equation} \label{propEE10} 
\sum_{k \ge 1}2^{2ks} \sup_{t_k \in [0,T]}X_2(k)    \lesssim \|u\|_{F^{\frac12+}(T)} \|u\|_{F^{\frac34+}(T)}\|u\|_{B^{s}(T)}^2.
 \end{equation}
 Similarly, we get that 
 \begin{equation} \label{propEE11} 
\sum_{k \ge 1}2^{2ks} \sup_{t_k \in [0,T]}X_3(k)    \lesssim \|u\|_{F^{\frac34+}(T)}^2\|u\|_{B^{s}(T)}^2.
 \end{equation}
 
 To deal with $X_4(k)$, we use the following decomposition
 \begin{equation} \label{propEE12}
 \begin{split}
X_4(k) &\le \sum_{k_1 \ge k-7} 
\Big| \int_{\mathbb R \times [0,t_k]}P_{k_1}uP_k\big(u\partial_xu\big)P_kudxdt \Big| \\ & \quad+
\sum_{j=1}^4\sum_{(k_1, k_2, k_3) \in E_j} 
\Big| \int_{\mathbb R \times [0,t_k]}P_{k_1}uP_k\big(P_{k_2}u\partial_xP_{k_3}u\big)P_kudxdt \Big|\\ & =: 
\sum_{j=0}^4X_{4,j}(k),
\end{split}
\end{equation} 
where 
\begin{displaymath} 
\begin{array}{l}
E_1= \big\{ (k_1,k_2,k_3) \in \mathbb Z_+^3 \ : \ 0 \le k_1 \le k-8, \ |k-k_3|\le 3, \  0 \le k_2 \le \max(k,k_3)-5 \big\},  \\ 
E_2= \big\{ (k_1,k_2,k_3) \in \mathbb Z_+^3 \ : \ 0 \le k_1 \le k-8, \ |k-k_2|\le 3, \  0 \le k_3 \le \max(k,k_2)-5 \big\},  \\ 
E_3= \big\{ (k_1,k_2,k_3) \in \mathbb Z_+^3 \ : \ 0 \le k_1 \le k-8, \ |k-k_2|\le 5, \   |k-k_3|\le 5 \big\}, \\
E_4= \big\{ (k_1,k_2,k_3) \in \mathbb Z_+^3 \ : \ 0 \le k_1 \le k-8, \ |k_2-k_3|\le 3, \   1 \le k \le \max(k_2,k_3)-5 \big\}.
\end{array}
\end{displaymath}
Observe that, according to estimates \eqref{lemma1.1} and \eqref{Bstrichartz1} 
\begin{equation} \label{propEE13} 
\begin{split}
X_{4,0}(k)&=\sum_{ k-7 \le k_1 \le k+3} 
\Big| \int_{\mathbb R \times [0,t_k]}u\partial_xuP_k\big(P_{k_1}uP_ku\big)dxdt \Big| \\ 
& \lesssim  \|u\|_{F^{\frac12+}(T)}\|u\|_{F^{\frac34+}(T)}\sum_{|k'-k|\le 7}\|P_{k'}u\|_{L^{\infty}_TL^2_x}^2.
\end{split}
\end{equation}
Now, by using estimate \eqref{tec3EE1}, we get that 
\begin{equation} \label{propEE14} 
X_{4,1}(k)+X_{4,2}(k)\lesssim \ \|\widetilde{u}\|_{F^{\frac12+}}\|\widetilde{u}\|_{F^{0+}}\sum_{|k'-k|\le 3}\|P_{k'}\widetilde{u}\|_{F_{k'}}^2.
\end{equation}
Over the region $E_3$, we deduce from estimates \eqref{lemma1.1} and \eqref{Bstrichartz1b} that
\begin{equation} \label{propEE15} 
X_{4,3}(k)\lesssim \|u\|_{F^{\frac12+}(T)}\|u\|_{F^{\frac34+}(T)}\sum_{|k'-k| \le 5}\|P_{k'}u\|_{L^{\infty}_TL^2_x}^2.
\end{equation}
Finally, estimate \eqref{lemma1.1} gives
\begin{equation} \label{propEE16} 
X_{4,4}(k)\lesssim \|u\|_{F^{\frac12+}(T)}\|P_ku\|_{L^{\infty}_TL^2_x}\sum_{k_3 \ge k+5}\|P_{k_3}u\|_{L^{\infty}_TL^2_x}\|P_{k_3}\partial_xu\|_{L^2_TL^{\infty}_x}.
\end{equation}
Thus, we deduce from \eqref{propEE12}--\eqref{propEE16} that 
\begin{equation} \label{propEE17} 
\sum_{k \ge 1}2^{2ks} \sup_{t_k \in [0,T]}X_4(k)    \lesssim \|u\|_{F^{\frac34+}(T)}^2\|u\|_{B^{s}(T)}^2+\|u\|_{F^{0+}(T)}\|u\|_{F^{\frac12+}(T)}\|u\|_{F^s(T)}^2.
 \end{equation}

Therefore, we conclude gathering \eqref{propEE8b}--\eqref{propEE11} and \eqref{propEE17} that 
 \begin{equation} \label{propEE18} 
 \begin{split}
 \sum_{k \ge 1}2^{2ks} \sup_{t_k \in [0,T]}&\Big|\int_{[0,t_k]} \mathcal{N}_k^2(u) dt\Big| \\ & \lesssim \|u\|_{F^{\frac34+}(T)}^2\|u\|_{B^s(T)}^2+ \|u\|_{F^{0+}(T)}\|u\|_{F^{\frac12+}(T)} 
 \|u\|_{F^{s}(T)}^2. 
 \end{split}
 \end{equation}
 By using the same arguments, we could obtain a similar bound for $\mathcal{N}_k^1(u)$. 
 
 We finish the proof of Proposition \ref{propEE} recalling the definition of the energy in \eqref{Es} and gathering estimates \eqref{propEE3}, \eqref{propEE8} and \eqref{propEE18}. 
\end{proof}

 \subsection{Energy estimates for the differences of two solutions}
In this subsection, we assume that $s \ge 2$. Let $u_1$ and $u_2$ be two solutions to the equation in \eqref{5KdV} with $c_3=0$ in the class \eqref{maintheo1} satisfying $u_1(\cdot,0)=\varphi_1$ and $u_2(\cdot,0)=\varphi_2$.  Then by setting $v=u_1-u_2$, we see that $v$ must satisfy
 \begin{equation} \label{diff5KdV}
\partial_tv=\partial^5_x v+2c_1\partial_xu_1\partial_x^2v+2c_1\partial_xv\partial_x^2u_2+c_2\partial_x(u_1\partial_x^2v)+c_2\partial_x(v\partial_x^2u_2),
\end{equation}
with $v(\cdot,0)=\varphi := \varphi_1-\varphi_2.$
As in subsection \ref{EE1}, we introduce the energy $\widetilde{E}^s_T(v)$ associated to \eqref{diff5KdV}. For $k \ge 1$, 
\begin{equation} \label{diffEk} 
\begin{split}
\widetilde{E}_k(v)(t)=&\|P_kv(\cdot,t)\|_{L^2}^2+\widetilde{\alpha}\int_{\mathbb R}\big(u_1P_k\partial_x^{-1}vQ_{k}\partial_x^{-1}v\big)(x,t)dx 
\\ & +\widetilde{\beta}\int_{\mathbb R}\big(u_1P_k\partial_x^{-1}vP_{k}\partial_x^{-1}v\big)(x,t)dx,
\end{split}
\end{equation}
and
\begin{equation} \label{diffEs} 
\widetilde{E}^s_T(u)=\|P_{\le 0}v(\cdot,0)\|_{L^2}^2+\sum_{k\ge 1}2^{2ks} \sup_{t_k \in [-T,T]}\widetilde{E}_k(v)(t_k),
\end{equation}
where $\widetilde{\alpha}$ and $\widetilde{\beta}$ are two real numbers which will be fixed later. As in Lemma \ref{lemmaEE}, we can compare $\widetilde{E}^s_T(v)$ with $\|v\|_{B^s(T)}$ if $\|u_1\|_{L^{\infty}_TH^s_x}$ is small enough.
\begin{lemma} \label{lemmaEEdiff} 
Let $s>\frac12$. Then, there exists $0<\delta_1$ such that 
\begin{equation} \label{lemmaEEdiff1} 
\frac12 \|v\|_{B^s(T)}^2 \le \widetilde{E}^s_T(v) \le \frac32 \|v\|_{B^s(T)}^2,
\end{equation}
for all $v \in B^s(T)$ as soon as $\|u_1\|_{L^{\infty}_TH^s_x} \le \delta_1$.
\end{lemma}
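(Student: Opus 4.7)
The plan is to mirror the argument of Lemma~\ref{lemmaEE}: the modified energy $\widetilde{E}_k(v)$ has the same trilinear structure as $E_k(u)$, with $u_1$ now playing the role of the ``background'' factor and $v$ playing the role of the two frequency-localized factors. The smallness hypothesis therefore moves from $v$ to $u_1$, but the mechanism is identical.

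First I would bound each of the two correction integrals in $\widetilde{E}_k(v)(t)$ by H\"older's inequality. For $k\ge 1$, the symbols of $P_k$ and $Q_k$ are supported in $|\xi|\sim 2^k$, so $P_k\partial_x^{-1}$ and $Q_k\partial_x^{-1}$ are bounded on $L^2(\mathbb R)$ with operator norm $O(2^{-k})\le 1$, and their output is frequency-localized to $|k'-k|\le 3$. Together with the Sobolev embedding $H^s(\mathbb R)\hookrightarrow L^\infty(\mathbb R)$ (valid for $s>1/2$), this would yield, for every $t\in[-T,T]$ and $k\ge 1$,
\begin{equation*}
\Big|\int_{\mathbb R}u_1\,P_k\partial_x^{-1}v\,Q_k\partial_x^{-1}v\,dx\Big|+\Big|\int_{\mathbb R}u_1(P_k\partial_x^{-1}v)^2\,dx\Big|\lesssim \|u_1\|_{L^\infty_TH^s_x}\sum_{|k-k'|\le 3}\|P_{k'}v(\cdot,t)\|_{L^2}^2.
\end{equation*}

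Substituting this bound into \eqref{diffEk} under the hypothesis $\|u_1\|_{L^\infty_TH^s_x}\le \delta_1$ would give, with $c$ depending only on $|\widetilde{\alpha}|+|\widetilde{\beta}|$,
\begin{equation*}
\bigl|\widetilde{E}_k(v)(t)-\|P_kv(\cdot,t)\|_{L^2}^2\bigr|\le c\delta_1\sum_{|k-k'|\le 3}\|P_{k'}v(\cdot,t)\|_{L^2}^2.
\end{equation*}
Then I would take $\sup_{t\in[-T,T]}$, multiply by $2^{2ks}$ and sum in $k\ge 1$; the cross-term double sum $\sum_k 2^{2ks}\sum_{|k-k'|\le 3,\,k'\ne k}\sup_t\|P_{k'}v(t)\|_{L^2}^2$ rearranges by Fubini into at most a fixed constant times $\sum_{k'\ge 1}2^{2k's}\sup_t\|P_{k'}v(t)\|_{L^2}^2\le \|v\|_{B^s(T)}^2$. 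Adding the $\|P_{\le 0}v(\cdot,0)\|_{L^2}^2$ contribution, which appears identically on both sides, this would give $|\widetilde{E}^s_T(v)-\|v\|_{B^s(T)}^2|\le C\delta_1\|v\|_{B^s(T)}^2$, from which \eqref{lemmaEEdiff1} follows by choosing $\delta_1$ so that $C\delta_1\le 1/2$.

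No genuine obstacle is expected: the only point worth noting is that the frequency support of $P_k$ and $Q_k$ (for $k\ge 1$) makes $\partial_x^{-1}$ harmless on their range, so the correction terms in $\widetilde{E}_k(v)$ are genuine lower-order perturbations of the $L^2$ piece, controllable by a small multiple of $\|v\|_{B^s(T)}^2$.
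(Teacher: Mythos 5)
Your proposal is correct and follows the same route as the paper, which simply invokes the argument of Lemma \ref{lemmaEE}: bound the cubic correction terms via H\"older and the Sobolev embedding $H^s(\mathbb R)\hookrightarrow L^\infty(\mathbb R)$ applied to $u_1$, using that $P_k\partial_x^{-1}$ and $Q_k\partial_x^{-1}$ are $L^2$-bounded and frequency-localized near $2^k$ for $k\ge 1$, then absorb a small multiple of $\|v\|_{B^s(T)}^2$ after summing. The only difference is cosmetic: you give a clean two-sided bound $|\widetilde{E}^s_T(v)-\|v\|_{B^s(T)}^2|\le C\delta_1\|v\|_{B^s(T)}^2$, whereas the paper writes the lower and upper inequalities separately.
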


\begin{proposition} \label{propEEdiff}
Assume $T \in (0,1]$ and $s \ge 2$. Then, if $v$ is a solution to \eqref{diff5KdV}, we have that 
\begin{equation} \label{propEEdiff0} 
\begin{split}
&\widetilde{E}^0_T(v) \lesssim (1+\|\varphi_1\|_{H^{\frac12+}})\|\varphi\|_{L^2}^2 \\ & \quad+\big(1+\|u_1\|_{F^{\frac34+}(T)}\big)\big(\|u_1\|_{F^2(T)}
+\|u_2\|_{F^{2}(T)}\big)\big(\|v\|_{B^{0}(T)}^2+\|v\|_{F^{0}(T)}^2\big),
\end{split}
\end{equation}
and
\begin{equation} \label{propEEdiff1} 
\begin{split}
\widetilde{E}^s_T(v) &\lesssim (1+\|\varphi_1\|_{H^{\frac12+}})\|\varphi\|_{H^s}^2+\|v\|_{F^{0}(T)}\|u_2\|_{F^{s+2}(T)}\|v\|_{F^s(T)} \\ & \quad +\big(1+\Gamma_T^s(u_1)+\Gamma_T^s(u_2)\big)\big(\Gamma_T^s(u_1)+\Gamma_T^s(u_2)\big) \Gamma_T^s(v)^2,
\end{split}
\end{equation}
where 
\begin{displaymath} 
\Gamma_T^s(u):=\max \big\{\|u\|_{F^s(T)},\|u\|_{B^s(T)} \big\}.
\end{displaymath}
\end{proposition}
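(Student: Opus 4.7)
The plan is to mirror the argument of Proposition \ref{propEE}, computing $\frac{d}{dt}\widetilde{E}_k(v)$ along the flow of \eqref{diff5KdV}, choosing the constants $\widetilde{\alpha}, \widetilde{\beta}$ so that the borderline high-low commutator terms identified in Lemma \ref{tec2EE} are cancelled by the time derivative of the cubic correction, and finally controlling the remaining cubic and quartic terms via the multilinear estimates of Section~3 and Lemma \ref{tec3EE}. The new feature compared to Proposition \ref{propEE} is that \eqref{diff5KdV} is no longer symmetric in its nonlinear part, so only the pieces involving $u_1$ can be absorbed by the correction, which is built out of $u_1$.

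Concretely, I first expand
\begin{displaymath}
\tfrac{d}{dt}\widetilde{E}_k(v)=\widetilde{\mathcal{I}}_k+\widetilde{\mathcal{J}}_k+\widetilde{\alpha}\widetilde{\mathcal{L}}_k^1+\widetilde{\alpha}\widetilde{\mathcal{N}}_k^1+\widetilde{\beta}\widetilde{\mathcal{L}}_k^2+\widetilde{\beta}\widetilde{\mathcal{N}}_k^2,
\end{displaymath}
where $\widetilde{\mathcal{I}}_k,\widetilde{\mathcal{J}}_k$ come from pairing $P_kv$ with the nonlinear right-hand side of \eqref{diff5KdV}, the $\widetilde{\mathcal{L}}_k^j$'s encode the linear $\partial_x^5 u_1$ contribution (through $\partial_t u_1$) plus the double $\partial_x^5 v$ contribution in the cubic correction, and the $\widetilde{\mathcal{N}}_k^j$'s are the resulting quartic terms. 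I split $\widetilde{\mathcal{I}}_k+\widetilde{\mathcal{J}}_k$ according to whether the factor carrying $\partial_x^2$ or the outer $\partial_x$ originates from $u_1$ (the \emph{symmetric} family) or from $u_2$ (the \emph{asymmetric} family). For the symmetric family, picking $\widetilde{\alpha}=-2c_2/5$, $\widetilde{\beta}=(c_2-4c_1)/5$ as in Proposition \ref{propEE}, integrating $\widetilde{\mathcal{L}}_k^{1,2}$ by parts produces exactly the three-term combination of Lemma \ref{tec2EE}, and one concludes with Lemma \ref{tecEE} on the commutator remainders — this yields a contribution of the last type in \eqref{propEEdiff1}, with $u_1$ playing the role of the low-frequency factor.

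For the asymmetric family I cannot rely on the correction. In the high$\times$high$\to$low regime, where $v$ and $\partial_x^2 u_2$ live at frequency $2^{k_1}\gg 2^{k}$ and $P_kv$ at $2^k$, integration by parts cannot be used to redistribute derivatives between the two copies of $v$, so I absorb all the $2^{3k_1}$ derivatives onto $u_2$: a direct application of Lemma \ref{tecEE}(a) gives a bound of the form $2^{-k}2^{3k_1}\|P_kv\|_{F_k}\|P_{k_1}v\|_{F_{k_1}}\|P_{k_2}u_2\|_{F_{k_2}}$, which, after multiplication by $2^{2ks}$, the supremum in $t_k$, summation in $k,k_1,k_2$ and Cauchy--Schwarz, reassembles exactly into the distinguished term $\|v\|_{F^0(T)}\|u_2\|_{F^{s+2}(T)}\|v\|_{F^s(T)}$. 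All other (high$\times$low, low$\times$high, comparable-frequency) subcases of the asymmetric family are better and fall into the $\Gamma_T^s$ cubic bucket via Lemmas \ref{tecEE} and \ref{tec2EE}. The $L^2$ version \eqref{propEEdiff0} is obtained by the same scheme at $s=0$, where this exceptional term becomes $\|v\|_{F^0(T)}^2\|u_2\|_{F^2(T)}$ and merges with the generic cubic bound. The quartic terms $\widetilde{\mathcal{N}}_k^{1,2}$ are handled verbatim as in Proposition \ref{propEE}: dyadic decomposition, Lemma \ref{tec3EE}, the Strichartz estimates of Corollary \ref{Bstrichartz}, and use of $\|u_i\|_{L^\infty_TH^{1/2+}}\lesssim\|u_i\|_{B^{1/2+}(T)}$; this produces the $\Gamma_T^s(u_j)\Gamma_T^s(v)^2$ factor.

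The main obstacle, as flagged in Remark \ref{rematecEE} and the introduction, is precisely the asymmetric cubic family in Step 3: with only a single copy of $u_1$ in the correction, the commutator cancellation of Lemma \ref{tec2EE} is unavailable for the $u_2$-based nonlinearities, and one is forced to accept a two-derivative loss onto $u_2$. This is what drives the appearance of the $F^{s+2}$ norm in \eqref{propEEdiff1} and ultimately the regularity threshold $s\ge 2$ at which the Bona--Smith scheme can be closed (the higher-regularity approximation to $u_2$ supplies the extra two derivatives through a uniform bound on $\|u_2^{\varepsilon}\|_{F^{s+2}(T)}$).
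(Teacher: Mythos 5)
Your overall architecture matches the paper's: the same modified energy with $\widetilde{\alpha}=-2c_2/5$, $\widetilde{\beta}=(c_2-4c_1)/5$, the same separation into the $u_1$-based (symmetric) cubic terms cancelled via Lemma \ref{tec2EE} and the $u_2$-based (asymmetric) ones, and the same treatment of the quartic terms by Lemma \ref{tec3EE} and the Strichartz estimates. However, you have misidentified which frequency interaction forces the two-derivative loss on $u_2$, and as a consequence you make a concretely false claim about one regime. In the paper the distinguished term $\|v\|_{F^0(T)}\|u_2\|_{F^{s+2}(T)}\|v\|_{F^s(T)}$ arises from the terms $\widetilde{T}_5,\widetilde{T}_6$ in the region $D_2$ of \eqref{propEE9b}, i.e.\ the high$(u_2)\times$low$(v)\to$high$(v)$ interaction: there $\partial_x^2$ falls on $u_2$ at frequency $2^{k_2}\sim 2^k$ comparable to the output frequency, the second copy of $v$ sits at low frequency $2^{k_1}\ll 2^k$, and after applying \eqref{tecEE1}--\eqref{tecEE1b} one is left with $\sum_k 2^{2ks}\|P_k v\|_{F_k}\,2^{2k_2}\|P_{k_2}u_2\|_{F_{k_2}}\,\|v\|_{F^0}$. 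Since $\|P_kv\|_{F_k}$ appears only to the first power, the only factor at frequency $\sim 2^k$ available to absorb the second $2^{ks}$ is $u_2$, which yields $2^{k(s+2)}\|P_ku_2\|_{F_k}$ and hence $\|u_2\|_{F^{s+2}}$ after Cauchy--Schwarz; there is no way to route this into $\Gamma^s_T$ norms. Your assertion that ``all other (high$\times$low, low$\times$high, comparable-frequency) subcases of the asymmetric family are better and fall into the $\Gamma_T^s$ cubic bucket'' is therefore wrong precisely for this subcase, and if one follows your plan literally the estimate does not close there.

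Conversely, the high$\times$high$\to$low regime you single out as the source of the $F^{s+2}$ loss (the paper's region $D_4$) is in fact milder: the sharp gain $2^{-\frac32 k_{max}}2^{-k_{min}}$ of \eqref{tecEE1b} leaves only about $2^{k_{max}/2}$ of unabsorbed derivatives on the high pair $(v,u_2)$, and the paper disposes of it inside the generic $\|u_2\|_{F^{5/4}}\|v\|_{F^s}^2$ bucket (see \eqref{propdiffEE12}--\eqref{propdiffEE13}). Your cruder bound $2^{-k}2^{3k_1}\prod\|\cdot\|_{F_{k_i}}$ for that region is not what Lemma \ref{tecEE}(a) actually gives (the gain is $2^{-k_{max}}$, or $2^{-\frac32 k_{max}}2^{-k_{min}}$, not $2^{-k_{min}}$ alone), and without some decay in the output frequency $k$ the $\ell^1$ sum $\sum_k\|P_kv\|_{F_k}$ does not reassemble into $\|v\|_{F^0}$. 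The final inequality you state happens to coincide with the paper's because the $F^{s+2}$ term you generate from the wrong regime dominates anyway, but the diagnosis of where the obstruction lives --- and hence the justification for the remaining regimes --- needs to be corrected: the two-derivative loss is a high$\times$low phenomenon (derivatives trapped on $u_2$ at the output frequency), not a high$\times$high$\to$low one.
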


As a Corollary to Lemma \ref{lemmaEEdiff} and Proposition \ref{propEEdiff}, we deduce an \textit{a priori} estimate in $\|\cdot\|_{B^s(T)}$ for the solutions $v$ to the difference equation \eqref{diff5KdV}. 
\begin{corollary} \label{coroEEdiff}
Assume $T \in (0,1]$. Then, there exists $0<\delta_1 \le 1$ such that
\begin{equation} \label{coroEEdiff0} 
\|v\|_{B^0(T)}^2 \lesssim \|\varphi\|_{L^2}^2+\big(1+\|u_1\|_{F^{\frac34+}(T)}\big)\big(\|u_1\|_{F^2(T)}+\|u_2\|_{F^2(T)}\big)\big(\|v\|_{B^{0}(T)}^2+\|v\|_{F^{0}(T)}^2\big),
\end{equation}
and
\begin{equation} \label{coroEEdiff1} 
\begin{split}
\|v\|_{B^s(T)}^2 &\lesssim \|\varphi\|_{H^s}^2 +\|v\|_{F^{0}(T)}\|u_2\|_{F^{s+2}(T)}\|v\|_{F^s(T)} \\ & \quad +\big(1+\Gamma_T^s(u_1)+\Gamma_T^s(u_2)\big)\big(\Gamma_T^s(u_1)+\Gamma_T^s(u_2)\big) \Gamma_T^s(v)^2,
\end{split}
\end{equation}
for all solutions $v$ to \eqref{diff5KdV} with $\|u_1\|_{L^{\infty}_TH^{\frac12+}_x}< \delta_1$.
\end{corollary}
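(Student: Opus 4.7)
The plan is to observe that Corollary \ref{coroEEdiff} is an immediate consequence of combining the norm-equivalence Lemma \ref{lemmaEEdiff} with the energy bounds of Proposition \ref{propEEdiff}, exactly paralleling the way Corollary \ref{coroEE} was deduced from Lemma \ref{lemmaEE} and Proposition \ref{propEE}. The one technical wrinkle is that the smallness hypothesis in Corollary \ref{coroEEdiff} is phrased in terms of $\|u_1\|_{L^\infty_T H^{1/2+}_x}$, not $\|u_1\|_{L^\infty_T H^s_x}$, and that the inequality \eqref{coroEEdiff0} is required also at the $L^2$ level $s=0$, which is outside the range $s>1/2$ where Lemma \ref{lemmaEEdiff} is stated.

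To address these issues I would first revisit the proof of Lemma \ref{lemmaEEdiff}: the only place where the smallness of $u_1$ enters is through the pointwise bound
\begin{displaymath}
\Big|\int_{\mathbb R} u_1 \, P_k\partial_x^{-1}v \, Q_k \partial_x^{-1} v \, dx \Big|
\lesssim \|u_1\|_{L^\infty_{T,x}} \sum_{|k-k'|\le 3} \|P_{k'} v(\cdot,t_k)\|_{L^2}^2
\end{displaymath}
and an analogous estimate for the other correction term, after which Sobolev embedding gives $\|u_1\|_{L^\infty_{T,x}} \lesssim \|u_1\|_{L^\infty_T H^{1/2+}_x}$. Thus choosing $\delta_1$ small enough that $C\delta_1 \le \tfrac14$ yields the comparability \eqref{lemmaEEdiff1} under the hypothesis $\|u_1\|_{L^\infty_T H^{1/2+}_x}<\delta_1$, regardless of the particular $s \ge 2$.

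The same computation works at $s=0$ with one small observation: when $k \ge 1$ the operator $\partial_x^{-1} P_k$ is bounded on $L^2$ by $C\,2^{-k}$, so the cubic correction summed without any $2^{2ks}$ weight satisfies
\begin{displaymath}
\sum_{k\ge 1} \Big|\int_{\mathbb R} u_1 \, P_k\partial_x^{-1}v \, Q_k \partial_x^{-1} v \, dx \Big|
\lesssim \|u_1\|_{L^\infty_T H^{1/2+}_x} \sum_{k\ge 1} 2^{-2k}\sum_{|k-k'|\le 3}\|P_{k'}v\|_{L^2}^2,
\end{displaymath}
which is controlled by $C\delta_1 \|v\|_{B^0(T)}^2$. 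Hence taking $\delta_1$ small enough, one has the $s=0$ analog
\begin{displaymath}
\tfrac12 \|v\|_{B^0(T)}^2 \le \widetilde E^0_T(v) \le \tfrac32 \|v\|_{B^0(T)}^2.
\end{displaymath}

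With the two comparabilities in hand, estimates \eqref{coroEEdiff0} and \eqref{coroEEdiff1} follow at once by multiplying the conclusions \eqref{propEEdiff0} and \eqref{propEEdiff1} of Proposition \ref{propEEdiff} by $2$. The main conceptual point, which is where I expect the bulk of the work to lie if one writes out the argument carefully, is the verification of the $s=0$ equivalence; the $s \ge 2$ case is an immediate appeal to Lemma \ref{lemmaEEdiff} once one notes that its proof needs only $H^{1/2+}$ smallness of $u_1$.
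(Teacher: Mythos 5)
Your proposal is correct and follows exactly the route the paper intends: the corollary is deduced by combining the norm equivalence of Lemma \ref{lemmaEEdiff} with the energy bounds of Proposition \ref{propEEdiff}, and the paper leaves the details implicit. Your observations that the smallness hypothesis only enters through $\|u_1\|_{L^{\infty}_{T,x}}\lesssim\|u_1\|_{L^{\infty}_TH^{1/2+}_x}$ and that the equivalence $\frac12\|v\|_{B^0(T)}^2\le\widetilde E^0_T(v)\le\frac32\|v\|_{B^0(T)}^2$ persists at $s=0$ (using $\|\partial_x^{-1}P_k\|_{L^2\to L^2}\lesssim 2^{-k}$ for $k\ge 1$) correctly fill the two gaps the paper glosses over.
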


\begin{proof}[Proof of Proposition \ref{propEEdiff}] We argue as in the proof of Proposition \ref{propEE}. First, we choose extensions $\widetilde{v}$, $\widetilde{u}_1$ and $\widetilde{u}_2$ of $v$, $u_1$ and $u_2$ over $\mathbb R^2$ satisfying 
\begin{equation} \label{propEEdiff1} 
\|\widetilde{v}\|_{F^s} \le 2\|v\|_{F^s(T)} \quad \text{and} \quad \|\widetilde{u_i}\|_{F^s} \le 2\|u_i\|_{F^s(T)}, \ i=1,2.
\end{equation}
Then, for any $k \in \mathbb Z_+ \cap [1,+\infty)$ and $t \in [-T,T]$, we differentiate $\widetilde{E}_k(v)$ with respect to $t$ and deduce using \eqref{diff5KdV} that 
\begin{equation} \label{propEEdiff2}
\frac{d}{dt}\widetilde{E}_k(v) = \widetilde{\mathcal{I}}_k(v)+\widetilde{\mathcal{J}}_k(v)+\widetilde{\alpha} \widetilde{\mathcal{L}}_k^1(v)+\widetilde{\alpha} \widetilde{\mathcal{N}}_k^1(v)+\widetilde{\beta} \widetilde{\mathcal{L}}_k^2(v)+\widetilde{\beta} \widetilde{\mathcal{N}}_k^2(v) ,
\end{equation}
where 
\begin{displaymath}
\widetilde{\mathcal{I}}_k(v)=4c_1\int_{\mathbb R}P_kvP_k\big(\partial_xu_1\partial_x^2v\big)dx
+4c_1\int_{\mathbb R}P_kvP_k\big(\partial_xv\partial_x^2u_2\big)dx,
\end{displaymath}
\begin{displaymath} 
\widetilde{\mathcal{J}}_k(v)=2c_2\int_{\mathbb R}P_kvP_k\partial_x\big(u_1\partial_x^2v\big)dx
2c_2\int_{\mathbb R}P_kvP_k\partial_x\big(v\partial_x^2u_2\big)dx,
\end{displaymath}
\begin{displaymath} 
\begin{split}
\widetilde{\mathcal{L}}_k^1(v)&=\int_{\mathbb R}\partial_x^5u_1P_k\partial_x^{-1}vQ_k\partial_x^{-1}vdx+
\int_{\mathbb R}u_1P_k\partial_x^4vQ_k\partial_x^{-1}vdx\\ & \quad +
\int_{\mathbb R}u_1P_k\partial_x^{-1}vQ_k\partial_x^4vdx,
\end{split}
\end{displaymath} 

\begin{displaymath}  
\begin{split}
\widetilde{\mathcal{N}}_k^1&(v)=c_1\int_{\mathbb R}\partial_x\big((\partial_xu_1)^2\big)P_k\partial_x^{-1}vQ_k\partial_x^{-1}vdx+
2c_1\int_{\mathbb R}u_1P_k\partial_x^{-1}\big(\partial_xu_1\partial_x^2v\big)Q_k\partial_x^{-1}vdx\\ & \quad +
2c_1\int_{\mathbb R}u_1P_k\partial_x^{-1}\big(\partial_xv\partial_x^2u_2\big)Q_k\partial_x^{-1}vdx+
2c_1\int_{\mathbb R}u_1P_k\partial_x^{-1}vQ_k\partial_x^{-1}\big(\partial_xu_1\partial_x^2v\big)dx \\& \quad+
2c_1\int_{\mathbb R}u_1P_k\partial_x^{-1}vQ_k\partial_x^{-1}\big(\partial_xv\partial_x^2u_2\big)dx+c_2\int_{\mathbb R}\partial_x\big(u_1\partial_x^2u_1\big)P_k\partial_x^{-1}vQ_k\partial_x^{-1}vdx \\ &
\quad+c_2\int_{\mathbb R}u_1P_k\big(u_1\partial_x^2v\big)Q_k\partial_x^{-1}vdx+c_2\int_{\mathbb R}u_1P_k\big(v\partial_x^2u_2\big)Q_k\partial_x^{-1}vdx \\ & \quad 
+c_2\int_{\mathbb R}u_1P_k\partial_x^{-1}vQ_k\big(u_1\partial_x^2v\big)dx+c_2\int_{\mathbb R}u_1P_k\partial_x^{-1}vQ_k\big(v\partial_x^2u_2\big)dx,
\end{split}
\end{displaymath}

\begin{displaymath}  
\widetilde{\mathcal{L}}_k^2(v)=\int_{\mathbb R}\partial_x^5u_1P_k\partial_x^{-1}vP_k\partial_x^{-1}vdx+
2\int_{\mathbb R}u_1P_k\partial_x^4vP_k\partial_x^{-1}vdx,
\end{displaymath}
and 
\begin{equation} \label{propEEdiff2b}
\begin{split}
\widetilde{\mathcal{N}}_k^2(v)&=c_1\int_{\mathbb R}\partial_x\big((\partial_xu_1)^2\big)P_k\partial_x^{-1}vP_k\partial_x^{-1}vdx+
4c_1\int_{\mathbb R}u_1P_k\partial_x^{-1}\big(\partial_xu_1\partial_x^2v\big)P_k\partial_x^{-1}vdx\\ & \quad +
4c_1\int_{\mathbb R}u_1P_k\partial_x^{-1}\big(\partial_xv\partial_x^2u_2\big)P_k\partial_x^{-1}vdx+c_2\int_{\mathbb R}\partial_x\big(u_1\partial_x^2u_1\big)P_k\partial_x^{-1}vP_k\partial_x^{-1}vdx  \\ & \quad 
+2c_2\int_{\mathbb R}u_1P_k\big(u_1\partial_x^2v\big)P_k\partial_x^{-1}vdx+2c_2\int_{\mathbb R}u_1P_k\big(v\partial_x^2u_2\big)P_k\partial_x^{-1}vdx.
\end{split}
\end{equation}
Now, we fix $t_k \in [-T,T]$. Without loss of generality, we can assume that $0<t_k \le T$. Therefore, we obtain integrating \eqref{propEEdiff2} between $0$ and $t_k$ that 
\begin{equation} \label{propEEdiff3}
\begin{split}
&\widetilde{E}_k(v)(t_k)- \widetilde{E}_k(v)(0) \\ & \le \Big|\int_{[0,t_k]}\big(\widetilde{\mathcal{I}}_k(v)+\widetilde{\mathcal{J}}_k(v)+\widetilde{\alpha} \widetilde{\mathcal{L}}_k^1(v)+\widetilde{\alpha} \widetilde{\mathcal{N}}_k^1(v)+\widetilde{\beta} \widetilde{\mathcal{L}}_k^2(v)+\widetilde{\beta} \widetilde{\mathcal{N}}_k^2(v)\big)dt\Big|.
\end{split}
\end{equation}

Next we estimate the right-hand side of \eqref{propEEdiff3}. \\

\noindent \textit{Estimates for the cubic terms.} We deduce after some integrations by parts that 
\begin{displaymath} 
\widetilde{\mathcal{L}}_k^1(v)
=5\int_{\mathbb R}\partial_x^3u_1P_kvQ_kvdx-5\int_{\mathbb R}\partial_xu_1P_k\partial_xvQ_k\partial_xvdx.
\end{displaymath}
and
\begin{displaymath} 
\widetilde{\mathcal{L}}_k^2(v)
=5\int_{\mathbb R}\partial_x^3u_1P_kvP_kvdx-5\int_{\mathbb R}\partial_xu_1P_k\partial_xvP_k\partial_xvdx.
\end{displaymath}
We choose $\widetilde{\alpha}=-\frac{2c_2}5$ and $\widetilde{\beta}=\frac{c_2-4c_1}5$. Then it follows, after performing a dyadic decomposition on $v$, that 
\begin{equation} \label{propEEdiff3b}
\Big|\int_{[0,t_k]}\big(\widetilde{\mathcal{I}}_k(v)+\widetilde{\mathcal{J}}_k(v)+
\widetilde{\alpha}\widetilde{\mathcal{L}}_k^1(v)+\widetilde{\beta}\widetilde{\mathcal{L}}_k^2(v)\big)dt\Big| 
\lesssim  \sum_{j=1}^9\widetilde{T}_j(k),
\end{equation}
for each $k \ge 1$, with 
\begin{displaymath} 
\widetilde{T}_1(k)=\sum_{0 \le k_1 \le k-6}\Big|\int_{\mathbb R \times [0,t_k]}\Big(P_kvP_k\big(P_{k_1}\partial_xu_1\partial_x^2v\big) +P_k\partial_xvP_k\partial_xvP_{k_1}\partial_xu_1 \big) dxdt\Big|, 
\end{displaymath}
\begin{displaymath} 
\widetilde{T}_2(k)=\sum_{0 \le k_1 \le k-6}\Big| \int_{\mathbb R \times [0,t_k]}P_k\partial_xv \Big( \big[P_k,P_{k_1}u_1 \big]\partial_x^2v-Q_k\partial_xvP_{k_1}\partial_xu_1\Big) dxdt\Big|,
\end{displaymath}
\begin{displaymath} 
\widetilde{T}_3(k)=\sum_{k_1 \ge k-5, k_2 \ge 0}\Big| \int_{\mathbb R \times [0,t_k]}P_k^2vP_{k_1}\partial_xu_1P_{k_2}\partial_x^2vdt\Big|,
\end{displaymath}
\begin{displaymath} 
\widetilde{T}_4(k)=\sum_{k_1 \ge k-5, k_2 \ge 0}\Big| \int_{\mathbb R \times [0,t_k]}P_k^2\partial_xvP_{k_1}u_1P_{k_2}\partial_x^2vdt\Big|,
\end{displaymath}
\begin{displaymath} 
\widetilde{T}_5(k)=\sum_{k_1, k_2 \ge 0}\Big| \int_{\mathbb R \times [0,t_k]}P_k^2vP_{k_1}\partial_xvP_{k_2}\partial_x^2u_2dt\Big|,
\end{displaymath}
\begin{displaymath} 
\widetilde{T}_6(k)=\sum_{k_1, k_2 \ge 0}\Big| \int_{\mathbb R \times [0,t_k]}P_k^2\partial_xvP_{k_1}vP_{k_2}\partial_x^2u_2dt\Big|,
\end{displaymath}
\begin{displaymath} 
\widetilde{T}_7(k)=\sum_{k-5\le k_1 \le k+4}\Big| \int_{\mathbb R \times [0,t_k]}P_{k_1}\partial_xu_1P_k\partial_xv\big(Q_k\partial_xv+P_k\partial_xv\big)dxdt\Big|,
\end{displaymath}
\begin{displaymath} 
\widetilde{T}_8(k)=\sum_{k_1 \le k-5}\Big|\int_{\mathbb R \times [0,t_k]}P_{k_1}\partial_x^3u_1P_kv\big(Q_kv+P_kv\big)dx \Big| \end{displaymath}
and
\begin{displaymath} 
\widetilde{T}_9(k)=\sum_{k-4 \le k_1 \le k+4}\Big|\int_{\mathbb R \times [0,t_k]}P_{k_1}\partial_x^3u_1P_kv\big(Q_kv+P_kv\big)dx \Big|.
\end{displaymath}
Clearly, Lemma \ref{tec2EE} and the Cauchy-Schwarz inequality imply that 
\begin{equation} \label{propEEdiff4} 
\begin{split}
\widetilde{T}_1(k)+\widetilde{T}_2(k) & \lesssim \sum_{0 \le k_1 \le k-6}2^{k_1/2}\|P_{k_1}\widetilde{u}_1\|_{F_{k_1}}\sum_{|k-k'| \le3}\|P_{k'}\widetilde{v}\|_{F_{k'}}^2 \\ & \lesssim \|\widetilde{u}_1\|_{F^{\frac12+}}\sum_{|k-k'| \le3}\|P_{k'}\widetilde{v}\|_{F_{k'}}^2 .
\end{split}
\end{equation}
Similarly, we get applying estimate \eqref{tecEE1} if $k_1=0$, and estimate \eqref{tecEE1b} if $k_1>0$, that 
\begin{equation} \label{propEEdiff5} 
\widetilde{T}_8(k)  \lesssim  \|\widetilde{u}_1\|_{F^{\frac12+}}\sum_{|k-k'| \le3}\|P_{k'}\widetilde{v}\|_{F_{k'}}^2 .
\end{equation} 
Now, estimate \eqref{tecEE2} leads to 
\begin{equation} \label{propEEdiff6} 
\widetilde{T}_7(k)+\widetilde{T}_9(k)  \lesssim  \|\widetilde{u}_1\|_{F^{\frac54}}\|P_{k}\widetilde{v}\|_{F_{k}}^2 .
\end{equation}
Arguing exactly as in \eqref{propEE7}, we get that 
\begin{equation} \label{propdiffEE6b}
\begin{split}
\widetilde{T}_3(k)+\widetilde{T}_4(k) &\lesssim \|\widetilde{v}\|_{F^{\frac12+}}\sum_{|k'-k| \le 3}\|P_{k'}\widetilde{u}_1\|_{F_{k'}}\|P_{k'}\widetilde{v}\|_{F_{k'}} \\ & \quad+\|\widetilde{u}_1\|_{F^{\frac54}}\sum_{|k'-k| \le 3}\|P_{k'}\widetilde{v}\|_{F_{k'}}^2 \\ & \quad+2^{-k}\|P_k\widetilde{v}\|_{F_k}\sum_{k_1 \ge k+5}2^{3k_1/2}\|P_{k_1}\widetilde{u}_1\|_{F_{k_1}}\|P_{k_1}\widetilde{v}\|_{F_{k_1}}.
\end{split}
\end{equation}
This implies after taking the suprem of $t_k$ over $[0,T]$ and summing in $k \in \mathbb Z_+ \cap [1,+\infty)$ that 
\begin{equation} \label{propdiffEE7} 
\sum_{k \ge 1} 2^{2ks}\sup_{t_k \in [0,T]}\Big( \widetilde{T}_3(k)+\widetilde{T}_4(k)\Big) \lesssim \|\widetilde{u}_1\|_{F^{\frac54}}\|\widetilde{v}\|_{F^s}^2+\|\widetilde{u}_1\|_{F^{s}}\|\widetilde{v}\|_{F^{\frac12+}}\|\widetilde{v}\|_{F^s},
\end{equation}
whenever $s \ge 1$ and 
\begin{equation} \label{propdiffEE7b} 
\sum_{k \ge 1} \sup_{t_k \in [0,T]}\Big( \widetilde{T}_5(k)+\widetilde{T}_6(k)\Big) \lesssim \|\widetilde{u}_1\|_{F^{2}}\|\widetilde{v}\|_{F^0}^2,
\end{equation}
at the $L^2$-level. Note that to obtain \eqref{propdiffEE7b}, we need to modify the first term on the right-hand side of \eqref{propdiffEE6b} by putting all the derivative on $\|P_{k'}\widetilde{u}_1\|_{F_{k'}}$.

To bound $\widetilde{T}_5(k)$ and $\widetilde{T}_6(k)$, we split the domain of summation over the $\{D_j\}_{j=1}^4$ defined in \eqref{propEE9b}.  For example, we explain how to deal with $\widetilde{T}_6(k)$. We have that
\begin{equation} \label{propdiffEE8}
\widetilde{T}_6(k)=\sum_{j=1}^4\sum_{(k_1, k_2) \in D_j}\Big| \int_{\mathbb R \times [0,t_k]}P_k^2\partial_xvP_{k_1}vP_{k_2}\partial_x^2u_2dt\Big|.
\end{equation}
By using estimates \eqref{tecEE1} when $k_2=0$, \eqref{tecEE1b} when $k_2 \ge 1$ and the Cauchy-Schwarz inequality in $k_2$, we deduce that 
\begin{equation} \label{propdiffEE9}
\begin{split}
\sum_{(k_1, k_2) \in D_1}\Big|& \int_{\mathbb R \times [0,t_k]}P_k^2\partial_xvP_{k_1}vP_{k_2}\partial_x^2u_2dt\Big|
 \\ & \lesssim \sum_{0 \le k_2 \le \max(k,k')-5}2^{k_2/2}\|P_{k_2}\widetilde{u}_2\|_{F_{k_2}}\sum_{|k-k'|\le 3}\|P_{k'}\widetilde{v}\|_{F_{k'}}^2
 \\ & \lesssim \|\widetilde{u}_2\|_{F^{\frac12+}}\sum_{|k-k'|\le 3}\|P_{k'}\widetilde{v}\|_{F_{k'}}^2.
\end{split}
\end{equation}
We treat the summation over $D_2$ similarly. Estimate \eqref{tecEE1} when $k_1=0$, estimate \eqref{tecEE1b} when $k_1 \ge 1$ and the Cauchy-Schwarz inequality in $k_1$ imply that 
\begin{equation}  \label{propdiffEE10}
\begin{split}
\sum_{(k_1, k_2) \in D_2}\Big|& \int_{\mathbb R \times [0,t_k]}P_k^2\partial_xvP_{k_1}vP_{k_2}\partial_x^2u_2dt\Big|
 \\ & \lesssim \sum_{0 \le k_1 \le \max(k,k_2)-5}2^{-k_1}\|P_{k_1}\widetilde{v}\|_{F_{k_1}}\sum_{|k-k_2|\le 3}\|P_{k}\widetilde{v}\|_{F_{k}}2^{2k_2}\|\widetilde{u}_2\|_{F_{k_2}}
 \\ & \lesssim \|\widetilde{v}\|_{F^0}\sum_{|k-k_2|\le 3}\|P_{k}\widetilde{v}\|_{F_{k}}2^{2k_2}\|\widetilde{u}_2\|_{F_{k_2}}.
 \end{split}
\end{equation}
Estimate \eqref{tecEE2} gives that 
\begin{equation}  \label{propdiffEE11}
\sum_{(k_1, k_2) \in D_3}\Big| \int_{\mathbb R \times [0,t_k]}P_k^2\partial_xvP_{k_1}vP_{k_2}\partial_x^2u_2dt\Big|
 \lesssim \|\widetilde{u}_2\|_{F^{\frac54}}\sum_{|k-k'|\le 3}\|P_{k'}\widetilde{v}\|_{F_{k'}}^2.
\end{equation}
Finally, it follows from estimate \eqref{tecEE1b} that 
\begin{equation}  \label{propdiffEE12}
\begin{split}
\sum_{(k_1, k_2) \in D_4}\Big| \int_{\mathbb R \times [0,t_k]}&P_k^2\partial_xvP_{k_1}vP_{k_2}\partial_x^2u_2dt\Big|
  \\ & \lesssim \|P_k\widetilde{v}\|_{F_k}\sum_{k_2\ge k+5}2^{k_2/2}\|P_{k_2}\widetilde{v}\|_{F_{k_2}}
 \|P_{k_2}\widetilde{u}_2\|_{F_{k_2}}.
 \end{split}
\end{equation}
Thus, we conclude gathering \eqref{propdiffEE8}--\eqref{propdiffEE12}, taking the supreme over $t_k \in [0,T]$, summing in $k \in \mathbb Z_+ \cap [1,+\infty)$ that
\begin{equation} \label{propdiffEE13} 
\sum_{k \ge 1} 2^{2ks}\sup_{t_k \in [0,T]}\Big( \widetilde{T}_5(k)+\widetilde{T}_6(k)\Big) \lesssim \|\widetilde{u}_2\|_{F^{\frac54}}\|\widetilde{v}\|_{F^s}^2+\|\widetilde{u}_2\|_{F^{s+2}}\|\widetilde{v}\|_{F^0}\|\widetilde{v}\|_{F^s},
\end{equation}
whenever $s \ge 1$ and 
\begin{equation} \label{propdiffEE14} 
\sum_{k \ge 1} \sup_{t_k \in [0,T]}\Big( \widetilde{T}_5(k)+\widetilde{T}_6(k)\Big) \lesssim \|\widetilde{u}_2\|_{F^{2}}\|\widetilde{v}\|_{F^0}^2,
\end{equation}
at the $L^2$-level.

Therefore, we deduce gathering \eqref{propEEdiff4}--\eqref{propdiffEE7} and \eqref{propdiffEE13} that 
\begin{equation} \label{propdiffEE15} 
\begin{split}
\sum_{k \ge 1}&2^{2ks} \sup_{t_k \in [0,T]}\Big|\int_{[0,t_k]}\big(\widetilde{\mathcal{I}}_k(v)+\widetilde{\mathcal{J}}_k(v)+\widetilde{\alpha}\widetilde{\mathcal{L}}_k^1(v)+\widetilde{\beta}\widetilde{\mathcal{L}}_k^2(v)\big)dt\Big| 
\\ 
& \lesssim \big(\|u_1\|_{F^{\frac54}(T)}+\|u_2\|_{F^{\frac54}(T)}\big)\|v\|_{F^s(T)}^2 
+\|u_1\|_{F^s(T)}\|v\|_{F^{\frac12+}(T)}\|v\|_{F^{s}(T)}  \\ 
& \quad +\|u_2\|_{F^{s+2}(T)}\|v\|_{F^{0}(T)}\|v\|_{F^s(T)},
\end{split}
\end{equation}
if $s \ge 1$, whereas 
\begin{equation} \label{propdiffEE16} 
\begin{split}
\sum_{k \ge 1}\sup_{t_k \in [0,T]}\Big|\int_{[0,t_k]}\big(\widetilde{\mathcal{I}}_k(v)+&\widetilde{\mathcal{J}}_k(v)+\widetilde{\alpha}\widetilde{\mathcal{L}}_k^1(v)+\widetilde{\beta}\widetilde{\mathcal{L}}_k^2(v)\big)dt\Big| 
\\ 
& \lesssim \big(\|u_1\|_{F^2(T)}+\|u_2\|_{F^2(T)}\big)\|v\|_{F^0(T)}^2 ,
\end{split}
\end{equation}
at the $L^2$ level. \\

\noindent \textit{Estimates for the fourth order terms.} 
We estimate the fourth order term corresponding to $ \widetilde{\mathcal{N}}_k^2(v) $. After a few integration by parts in \eqref{propEEdiff2b}, we get that 
\begin{equation} \label{propdiffEE17}
\Big|\int_{[0,t_k]}\widetilde{\mathcal{N}}_k^2(v) dt\Big|  \lesssim \sum_{i=1}^5\widetilde{X}_i(k).
\end{equation}
for each $k \ge 1$, whith 
\begin{displaymath}
\begin{split}
\widetilde{X}_1(k)&=\Big|\int_{\mathbb R \times [0,t_k]}\big(\partial_xu_1 \big)^2P_k\partial_x^{-1}vP_kv dxdt\Big| + \Big|\int_{\mathbb R \times [0,t_k]}u_1\partial_xu_1 P_kvP_kv dxdt\Big|  \\ 
& \quad +\Big|\int_{\mathbb R \times [0,t_k]}u_1\partial_xu_1 P_k\partial_xvP_k\partial_x^{-1}v dxdt\Big| ,
\end{split}
\end{displaymath}
\begin{displaymath} 
\widetilde{X}_2(k)= \Big|\int_{\mathbb R \times [0,t_k]}u_1P_k\partial_x^{-1}\big(\partial_xv\partial_x^2u_2\big)P_k\partial_x^{-1}v dxdt\Big| ,
\end{displaymath}
\begin{displaymath}
\widetilde{X}_3(k)= \Big|\int_{\mathbb R \times [0,t_k]}u_1P_k\partial_x^{-1}\big(\partial_xu_1\partial_x^2v\big)P_k\partial_x^{-1}v dxdt\Big| ,
\end{displaymath}
\begin{displaymath}
\widetilde{X}_4(k)= \Big|\int_{\mathbb R \times [0,t_k]}u_1P_k\big(v\partial_x^2u_2\big)P_k\partial_x^{-1}v dxdt\Big| .
\end{displaymath} 
and
\begin{displaymath}
\widetilde{X}_5(k)= \Big|\int_{\mathbb R \times [0,t_k]}u_1P_k\big(u_1\partial_x^2v\big)P_k\partial_x^{-1}v dxdt\Big| .
\end{displaymath}

We use the Strichartz estimate \eqref{Bstrichartz1} with $\alpha=2$, estimate \eqref{lemma1.1} and H\"older's inequality to deduce that 
\begin{equation} \label{propEEdiff18} 
\begin{split}
\sum_{k \ge 1}2^{2ks} \sup_{t_k \in [0,T]} \widetilde{X}_1(k)  & \lesssim \big( \|u_1\|_{L^2_TL^{\infty}_x}+\|\partial_xu_1\|_{L^2_TL^{\infty}_x}\big) \|\partial_xu_1\|_{L^2_TL^{\infty}_x}\sum_{k \ge 1}2^{2ks}\|P_kv\|_{L^{\infty}_TL^2_x}^2 \\
& \lesssim \big(\|u_1\|_{F^{\frac12+}(T)}+\|u_1\|_{F^{\frac34+}(T)})\|u_1\|_{F^{\frac34+}(T)}\|v\|_{B^s(T)}^2,
\end{split}
\end{equation}
for any $s \ge 0$.

To handle $\widetilde{X}_2(k)$, we perform the following decomposition
 \begin{equation} \label{propEEdiff19}
 \begin{split}
\widetilde{X}_2(k) & \lesssim
\sum_{j=1}^4\sum_{(k_1, k_2, k_3) \in F_j} 
\Big| \int_{\mathbb R \times [0,t_k]}P_{k_1}u_1P_k\partial_x^{-1}\big(P_{k_2}\partial_xv\partial_xP_{k_3}\partial_x^2u_2\big)P_k\partial_x^{-1}vdxdt \Big|\\
& =: \sum_{j=0}^4\widetilde{X}_{2,j}(k),
\end{split}
\end{equation} 
where 
\begin{displaymath} 
\begin{array}{l}
F_1= \big\{ (k_1,k_2,k_3) \in \mathbb Z_+^3 \ : \ 0 \le k_1 \le k+3, \ |k-k_3|\le 3, \  0 \le k_2 \le \max(k,k_3)-5 \big\},  \\ 
F_2= \big\{ (k_1,k_2,k_3) \in \mathbb Z_+^3 \ : \ 0 \le k_1 \le k+3, \ |k-k_2|\le 3, \  0 \le k_3 \le \max(k,k_2)-5 \big\},  \\ 
F_3= \big\{ (k_1,k_2,k_3) \in \mathbb Z_+^3 \ : \ 0 \le k_1 \le k+3, \ |k-k_2|\le 8, \   |k-k_3|\le 8 \big\}, \\
F_4= \big\{ (k_1,k_2,k_3) \in \mathbb Z_+^3 \ : \ 0 \le k_1 \le k+3, \ |k_2-k_3|\le 3, \   1 \le k \le \max(k_2,k_3)-8 \big\}.
\end{array}
\end{displaymath}
By applying H\"older's inequality, we can bound $\widetilde{X}_{2,1}(k)$ by 
\begin{equation} \label{propEEdiff20}
\sum_{0 \le k_1 \le k+3}\|P_{k_1}u_1\|_{L^{\infty}_{T,x}} 
\sum_{0 \le k_2 \le \max(k,k_3)-5}2^{k_2}\|P_{k_2}v\|_{L^2_TL^{\infty}_x}\sum_{|k-k_3|\le 3}
\|P_{k_3}u_2\|_{L^{\infty}_TL^2_x}\|P_{k}v\|_{L^{\infty}_TL^2_x},
\end{equation}
which implies after suing the Sobolev embedding, the Cauchy-Schwarz inequality and estimate \eqref{Bstrichartz1b}
\begin{equation} \label{propEEdiff21}
\sum_{k \ge 1}2^{2ks}\sup_{t_k \in [0,T]}\widetilde{X}_{2,1}(k) \lesssim \|u_1\|_{F^{\frac12+}(T)}\|v\|_{F^{\frac34+}(T)}
\|u_2\|_{B^s(T)}\|v\|_{B^s(T)},
\end{equation}
for any $s \ge 0$. On the other by putting the $L^{\infty}_TL^2_x$ norm on $P_{k_2}v$ and the $L^2_TL^{\infty}_x$ norm on $P_{k_3}u_2$ in \eqref{propEEdiff20}, we get that 
\begin{equation} \label{propEEdiff22}
\sum_{k \ge 1}\sup_{t_k \in [0,T]}\widetilde{X}_{2,1}(k) \lesssim \|u_1\|_{F^{\frac12+}(T)}\|u_2\|_{F^{\frac34+}(T)}
\|v\|_{B^0(T)}^2, 
\end{equation}
at the $L^2$ level. By using similar arguments, we get that 
\begin{equation} \label{propEEdiff22b}
\sum_{k \ge 1}2^{2ks}\sup_{t_k \in [0,T]}\big(\widetilde{X}_{2,2}(k)+\widetilde{X}_{2,3}(k)\big)\lesssim \|u_1\|_{F^{\frac12+}(T)}
\|u_2\|_{F^{\frac34+}(T)}\|v\|_{B^s(T)}^2,
\end{equation}
for any $s \ge 0$. Finally, we use estimate \eqref{tec3EE1} to bound $\widetilde{X}_{2,4}(k)$ by
\begin{displaymath} 
\sum_{0 \le k_1 \le k+3}\|P_{k_1}\widetilde{u}_1\|_{F_{k_1}} 2^{-\frac{3k}2}\|P_k\widetilde{v}\|_{F_k}
\sum_{\max(k_2,k_3)\ge k+8}\sum_{|k_2-k_3|\le 3} \|P_{k_2}\widetilde{v}\|_{F_{k_2}}2^{2k_3}\|P_{k_3}\widetilde{u}_2\|_{F_{k_3}},
\end{displaymath}
which implies after summing over $k \in \mathbb Z_+ \cap [1,+\infty)$
\begin{equation} \label{propEEdiff23} 
\sum_{k \ge 1}2^{2ks}\sup_{t_k \in [0,T]}\widetilde{X}_{2,4}(k) \lesssim \|\widetilde{u}_1\|_{F^{0+}}\|\widetilde{u}_2\|_{F^{2}}\|\widetilde{v}\|_{F^{s}}^2,
\end{equation} 
for all $s \ge 0$. Therefore, we conclude gathering estimates \eqref{propEEdiff1} and \eqref{propEEdiff19}--\eqref{propEEdiff23} that
\begin{equation} \label{propEEdiff24} 
\begin{split}
\sum_{k \ge 1}2^{2ks}\sup_{t_k \in [0,T]}\widetilde{X}_{2}(k) & \lesssim \|u_1\|_{F^{\frac12+}(T)}\|u_2\|_{F^{2}(T)}\big(\|v\|_{B^s(T)}^2+\|v\|_{F^{s}(T)}^2\big) \\ & \quad+\|u_1\|_{F^{\frac12+}(T)}\|v\|_{F^{\frac34+}(T)}
\|u_2\|_{B^s(T)}\|v\|_{B^s(T)},
\end{split}
\end{equation}
for any $s \ge 0$ and 
\begin{equation} \label{propEEdiff25} 
\sum_{k \ge 1}\sup_{t_k \in [0,T]}\widetilde{X}_{2}(k)  \lesssim \|u_1\|_{F^{\frac12+}(T)}\|u_2\|_{F^{2}(T)}\big(\|v\|_{B^0(T)}^2+\|v\|_{F^0(T)}^2\big),
\end{equation}
at the $L^2$ level.

By using the same arguments as for $\widetilde{X}_{2}(k) $, we have that 
\begin{equation} \label{propEEdiff26} 
\begin{split}
\sum_{k \ge 1}2^{2ks}\sup_{t_k \in [0,T]}\widetilde{X}_{3}(k) & \lesssim \|u_1\|_{F^{\frac12+}(T)}\|u_1\|_{F^{2}(T)}\big(\|v\|_{B^s(T)}^2+\|v\|_{F^{s}(T)}^2\big) \\ & \quad+\|u_1\|_{F^{\frac12+}(T)}\|v\|_{F^{\frac34+}(T)}
\|u_1\|_{B^s(T)}\|v\|_{B^s(T)},
\end{split}
\end{equation}
for any $s \ge 0$ and 
\begin{equation} \label{propEEdiff27} 
\sum_{k \ge 1}\sup_{t_k \in [0,T]}\widetilde{X}_{3}(k)  \lesssim \|u_1\|_{F^{\frac12+}(T)}\|u_1\|_{F^{2}(T)}\big(\|v\|_{B^0(T)}^2+\|v\|_{F^0(T)}^2\big),
\end{equation}
at the $L^2$ level. 

To deal with $\widetilde{X}_{4}(k)$ at the $L^2$ level, we observe after integrating by parts that
\begin{equation} \label{propEEdiff28}
\begin{split}
\widetilde{X}_4(k) &\le \Big|\int_{\mathbb R \times [0,t_k]}
\partial_xu_1P_k\big(v\partial_xu_2\big)P_k\partial_x^{-1}v dxdt\Big|  \\ & \quad
+\Big|\int_{\mathbb R \times [0,t_k]}
u_1P_k\big(\partial_xv\partial_xu_2\big)P_k\partial_x^{-1}v dxdt\Big|   \\ & \quad
+\Big|\int_{\mathbb R \times [0,t_k]}
u_1P_k\big(v\partial_xu_2\big)P_kv dxdt\Big| \\ & =: \sum_{j=1}^3\widetilde{X}_{4,j}(k).
\end{split}
\end{equation}
Arguing exactly as for $X_2(k)$ in \eqref{propEE10}, we deduce that 
\begin{equation} \label{propEEdiff29} 
\begin{split}
&\sum_{k \ge 1}2^{2ks}\sup_{t_k \in [0,T]}\big(\widetilde{X}_{4,1}(k)+\widetilde{X}_{4,2}(k) \big)
 \\ &  \lesssim \|u_1\|_{F^{\frac34+}(T)}\|u_2\|_{F^2(T)}\|v\|_{B^s(T)}^2
 +\|u_1\|_{F^{\frac34+}(T)}\|v\|_{F^{\frac34+}(T)}
\|u_2\|_{B^s(T)}\|v\|_{B^s(T)},
\end{split}
\end{equation}
for all $s \ge 0$ and 
\begin{equation} \label{propEEdiff30} 
\sum_{k \ge 1}\sup_{t_k \in [0,T]}\widetilde{X}_4(k) \lesssim \|u_1\|_{F^{\frac34+}(T)}\|u_2\|_{F^2(T)}\|v\|_{B^0(T)}^2,
\end{equation}
at the $L^2$ level. To estimate $\widetilde{X}_{4,3}(k)$ at the $H^s$-level, we use the same decomposition as for $X_4(k)$ in \eqref{propEE12}. It follows that 
\begin{displaymath}
\begin{split}
\sum_{k \ge 1}2^{2ks}\sup_{t_k \in [0,T]}\widetilde{X}_{4,3}(k) &\lesssim \|u_1\|_{F^{\frac12+}(T)}\|u_2\|_{F^{\frac34+}(T)}\|v\|_{B^s(T)}^2
\\ & \quad +\|u_2\|_{F^{\frac34+}(T)}\|v\|_{F^{\frac12+}(T)}\|u_1\|_{B^s(T)}\|v\|_{B^s(T)} \\ & \quad 
+\|u_1\|_{F^{\frac12+}(T)}\|v\|_{F^{\frac12+}(T)}\|u_2\|_{F^s(T)}\|v\|_{F^s(T)},
\end{split}
\end{displaymath}
which implies together with \eqref{propEEdiff28} and \eqref{propEEdiff29} 
\begin{equation} \label{propEEdiff31}
\begin{split}
&\sum_{k \ge 1}2^{2ks}\sup_{t_k \in [0,T]}\widetilde{X}_4(k) \\ & 
 \lesssim \|u_1\|_{F^{\frac34+}(T)}\|u_2\|_{F^2(T)}\|v\|_{B^s(T)}^2 
 +\|u_1\|_{F^{\frac12+}(T)}\|v\|_{F^{\frac12+}(T)}\|u_2\|_{F^s(T)}\|v\|_{F^s(T)}\\ &
 \quad +\big(\|u_1\|_{F^{\frac34+}(T)}+\|u_2\|_{F^{\frac34+}(T)}\big)\|v\|_{F^{\frac34+}(T)}
\big(\|u_1\|_{B^s(T)}+\|u_2\|_{B^s(T)}\big)\|v\|_{B^s(T)},
 \end{split}
\end{equation}
for all $s \ge 0$.

Finally, we treat the term $\widetilde{X}_5(k)$. After integrating by parts, we obtain that 
\begin{equation} \label{propEEdiff32} 
\begin{split}
\widetilde{X}_5(k) &\le \Big|\int_{\mathbb R \times [0,t_k]}\partial_xu_1P_k\big(u_1\partial_xv\big)P_k\partial_x^{-1}v dxdt\Big| \\ & \quad +\Big|\int_{\mathbb R \times [0,t_k]}u_1P_k\big(\partial_xu_1\partial_xv\big)P_k\partial_x^{-1}v dxdt\Big|
\\ & \quad +\Big|\int_{\mathbb R \times [0,t_k]}u_1P_k\big(u_1\partial_xv\big)P_kv dxdt\Big| \\ & := \sum_{j=1}^3\widetilde{X}_{5,j}(k).
\end{split}
\end{equation}
By using the same arguments as above, we deduce that 
Arguing exactly as for $X_2(k)$ in \eqref{propEE10}, we deduce that 
\begin{equation} \label{propEEdiff33} 
\begin{split}
&\sum_{k \ge 1}2^{2ks}\sup_{t_k \in [0,T]}\big(\widetilde{X}_{5,1}(k)+\widetilde{X}_{5,2}(k) \big)
 \\ &  \lesssim \|u_1\|_{F^{\frac34+}(T)}\|u_1\|_{F^2(T)}\|v\|_{B^s(T)}^2
 +\|u_1\|_{F^{\frac34+}(T)}\|v\|_{F^{\frac34+}(T)}
\|u_1\|_{B^s(T)}\|v\|_{B^s(T)},
\end{split}
\end{equation}
for all $s \ge 0$ and 
\begin{equation} \label{propEEdiff34} 
\sum_{k \ge 1}\sup_{t_k \in [0,T]}\big(\widetilde{X}_{5,1}(k)+\widetilde{X}_{5,2}(k) \big)
 \lesssim \|u_1\|_{F^{\frac34+}(T)}\|u_1\|_{F^2(T)}\|v\|_{B^0(T)}^2,
\end{equation}
at the $L^2$ level. To handle $\widetilde{X}_{5,3}(k)$, we perform the same decomposition as for $X^4(k)$ in \eqref{propEE12}. It follows that 
\begin{equation} \label{propEEdiff35}
\begin{split}
\sum_{k \ge 1}2^{2ks}\sup_{t_k \in [0,T]}\widetilde{X}_{5,3}(k) &\lesssim \|u_1\|_{F^{\frac12+}(T)}\|v\|_{F^{\frac34+}(T)}\|u_1\|_{B^s(T)}\|v\|_{B^s(T)}
\\ & \quad +\|u_1\|_{F^{\frac12+}(T)}\|u_1\|_{F^{\frac34+}(T)}\big(\|v\|_{F^s(T)}^2 +\|v\|_{B^s(T)}^2\big),
\end{split}
\end{equation}
for any $s \ge 0$ and 
\begin{equation} \label{propEEdiff36}
\begin{split}
\sum_{k \ge 1}\sup_{t_k \in [0,T]}\widetilde{X}_{5,3}(k) &\lesssim \|u_1\|_{F^{\frac12+}(T)}\|u_1\|_{F^{\frac34+}(T)}\big(\|v\|_{F^0(T)}^2 +\|v\|_{B^0(T)}^2\big),
\end{split}
\end{equation}
at the $L^2$ level. Thus, we deduce from \eqref{propEEdiff33} and \eqref{propEEdiff35} that 
\begin{equation} \label{propEEdiff37}
\begin{split}
\sum_{k \ge 1}2^{2ks}\sup_{t_k \in [0,T]}\widetilde{X}_5(k) &\lesssim \|u_1\|_{F^{\frac34+}(T)}\|v\|_{F^{\frac34+}(T)}\|u_1\|_{B^s(T)}\|v\|_{B^s(T)}
\\ & \quad +\|u_1\|_{F^{\frac34+}(T)}\|u_1\|_{F^2(T)}\big(\|v\|_{F^s(T)}^2 +\|v\|_{B^s(T)}^2\big),
\end{split}
\end{equation}
for any $s \ge 0$, and from \eqref{propEEdiff34} and \eqref{propEEdiff36} that 
\begin{equation} \label{propEEdiff38}
\begin{split}
\sum_{k \ge 1}\sup_{t_k \in [0,T]}\widetilde{X}_5(k) &\lesssim \|u_1\|_{F^{\frac34+}(T)}\|u_1\|_{F^2(T)}\big(\|v\|_{F^0(T)}^2 +\|v\|_{B^0(T)}^2\big),
\end{split}
\end{equation}
at the $L^2$ level.

Therefore, we deduce from \eqref{propdiffEE17}, \eqref{propEEdiff18}, \eqref{propEEdiff25}, \eqref{propEEdiff27}, \eqref{propEEdiff30} and \eqref{propEEdiff38} that 
\begin{displaymath} 
\begin{split}
\sum_{k \ge 1}\sup_{t_k \in [0,T]}\Big|&\int_{[0,t_k]}\widetilde{\mathcal{N}}_k^2(v) dt\Big|\\ &\lesssim \|u_1\|_{F^{\frac34+}(T)}\big(\|u_1\|_{F^2(T)}+\|u_2\|_{F^2(T)}\big)\big(\|v\|_{F^0(T)}^2 +\|v\|_{B^0(T)}^2\big),
\end{split}
\end{displaymath}
which together with \eqref{propEEdiff3} and \eqref{propdiffEE16} implies estimate \eqref{propEEdiff0}, since the bound for the term corresponding to $\widetilde{\mathcal{N}}_k^1(v) $ would be similar.

Similarly, we deduce from \eqref{propdiffEE17}, \eqref{propEEdiff18}, \eqref{propEEdiff24}, \eqref{propEEdiff26}, \eqref{propEEdiff31} and \eqref{propEEdiff37} that
\begin{displaymath} 
\begin{split}
&\sum_{k \ge 1}2^{2ks}\sup_{t_k \in [0,T]}\Big|\int_{[0,t_k]}\widetilde{\mathcal{N}}_k^2(v) dt\Big|\\ &\lesssim 
\big(\|u_1\|_{F^{\frac34+}(T)}+\|u_2\|_{F^{\frac34+}(T)}\big)\|v\|_{F^{\frac34+}(T)}\big(\|u_1\|_{B^s(T)}+\|u_2\|_{B^s(T)}\big)\|v\|_{B^s(T)}
\\ & \quad +\|u_1\|_{F^{\frac34+}(T)}\big(\|u_1\|_{F^2(T)}+\|u_2\|_{F^2(T)}\big)\big(\|v\|_{F^s(T)}^2 +\|v\|_{B^s(T)}^2\big) \\ & \quad +\|u_1\|_{F^{\frac12+}(T)}\|v\|_{F^{\frac12+}(T)}\|u_2\|_{F^s(T)}\|v\|_{F^s(T)},
\end{split}
\end{displaymath}
which together with \eqref{propEEdiff3} and \eqref{propdiffEE15} implies estimate \eqref{propEEdiff1}.

This concludes the proof of Proposition \ref{propEEdiff}. 

\end{proof}

\section{Proof of Theorem \ref{maintheo}}

We recall that, for sake of simplicity, we are proving Theorem \ref{maintheo} in the case $c_3=0$. The starting point is a well-posedness result for smooth solutions which follows from Theorem 3.1 in \cite{Po}.
\begin{theorem} \label{smoothsol} 
For all $u_0 \in H^{\infty}(\mathbb R)$, there exist a positive time $T$ and a unique solution $u \in C([-T,T];H^{\infty}(\mathbb R))$ to the initial value problem \eqref{5KdV}. Moreover $T=T(\|u_0\|_{H^4})$ can be chosen as a nonincreasing function of its argument.
\end{theorem}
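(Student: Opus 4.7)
The statement is essentially Theorem~3.1 of \cite{Po}; I only outline the scheme. The plan is to solve a parabolically regularized version of \eqref{5KdV}, obtain $\epsilon$-independent energy estimates at the $H^4$-level on a time interval depending only on $\|u_0\|_{H^4}$, propagate regularity to $H^s$ for every $s$, and pass to the limit. Concretely, for $\epsilon>0$ consider
\begin{equation*}
\partial_t u_\epsilon-\partial_x^5 u_\epsilon -\epsilon\partial_x^6 u_\epsilon
=c_1\partial_x u_\epsilon \partial_x^2 u_\epsilon+c_2\partial_x(u_\epsilon\partial_x^2 u_\epsilon)+c_3\partial_x(u_\epsilon^3),\quad u_\epsilon(\cdot,0)=u_0,
\end{equation*}
whose linear semigroup has Fourier multiplier $e^{t(i\xi^5-\epsilon\xi^6)}$ and therefore gains arbitrarily many derivatives for $t>0$. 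A standard Picard iteration in $C([0,T_\epsilon];H^s(\mathbb R))$ for, say, $s=4$ then produces a unique smooth solution $u_\epsilon$ for every $\epsilon>0$.

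The crux is the $\epsilon$-uniform $H^4$-estimate. Differentiating the equation and pairing $\partial_x^k$ of it with $\partial_x^k u_\epsilon$ in $L^2$ for $k=0,1,\dots,4$, the dissipation contributes a favourable $-\epsilon\|\partial_x^{k+3}u_\epsilon\|_{L^2}^2$, while the nonlinear terms leave, after one or two integrations by parts, an expression bounded schematically by \eqref{standardenergy} plus lower-order terms. The genuinely problematic contribution is the one involving $\|\partial_x^3 u_\epsilon\|_{L^1_TL^\infty_x}$, which cannot be treated by integration by parts or Sobolev embedding alone. Ponce handles it via a recursive argument based on the Kato/Kenig--Ponce--Vega local smoothing estimate
\begin{equation*}
\|\partial_x^2 e^{t\partial_x^5}u_0\|_{L^\infty_xL^2_T}\lesssim \|u_0\|_{L^2}
\end{equation*}
(cf.~\cite{KPV}) combined with the associated maximal function estimate; applied to the Duhamel representation of $u_\epsilon$, this produces a self-consistent inequality for $\|u_\epsilon\|_{L^\infty_TH^4_x}$ which, via Gronwall, closes on a time $T=T(\|u_0\|_{H^4})>0$ independent of $\epsilon$ and nonincreasing as a function of $\|u_0\|_{H^4}$.

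Given the uniform $H^4$-bound, persistence of regularity in $H^s$ for every $s\ge 4$ follows by induction on $s$: the energy estimate for $\|u_\epsilon\|_{H^s}^2$ is linear in $\|u_\epsilon\|_{H^s}^2$ with coefficients depending only on $L^\infty$-norms of $u_\epsilon$ and of $\partial_x^j u_\epsilon$, $j\le 3$, all of which are controlled on $[0,T]$ by $\|u_\epsilon\|_{L^\infty_TH^4_x}$. Thus $\{u_\epsilon\}_{\epsilon>0}$ is bounded in $C([0,T];H^s(\mathbb R))$ for every $s$. An analogous $L^2$ energy estimate applied to the difference $u_\epsilon-u_{\epsilon'}$ shows that the family is Cauchy in $C([0,T];L^2(\mathbb R))$; interpolating with the uniform higher-regularity bounds yields strong convergence in every $C([0,T];H^{s'}(\mathbb R))$ with $s'$ arbitrary. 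The limit $u\in C([-T,T];H^\infty(\mathbb R))$ (after the analogous construction on $[-T,0]$) solves \eqref{5KdV}, and uniqueness in this smooth class is immediate from the $L^2$ energy estimate for the difference of two solutions. The main obstacle is, and remains, the $H^4$ estimate described in the second paragraph; everything else is routine once the dispersive smoothing has been exploited there.
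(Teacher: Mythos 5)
The paper does not actually prove this statement: it is imported verbatim from Theorem 3.1 of \cite{Po}, so your decision to cite \cite{Po} and only sketch the scheme is consistent with what the authors do, and your description of the existence part (parabolic regularization, an $\epsilon$-uniform $H^4$ bound closed through the gain-of-two-derivatives local smoothing estimate of \cite{KPV} and the associated maximal function estimate, then persistence of regularity) is a faithful account of Ponce's argument.

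There is, however, one step in your sketch that would fail as written: the claim that the family $\{u_\epsilon\}$ is Cauchy in $C([0,T];L^2)$, and that uniqueness is ``immediate,'' from an $L^2$ energy estimate for the difference of two solutions. Writing $v=u_1-u_2$, the difference equation (cf.\ \eqref{diff5KdV}) contains the term $c_2\partial_x(u_1\partial_x^2v)$, and pairing it with $v$ gives, after integration by parts,
\begin{displaymath}
2c_2\int_{\mathbb R} v\,\partial_x(u_1\partial_x^2v)\,dx=c_2\int_{\mathbb R}\partial_xu_1\,(\partial_xv)^2\,dx,
\end{displaymath}
which is not controlled by $\|v\|_{L^2}^2$; the same one-derivative loss that obstructs \eqref{standardenergy} reappears at every fixed regularity level of the difference, so a naive Gronwall argument does not close. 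Interpolating $\|\partial_xv\|_{L^2}^2\le\|v\|_{L^2}\|\partial_x^2v\|_{L^2}$ and using the uniform $H^4$ bounds only yields $\frac{d}{dt}\|v\|_{L^2}^2\lesssim\|v\|_{L^2}$, an ODE which admits nontrivial solutions vanishing at $t=0$, hence gives neither uniqueness nor the Cauchy property. In \cite{Po} the difference estimate is closed by running the same smoothing-norm machinery (the $\|\partial_x^{k+2}\cdot\|_{L^\infty_xL^2_T}$ norm together with the maximal function norm) on $v$, the uniqueness class being defined by finiteness of those auxiliary norms, which is automatic for $C([-T,T];H^\infty(\mathbb R))$ solutions. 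Indeed, the difficulty of estimating differences of solutions is precisely why the present paper introduces the separate modified energy $\widetilde E^s_T$ in Section 5.2. So the ``routine'' part of your sketch conceals a second, essential use of the dispersive smoothing, not just the one in the $H^4$ a priori bound.
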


\subsection{\textit{A priori} estimates for smooth solutions} The main result of this subsection reads as follows. 

\begin{proposition} \label{apriori} 
Assume $s \ge \frac54$. For any $M>0$, there exists a positive time $T=T(M)$ such that for any initial data $u_0 \in H^{\infty}(\mathbb R)$satisfying $\|u_0\|_{H^s} \le M$, the solution $u$ obtained in Theorem \ref{smoothsol} is defined on $[-T,T]$ and satisfies 
\begin{equation} \label{apriori1}
u \in C([-T,T];H^{\infty}(\mathbb R)) \quad \text{and} \quad \|u\|_{L^{\infty}_TH^s_x} \lesssim \|u_0\|_{H^s}.
\end{equation}
\end{proposition}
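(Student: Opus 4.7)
The plan is to combine the three main building blocks developed so far, namely the short time linear estimate (Proposition~\ref{linear}), the bilinear estimates (Proposition~\ref{bilinE}), and the frequency localized energy estimate (Corollary~\ref{coroEE}), and to reduce to the small data case by a scaling argument. Fix $s\ge 5/4$. The natural scaling for \eqref{5KdV} with $c_3=0$ is $u_{\lambda}(x,t)=\lambda^{2}u(\lambda x,\lambda^{5}t)$, which preserves the equation and satisfies $\|u_{\lambda}(\cdot,0)\|_{H^s}^{2}\le \lambda^{3}\|u_0\|_{H^s}^{2}$ for $\lambda\in(0,1]$. Thus, given $\|u_0\|_{H^s}\le M$, choosing $\lambda=\lambda(M)\sim \min\{1,(\varepsilon_0/M)^{2/3}\}$ with $\varepsilon_0\ll 1$ to be fixed reduces matters to proving the following: there exists $\varepsilon_0>0$ such that, whenever $u_{0}^{\lambda}\in H^{\infty}(\mathbb R)$ satisfies $\|u_{0}^{\lambda}\|_{H^s}\le \varepsilon_0$, the smooth solution $u_{\lambda}$ furnished by Theorem~\ref{smoothsol} exists on $[-1,1]$ and obeys $\|u_{\lambda}\|_{L^{\infty}_{[-1,1]}H^s}\lesssim \|u_{0}^{\lambda}\|_{H^s}$. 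Unscaling then gives \eqref{apriori1} with $T(M)\sim \lambda(M)^{5}\sim M^{-10/3}\wedge 1$.

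Assume henceforth $\|u_{0}^{\lambda}\|_{H^s}\le \varepsilon_0$. Since $s\ge 5/4>1$, Propositions~\ref{linear} and~\ref{bilinE} applied to the equation $\partial_t u_\lambda-\partial_x^{5}u_\lambda=c_1 \partial_x u_\lambda\partial_x^{2}u_\lambda+c_2 \partial_x(u_\lambda\partial_x^{2}u_\lambda)$ yield, for every $T\in(0,1]$,
\begin{equation}\label{plan:lin}
\|u_{\lambda}\|_{F^{s}(T)} \le C\,\|u_{\lambda}\|_{B^{s}(T)}+C\,\|u_{\lambda}\|_{F^{s}(T)}\|u_{\lambda}\|_{F^{s}(T)} .
\end{equation}
On the other hand, as soon as $\|u_{\lambda}\|_{L^{\infty}_TH^{\frac12+}_x}<\delta_0$, Corollary~\ref{coroEE} gives
\begin{equation}\label{plan:en}
\|u_{\lambda}\|_{B^{s}(T)}^{2} \le C\,\|u_{0}^{\lambda}\|_{H^s}^{2}+C\,\bigl(1+\|u_{\lambda}\|_{F^{s}(T)}\bigr)\|u_{\lambda}\|_{F^{s}(T)}^{3}+C\,\|u_{\lambda}\|_{F^{s}(T)}^{2}\|u_{\lambda}\|_{B^{s}(T)}^{2},
\end{equation}
where we used that $\|u_\lambda\|_{F^{5/4}(T)}+\|u_\lambda\|_{F^{3/4+}(T)}\lesssim \|u_\lambda\|_{F^{s}(T)}$ for $s\ge 5/4$. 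Setting $\Lambda(T):=\|u_{\lambda}\|_{F^{s}(T)}+\|u_{\lambda}\|_{B^{s}(T)}$, estimates \eqref{plan:lin}--\eqref{plan:en} combine to
\begin{equation}\label{plan:boot}
\Lambda(T)\le K\|u_{0}^{\lambda}\|_{H^s}+K\,\Lambda(T)^{3/2}\bigl(1+\Lambda(T)^{1/2}\bigr),
\end{equation}
valid whenever the smallness assumption of Corollary~\ref{coroEE} holds and $\Lambda(T)$ is, say, smaller than $1/(2C)$ so that the quadratic terms can be absorbed.

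The argument is then a standard continuity bootstrap. Let $T^{\ast}\in(0,1]$ denote the supremum of times $T$ such that $u_\lambda\in C([-T,T];H^{\infty})$ and $\Lambda(T)\le 4K\varepsilon_{0}$. By Theorem~\ref{smoothsol} applied to the (smooth, small) data $u_{0}^{\lambda}$, and by the continuity of $T\mapsto \Lambda(T)$ on the interval where the smooth solution persists (a fact I would establish directly from the definitions of $F^{s}(T)$ and $B^{s}(T)$ and the dominated convergence theorem, using the smoothness of $u_\lambda$), the set of admissible $T$ is nonempty and open. The smallness $4K\varepsilon_{0}<\delta_0$ ensures \eqref{plan:en} applies throughout $[-T^{\ast},T^{\ast}]$ via Lemma~\ref{lemma1}. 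Plugging $\Lambda(T)\le 4K\varepsilon_{0}$ into \eqref{plan:boot} and choosing $\varepsilon_{0}$ small enough, one obtains $\Lambda(T)\le 2K\varepsilon_{0}$, strictly improving the bootstrap assumption. Combined with the blow-up criterion underlying Theorem~\ref{smoothsol} (a uniform $H^{s}$ bound for $s\ge 5/4>\tfrac{5}{2}$... in fact one would propagate the $H^{4}$ bound by reapplying the same scheme with $s$ replaced by $4$, using that the energy estimate is polynomial in $\|u_\lambda\|_{F^{s}(T)}$ at higher regularity) this forces $T^{\ast}=1$ and the desired bound $\Lambda(1)\le 2K\varepsilon_{0}\lesssim \|u_{0}^{\lambda}\|_{H^s}$. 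Lemma~\ref{lemma1} then yields $\|u_{\lambda}\|_{L^{\infty}_{[-1,1]}H^s_x}\lesssim \|u_{0}^{\lambda}\|_{H^s}$, completing the reduction.

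The main obstacle I anticipate is twofold. First, one must rigorously justify the continuity of $T\mapsto \|u_\lambda\|_{F^{s}(T)}$ and $T\mapsto \|u_\lambda\|_{B^{s}(T)}$ for a smooth solution, so that the bootstrap can be closed; this requires some care because of the $\sup$ over translates in the definition \eqref{Fk} of $F_{k,\alpha}$ and the frequency-by-frequency supremum in \eqref{Bs}, but reduces to a standard density-plus-uniform-estimate argument using the smoothness of $u_\lambda$. Second, one has to propagate the higher-regularity bound needed to invoke the Ponce continuation criterion of Theorem~\ref{smoothsol}; this is done by running the very same bootstrap at any fixed Sobolev index $\sigma\ge s$, with constants depending polynomially on $\|u_{0}^{\lambda}\|_{H^{\sigma}}$ but a lifespan that is controlled by $\|u_{0}^{\lambda}\|_{H^s}$ alone, which is precisely what the shape of \eqref{plan:lin}--\eqref{plan:en} (linear in $\|u\|_{F^s(T)}^{2}$ when one index is pushed to $\sigma$) allows.
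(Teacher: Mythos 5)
Your overall strategy coincides with the paper's: rescale to small data with $T=1$, close a bootstrap by combining Proposition \ref{linear}, Proposition \ref{bilinE} and Corollary \ref{coroEE}, and then propagate the $H^4$ bound so that Theorem \ref{smoothsol} can be reapplied to reach time $1$. The algebra of your inequalities \eqref{plan:lin}--\eqref{plan:boot} is consistent with \eqref{apriori2} in the paper, and the final unscaling giving $T\sim \|u_0\|_{H^s}^{-10/3}$ is the same.

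The genuine gap is in the continuity argument, which you flag but substantially underestimate. You bootstrap on $\Lambda(T)=\|u_\lambda\|_{F^s(T)}+\|u_\lambda\|_{B^s(T)}$ and assert that continuity of $T\mapsto\|u_\lambda\|_{F^s(T)}$ and the limit $\Lambda(T)\to C\|u_0\|_{H^s}$ as $T\to 0$ follow from ``a standard density-plus-uniform-estimate argument.'' This is precisely the step the paper is careful to avoid: the quantity $\|u\|_{F^s(T)}$ is an infimum over extensions of a norm containing a supremum over time translates of localized $X_k$ norms, and neither its continuity in $T$ nor its behaviour as $T\to 0$ is accessible by a soft argument (there is no obvious reason why $\lim_{T\to 0}\|u\|_{F^s(T)}\lesssim\|u_0\|_{H^s}$ rather than merely being finite). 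The paper instead bootstraps on $\Lambda^s_{T'}(u)=\max\{\|u\|_{B^{s}(T')},\|\partial_x(u\partial_x^2u)\|_{N^s(T')},\|\partial_xu\partial_x^2u\|_{N^s(T')}\}$, i.e.\ it replaces the $F^s(T)$ norm of $u$ by the $N^s(T)$ norms of the nonlinear terms, because for a function $f\in C([-T,T];H^\infty)$ one can prove both $\lim_{T'\to 0}\|f\|_{N^s(T')}=0$ (via the crude bound $\|f\|_{N^s(T')}\lesssim (T')^{1/2}\|f\|_{L^\infty_TH^s_x}$) and the continuity in $T'$ — and even the latter requires the full strength of Lemma \ref{apriorilemma}, a delicate dilation argument comparing $\|D_{1/r}(\tilde f_r)\|_{N^s}$ with $\|\tilde f_r\|_{N^s}$ (the acknowledgments record that an earlier version of this very lemma was flawed). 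Once the bootstrap is closed for $\Lambda^s_T(u)$, the linear estimate \eqref{linear1} recovers the bound on $\|u\|_{F^s(T)}$ a posteriori. To repair your proof you should either switch to the paper's bootstrap quantity and supply the analogue of Lemma \ref{apriorilemma}, or give an actual proof of the continuity and small-time behaviour of $T\mapsto\|u\|_{F^s(T)}$, which is not in the paper and is not standard.
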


The following technical lemma will be needed in the proof of Proposition \ref{apriori}. 
\begin{lemma} \label{apriorilemma} 
Assume $s \in \mathbb R_+$, $T>0$ and $u \in C([-T,T];H^{\infty}(\mathbb R))$. We define 
\begin{equation} \label{apriorilemma1b} 
\Lambda_{T'}^s(u):= \max\big\{\|u\|_{B^s(T')},\big\|\partial_x\big(u\partial_x^2u\big)\big\|_{N^s(T')}, \big\|\partial_xu\partial_x^2u\big\|_{N^s(T')} \big\},
\end{equation}
for any $0 \le T' \le T$. Then $:T'\mapsto \Lambda^s_{T'}(u)$ is nondecreasing and continuous on $[0,T)$. Moreover 
\begin{equation} \label{apriorilemma2} 
\lim_{T' \to 0} \Lambda_{T'}^s(u) \lesssim \|u(0)\|_{H^s}.
\end{equation}
\end{lemma}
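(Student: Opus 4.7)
If $0 \le T_1' \le T_2' < T$, the inequality $\Lambda^s_{T_1'}(u) \le \Lambda^s_{T_2'}(u)$ is immediate: the $B^s$-norm takes a supremum over a smaller set of times when $T'$ decreases, and the infima defining the $N^s(T')$-norms are monotone since any admissible extension of $f|_{\mathbb R\times[-T_2',T_2']}$ to $\mathbb R^2$ is also an extension of $f|_{\mathbb R\times[-T_1',T_1']}$.

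For the $B^s$-part, continuity of $T' \mapsto \|u\|_{B^s(T')}$ on $[0,T)$, together with $\lim_{T'\to 0} \|u\|_{B^s(T')} = \|u(\cdot,0)\|_{H^s}$, will follow from dominated convergence applied termwise to the series \eqref{Bs}. For each $k$, the map $t \mapsto \|P_k u(\cdot,t)\|_{L^2}$ is continuous on $[-T,T]$ since $u \in C([-T,T];H^\infty(\mathbb R))$, and the uniform dominating bound $\|P_k u(\cdot,t)\|_{L^2} \lesssim 2^{-kN} \|u\|_{L^\infty_T H^{s+N}}$ holds for any $N$, so the dyadic series converges uniformly in $T'$.

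For each of the two nonlinear quantities $f \in \{\partial_x(u\partial_x^2 u),\, \partial_x u\,\partial_x^2 u\}$, the plan is to produce an explicit, sharply time-localized extension. Fix a smooth bump $\chi$ with $\chi \equiv 1$ on $[-1,1]$ and $\text{supp}\,\chi \subset [-2,2]$. After extending $u$ slightly beyond $[-T',T']$ using its existence on $[-T,T]$, set $\widetilde f(x,t) = \chi(t/T') f(x,t)$; this extends $f|_{\mathbb R \times [-T',T']}$ and is time-supported in $[-2T',2T']$. A direct computation from \eqref{Ns}, \eqref{Nk} and \eqref{Xk} will yield a crude bound of the form
\begin{displaymath}
\|f\|_{N^s(T')} \le \|\widetilde f\|_{N^s} \lesssim (T')^{\sigma}\, \|u\|_{L^\infty_T H^{s+N}}^2
\end{displaymath}
for some $\sigma>0$ and some large $N$, exploiting the shrinking time-support of $\widetilde f$. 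Letting $T' \to 0$ gives the limit claim. For continuity on $[0,T)$, one applies the same estimate to the difference $\chi(t/T_2')f - \chi(t/T_1')f$, which, when $|T_2'-T_1'|$ is small, is sharply localized in the thin annulus $\{|t|\in[T_1',2T_2']\}$ and therefore has small $N^s$-norm by the same technique.

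The main obstacle will be the technical derivation of the crude bound above. One must carefully track, for each dyadic frequency $k \ge 0$, the interplay between the modulation weight $2^{j/2}$ in $\|\cdot\|_{X_k}$, the sharp cutoff $\chi(\cdot/T')$, and the moving time-localization $\eta_0(2^{2k}(\cdot-\widetilde t))$ built into $N_{k,2}$, distinguishing the regimes $T' \gtrsim 2^{-2k}$ and $T' \lesssim 2^{-2k}$; the smoothness of $u$ absorbs all $x$-derivative losses and the high-frequency tail arising from the summation in $k$.
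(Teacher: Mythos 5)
Your treatment of the monotonicity, of the $B^s$-component, and of the limit \eqref{apriorilemma2} is sound and essentially matches the paper. For the limit, though, your ``crude bound'' can be obtained much more cheaply: the paper simply uses the embedding $\|\widetilde f\|_{N^s}\lesssim\|\widetilde f\|_{L^2_tH^s_x}$ applied to the sharp cutoff $\chi_{[-T',T']}(t)f(x,t)$, which yields $\|f\|_{N^s(T')}\lesssim (T')^{1/2}\|f\|_{L^{\infty}_TH^s_x}\to 0$ at once and spares you the frequency-by-frequency analysis of the interaction between $\chi(\cdot/T')$, the modulation weights and the moving localizations $\eta_0(2^{2k}(\cdot-\widetilde t))$.

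The genuine gap is in the continuity of $T'\mapsto\|f\|_{N^s(T')}$. That quantity is an \emph{infimum} over all extensions of $f|_{\mathbb R\times[-T',T']}$, whereas your argument compares the two particular extensions $\chi(\cdot/T_2')f$ and $\chi(\cdot/T_1')f$. Smallness of $\|\chi(\cdot/T_2')f-\chi(\cdot/T_1')f\|_{N^s}$ shows that $T'\mapsto\|\chi(\cdot/T')f\|_{N^s}$ is continuous, but this function is only an upper bound for $\|f\|_{N^s(T')}$ and may exceed it; in particular it does not give $\|f\|_{N^s(T_2')}\le\|f\|_{N^s(T_1')}+o(1)$ as $T_2'\downarrow T_1'$. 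To obtain that, you would have to start from a near-optimal extension $\widetilde f_1$ of $f|_{[-T_1',T_1']}$ and correct it on the annulus $T_1'\le|t|\le T_2'$; the correction is $f-\widetilde f_1$ there, and you have no pointwise-in-time control of the arbitrary extension $\widetilde f_1$ on that annulus, nor can you freely multiply elements of $N^s$ by time cutoffs at the fixed scale $T_1'$ (Corollary \ref{coro3} only handles multipliers adapted to the frequency-dependent scale $2^{-2k}$). This is precisely where the paper has to work: it reduces the comparison of $\|f\|_{N^s(T')}$ with $\|f\|_{N^s(T_0')}$ to the statement $\lim_{r\to1}\|D_r(f)\|_{N^s(rT_0')}=\|f\|_{N^s(T_0')}$ for the scaling $D_r(f)(x,t)=f(x/r^{1/5},t/r)$, which maps extensions for one time interval bijectively onto extensions for the other and therefore respects the infimum; showing that this scaling distorts the $N^s$-norm only by a factor tending to $1$ is the bulk of the proof (estimates \eqref{apriorilemma14}--\eqref{apriorilemma25}). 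Your proposal is missing an idea of this kind, and without it the continuity claim --- which is exactly what the bootstrap in Proposition \ref{apriori} needs --- is not established.
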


\begin{proof} It is clear from the definition of $B^s(T')$ and the fact that $u \in C([-T,T];H^{\infty}(\mathbb R))$ that $:T' \mapsto \|u\|_{B^s(T')}$ is nondecreasing and continuous on $[0,T]$ and that 
\begin{equation} \label{apriorilemma3}  
\lim_{T' \to 0} \|u\|_{B^s(T')} \lesssim \|u(0)\|_{H^s}.
\end{equation}

In order to deal with the other components of $\Lambda^s_{T'}(u)$ in \eqref{apriorilemma1b}, it suffices to prove that given $f \in C([-T,T];H^{\infty}(\mathbb R))$,
\begin{equation} \label{apriorilemma4} 
: T' \in [0,T) \mapsto \|f\|_{N^s(T')} \quad \text{is continuous and nondecreasing},
\end{equation} 
and
\begin{equation} \label{apriorilemma5} 
\lim_{T' \to 0} \|f\|_{N^s(T')}=0
\end{equation}

It is clear from the definition of $N^s$ that 
\begin{equation} \label{apriorilemma6} 
\|\widetilde{f}\|_{N^s} \lesssim \|\widetilde{f}\|_{L^2_tH^s_x} ,
\end{equation}
for any $\widetilde{f} \in L^2_tH^s_x$.  Then, we deduce by applying estimate \eqref{apriorilemma6} to $\widetilde{f}(x,t)=\chi_{[-T',T']}(t)f(x,t)$ that
\begin{equation} \label{apriorilemma7} 
\|f\|_{N^s(T')} \le \|\widetilde{f}\|_{N^s} \lesssim \|\widetilde{f}\|_{L^2_tH^s_x}   \lesssim (T')^{1/2}\|f\|_{L^{\infty}_TH^s_x} \underset{T'\to 0}{\longrightarrow} 0,
\end{equation}
which gives \eqref{apriorilemma5}.

Now, we turn to the proof of \eqref{apriorilemma4}. The fact that $:T' \in [0,T) \mapsto \|f\|_{N^s(T')}$ is a nondecreasing function  follows directly from the definition of $N^s(T')$. To prove the continuity of $:T' \in (0,T) \mapsto   \|f\|_{N^s(T')}$ at some fixed time $T_0'\in (0,T)$, we introduce the scaling operator $D_r(f)(x,t):=f(x/r^{\frac15},t/r)$, for $r$ close enough to $1$. Hence, we have from \eqref{apriorilemma7} and the triangle inequality that 
\begin{equation} \label{apriorilemma8} 
\begin{split}
\Big| \|f\|_{N^s(T')}-\|D_{T'/T_0'}(f)\|_{N^s(T')}  \Big| &\le \|f-D_{T'/T_0'}(f)\|_{N^s(T')} \\ 
&\lesssim (T')^{\frac12}\|f-D_{T'/T_0'}(f)\|_{L^{\infty}_{T}H^s_x}  \underset{T'\to T_0'}{\rightarrow} 0,
\end{split}
\end{equation} 
since $f \in C([-T,T];H^{\infty}(\mathbb R))$.
Then, it remains to show that 
\begin{equation} \label{apriorilemma9} 
\lim_{r \to 1}\|D_r(f)\|_{N^s(rT'_0)}=\|f\|_{N^s(T_0')},
\end{equation}
to conclude the proof of \eqref{apriorilemma4}.  We observe that \eqref{apriorilemma9} would follow from the inequalities 
\begin{equation} \label{apriorilemma10} 
\|f\|_{N^s(T_0')} \le \liminf_{r \to 1}\|D_{r}(f)\|_{N^s(rT'_0)},
\end{equation}
and 
\begin{equation} \label{apriorilemma11}
\limsup_{r \to 1}\|D_{r}(f)\|_{N^s(rT'_0)} \le \|f\|_{N^s(T_0')}.
\end{equation} 

First, we begin with the proof of \eqref{apriorilemma10}. Let $\epsilon$ be an arbitrarily small positive number. For $r$ close to $1$, we choose an extension $\widetilde{f}_r$ of $D_r(f)$ outside of $[-rT_0',rT_0']$ satisfying
\begin{equation}  \label{apriorilemma12}
\widetilde{f}_{r|_{[-rT_0',rT_0']}}=D_{r}(f) \quad \text{and} \quad \|\widetilde{f}_r\|_{N^s} \le \|D_{r}(f)\|_{N^s(rT'_0)}+\epsilon.
\end{equation}
Note that, since $f \in C([-T,T];H^{\infty}(\mathbb R))$, we have $\|D_r(f)\|_{N^s(rT_0')} \le M$, where $M$ is a positive constant independent of $r$.
We also observe that $D_{1/r}(\widetilde{f}_r)$ is an extension of $f$ outside of $[-T_0',T_0']$, so that 
\begin{equation} \label{apriorilemma13}
\|f\|_{N^s(T_0')} \le \|D_{1/r}(\widetilde{f}_r)\|_{N^s}.
\end{equation}
Moreover, we will prove that 
\begin{equation} \label{apriorilemma14} 
\|D_{1/r}(\widetilde{f}_r)\|_{N^s} \le \psi(r)\|\widetilde{f}_r\|_{N^s}, 
\end{equation}
where $\psi$ is a continuous function defined in a neighborhood of $1$ and satisfying $\lim_{r\to 1}\psi(r)=1$. Then estimate \eqref{apriorilemma10} would be deduced gathering estimates \eqref{apriorilemma12}, \eqref{apriorilemma13} and \eqref{apriorilemma14}. 

To prove estimate \eqref{apriorilemma14}, we first fix $k \in \mathbb Z_+$. Then, by definition, 
\begin{equation} \label{apriorilemma15}
\|P_kD_{1/r}(\widetilde{f}_r) \|_{N_k}=\sup_{\widetilde{t} \in \mathbb R}\big\| 
(\tau-w(\xi)+i2^{2k})^{-1}\mathcal{F}\big[\eta_0(2^{2k}(\cdot-\widetilde{t}))P_kD_{1/r}(\widetilde{f}_r) \big]\big\|_{X_k}.
\end{equation}
We observe that 
\begin{displaymath} 
\eta_0(2^{2k}(\cdot-\widetilde{t}))D_{1/r}(\widetilde{f}_r)=D_{1/r}\big(\eta_0^r(2^{2k}(\cdot-r\widetilde{t}))\widetilde{f}_r\big),
\end{displaymath}
where $\eta_0^r(t)=\eta_0(r^{-1}t)$. Hence, 
\begin{displaymath} 
\mathcal{F}\big[\eta_0(2^{2k}(\cdot-\widetilde{t}))P_kD_{1/r}(\widetilde{f}_r) \big](\xi,\tau)=r^{-\frac65}\eta_k(\xi)\mathcal{F}\big[\eta_0^r(2^{2k}(\cdot-r\widetilde{t}))\widetilde{f}_r\big](\xi/r^{\frac15},\tau/r),
\end{displaymath}
so, we deduce from the definition of $X_k$ in \eqref{Xk} that the right-hand side of \eqref{apriorilemma15} is equal to 
\begin{equation} \label{apriorilemma16} 
r^{-3/5}\sup_{\widetilde{t} \in \mathbb R}\sum_{j \ge 0}2^{j/2}\Big\| 
\frac{\eta_j\big(r(\tau-w(\xi))\big)}{\big|r(\tau-w(\xi))+i2^{2k}\big|}\eta_k(r^{\frac15}\xi)\mathcal{F}\big[\eta_0^r(2^{2k}(\cdot-\widetilde{t}))\widetilde{f}_r\big]\Big\|_{L^2_{\xi,\tau}}.
\end{equation}
Moreover, we use that 
\begin{displaymath} 
\Big|\frac{a^2+2^{4k}}{r^2a^2+2^{4k}}-1 \Big|=|1-r^2|\frac{a^2}{r^2a^2+2^{4k}} \le 4|1-r^2|,
\end{displaymath}
for any $a \in \mathbb R_+,$ $k \in \mathbb Z_+$ and $r \in [1/2,2]$, to bound the $L^2$ norm corresponding to a fixed $j \in \mathbb Z_+$ in \eqref{apriorilemma16} by
\begin{equation} \label{apriorilemma17}
\big(1+4|1-r^2|\big)^{1/2}\Big\| 
\frac{\eta_j\big(r(\tau-w(\xi))\big)}{\big|\tau-w(\xi)+i2^{2k}\big|}\eta_k(r^{\frac15}\xi)\mathcal{F}\big[\eta_0^r(2^{2k}(\cdot-\widetilde{t}))\widetilde{f}_r\big]\Big\|_{L^2_{\xi,\tau}}.
\end{equation}
Now, the mean value theorem gives that 
\begin{equation} \label{apriorilemma18}
\begin{split}
\big|\eta_j(r(\tau-w(\xi)))-\eta_j(\tau-w(\xi)) \big|& \lesssim |r-1| \sup_{s \in [1,r]} 2^j\big|\eta_j'(s(\tau-w(\xi)))\big| \\
& \lesssim |r-1| \sum_{|j'-j| \le 1, j'\in \mathbb Z_+}\eta_j(\tau-w(\xi)),
\end{split}
\end{equation}
if $r \in [3/4,5/4]$. Therefore, we deduce after gathering \eqref{apriorilemma15}--\eqref{apriorilemma18} and summing over $j \in \mathbb Z_+$ that 
\begin{equation} \label{apriorilemma19} 
\begin{split}
\|P_kD_{1/r}&(\widetilde{f}_r) \|_{N_k} \\ & \le \varphi(r) \sup_{\widetilde{t} \in \mathbb R}
\big\|(\tau-w(\xi)+i2^{2k})^{-1}\eta_k(r^{\frac15}\xi)\mathcal{F}\big[\eta_0^r(2^{2k}(\cdot-\widetilde{t}))\widetilde{f}_r\big]\big\|_{X_k},
\end{split}
\end{equation}
where $\varphi$ is a continuous function defined in a neighborhood of $1$  and satisfying $\lim_{r \to 1}\varphi(r)=1$.  In order to deal with $\eta_0^r$ appearing on the right-hand side of \eqref{apriorilemma19}, we get from the fundamental theorem of calculus that
\begin{equation} \label{apriorilemma19b}
\eta_0^r(2^{2k}(t-\widetilde{t}))-\eta_0(2^{2k}(t-\widetilde{t}))=\int_1^{r^{-1}}\gamma_s(2^{2k}(t-\widetilde{t}))ds,
\end{equation}
where $\gamma_s(t)=t\eta_0'(st)$. Moreover, we use that 
\begin{displaymath}
\eta_0\big(2^{2k}(t-(\widetilde{t}+2\cdot2^{-2k}))\big)+\eta_0\big(2^{2k}(t-(\widetilde{t}-2\cdot2^{-2k}))\big)  =1
\end{displaymath}
 on the support of the integral on the right-hand side of \eqref{apriorilemma19b}.  Hence, it follows from Minkowski's inequality and estimates \eqref{coro1b.1} and \eqref{apriorilemma19} that 
\begin{equation} \label{apriorilemma20} 
\begin{split}
\|P_kD_{1/r}&(\widetilde{f}_r) \|_{N_k} \\ & \le \widetilde{\varphi}(r) \sup_{\widetilde{t} \in \mathbb R}
\big\|(\tau-w(\xi)+i2^{2k})^{-1}\eta_k(r^{\frac15}\xi)\mathcal{F}\big[\eta_0(2^{2k}(\cdot-\widetilde{t}))\widetilde{f}_r\big]\big\|_{X_k},
\end{split}
\end{equation}
where $ \widetilde{\varphi}$ is a function possessing the same properties as $\varphi$. Moreover, we observe arguing as in \eqref{apriorilemma18} that 
\begin{equation} \label{apriorilemma21}
\big|\eta_k(r^{\frac15}\xi))-\eta_k(\xi) \big| \lesssim  |r^{\frac15}-1| \sum_{|k'-k| \le 1, k'\in \mathbb Z_+}\eta_k(\xi).
\end{equation}
Thus, we deduce gathering estimates \eqref{apriorilemma20} and \eqref{apriorilemma21} that
\begin{equation} \label{apriorilemma22} 
\begin{split}
&\|P_kD_{1/r}(\widetilde{f}_r) \|_{N_k} \\ & \le \widetilde{\varphi}(r)\big(1+|r^{\frac15}-1|\big)\|P_k\widetilde{f}_r\|_{N_k} \\ 
& \ +\widetilde{\varphi}(r)|r^{\frac15}-1| \sup_{\widetilde{t} \in \mathbb R}
\big\|(\tau-w(\xi)+i2^{2k})^{-1}\eta_{k-1}(\xi)\mathcal{F}\big[\eta_0(2^{2k}(\cdot-\widetilde{t}))\widetilde{f}_r\big]\big\|_{X_{k-1}} 
\\ 
& \ +\widetilde{\varphi}(r)|r^{\frac15}-1| \sup_{\widetilde{t} \in \mathbb R}
\big\|(\tau-w(\xi)+i2^{2k})^{-1}\eta_{k+1}(\xi)\mathcal{F}\big[\eta_0(2^{2k}(\cdot-\widetilde{t}))\widetilde{f}_r\big]\big\|_{X_{k+1}} .
\end{split} 
\end{equation}
To deal with the second term on the right-hand side of \eqref{apriorilemma22}, we notice that $\big|\tau-w(\xi) +i2^{2k}\big|^{-1} \le \big|\tau-w(\xi) +i2^{2(k-1)}\big|^{-1}$ and $\eta_0(2^{2(k-1)}(\cdot-\widetilde{t}))=1$ on the support of $\eta_0(2^{2k}(\cdot-\widetilde{t}))$. Then, it follows from estimate \eqref{coro1b.1} that 
\begin{equation} \label{apriorilemma23} 
 \sup_{\widetilde{t} \in \mathbb R}
\big\|(\tau-w(\xi)+i2^{2k})^{-1}\eta_{k-1}(\xi)\mathcal{F}\big[\eta_0(2^{2k}(\cdot-\widetilde{t}))\widetilde{f}_r\big]\big\|_{X_{k-1}} \lesssim \|P_{k-1}\widetilde{f}_r\|_{N_{k-1}}.
\end{equation}
On the other hand, we have that $\big|\tau-w(\xi) +i2^{2k}\big|^{-1} \le 4\big|\tau-w(\xi) +i2^{2(k+1)}\big|^{-1}$. Moreover, we observe that 
\begin{displaymath}
2^{2(k+1)}\int_{-\tilde{t}-2^4\cdot2^{-2(k+1)}}^{-\tilde{t}+2^4\cdot2^{-2(k+1)}}\eta_0\big(2^{2(k+1)}(t+s)\big)ds=\int \eta_0(s)ds>0,
\end{displaymath}
if $t$ lies in the support of $\eta_0(2^{2k}(\cdot-\tilde{t}))$. Therefore, we deduce from Minkowski's inequality  and by using estimate \eqref{coro1b.1} that 
\begin{equation} \label{apriorilemma25} 
 \sup_{\widetilde{t} \in \mathbb R}
\big\|(\tau-w(\xi)+i2^{2k})^{-1}\eta_{k+1}(\xi)\mathcal{F}\big[\eta_0(2^{2k}(\cdot-\widetilde{t}))\widetilde{f}_r\big]\big\|_{X_{k+1}} \lesssim \|P_{k+1}\widetilde{f}_r\|_{N_{k+1}}.
\end{equation}
We conclude the proof of estimate \eqref{apriorilemma14} by gathering estimates \eqref{apriorilemma22}, \eqref{apriorilemma23}, \eqref{apriorilemma25}, summing over $k \in \mathbb Z_+$ and recalling the definition of $N^s$ in \eqref{Ns}. 

The proof of estimate \eqref{apriorilemma11} follows in a similar way (it is actually easier).
\end{proof}

\begin{proof}[Proof of Proposition \ref{apriori}] Fix $s \ge \frac54$. First, it is worth noticing that we can always assume that the initial data $u_0$ have small $H^s$-norm by using a scaling argument.  

Indeed, if $u$ is a solution to the IVP \eqref{5KdV} on the time interval $[0,T]$, then $u_{\lambda}(x,t)=\lambda^2u(\lambda x,\lambda^5 t)$ is also a solution to the equation in \eqref{5KdV} with initial data $u_{\lambda}(\cdot,0)=\lambda^2u_0(\lambda \cdot)$ on the time interval $[0,\lambda^{-5}T]$. For $\epsilon>0$, let us denote by $\mathcal{B}^s(\epsilon)$ the ball of $H^s(\mathbb R)$ centered at the origin with radius $\epsilon$. Since 
\begin{displaymath} 
\|u_{\lambda}(\cdot,0)\|_{H^s} \lesssim \lambda^{\frac32}(1+\lambda^s)\|u_0\|_{H^s},
\end{displaymath}
we can always force $u_{\lambda}(\cdot,0)$ to belong to $\mathcal{B}^s(\epsilon)$ by choosing $\lambda \sim\epsilon^{\frac23}\|u_0\|_{H^s}^{-\frac23} $. Therefore, it is enough to prove that if $u_0 \in \mathcal{B}^s(\epsilon)$, then Proposition \ref{apriori} holds with $T=1$. This would imply that Proposition \ref{apriori} holds for arbitrarily large initial data in $H^s(\mathbb R)$ with a time  $T\sim \lambda^5 \sim \|u_0\|_{H^s}^{-\frac{10}3}$. 

Now, fix $u_0 \in H^{\infty}(\mathbb R) \cap \mathcal{B}^s(\epsilon)$ and let $u \in C([-T,T];H^{\infty})$ the solution to \eqref{5KdV} given by Theorem \ref{smoothsol} where $0<T\le 1$. We obtain gathering the linear estimate \eqref{linear1}, the bilinear estimates \eqref{bilinE1}--\eqref{bilinE2} and the energy estimate \eqref{coroEE1} that
\begin{equation} \label{apriori2}
\Lambda^{\sigma}_T(u)^2 \lesssim \|u_0\|_{H^{\sigma}}^2+\big(\Lambda^s_T(u)+\Lambda^s_T(u)^2\big)\Lambda^{\sigma}_T(u)^2,
\end{equation}
for any $\sigma \ge s$ as soon as $\Lambda^s_T(u) < \widetilde{\delta}_0$.  Here, $\widetilde{\delta}_0$ is a small positive number chosen\footnote{the choice is possible by using estimates \eqref{lemma1.1} and \eqref{linear1}.} such that $\|u\|_{L^{\infty}_TH^s_x}<\delta_0$ as soon as $\Lambda^s_T(u)< \widetilde{\delta}_0$, where $\delta_0$ is given by Corollary \ref{coroEE}. Estimates \eqref{linear1}, \eqref{apriori2} with $\sigma=s$, Lemma \ref{apriorilemma} and a continuity argument ensure the existence of $\epsilon_s>0$ and $C_s>0$ such that $ \Gamma^s_T(u) \le C_s \epsilon$ provided $\|u_0\|_{H^s} \le \epsilon \le \epsilon_s$ where $\Gamma^s_T(u)$ is defined by 
\begin{equation} \label{apriorilemma1}
\Gamma_{T}^s(u):= \max\big\{\|u\|_{B^s(T)},\|u\|_{F^s(T)} \big\}.
\end{equation} 
Thus, estimates \eqref{lemma1.1}, \eqref{linear1} and \eqref{apriori2} yield
\begin{equation} \label{apriori3} 
\|u\|_{L^{\infty}_TH^{\sigma}_x} \lesssim \Gamma^{\sigma}_T(u) \lesssim \|u_0\|_{H^{\sigma}},
\end{equation}
for all $\sigma \ge s$, provided $\|u_0\|_{H^s} \le \epsilon \le \epsilon_s$.

Therefore, using estimate \eqref{apriori3} with $\sigma=4$ we can reapply the result of Theorem \ref{smoothsol} a finite number of times and extend the solution $u$ on the time interval $[-1,1]$. This concludes the proof of Proposition \ref{apriori}.
\end{proof}

\subsection{$L^2$- Lipschitz bound for the difference of two solutions and uniqueness} 

Let $u_1$ and $u_2$ be two solutions of the equation in \eqref{5KdV} define on a time interval $[-T,T]$ for some $0<T\le 1$ and with respective initial data $u_1(\cdot,0)=\varphi_1$ and $u_2(\cdot,0)=\varphi_2$. We also assume that $\varphi_1, \ \varphi_2 \in \mathcal{B}^2(\epsilon)$ and 
\begin{equation} \label{maintheo3} 
\Gamma_T^2(u_i) \le C_2 \epsilon \le C_2 \epsilon_2, \quad \text{for} \ i=1,2, 
\end{equation}
where $\Gamma^2_T(\cdot)$ is defined in \eqref{apriorilemma1}. Moreover, according to \eqref{lemma1.1}, we can choose $\epsilon$ small enough such that $\|u_i\|_{L^{\infty}_TH^2_x} < \delta_1$ where $\delta_1$ is given in Corollary \ref{coroEEdiff}.

Let define $v$ by $v=u_1-u_2$. Observe that $v$ is a solution to equation \eqref{diff5KdV} and also to
\begin{displaymath} 
\partial_tv=\partial^5_x v+c_1\partial_x\big(\partial_x(u_1+u_2)\partial_xv\big)+c_2\partial_x(u_1\partial_x^2v)+c_2\partial_x(v\partial_x^2u_2).
\end{displaymath}
Then, we conclude gathering estimates \eqref{linear1}, \eqref{bilinL2E1}, \eqref{bilinL2E2} and \eqref{coroEEdiff0} that there exists $0<\widetilde{\epsilon}_2 \le \epsilon_2$ such that 
\begin{equation} \label{maintheo4} 
\Gamma^0_T(v) \lesssim \|\varphi_1-\varphi_2\|_{L^2},
\end{equation}
provided $u_1$ and $u_2$ satisfy \eqref{maintheo3} with $0<\epsilon \le \widetilde{\epsilon}_2 $. 

We now state our uniqueness result. 
\begin{proposition} \label{uniqueness} 
Let $u_1$ and $u_2$ be two solutions to the equation in \eqref{5KdV} in the class \eqref{maintheo1} with $s=2$, defined on a time interval $[-T,T]$ for some $T>0$  and satisfying $u_1(\cdot,0)=u_2(\cdot,0)=\varphi$. Then $u_1=u_2$ on $[-T,T]$.
\end{proposition}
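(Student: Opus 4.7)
The plan is to deduce uniqueness from the $L^2$-Lipschitz estimate \eqref{maintheo4}, whose hypothesis \eqref{maintheo3} requires smallness of $\Gamma_T^2(u_1), \Gamma_T^2(u_2)$ not automatically furnished under the assumptions of the proposition. I will combine a scaling reduction to small initial data with a continuity argument to obtain uniqueness on a small time interval, and then extend to all of $[-T, T]$ by a connectedness argument. Set $\varphi := u_1(\cdot, 0) = u_2(\cdot, 0)$ and $v := u_1 - u_2$. First treat the case where $\|\varphi\|_{H^2}$ is as small as we wish: since $u_i \in C([-T, T]; H^2) \cap B^2(T) \cap F^2(T)$, an extension of Lemma \ref{apriorilemma} to this class (discussed below) gives that $T' \mapsto \Gamma_{T'}^2(u_i)$ is continuous on $[0, T]$ with $\limsup_{T' \to 0^+} \Gamma_{T'}^2(u_i) \lesssim \|\varphi\|_{H^2}$. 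Picking $T_0 \in (0, T]$ small enough ensures $\Gamma_{T_0}^2(u_i) \le C_2 \widetilde{\epsilon}_2$ for $i = 1, 2$, so \eqref{maintheo3} is met on $[-T_0, T_0]$, and \eqref{maintheo4} applied to $v$ yields $\Gamma_{T_0}^0(v) \lesssim \|\varphi - \varphi\|_{L^2} = 0$, hence $v \equiv 0$ on $[-T_0, T_0]$.

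For general $\varphi \in H^2$, I will invoke the scaling $u_{i, \lambda}(x, t) := \lambda^2 u_i(\lambda x, \lambda^5 t)$, which solves the equation in \eqref{5KdV} on $[-\lambda^{-5} T, \lambda^{-5} T]$ with $\|u_{i, \lambda}(\cdot, 0)\|_{H^2} \lesssim \lambda^{3/2}(1 + \lambda^2) \|\varphi\|_{H^2}$. Choosing $\lambda > 0$ small enough places the problem in the small-data regime above, producing $\tau_0 \in (0, 1]$ with $u_{1, \lambda} \equiv u_{2, \lambda}$ on $[-\tau_0, \tau_0]$; rescaling back gives $u_1 \equiv u_2$ on $[-\lambda^5 \tau_0, \lambda^5 \tau_0]$. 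To continue this to all of $[0, T]$, I set $A := \{ t \in [0, T] : u_1 \equiv u_2 \text{ on } [0, t]\}$. Then $A$ is non-empty (containing $0$), closed by continuity of $u_i$ in $L^2$, and open: if $t_0 \in A \cap [0, T)$, then $u_1(\cdot, t_0) = u_2(\cdot, t_0)$, and applying the previous step to the time-translated solutions $\tilde u_i(\cdot, t) := u_i(\cdot, t + t_0)$ (using translation invariance of the $B^2$, $F^2$ and $N^2$ norms) extends the coincidence past $t_0$. Hence $A = [0, T]$, and a symmetric argument handles $[-T, 0]$.

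The main obstacle is the extension of Lemma \ref{apriorilemma} from $C([-T,T]; H^\infty)$ solutions to the $C([-T,T]; H^2) \cap B^2(T) \cap F^2(T)$ class required in the first step, i.e., showing that $\Gamma_{T'}^2(u_i)$ is continuous in $T'$ with limit controlled by $\|\varphi\|_{H^2}$ as $T' \to 0^+$. The $B^2(T')$-component is immediate from continuity of $u_i$ in $H^2$, but the direct $L^2_{T'}H^2_x$-bound used in the original proof fails for the $N^2(T')$-norms of $\partial_x(u_i \partial_x^2 u_i)$ and $\partial_x u_i \partial_x^2 u_i$, since these nonlinearities live only in $H^{-2}$. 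Instead I plan to combine the bilinear estimates \eqref{bilinE1}--\eqref{bilinE2}, which give $\|\partial_x(u_i \partial_x^2 u_i)\|_{N^2(T')} \lesssim \|u_i\|_{F^2(T')}^2$ and similarly for the other term, with the monotonicity of $T' \mapsto \|u_i\|_{F^2(T')}$ and a smoothing argument: approximate $u_i$ by mollified solutions $u_i^{(n)} \in C([-T,T]; H^\infty)$, apply Lemma \ref{apriorilemma} at that level, and pass to the limit using the bilinear estimates to transport the approximation from $u_i$ to the nonlinearities in the $N^2$-norm. Once this is established, the linear estimate \eqref{linear1} gives the desired bound on $\Gamma_{T'}^2(u_i)$ and the rest of the argument goes through.
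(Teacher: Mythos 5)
Your argument is correct in outline and reaches the same endgame as the paper --- rescale to make $\|\varphi\|_{H^2}$ and the $B^2$-norms small, obtain the smallness hypothesis \eqref{maintheo3} on a short time interval, invoke the $L^2$-Lipschitz bound \eqref{maintheo4} to get $v\equiv 0$ there, and propagate (your open--closed argument and the paper's finite iteration are interchangeable) --- but it differs in the one step that carries all the technical weight: making $\|u_{i,\lambda}\|_{F^2}$ small on a short interval when $u_{i,\lambda}$ is only a rough solution. You propose to extend Lemma \ref{apriorilemma} to the class $C([-T,T];H^2)\cap F^2(T)\cap B^2(T)$ by mollifying, applying the lemma to the smooth mollifications, and transporting the conclusion back via the bilinear estimates \eqref{bilinE1}--\eqref{bilinE2}; this does work, the crucial (and unstated) point being that a bounded spatial Fourier multiplier acts boundedly on each $F_k$, so $\|\rho_\lambda\ast u-u\|_{F^2(T)}\to 0$ by dominated convergence on the dyadic sum, whence $\|\partial_x(u\partial_x^2u)\|_{N^2(T')}\to 0$ as $T'\to 0$ uniformly along the approximation. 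The paper avoids this limiting argument entirely with a frequency-truncation device: since $\|u_{i,\lambda}\|_{F^2(S)}<\infty$, the tail $P_{>n}u_{i,\lambda}$ is already small in $F^2$ for $n$ large (see \eqref{uniqueness2}), while for the low-frequency part $P_{\le n}u_{i,\lambda}$ the nonlinearity is automatically in $L^\infty_S L^2_x$ at the cost of a fixed factor $2^{3n}$, so the crude bound \eqref{apriorilemma7} combined with the linear estimate \eqref{linear1} gives smallness for $S_1=S_1(K)$ small (see \eqref{uniqueness3}). The paper's route is shorter and needs no approximation of the solution; yours is more systematic (it upgrades Lemma \ref{apriorilemma} to the natural solution class, which could be reused elsewhere) but you should be aware that, as written, this key extension is a plan rather than a proof, and it is precisely where the work lies.
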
 

\begin{proof} Let us define $K:=\max\big\{\Gamma_T^2(u_1),\Gamma_T^2(u_2) \big\}.$ As in the proof of Proposition \ref{apriori}, we use the scaling property of \eqref{5KdV} and define $u_{i,\lambda}=\lambda^2u_i(\lambda x,\lambda^5 t)$, for $i=1,2$ and $\lambda>0$, which are also solutions to the equation in \eqref{5KdV} on the time interval $[-S,S]$ with $S=\lambda^{-5}T$ and with initial data $\varphi_{\lambda}=\lambda^2\varphi(\lambda \cdot)$.  Moreover, since 
\begin{displaymath} 
\|u_{i,\lambda}\|_{L^{\infty}_SH^2_x} + \|u_{i,\lambda}\|_{B^2(S)} 
\lesssim \lambda^{\frac32}(1+\lambda^2)\big(\|u_i\|_{L^{\infty}_TH^2_x}+\|u_i\|_{B^2(T)}\big) \lesssim \lambda^{\frac32}(1+\lambda^2)K,
\end{displaymath}
for $i=1,2$, we can always choose $\lambda=\lambda(K)$ small enough such that 
\begin{equation} \label{uniqueness1} 
\|\varphi_{\lambda}\|_{H^2} \le \epsilon, \ \|u_{i,\lambda}\|_{B^2(S)} \le C_2 \epsilon/(3c) \le C_2 \widetilde{\epsilon}_2/(3c)
\ \text{and} \ \|u_{i,\lambda}\|_{F^2(S)} \le C(K),
\end{equation}
for $i=1,2$, and where $c$ is the implicit constant appearing in the first inequality of \eqref{uniqueness3} below.

Since $\|u_{i,\lambda}\|_{F^2(S)} < \infty$, there exists $n \in \mathbb Z_+$ such that
\begin{equation} \label{uniqueness2}
\|P_{> n}u_{i,\lambda}\|_{F^2(S)} \le C_2\epsilon/3, \quad i=1,2.
\end{equation}
On the other hand, we deduce from \eqref{linear1}, \eqref{apriorilemma7} and \eqref{uniqueness1} that 
\begin{equation} \label{uniqueness3} 
\begin{split}
\|P_{\le n}u_{i,\lambda}\|_{F^2(S)} & \lesssim \|u_{i,\lambda}\|_{B^2(S)}+
S^{\frac12}2^{2n}\big\|P_{\le n}\partial_x\big((\partial_xu_{i,\lambda})^2\big)\big\|_{L^{\infty}_SL^2_x} \\ & \quad+
S^{\frac12}2^{2n}\big\|P_{\le n}\partial_x\big(u_{i,\lambda}\partial_x^2u_{i,\lambda}\big)\big\|_{L^{\infty}_SL^2_x} \\ 
& \lesssim C_2\epsilon/3+S^{\frac12}2^{3n}\|u_{i,\lambda}\|_{L^{\infty}_SH^2_x}^2.
\end{split}
\end{equation}
By choosing $S_1=S_1(K)$ small enough, we deduce from \eqref{uniqueness1}--\eqref{uniqueness3} that $u_{\lambda,1}$ and  $u_{\lambda,2}$ satisfy the smallness condition \eqref{maintheo3} on $[-S_1,S_1]$, \textit{i.e.} 
\begin{displaymath}  
\Gamma_{S_1}^2(u_{i,\lambda}) \le C_2 \epsilon \le C_2 \widetilde{\epsilon}_2, \quad \text{for} \ i=1,2. 
\end{displaymath}
This implies from \eqref{maintheo4} that $u_{1,\lambda}\equiv u_{2,\lambda}$ on $[-S_1,S_1]$. By applying this argument a finite number of times, we see that the equality holds in fact in $[-S,S]$. Then it follows after changing variables that  $u_{1}\equiv u_{2}$ on $[-T,T]$.
\end{proof}

\subsection{Existence} 
Let $2 \le s < 4$ and $u_0 \in H^s(\mathbb R)$.  By using a scaling argument as in the proof of Proposition \ref{apriori}, we can assume that $u_0 \in \mathcal{B}^s(\epsilon)$, with $\epsilon < \widetilde{\epsilon}_s \le \min (\epsilon_s,\frac{C_2}{C_s}\widetilde{\epsilon}_2)$. Note here that $\widetilde{\epsilon}_s$ will be determined later.

We will use the Bona-Smith argument (c.f. \cite{BS}).  Let $\rho \in \mathcal{S}(\mathbb R)$ with $\rho \ge 0$, $\int\rho
dx=1$, and $\int x^k\rho(x)dx=0, \ k \in \mathbb Z_{+}$, $0 \le k \le [s]+1$. For any $\lambda>0$, define $\rho_{\lambda}(x)=\lambda^{-1}\rho(\lambda^{-1}x)$. The following lemma, whose proof can be found in \cite{BS}
(see also Proposition 2.1 in \cite{KP}), gathers the properties of the smoothing operators which will be used in this
section.
\begin{lemma} \label{lemma existence1}
Let $s \ge 0$, $\phi \in H^s(\mathbb R)$ and for any $\lambda>0$,
$\phi_{\lambda}=\rho_{\lambda} \ast \phi$. Then,
\begin{equation} \label{lemma existence1.1}
\|\phi_{\lambda}\|_{H^{s+\alpha}} \lesssim \lambda^{-\alpha}
\|\phi\|_{H^s}, \quad \forall \alpha \ge0,
\end{equation}
and
\begin{equation} \label{lemma existence1.2}
\|\phi-\phi_{\lambda}\|_{H^{s-\beta}} \underset{\lambda
\rightarrow 0}{=} o(\lambda^{\beta}), \quad \forall \beta \in
[0,s].
\end{equation}
\end{lemma}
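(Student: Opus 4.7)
The plan is to work on the Fourier side, since $\widehat{\phi_\lambda}(\xi) = \widehat{\rho}(\lambda \xi)\,\widehat{\phi}(\xi)$, so both statements reduce to pointwise estimates on the Fourier multiplier $\widehat{\rho}(\lambda \cdot)$, together with Plancherel's identity. The key inputs are: (i) $\widehat{\rho} \in \mathcal{S}(\mathbb R)$ with $\widehat{\rho}(0) = \int \rho\, dx = 1$; and (ii) the moment conditions, which via $\widehat{\rho}^{(k)}(0) = (-i)^k \int x^k \rho(x)\,dx$ yield $\widehat{\rho}^{(k)}(0) = 0$ for $1 \le k \le [s]+1$, so that Taylor's theorem produces
\begin{equation*}
\big|1-\widehat{\rho}(\eta)\big| \lesssim |\eta|^{[s]+2}, \qquad |\eta| \le 1,
\end{equation*}
while $|1-\widehat{\rho}(\eta)| \lesssim 1$ trivially elsewhere.

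For \eqref{lemma existence1.1}, the aim is to prove the uniform multiplier bound
\begin{equation*}
(1+|\xi|^2)^{\alpha/2}\big|\widehat{\rho}(\lambda \xi)\big| \lesssim \lambda^{-\alpha}, \qquad \xi \in \mathbb R,\ \lambda >0.
\end{equation*}
I will split into the two regions $\lambda|\xi| \le 1$ and $\lambda|\xi| \ge 1$. In the first, $(1+|\xi|^2)^{\alpha/2} \lesssim \lambda^{-\alpha}$ and $|\widehat{\rho}| \lesssim 1$; in the second, I use the Schwartz decay $|\widehat{\rho}(\lambda\xi)| \lesssim (\lambda|\xi|)^{-\alpha}$ together with $(1+|\xi|^2)^{\alpha/2} \lesssim |\xi|^\alpha$. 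Plancherel then gives \eqref{lemma existence1.1}.

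For \eqref{lemma existence1.2}, the combination of Taylor expansion at $0$ and the boundedness at infinity produces the uniform bound
\begin{equation*}
\frac{|1-\widehat{\rho}(\lambda\xi)|^2}{(1+|\xi|^2)^{\beta}} \lesssim \lambda^{2\beta}, \qquad 0 \le \beta \le s,
\end{equation*}
by splitting again at $\lambda |\xi| = 1$ (using $|1-\widehat{\rho}(\lambda\xi)| \lesssim (\lambda|\xi|)^\beta$ for $\lambda|\xi| \le 1$, since $\beta \le [s]+1$, and $|1-\widehat{\rho}(\lambda\xi)| \lesssim 1 \le (\lambda|\xi|)^\beta$ for $\lambda|\xi| \ge 1$). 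This already yields the $O(\lambda^\beta)$ bound on $\|\phi-\phi_\lambda\|_{H^{s-\beta}}$.

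The only non-routine step, which is the main point of the argument, is upgrading $O(\lambda^\beta)$ to $o(\lambda^\beta)$. For this I will write
\begin{equation*}
\lambda^{-2\beta}\|\phi-\phi_\lambda\|_{H^{s-\beta}}^2 = \int_{\mathbb R} (1+|\xi|^2)^s |\widehat{\phi}(\xi)|^2\, g_\lambda(\xi)\, d\xi,\qquad g_\lambda(\xi):=\frac{|1-\widehat{\rho}(\lambda\xi)|^2}{\lambda^{2\beta}(1+|\xi|^2)^{\beta}},
\end{equation*}
and apply dominated convergence: the uniform bound above gives the integrable majorant $C(1+|\xi|^2)^s|\widehat{\phi}|^2$, while for each fixed $\xi$ the Taylor expansion $|1-\widehat{\rho}(\lambda \xi)| \lesssim (\lambda|\xi|)^{[s]+2}$ shows $g_\lambda(\xi) \lesssim \lambda^{2([s]+2-\beta)}|\xi|^{2([s]+2-\beta)} \to 0$ as $\lambda \to 0$, since $\beta \le s < [s]+2$. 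I expect this density/dominated-convergence step to be the subtlest point, as the pointwise bound alone is not enough—the finiteness of $\|\phi\|_{H^s}$ must be invoked to produce the strict decay.
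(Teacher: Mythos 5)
Your proof is correct and complete. The paper does not actually prove this lemma --- it cites \cite{BS} and Proposition 2.1 of \cite{KP} --- but the argument given there is exactly the one you propose: reduce to pointwise bounds on the multipliers $\widehat{\rho}(\lambda\xi)$ and $1-\widehat{\rho}(\lambda\xi)$ via Plancherel, use the Schwartz decay for the high-frequency region and the vanishing moments (hence vanishing derivatives of $\widehat{\rho}$ at the origin) for the low-frequency region, and upgrade $O(\lambda^{\beta})$ to $o(\lambda^{\beta})$ by dominated convergence. The only cosmetic caveat is that \eqref{lemma existence1.1} with a uniform constant requires $\lambda\lesssim 1$ (your bound $(1+|\xi|^2)^{\alpha/2}\lesssim\lambda^{-\alpha}$ on the region $\lambda|\xi|\le 1$ uses this), which is harmless since the lemma is only invoked as $\lambda\to 0$.
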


Now we regularize the initial data by letting $u_{0,\lambda}=\rho_{\lambda} \ast u_0$. Since $u_{0,\lambda} \in H^{\infty}(\mathbb R)$, we deduce from Theorem \ref{smoothsol} that for any $\lambda>0$, there exists a positive time $T_{\lambda}$ and a unique solution 
\begin{displaymath} 
u_{\lambda} \in C([-T_{\lambda},T_{\lambda}];H^{\infty}(\mathbb R)) \quad \text{satisfying} \quad u_{\lambda}(\cdot,0)=u_{0,\lambda}.
\end{displaymath} 

We observe that $\|u_{0,\lambda}\|_{H^s} \le \|u_0\|_{H^s} \le \epsilon$. Thus, it follows from the proof of Proposition \ref{apriori}  and estimate \eqref{lemma existence1.1}, that the sequence of solutions $\{u_{\lambda}\}$ can be extended on the time interval $[-1,1]$ and satisfy 
\begin{equation} \label{maintheo5}
\Gamma^s_1(u_{\lambda}) \le C_s\epsilon \le \min(C_2\widetilde{\epsilon}_2,C_s\widetilde{\epsilon}_s),
\end{equation}
\begin{equation} \label{maintheo6} 
\Gamma^s_1(u_{\lambda}) \lesssim \|u_0\|_{H^s} \quad \text{and} 
\quad \Gamma^{s+2}_1(u_{\lambda}) \lesssim \|u_{0,\lambda}\|_{H^{s+2}} \lesssim \lambda^{-2}\|u_0\|_{H^s},
\end{equation}
for all $\lambda>0$.

Then, we deduce from \eqref{maintheo4} and  \eqref{lemma existence1.2} that for any $0<\lambda' < \lambda$,
\begin{equation} \label{maintheo7}
\Gamma^0_1(u_{\lambda}-u_{\lambda'}) \lesssim \|u_{0,\lambda}-u_{0,\lambda'}\|_{L^2_x} \underset{\lambda \rightarrow 0}{=} o(\lambda^{s}).
\end{equation}
Moreover, we obtain gathering estimates \eqref{linear1}, \eqref{bilinE1}--\eqref{bilinE2}, \eqref{coroEEdiff1}, 
\eqref{maintheo5} and choosing $\widetilde{\epsilon}_s$ small enough that
\begin{equation} \label{maintheo8}
\Gamma^s_1(u_{\lambda}-u_{\lambda'}) \lesssim \|u_{0,\lambda}-u_{0,\lambda'}\|_{H^s_x} +\Gamma^{s+2}_1(u_{\lambda})\Gamma^0_1(u_{\lambda}-u_{\lambda'}),
\end{equation}
since $s \ge 2$. This combined with \eqref{lemma existence1.2}, \eqref{maintheo6} and \eqref{maintheo7} yields
\begin{equation} \label{maintheo9} 
\|u_{\lambda}-u_{\lambda'}\|_{L^{\infty}_1H^s_x} \lesssim \Gamma^s_1(u_{\lambda}-u_{\lambda'}) \underset{\lambda \rightarrow 0}{\longrightarrow} 0 \ . 
\end{equation}

Therefore, we conclude that $\{u_{\lambda}\}$  converges in the norm $\Gamma_1^s$ to a solution $u$ of \eqref{5KdV} in the class \eqref{maintheo1}.

\begin{remark}
Observe that the convergence of $\{u_{\lambda}\}$ in $C([-1,1];H^1(\mathbb R))$ would be enough to obtain that the limit $u$ satisfies the equation in \eqref{5KdV} in the weak sense. 
\end{remark}

\subsection{Continuity of the flow map data-solution} Observe that for $s \ge 4$, the result was already proved in Theorem 3.1 in \cite{Po}. Then it is enough to prove it for $2 \le s < 4$. Let $u_0 \in H^s(\mathbb R)$. Once again we can assume by using a scaling argument that $u_0 \in \mathcal{B}^s(\epsilon)$ with $0<\epsilon \le \bar{\epsilon}< \widetilde{\epsilon}_s$ and where $\widetilde{\epsilon}_s$ was determined in the previous subsection. Then, the solution $u$ emanating from $u_0$ is defined on the time interval $[-1,1]$ and satisfies $u \in C([-1,1];H^s(\mathbb R))$.

Let $\theta>0$ be given. It suffices to prove that for any initial data $v_0 \in \mathcal{B}^s(\epsilon)$ with $\|u_0-v_0\|_{H^s} \le \delta$, where $\delta=\delta(\theta)>0$ will be fixed later, the solution $v \in C([-1,1];H^s(\mathbb R))$  emanating from $v_0$ satisfies 
\begin{equation} \label{maintheo10} 
\|u-v\|_{L^{\infty}_1H^s_x} \le \theta.
\end{equation}

For any $\lambda>0$, we normalize the initial data $u_0$ and $v_0$ by defining $u_{0,\lambda}=\rho_{\lambda} \ast u_0$ and $v_{0,\lambda}=\rho_{\lambda} \ast v_0$ as in the previous subsection and consider the associated smooth solutions $u_{\lambda}, \ v_{\lambda} \in C([-1,1];H^{\infty}(\mathbb R))$. Then it follows from the triangle inequality that 
\begin{equation} \label{maintheo11} 
\|u-v\|_{L^{\infty}_1H^s_x} \le \|u-u_{\lambda}\|_{L^{\infty}_1H^s_x}+\|u_{\lambda}-v_{\lambda}\|_{L^{\infty}_1H^s_x}
+\|v-v_{\lambda}\|_{L^{\infty}_1H^s_x} \ .
\end{equation}
On the one hand, according to \eqref{maintheo9}, we can choose $\lambda_0$ small enough so that 
\begin{equation} \label{maintheo12} 
\|u-u_{\lambda_0}\|_{L^{\infty}_1H^s_x}+\|v-v_{\lambda_0}\|_{L^{\infty}_1H^s_x} \le 2\theta/3.
\end{equation}
On the other hand, we get from \eqref{lemma existence1.1} that 
\begin{displaymath} 
\|u_{0,\lambda_0}-v_{0,\lambda_0}\|_{H^4} \lesssim \lambda_0^{-(4-s)}\|u_0-v_0\|_{H^4} \lesssim \lambda_0^{-(4-s)}\delta.
\end{displaymath}
Therefore, by using the continuity of the flow map for smooth initial data (c.f. Theorem 3.1 in \cite{Po}), we can choose $\delta>0$ small enough such that 
\begin{equation} \label{maintheo13} 
\|u_{\lambda_0}-v_{\lambda_0}\|_{L^{\infty}_1H^s_x} \le \theta/3.
\end{equation}
Estimate \eqref{maintheo10} is concluded gathering \eqref{maintheo11}--\eqref{maintheo13}.

\section{Appendix: how to deal with the cubic term $\partial_x(u^3)$.} 
In this appendix, we explain what are the main modifications needed to deal with cubic term $\partial_x(u^3)$ (\textit{i.e.} in the case where $c_3 \neq 0$). As above, we fix $\alpha=2$ in the definition of the spaces $F^s_{\alpha}(T), \ N^s_{\alpha}(T), \ F^s_{\alpha}, \ N^s_{\alpha}, \ F_{k,\alpha}, \ N_{k,\alpha}$ and write those spaces without the index $\alpha=2$, since there is no risk of confusion.

\subsection{Short time trilinear estimate} In this subsection, we prove the trilinear estimate for the nonlinear term $\partial_x(u^3)$.
\begin{proposition} \label{trilinear} 
Let $s \ge 0$ and $T \in (0,1]$ be given. Then, it holds that 
\begin{equation} \label{trilinear1} 
\begin{split}
\|\partial_x(uvw)\|_{N^s(T)} & \lesssim \|u\|_{F^0(T)}\|v\|_{F^0(T)}\|w\|_{F^s(T)}+\|u\|_{F^0(T)}\|w\|_{F^0(T)}\|v\|_{F^s(T)} \\ & \quad +\|v\|_{F^0(T)}\|w\|_{F^0(T)}\|u\|_{F^s(T)},
\end{split}
\end{equation}
for all $u, \ v, \ w \in F^s(T)$.
\end{proposition}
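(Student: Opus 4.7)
The plan is to mimic the proof of Proposition \ref{bilinE}, replacing the $L^2$ bilinear estimates of Corollary \ref{coroL2bilin} by the $L^2$ trilinear estimates of Corollary \ref{coroL2trilin}. The cubic term carries only one derivative distributed among three factors (as opposed to the three derivatives on two factors appearing in $\partial_x(u\partial_x^2v)$), so the frequency budget is much more favorable, which is why the estimate holds down to $s=0$ rather than $s>1$. First I would choose extensions $\tilde u,\tilde v,\tilde w\in F^s$ with $\|\tilde u\|_{F^s}\le 2\|u\|_{F^s(T)}$, etc., and perform a nonhomogeneous Littlewood--Paley decomposition $\partial_x(uvw)=\sum_{k,k_1,k_2,k_3}P_k\partial_x(P_{k_1}\tilde u\,P_{k_2}\tilde v\,P_{k_3}\tilde w)$. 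Since $\xi=\xi_1+\xi_2+\xi_3$ forces at least two of $\{k,k_1,k_2,k_3\}$ to be comparable to $k_{\max}:=\max(k,k_1,k_2,k_3)$, the summation splits into four configurations: (I) $|k-k_i|\le 5$ for some $i\in\{1,2,3\}$ and the other two indices are $\le k-10$; (II) two of $k_1,k_2,k_3$ are comparable to $k$ and the third is smaller; (III) $k\sim k_1\sim k_2\sim k_3$; (IV) $k_1\sim k_2\sim k_3\gg k$.

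Next, for a fixed quadruple I would reduce to $L^2$ convolution estimates by the argument in the proof of Lemma \ref{high-low}: using the definition \eqref{Nk} of $N_k$, Corollary \ref{coro2}, and a further partition of unity in time at scale $2^{-2k_{\max}}$ (as in the proof of Lemma \ref{high-high-low} when $k_{\max}>k$), one arrives at a sum
\begin{equation*}
\|P_k\partial_x(P_{k_1}\tilde u\,P_{k_2}\tilde v\,P_{k_3}\tilde w)\|_{N_k}\lesssim 2^k\cdot 2^{-2k}\sum_{|m|\lesssim 2^{2(k_{\max}-k)}}\sum_{j,j_1,j_2,j_3\ge 2k_{\max}}2^{-j/2}\big\|\mathbf{1}_{D_{k,j}}\cdot f^m_{k_1,j_1}\!\ast f^m_{k_2,j_2}\!\ast f^m_{k_3,j_3}\big\|_{L^2},
\end{equation*}
with the $f^m_{k_i,j_i}$ controlled in the usual way through Lemma \ref{lemma2}.

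In regions (I)--(III) I would then apply Corollary \ref{coroL2trilin} (estimates \eqref{coroL2trilin3}--\eqref{coroL2trilin4}): the gain $2^{-2k_{\max}}$ there, combined with the $\partial_x$-factor $2^k\le 2^{k_{\max}}$, leaves a net factor $2^{-k_{\max}}$, which handily beats the logarithmic-in-$k_{\max}$ cost of summing over $j$ and the short-time partition. The $s$-weight $2^{ks}$ on the output is comparable to $2^{k_i s}$ for whichever $k_i\sim k_{\max}$, so a Cauchy--Schwarz in the remaining low-frequency indices distributes the $H^s$ weight onto any one of the three factors, matching the right-hand side of \eqref{trilinear1}. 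The low-frequency region $0\le k,k_1,k_2,k_3\le 100$ is trivial via \eqref{coroL2trilin1}.

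Region (IV) is the main obstacle, exactly as in Lemma \ref{high-high-low}. Here the short-time partition at scale $2^{-2k_{\max}}$ costs $2^{2(k_{\max}-k)}$ pieces, which a priori seems too expensive. This loss is absorbed using the resonance identity \eqref{resonancetilde}: when $k\ll k_{\max}$ and $k_1\sim k_2\sim k_3\sim k_{\max}$, an elementary computation (as in Lemma \ref{lresonance}) gives $|\widetilde\Omega|\sim 2^{5k_{\max}}$ on the support of the integrand, forcing $j_{\max}\gtrsim 5k_{\max}-C$; consequently the resonant regime is controlled by \eqref{coroL2trilin4}, producing the decisive gain $2^{-j_{\max}/2}2^{-2k_{\max}}\lesssim 2^{-9k_{\max}/2}$, and the non-resonant regime reduces to one of the previous cases. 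After summing in $j$, exploiting $2^{ks}\le 2^{k_{\max}s}$, and applying Cauchy--Schwarz in $k$ and $k_{\max}$, the sum closes for any $s\ge 0$, distributing $H^s$ on any one of the three entries. Assembling the contributions of regions (I)--(IV) yields \eqref{trilinear1}. The only genuinely delicate step is region (IV), where the mismatch between the $2^{-2k}$ localization of $N_k$ and the $2^{-2k_{\max}}$ localization of $F_{k_{\max}}$ must be paid for by the fifth-order resonance of the symbol $w$.
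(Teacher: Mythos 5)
Your overall strategy coincides with the paper's: choose extensions, perform a Littlewood--Paley decomposition, reduce each frequency-interaction block to the $L^2$ trilinear estimates of Corollary \ref{coroL2trilin} via the short-time machinery (definition \eqref{Nk}, Corollary \ref{coro2}, and a finer time partition at scale $2^{-2k_{\max}}$ when the output frequency is low), and use the resonance identity to rescue the output-low case. However, your case analysis has a genuine gap: the configurations (I)--(IV) do not exhaust the possible interactions. The constraint that at least two of $\{k,k_1,k_2,k_3\}$ be comparable to $k_{\max}$ also allows the regime in which exactly two of the \emph{input} frequencies are comparable to $k_{\max}$ while both the output $k$ and the remaining input are much smaller (e.g.\ $\xi_2\approx -\xi_3$ large, $\xi_1$ and $\xi=\xi_1+\xi_2+\xi_3$ small). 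This is the paper's region $G_5$ (high $\times$ high $\times$ low $\rightarrow$ low, Lemma \ref{high-high-low-low}), and it is not contained in your (IV), which requires all three inputs to be comparable. The omission matters because your key device for the output-low case --- the lower bound $|\widetilde\Omega|\sim 2^{5k_{\max}}$ forcing $j_{\max}\ge 5k_{\max}-C$ --- fails here: with $\xi_1$ and $\xi$ small one has $\widetilde\Omega\approx \xi_2^5+\xi_3^5=(\xi_2+\xi_3)(\xi_2^4-\xi_2^3\xi_3+\cdots)$, and $|\xi_2+\xi_3|=|\xi-\xi_1|$ can be $O(1)$, so $|\widetilde\Omega|$ may be only of order $2^{4k_{\max}}$. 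This region must instead be closed using \eqref{coroL2trilin3}--\eqref{coroL2trilin4} directly, whose gain $2^{k_{\min}/2}2^{-2k_{\max}}$ (with $k_{\min}=\min(k,k_1)$) absorbs the $2^{2(k_{\max}-k)}$ cost of the time partition without any resonance input; this is exactly what the paper does to obtain the bound $2^{-(k_3+k)/2}$ of \eqref{high-high-low-low1}.

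A smaller inaccuracy: in your region (III), where $k\sim k_1\sim k_2\sim k_3$, the estimates \eqref{coroL2trilin3}--\eqref{coroL2trilin4} are not available, since Corollary \ref{coroL2trilin}(b) requires the separation $k_{thd}\le k_{\max}-5$. There one must fall back on \eqref{coroL2trilin1} and use only that $j,j_1,j_2,j_3\ge 2k$ to sum the modulations (as in Lemma \ref{high-high-high-high}); the resulting bound is $O(1)$ rather than $2^{-k_{\max}}$, which is still sufficient because the $H^s$ weight transfers directly to any factor. With region $G_5$ added and this adjustment in the all-comparable case, your argument matches the paper's proof.
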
 

We split the proof of Proposition \ref{trilinear} in several technical lemmas depending of the frequency interactions. 
\begin{lemma} \label{high-low-low-high}[high $\times$ low $\times$ low $\rightarrow$ high]
Assume that $k, \ k_1, \ k_2, \ k_3 \in \mathbb Z_+$ satisfy $k \ge 20$, $|k_3-k| \le 5$ and $0 \le k_1 \le k_2 \le k_3 -10$. 
Then, 
\begin{equation} \label{high-low-low-high1} 
\big\| P_k\partial_x\big(u_{k_1}v_{k_2}w_{k_3}\big)\big\|_{N_k} \lesssim 2^{-3k/2}\|u_{k_1}\|_{F_{k_1}}\|v_{k_2}\|_{F_{k_2}}\|w_{k_3}\|_{F_{k_3}},
\end{equation}
for all $u_{k_1} \in F_{k_1}, \ v_{k_2} \in F_{k_2}$ and $w_{k_3} \in F_{k_3}$.
\end{lemma}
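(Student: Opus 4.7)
The plan is to mimic the strategy used for the bilinear lemmas in Section \ref{STBL}, now invoking the $L^2$ trilinear bounds of Corollary \ref{coroL2trilin} in place of the $L^2$ bilinear bounds of Corollary \ref{coroL2bilin}. First, I would unfold the definition of $N_k$, pulling out the factor $2^k$ coming from $\partial_x$ and using the $X_k$-structure to reduce the claim to a dyadic sum. Corollary \ref{coro2} allows one to distribute the common temporal cutoff $\eta_0(2^{2k}(\cdot-t_k))$ over the three input factors, which is legitimate thanks to the hypothesis $k_i \le k_3 \le k + 5$. This reduces the desired estimate to
\begin{displaymath}
2^k \sum_{j \ge 2k} 2^{-j/2} \big\|\mathbf 1_{D_{k,j}} \cdot (f_{k_1,j_1} \ast f_{k_2,j_2} \ast f_{k_3,j_3})\big\|_{L^2_{\xi,\tau}} \lesssim 2^{-3k/2} \prod_{i=1}^3 2^{j_i/2}\|f_{k_i,j_i}\|_{L^2},
\end{displaymath}
for arbitrary dyadic $j, j_1, j_2, j_3 \ge 2k$.

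Next, I would apply Corollary \ref{coroL2trilin}(b). Setting $k_4 := k$, the four frequency indices satisfy $k_1 \le k_2 \le k_3 \sim k \sim k_4$, so $k_{max} \sim k$, $k_{min} = k_1$ and $k_{thd} = k_2$; the gap $k_2 \le k_3 - 10$ together with $|k_3 - k| \le 5$ gives $k_{thd} \le k_{max} - 5$, so case (b) of the corollary is available. Since $k_4 = k \ne k_2 = k_{thd}$, the exceptional configuration $(k_i,j_i) = (k_{thd}, j_{max})$ can only occur for $i \in \{1,2,3\}$; whenever it does, $j \le j_{max}$ and one uses \eqref{coroL2trilin2} or \eqref{coroL2trilin3}, otherwise \eqref{coroL2trilin4} is invoked. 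In all cases the gain $2^{k_{min}/2}\cdot 2^{-2k_{max}} = 2^{k_1/2 - 2k}$ (respectively $2^{k_2/2 - 2k}$ when \eqref{coroL2trilin2} is used) combines with the $2^k$ from the derivative and the geometric series $\sum_{j \ge 2k} 2^{-j/2} \lesssim 2^{-k}$ to yield the required factor $2^{-3k/2}$; the strict inequality $k_1 \le k_2 \le k - 5$ provides enough margin to absorb any benign logarithmic loss arising from the summation in $j$.

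The main obstacle lies in handling the regime $j = j_{max}$. There, the factor $2^{j/2}$ appearing in \eqref{coroL2trilin4} cancels the $2^{-j/2}$ on the output side, so a naive summation would diverge. To restrict the summation one uses the elementary identity
\begin{displaymath}
\tau - w(\xi) = \sum_{i=1}^3 (\tau_i - w(\xi_i)) + \widetilde{\Omega}(\xi_1, \xi_2, \xi_3),
\end{displaymath}
which forces $2^j \lesssim \max_i 2^{j_i} + |\widetilde{\Omega}|$ on the support of the convolution. Expanding $\widetilde{\Omega}$ around the high frequency $\xi_3$ shows that $|\widetilde{\Omega}| \lesssim 2^{4k + k_2}$, so either $j \le \max_i j_i + O(1)$ (reducing to a previously handled subcase, at the cost of an $O(k)$ factor counting the number of admissible $j$'s) or $2^j \sim |\widetilde{\Omega}|$, in which case $j$ is essentially pinned down by the frequency configuration and can be absorbed in a single term. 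In both situations the surplus $2^{(k_1 - k)/2} \le 2^{-5/2}$ inherited from the gap $k_1 \le k - 5$ absorbs the logarithmic loss, yielding the bound $2^{-3k/2}$ and completing the argument.
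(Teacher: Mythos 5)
Your proposal is correct and follows the same skeleton as the paper's proof: reduce via the definition of $N_k$, the $X_k$-structure and Corollary \ref{coro2} to a dyadic estimate on $2^k\sum_{j\ge 2k}2^{-j/2}\|\mathbf 1_{D_{k,j}}(f_{k_1,j_1}\ast f_{k_2,j_2}\ast f_{k_3,j_3})\|_{L^2}$, apply the $L^2$ block estimates of Section 3, and sum over $j$. The one genuine difference is the block estimate you invoke: the paper uses the \emph{bilinear} bounds \eqref{coroL2bilin2b}--\eqref{coroL2bilin3}, handling the lowest-frequency factor $f_{k_1,j_1}$ trivially by Cauchy--Schwarz on its support (which costs $2^{k_1/2}2^{j_1/2}$), whereas you invoke the \emph{trilinear} Corollary \ref{coroL2trilin}(b). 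The two routes produce the identical prefactor $2^{k_{min}/2}2^{-2k_{max}}=2^{k_1/2}2^{-2k}$, and in both cases the conclusion follows from $2^k\cdot 2^{-k}\cdot 2^{k_1/2}2^{-2k}\le 2^{-3k/2-5/2}$ using $k_1\le k_3-10\le k-5$; so your choice buys nothing extra but is perfectly legitimate, and is in fact the route the paper takes for the companion Lemmas \ref{high-high-low-high}--\ref{high-high-low-low}. Two small remarks on your last paragraph: the ``obstacle'' at $j=j_{max}$ is not really there, since \eqref{coroL2trilin4} (and \eqref{coroL2trilin2} in the exceptional configuration) already carries the factor $2^{-j_{max}/2}$, which makes $\sum_{j\ge 2k}2^{-j_{max}/2}\lesssim 2^{-k}$ with no logarithmic loss (the piece $j<j_{max}$ contributes $(j_{max}-2k)2^{-j_{max}/2}\lesssim 2^{-k}$); the detour through the resonance identity and the bound $|\widetilde\Omega|\lesssim 2^{4k+k_2}$ is the technique needed elsewhere (e.g.\ Lemma \ref{high-high}) but is superfluous here. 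Relatedly, the claim that the fixed margin $k_1\le k-5$ ``absorbs a logarithmic loss'' should be deleted: a constant gain $2^{-5/2}$ cannot absorb a factor growing like $k$; fortunately no such factor arises.
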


\begin{proof} Arguing exactly as in the proof of Lemma \ref{high-low}, it suffices to prove that 
\begin{equation} \label{high-low-low-high2} 
\begin{split}
2^{k}\sum_{j \ge 2k}2^{-j/2}\big\|\textbf{1}_{D_{k,j}}&\cdot \big(f_{k_1,j_1}\ast f_{k_2,j_2} \ast f_{k_3,j_3}\big) \big\|_{L^2_{\xi,\tau}} \\ & \lesssim 2^{j_1/2}\|f_{k_1,j_1}\|_{L^2}2^{j_2/2}\|f_{k_2,j_2}\|_{L^2}2^{j_3/2}\|f_{k_3,j_3}\|_{L^2},
\end{split}
\end{equation}
where the functions $f_{k_i,j_i}$ are localized in $D_{k_i,j_i}$, with $j_i \ge 2k$, for $i=1,2,3$.

But, we deduce from estimates \eqref{coroL2bilin2b} and \eqref{coroL2bilin3} that 
\begin{displaymath} 
\begin{split}
2^{k}\sum_{j \ge 2k}&2^{-j/2}\big\|\textbf{1}_{D_{k,j}}\cdot \big(f_{k_1,j_1}\ast f_{k_2,j_2}\ast f_{k_3,j_3}\big) \big\|_{L^2_{\xi,\tau}} 
\\ & \lesssim 2^{k}\sum_{j \ge 2k}2^{-j/2}2^{k_1/2}2^{-2k}2^{j_1/2}\|f_{k_1,j_1}\|_{L^2}2^{j_2/2}\|f_{k_2,j_2}\|_{L^2}2^{j_3/2}\|f_{k_3,j_3}\|_{L^2},
\end{split}
\end{displaymath}
which implies estimate \eqref{high-low-low-high2} after summing over $j$. 
\end{proof}

\begin{lemma} \label{high-high-low-high}[high $\times $ high $\times$ low $\rightarrow$ high]
Assume that $k, \ k_1, \ k_2, \ k_3 \in \mathbb Z_+$ satisfy $k \ge 20$, $|k_3-k| \le 5$, $k_3 -10 \le k_2 \le k_3$ and $0 \le k_1 \le k_2-20$. 
Then, 
\begin{equation} \label{high-high-low-high1} 
\big\| P_k\partial_x\big(u_{k_1}v_{k_2}w_{k_3}\big)\big\|_{N_k} \lesssim 2^{-k}\|u_{k_1}\|_{F_{k_1}}\|v_{k_2}\|_{F_{k_2}}\|w_{k_3}\|_{F_{k_3}},
\end{equation}
for all $u_{k_1} \in F_{k_1}, \ v_{k_2} \in F_{k_2}$ and $w_{k_3} \in F_{k_3}$.
\end{lemma}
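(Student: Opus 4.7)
The proof follows the pattern of Lemma~\ref{high-low-low-high}, with the trilinear resonance function $\widetilde{\Omega}$ defined in \eqref{resonancetilde} playing the role that the bilinear $\Omega$ played there. By the same reduction used in \eqref{high-low3}--\eqref{high-low4} (now with three localized factors in place of two), combined with Corollary~\ref{coro2} to sum in the modulation indices of each $u_{k_i}$, it suffices to prove
\begin{equation*}
2^{k}\sum_{j \ge 2k}2^{-j/2}\bigl\|\mathbf{1}_{D_{k,j}}\cdot \bigl(f_{k_1,j_1}\ast f_{k_2,j_2}\ast f_{k_3,j_3}\bigr)\bigr\|_{L^2_{\xi,\tau}} \lesssim 2^{-k}\prod_{i=1}^{3} 2^{j_i/2}\|f_{k_i,j_i}\|_{L^2}
\end{equation*}
uniformly in $j_1,j_2,j_3 \ge 2k$, where each $f_{k_i,j_i}$ is supported in $D_{k_i,j_i}$.

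The key new input is a lower bound on $\widetilde{\Omega}$. Starting from the factorization
\begin{equation*}
\widetilde{\Omega}(\xi_1,\xi_2,\xi_3) = -5(\xi_1+\xi_2)(\xi_2+\xi_3)(\xi_1+\xi_3)\bigl(\xi_1^2+\xi_2^2+\xi_3^2+\xi_1\xi_2+\xi_2\xi_3+\xi_1\xi_3\bigr),
\end{equation*}
the hypotheses $k \ge 20$, $|k-k_3| \le 5$, $k_3 - 10 \le k_2 \le k_3$, $k_1 \le k_2 - 20$ force $|\xi_1+\xi_2| \sim 2^{k_2}$, $|\xi_1+\xi_3| \sim 2^{k_3}$, $|\xi_2+\xi_3| = |\xi-\xi_1| \sim 2^{k}$ (since $|\xi_1| \ll |\xi|$), while the symmetric quadratic factor is $\sim 2^{2k_3}$ by the same computation as in \eqref{lresonance3}. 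Hence $|\widetilde{\Omega}| \sim 2^{k_2 + 4k}$ throughout the support of the convolution restricted to $D_{k,j}$, and the identity $\tau - w(\xi) = \sum_{i=1}^3 (\tau_i - w(\xi_i)) + \widetilde{\Omega}$ then forces $j_{max} := \max(j,j_1,j_2,j_3) \ge k_2 + 4k - C$ for an absolute constant $C$.

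To bound $\|\mathbf{1}_{D_{k,j}}\cdot(f_{k_1,j_1}\ast f_{k_2,j_2}\ast f_{k_3,j_3})\|_{L^2}$, I apply Corollary~\ref{coroL2trilin}(b) via \eqref{coroL2trilin3} when $k_{thd} \le k_{max} - 5$, and Corollary~\ref{coroL2trilin}(a) in the residual case; in both cases $k_{min} = k_1$ and $k_{thd} \sim k$. After substituting $j_{min} + j_{thd} = (j+j_1+j_2+j_3) - j_{max} - j_{sub}$, the per-$j$ bound reduces to
\begin{equation*}
2^{k}\cdot 2^{-j/2}\bigl\|\mathbf{1}_{D_{k,j}}(\,\cdot\,)\bigr\|_{L^2} \lesssim 2^{(5k+k_1)/2 - (j_{max}+j_{sub})/2}\cdot 2^{(j_1+j_2+j_3)/2}\prod\|f_{k_i,j_i}\|_{L^2}.
\end{equation*}
Combining the resonance bound with $j_{sub} \ge 2k$ yields $j_{max} + j_{sub} \ge 6k + k_2 - C$, which exceeds the target threshold $5k + k_1$ by a slack $k + (k_2 - k_1) - C \ge k + 20 - C$; this slack is precisely what delivers the $2^{-k}$ factor.

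The main technical obstacle is the summation in $j$, which I handle by a short case split on the ordering of $(j, j_1, j_2, j_3)$: when $j$ sits among the two largest modulations, the factor $2^{-j/2}$ forms a geometric tail; when $j$ is strictly smaller than all $j_i$ the per-$j$ bound is $j$-independent and yields a polynomial factor $\lesssim \min_i j_i$, which is absorbed by the exponential slack coming from the resonance. A final summation over $j_1, j_2, j_3$ via Corollary~\ref{coro2} then produces \eqref{high-high-low-high1}.
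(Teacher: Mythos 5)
Your argument is structurally the same as the paper's: reduce via Corollary \ref{coro2} to a fixed-modulation estimate for functions supported in $D_{k_i,j_i}$ with $j,j_i\ge 2k$, use the resonance identity to force $j_{max}\gtrsim 5k$, apply the $L^2$ trilinear estimate of Corollary \ref{coroL2trilin}, and sum in $j$ by splitting on whether $j$ is among the two largest modulations. Your explicit factorization of $\widetilde{\Omega}$ and the resulting bound $|\widetilde{\Omega}|\sim 2^{k_2+4k}$ is a welcome sharpening of the paper's bare assertion $\widetilde{\Omega}\sim 2^{5k_{max}}$ (the two agree up to constants here since $k_2\sim k_3\sim k$), and your handling of the $j$-sum matches the paper's terse remark about the cases $j_{max}=j$ and $j_{max}\neq j$.

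However, the exponent bookkeeping does not close as written. From \eqref{coroL2trilin1} with $k_{min}=k_1$ and $k_{thd}\le k+5$, the per-$(j,j_1,j_2,j_3)$ prefactor is $2^{k}\cdot 2^{(k_1+k_{thd})/2}\lesssim 2^{(3k+k_1)/2}$, not $2^{(5k+k_1)/2}$. Taking your figure at face value, the slack $j_{max}+j_{sub}-(5k+k_1)\ge k+(k_2-k_1)-C$ only yields a gain of $2^{-(k+(k_2-k_1))/2+C/2}\le 2^{-k/2-10+C/2}$ after the halving of the exponent, i.e. $2^{-k/2}$, which falls short of the claimed $2^{-k}$. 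With the corrected prefactor $2^{(3k+k_1)/2}$ the slack becomes $3k+(k_2-k_1)-C$ and the gain is $2^{-3k/2-10+C/2}$, which comfortably delivers \eqref{high-high-low-high1} and matches what the paper's one-line computation with the factor $2^{(k_1+k_2)/2}$ produces. So the strategy is correct and is the paper's; only the arithmetic in your penultimate display needs fixing for the stated $2^{-k}$ gain to follow.
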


\begin{proof} Once again, it is enough to prove that estimate \eqref{high-low-low-high2} remains true in this case. According to the frequency localization, we have that $\widetilde{\Omega} \sim 2^{5k_{max}}$, where $\widetilde{\Omega}$ is defined in \eqref{resonancetilde}. This yields $j_{max} \ge 5k-20$. Therefore, it follows from estimate \eqref{coroL2trilin1} that
\begin{equation} \label{high-high-low-high2}
\begin{split}
2^{k}&\sum_{j \ge 2k}2^{-j/2}\big\|\textbf{1}_{D_{k,j}}\cdot \big(f_{k_1,j_1}\ast f_{k_2,j_2}\ast f_{k_3,j_3}\big) \big\|_{L^2_{\xi,\tau}} 
\\ & \lesssim 2^{k}\sum_{j \ge 2k}2^{-j/2}2^{(j_1+j_2+j_3+j)/2}2^{(k_1+k_2)/2}2^{-(j_{max}+j_{sub})/2}\prod_{i=1}^3\|f_{k_i,j_i}\|_{L^2},
\end{split}
\end{equation}
which provides the bound in estimate \eqref{high-low-low-high2}, in both cases $j_{max}=j$ and $j_{max} \neq j_{max}$. This finishes the proof of Lemma \ref{high-high-low-high}.
\end{proof}

\begin{lemma} \label{high-high-high-high}[high $\times $ high $\times$ high $\rightarrow$ high]
Assume that $k, \ k_1, \ k_2, \ k_3 \in \mathbb Z_+$ satisfy $k \ge 20$, $|k_3-k| \le 5$, $k_3 -10 \le k_2 \le k_3$ and  
$k_2-30 \le k_1 \le k_2$. 
Then, 
\begin{equation} \label{high-high-high-high1} 
\big\| P_k\partial_x\big(u_{k_1}v_{k_2}w_{k_3}\big)\big\|_{N_k} \lesssim \|u_{k_1}\|_{F_{k_1}}\|v_{k_2}\|_{F_{k_2}}\|w_{k_3}\|_{F_{k_3}},
\end{equation}
for all $u_{k_1} \in F_{k_1}, \ v_{k_2} \in F_{k_2}$ and $w_{k_3} \in F_{k_3}$.
\end{lemma}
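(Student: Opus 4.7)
The plan is to follow the same structure used in the proofs of Lemma \ref{high-low} and Lemma \ref{high-high}, but using the $L^2$ trilinear estimate from Corollary \ref{coroL2trilin} in place of the bilinear one. First, I will open the definition of the $N_k$ norm and transfer the time cutoff $\eta_0(2^{2k}(\cdot - t_k))$ onto each of the three inputs, replacing $u_{k_1}$, $v_{k_2}$, $w_{k_3}$ by nonnegative Fourier profiles $f_{k_1}, f_{k_2}, f_{k_3}$. Decomposing each of them dyadically in the modulation variable $\tau - w(\xi)$ into pieces $f_{k_i,j_i}$ supported in $D_{k_i,j_i}$ (with $j_i \ge 2k$, the case $j_i < 2k$ being absorbed into $j_i = 2k$), and using Corollary \ref{coro2} to recover the $F_{k_i}$ norms, the statement reduces to the frequency-localized inequality
\begin{equation*}
2^k \sum_{j \ge 2k} 2^{-j/2} \big\| \mathbf{1}_{D_{k,j}} \cdot (f_{k_1,j_1} \ast f_{k_2,j_2} \ast f_{k_3,j_3}) \big\|_{L^2_{\xi,\tau}} \lesssim \prod_{i=1}^3 2^{j_i/2} \|f_{k_i,j_i}\|_{L^2},
\end{equation*}
for any fixed $j_1, j_2, j_3 \ge 2k$.

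Since $k \sim k_1 \sim k_2 \sim k_3$, the factor $2^{(k_{min}+k_{thd})/2}$ produced by \eqref{coroL2trilin1} is comparable to $2^k$. Applying that bound gives that the left-hand side is controlled by
\begin{equation*}
2^{2k} \sum_{j \ge 2k} 2^{-j/2} \, 2^{(j_{min}+j_{thd})/2} \prod_{i=1}^3 \|f_{k_i,j_i}\|_{L^2},
\end{equation*}
where $j_{min} \le j_{thd} \le j_{sub} \le j_{max}$ denotes the ordering of the four numbers $\{j, j_1, j_2, j_3\}$. Using the identity $j_{min}+j_{thd}+j_{sub}+j_{max} = j+j_1+j_2+j_3$, one rewrites $-j/2 + (j_{min}+j_{thd})/2 = (j_1+j_2+j_3)/2 - (j_{sub}+j_{max})/2$, so it suffices to verify the scalar bound
\begin{equation*}
2^{2k} \sum_{j \ge 2k} 2^{-(j_{sub}+j_{max})/2} \lesssim 1.
\end{equation*}

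Setting $A = \max(j_1,j_2,j_3)$ and $B$ the second-largest among $(j_1,j_2,j_3)$, a case analysis on whether $j \le B$, $B \le j \le A$, or $j \ge A$ shows
\begin{equation*}
\sum_{j \ge 2k} 2^{-(j_{sub}+j_{max})/2} \lesssim (B - 2k + 1) \, 2^{-(A+B)/2} + 2^{-A}.
\end{equation*}
Setting $x = B - 2k \ge 0$ and $y = A - B \ge 0$, the first summand equals $(x+1) 2^{-x-y/2} \cdot 2^{-2k} \lesssim 2^{-2k}$ since $(x+1)2^{-x} \lesssim 1$ for $x \ge 0$, while the second is at most $2^{-A} \le 2^{-2k}$. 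Multiplying by $2^{2k}$ gives the required bound.

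The main (minor) point of care is that, unlike the bilinear high$\times$high case of Lemma \ref{high-high}, here one cannot invoke a favorable resonance lower bound on $\widetilde{\Omega}$: at the equal-frequency, arbitrary-sign configuration the trilinear resonance can vanish, so there is no analog of \eqref{high-high5}. Fortunately, the only derivative loss on the product is $\partial_x$, producing a single factor $2^k$ rather than $2^{3k}$, and the constraint $j,j_1,j_2,j_3 \ge 2k$ inherited from the short-time localization is by itself enough to make the modulation sum converge, as the case analysis above shows.
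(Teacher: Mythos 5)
Your proof is correct and follows essentially the same route as the paper: reduce to the frequency-localized inequality, apply the $L^2$ trilinear bound \eqref{coroL2trilin1} (equivalently the bound \eqref{high-high-low-high2}), and then close the modulation sum using only $j,\,j_1,\,j_2,\,j_3\ge 2k$, without any resonance lower bound on $\widetilde{\Omega}$ -- which is precisely the point the paper makes in its own (much terser) proof. Your explicit case analysis of $\sum_{j\ge 2k}2^{-(j_{sub}+j_{max})/2}$ is a correct filling-in of the detail the paper leaves implicit (one can also note directly that $j_{sub}\ge 2k$ and $j_{max}\ge j$ give the bound $2^{-k}\sum_{j\ge 2k}2^{-j/2}\lesssim 2^{-2k}$).
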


\begin{proof} We argue exactly as in the proof of Lemma \ref{high-high-low-high} and observe that estimate \eqref{high-high-low-high2} leads to estimate \eqref{high-low-low-high2} even without using that $j_{max} \ge 5k-20$, which is not always satisfied in this case. Instead, it is sufficient to use that $j, \ j_i \ge 2k$ for all $i=1,2,3$.
\end{proof}

\begin{lemma} \label{high-high-high-low}[high $\times $ high $\times$ high $\rightarrow$ low]
Assume that $k, \ k_1, \ k_2, \ k_3 \in \mathbb Z_+$ satisfy $k_3-5 \le k_2 \le k_3$, $k_2 -10 \le k_1 \le k_2$ and  $20 \le k \le k_1-10$. 
Then, 
\begin{equation} \label{high-high-high-low1} 
\big\| P_k\partial_x\big(u_{k_1}v_{k_2}w_{k_3}\big)\big\|_{N_k} \lesssim 2^{-(k_3+k)/2}\|u_{k_1}\|_{F_{k_1}}\|v_{k_2}\|_{F_{k_2}}\|w_{k_3}\|_{F_{k_3}},
\end{equation}
for all $u_{k_1} \in F_{k_1}, \ v_{k_2} \in F_{k_2}$ and $w_{k_3} \in F_{k_3}$.
\end{lemma}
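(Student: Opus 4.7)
The plan is to adapt the strategies of Lemmas \ref{high-high-low} and \ref{high-high-low-high} to the present trilinear high $\times$ high $\times$ high $\to$ low configuration. Since the output frequency $2^k$ is much smaller than the input frequencies $\sim 2^{k_3}$, the $N_k$-structure is time-localized on the coarse scale $2^{-2k}$, whereas the $F_{k_i}$-structure of the inputs lives on the finer scale $2^{-2k_3}$. I would therefore insert a partition of unity $\sum_{m \in \mathbb Z} \gamma^3(2^{2k_3}t - m) = 1$ to further subdivide the output window into $\sim 2^{2(k_3-k)}$ sub-intervals of length $2^{-2k_3}$, reducing the estimate to
$$
\|P_k\partial_x(u_{k_1}v_{k_2}w_{k_3})\|_{N_k} \lesssim \sup_{t_k,m} 2^{2(k_3-k)} \cdot 2^k \sum_{j,j_1,j_2,j_3} 2^{-j/2} \big\| \mathbf{1}_{D_{k,j}} \cdot (f^m_{k_1,j_1} \ast f^m_{k_2,j_2} \ast f^m_{k_3,j_3}) \big\|_{L^2},
$$
where each $f^m_{k_i,j_i}$ is the modulation-localized piece of the Fourier transform of the corresponding time-localized input, and is effectively supported with $j_i \ge 2k_3$ by Corollary \ref{coro2}.

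Next, since $k$ is the unique minimum and the other three frequencies are all comparable to $k_3$, one has $k_{min}=k$ and $k_{thd}=k_3$ among $\{k,k_1,k_2,k_3\}$, so the $L^2$ trilinear estimate \eqref{coroL2trilin1} gives the convolution bound $2^{(k+k_3)/2} \cdot 2^{(j_{min}+j_{thd})/2} \prod_i \|f^m_{k_i,j_i}\|_{L^2}$. Using the algebraic identity $j_{min}+j_{thd}-(j+j_1+j_2+j_3)=-(j_{max}+j_{sub})$ and extracting $g_i := 2^{j_i/2} \|f^m_{k_i,j_i}\|_{L^2}$ (whose sums over $j_i$ are controlled by $\|u_{k_i}\|_{F_{k_i}}$ via Corollary \ref{coro2}), the summand rewrites neatly as $2^{(k+k_3)/2} \cdot 2^{-(j_{max}+j_{sub})/2} \prod_i g_i$.

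The crucial input is the resonance lower bound for $\widetilde{\Omega}$. Writing $\xi_3 = -(\xi_1+\xi_2) + \eta$ with $|\eta|\sim 2^k$ and Taylor-expanding $w(\xi_3)$ around $-(\xi_1+\xi_2)$, one obtains
$$
\widetilde{\Omega}(\xi_1,\xi_2,\xi_3) = \Omega(\xi_1,\xi_2) + 5(\xi_1+\xi_2)^4 \eta - \eta^5 + O(2^{3k_3+2k}).
$$
Since $|\xi_1+\xi_2| \sim |\xi_3| \sim 2^{k_3}$, Lemma \ref{lresonance} gives $|\Omega(\xi_1,\xi_2)| \sim 2^{5k_3}$, which dominates every other term thanks to the hypothesis $k \le k_3-10$. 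Hence $|\widetilde{\Omega}| \sim 2^{5k_3}$, forcing $j_{max} \ge 5k_3 - C$. Combined with the pigeonhole observation $j_{sub} \ge 2k_3$ (three of the four modulations in $\{j,j_1,j_2,j_3\}$ are automatically $\ge 2k_3$), this yields $2^{-(j_{max}+j_{sub})/2} \lesssim 2^{-7k_3/2}$.

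The main technical obstacle will be carrying out the summation over $j$, since $j_{max}$ and $j_{sub}$ depend on $j$ piecewise. I would split into the case $j \ge \max_i j_i$ (where $j = j_{max} \ge 5k_3$ and $\sum_{j \ge 5k_3} 2^{-j/2}$ converges geometrically to $2^{-5k_3/2}$) and the case $j < \max_i j_i$ (where some $j_i = j_{max} \ge 5k_3$ provides the decay, and the residual sum over $j \in [2k,\max_i j_i]$ costs at most a polynomial in $k_3$). In either case, summing each $g_i$ via Corollary \ref{coro2} gives
$$
\|P_k\partial_x(u_{k_1}v_{k_2}w_{k_3})\|_{N_k} \lesssim 2^{2(k_3-k)+k+(k+k_3)/2-7k_3/2} \prod_i \|u_{k_i}\|_{F_{k_i}} = 2^{-k_3-k/2} \prod_i \|u_{k_i}\|_{F_{k_i}},
$$
which is a factor $2^{-k_3/2}$ stronger than the claimed bound $2^{-(k_3+k)/2}$; this slack comfortably absorbs the polynomial-in-$k_3$ loss from the case analysis and the $k_3\ge 20$ hypothesis ensures the geometric factors dominate.
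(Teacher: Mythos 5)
Your argument is correct and follows essentially the same route as the paper: partition the output time window into $2^{2(k_3-k)}$ subintervals of length $2^{-2k_3}$, apply the $L^2$ trilinear estimate \eqref{coroL2trilin1}, and exploit $j_{max}\ge 5k_3-C$ (from $|\widetilde{\Omega}|\sim 2^{5k_3}$) together with $j_{sub}\ge 2k_3$ to sum in $j$; your Taylor-expansion justification of the resonance bound and the careful $j$-summation merely fill in details the paper asserts without proof. One trivial slip: with $k\le k_1-10\le k_2\le k_3$ one has $k_{thd}=k_1$ rather than $k_3$, but since $k_1\le k_3$ your replacement of $2^{(k+k_1)/2}$ by $2^{(k+k_3)/2}$ only enlarges the bound and the final exponent count $2^{-k_3-k/2}\le 2^{-(k_3+k)/2}$ is unaffected.
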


\begin{proof} We argue as in the proof of Lemma \ref{high-high-low}. Thus it is enough to prove that 
\begin{equation} \label{high-high-high-low2} 
\begin{split}
2^{2(k_3-k)}2^{k}\sum_{j \ge 0}2^{-j/2}\big\|\textbf{1}_{D_{k,j}}&\cdot \big(f_{k_1,j_1}^m\ast f_{k_2,j_2}^m \ast f_{k_3,j_3}^m\big) \big\|_{L^2_{\xi,\tau}} \\ & \lesssim 2^{j_1/2}\|f_{k_1,j_1}^m\|_{L^2}2^{j_2/2}\|f_{k_2,j_2}^m\|_{L^2}2^{j_3/2}\|f_{k_3,j_3}^m\|_{L^2},
\end{split}
\end{equation}
where the functions $f_{k_i,j_i}^m$ are localized in $D_{k_i,j_i}$, with $j_i \ge 2k_3$, for $i=1,2,3$. According to estimate \eqref{coroL2trilin1}, we can bound the left-hand side of \eqref{high-high-high-low2} by
\begin{displaymath} 
2^{2k_3-k}\sum_{j \ge 0}2^{-j/2}2^{(j_1+j_2+j_3+j)/2}2^{(k+k_1)/2}2^{-(j_{max}+j_{sub})/2}\prod_{i=1}^3\|f_{k_i,j_i}\|_{L^2}.
\end{displaymath}
 Moreover, we have $\widetilde{\Omega} \sim 2^{5k_{max}}$ in this case, so that $j_{max} \ge 5k_3-20$. This implies estimate \eqref{high-high-high-low2} in both cases $j=j_{max}$ and $j \neq j_{max}$. 
\end{proof}

\begin{lemma} \label{high-high-low-low}[high $\times $ high $\times$ low $\rightarrow$ low]
Assume that $k, \ k_1, \ k_2, \ k_3 \in \mathbb Z_+$ satisfy $k \ge 20$, $k_3-5 \le k_2 \le k_3$  and  $0 \le k_1, k \le k_2-10$. 
Then, 
\begin{equation} \label{high-high-low-low1} 
\big\| P_k\partial_x\big(u_{k_1}v_{k_2}w_{k_3}\big)\big\|_{N_k} \lesssim 2^{-(k_3+k)/2}\|u_{k_1}\|_{F_{k_1}}\|v_{k_2}\|_{F_{k_2}}\|w_{k_3}\|_{F_{k_3}},
\end{equation}
for all $u_{k_1} \in F_{k_1}, \ v_{k_2} \in F_{k_2}$ and $w_{k_3} \in F_{k_3}$.
\end{lemma}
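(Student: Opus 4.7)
The proof follows the template of Lemmas \ref{high-high-low} and \ref{high-high-high-low}. Since both the output frequency $k$ and the input frequency $k_1$ are low while $k_2 \sim k_3$ are high, the $2^{-2k}$ time scale intrinsic to the $N_k$-norm is too coarse; we subdivide it via the partition of unity $\sum_m \gamma^3(2^{2k_3}(t-t_k)-m) = 1$ with $|m| \lesssim 2^{2(k_3-k)}$ terms. Distributing one factor $\gamma$ to each of $u_{k_1}, v_{k_2}, w_{k_3}$ and invoking Corollary~\ref{coro2} (applicable since $k_3 + 5 \ge k_i - 5$ for each $i=1,2,3$), we may assume that the modulations $j_1, j_2, j_3$ of the time-localized pieces $f^m_{k_i, j_i}$ all satisfy $j_i \ge 2k_3$.

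After these reductions the task reduces, uniformly in $m$ and $t_k$, to establishing
\begin{equation*}
2^{k+2(k_3-k)} \sum_{j \ge 0} \frac{2^{j/2}}{|\tau-w(\xi)+i2^{2k}|}\bigl\|\mathbf{1}_{D_{k,j}} \cdot (f^m_{k_1,j_1} \ast f^m_{k_2,j_2} \ast f^m_{k_3,j_3})\bigr\|_{L^2} \lesssim 2^{-(k_3+k)/2}\prod_{i=1}^{3} 2^{j_i/2}\|f^m_{k_i,j_i}\|_{L^2}.
\end{equation*}
The decisive input is a resonance computation for $\widetilde{\Omega}$ defined in \eqref{resonancetilde}: writing $\widetilde{\Omega}(\xi_1,\xi_2,\xi_3) = \Omega(\xi_2,\xi_3) - \Omega(\xi_1, \xi_2+\xi_3)$ and applying Lemma~\ref{lresonance} to each term, the first factor has size $\sim 2^{4k_3 + \max(k,k_1)}$ while the second is at most $O(2^{3k_3})$ since $k_1, k \le k_3 - 10$. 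Hence $|\widetilde{\Omega}| \sim 2^{4k_3 + \max(k,k_1)} \gtrsim 2^{4k_3 + k}$, which forces $j_{max}(j, j_1, j_2, j_3) \ge 4k_3 + k - C$.

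The $L^2$ convolution is controlled by Corollary~\ref{coroL2trilin}(b), available since $k_{thd} = \max(k,k_1) \le k_2 - 10 \le k_{max} - 5$, producing a gain of $2^{-j_{max}/2} \cdot 2^{k_{min}/2} \cdot 2^{-2k_3}$ with $k_{min} = \min(k,k_1)$ (and $k_{max}=k_3$). The $j$-summation then splits exactly as in the proofs of Lemmas~\ref{high-high-low} and \ref{high-high-high-low}, according to whether $j = j_{max}$ or $j_{max} \in \{j_1,j_2,j_3\}$; in either subcase the resonance bound $j_{max} \ge 4k_3 + k$ forces convergence of the resulting geometric series and produces the modulation gain $2^{-(4k_3+k)/2}$, up to harmless logarithmic losses absorbed by the exponential decay. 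Collecting the outer prefactor $2^{2k_3-k}$ with the $L^2$ gain $2^{k_{min}/2 - 2k_3}$ and the resonance decay $2^{-2k_3 - k/2}$ yields an overall bound of order $2^{k_{min}/2 - k - 2k_3}$, which comfortably improves on the claimed $2^{-(k_3+k)/2}$ since $k_{min} \le k \le k_3$.

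The main obstacle is the sharp resonance asymptotics $|\widetilde{\Omega}| \sim 2^{4k_3+k}$ in this four-frequency configuration; the decomposition $\widetilde{\Omega}(\xi_1,\xi_2,\xi_3) = \Omega(\xi_2,\xi_3) - \Omega(\xi_1,\xi_2+\xi_3)$ is the key algebraic identity, reducing the question to the bilinear resonance of Lemma~\ref{lresonance} together with a crude upper bound on the lower-order correction. Once this lower bound on $|\widetilde{\Omega}|$ is in hand, the bookkeeping of the modulation sums is routine and essentially identical to the argument in Lemma~\ref{high-high-high-low}, with one additional low-frequency factor propagated through the estimate.
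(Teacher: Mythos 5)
There is a genuine gap: the resonance lower bound $|\widetilde{\Omega}|\gtrsim 2^{4k_3+k}$ on which your whole modulation bookkeeping rests is false in this frequency configuration. Your identity $\widetilde{\Omega}(\xi_1,\xi_2,\xi_3)=\Omega(\xi_2,\xi_3)+\Omega(\xi_1,\xi_2+\xi_3)$ is correct (up to the sign you wrote), but Lemma \ref{lresonance} gives $|\Omega(\xi_2,\xi_3)|\sim 2^{4k_3}\,|\xi_2+\xi_3|$, and here $\xi_2+\xi_3=\xi-\xi_1$ with $\xi\in I_k$ the output frequency and $\xi_1\in I_{k_1}$ the low input frequency. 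In the high $\times$ high $\times$ low $\to$ low regime there are \emph{two} small frequencies among the four, and nothing in the hypotheses prevents $k=k_1$ and $\xi=\xi_1$, in which case $\xi_2=-\xi_3$ and $\widetilde{\Omega}=0$ exactly. So $|\xi_2+\xi_3|$ has no lower bound of size $2^{\max(k,k_1)}$, the claim $j_{max}\ge 4k_3+k-C$ fails, and the geometric series in your final step does not close as written. (This is precisely why the paper invokes a resonance lower bound in Lemmas \ref{high-high-low-high} and \ref{high-high-high-low}, where only one of the four frequencies is small, but not here.)

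The conclusion is still true, and in fact the rest of your setup already contains the fix: drop the resonance input entirely. After the time localization to scale $2^{-2k_3}$ you have $j_1,j_2,j_3\ge 2k_3$, and Corollary \ref{coroL2trilin}(b) applies since $k_{thd}=\max(k,k_1)\le k_2-10\le k_{max}-5$; estimates \eqref{coroL2trilin3} and \eqref{coroL2trilin4} then supply a factor $2^{-j_{med}/2}$ or $2^{-j_{max}/2}$, which is $\le 2^{-k_3}$ purely from the time localization, together with $2^{k_{min}/2}2^{-2k_3}$. Splitting the $j$-sum according to whether $j=j_{max}$ or $j_{max}\in\{j_1,j_2,j_3\}$ and combining with the prefactor $2^{2(k_3-k)}2^{k}$ already yields \eqref{high-high-high-low2} with the gain $2^{-(k_3+k)/2}$, since $k_{min}\le k\le k_3$. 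This is exactly the paper's (very short) argument: it reduces to \eqref{high-high-high-low2} and cites \eqref{coroL2trilin3}--\eqref{coroL2trilin4}, with no resonance estimate.
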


\begin{proof} Following the proof of Lemma \ref{high-high-high-low}, we need to prove that estimate \eqref{high-high-high-low2} still holds in this case. This is a direct consequence of estimates \eqref{coroL2trilin3} and \eqref{coroL2trilin4}.
\end{proof}

\begin{lemma} \label{low-low-low-low}[low $\times $ low $\times$ low $\rightarrow$ low]
Assume that $k, \ k_1, \ k_2, \ k_3 \in \mathbb Z_+$ satisfy $0 \le k, \ k_1, \ k_2, \ k_3 \le 200$. 
Then, 
\begin{equation} \label{low-low-low-low1} 
\big\| P_k\partial_x\big(u_{k_1}v_{k_2}w_{k_3}\big)\big\|_{N_k} \lesssim \|u_{k_1}\|_{F_{k_1}}\|v_{k_2}\|_{F_{k_2}}\|w_{k_3}\|_{F_{k_3}},
\end{equation}
for all $u_{k_1} \in F_{k_1}, \ v_{k_2} \in F_{k_2}$ and $w_{k_3} \in F_{k_3}$.
\end{lemma}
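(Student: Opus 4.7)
The proof will mirror the structure of Lemma \ref{low-low} (the bilinear low-low analogue), replacing the bilinear $L^2$ bound from Corollary \ref{coroL2bilin} by the trilinear one from Corollary \ref{coroL2trilin}. First, unravelling the definition of $N_k$ in \eqref{Nk} and applying Corollaries \ref{coro1} and \ref{coro2} exactly as in the proof of Lemma \ref{high-low}, the estimate \eqref{low-low-low-low1} reduces to proving
\begin{equation*}
\sum_{j \ge 0} 2^{-j/2} \big\|\mathbf{1}_{D_{k,j}} \cdot \big(f_{k_1,j_1} \ast f_{k_2,j_2} \ast f_{k_3,j_3}\big)\big\|_{L^2_{\xi,\tau}} \lesssim \prod_{i=1}^3 2^{j_i/2}\|f_{k_i,j_i}\|_{L^2},
\end{equation*}
where the functions $f_{k_i,j_i}$ are supported in $D_{k_i,j_i}$ with $j_i \ge 0$, for $i=1,2,3$. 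Note that, contrary to the high-frequency lemmas in this section, the $\partial_x$ factor contributes only $2^k \lesssim 1$ (since $k \le 200$), so it may be absorbed in the implicit constant; likewise no restriction $j_i \ge 2k$ is needed here.

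Next, I would invoke estimate \eqref{coroL2trilin1}. Because $k_{min}+k_{thd} \le 400$ by hypothesis, the frequency factor $2^{(k_{min}+k_{thd})/2}$ is an $O(1)$ constant, and we obtain
\begin{equation*}
\big\|\mathbf{1}_{D_{k,j}} \cdot \big(f_{k_1,j_1} \ast f_{k_2,j_2} \ast f_{k_3,j_3}\big)\big\|_{L^2} \lesssim 2^{(j_{min}+j_{thd})/2} \prod_{i=1}^3 \|f_{k_i,j_i}\|_{L^2}.
\end{equation*}
Since $2^{(j_{min}+j_{thd})/2} \le \prod_{i=1}^3 2^{j_i/2}$, it remains only to sum $2^{-j/2}$ over $j\ge 0$. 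This sum converges geometrically and contributes an $O(1)$ factor, which completes the reduction.

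Because all dyadic frequencies are bounded by a fixed constant, there is essentially no interaction between the resonance function and the modulations to exploit or contend with, so I do not anticipate any obstacle beyond careful bookkeeping of the constants --- this is why the estimate \eqref{low-low-low-low1} carries no gain in $k$ on the right-hand side, in contrast with the genuinely high-frequency cases treated in Lemmas \ref{high-low-low-high}--\ref{high-high-low-low}.
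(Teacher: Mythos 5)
Your proposal is correct and follows essentially the same route as the paper, whose proof of this lemma is simply the remark that one argues as in Lemma \ref{low-low}: reduce via the definition of $N_k$ and Corollaries \ref{coro1}--\ref{coro2} to a dyadic sum over modulations, absorb the harmless factors $2^{k}\lesssim 1$ and $2^{(k_{min}+k_{thd})/2}\lesssim 1$, and apply the trilinear bound \eqref{coroL2trilin1} in place of the bilinear \eqref{coroL2bilin1}. The only point worth spelling out is that the inequality $j_{min}+j_{thd}\le j_1+j_2+j_3$ must be checked for each possible placement of $j$ among the four modulation indices, but it does hold in every case, so the geometric sum over $j$ closes as you claim.
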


\begin{proof} It follows arguing as in Lemma \ref{low-low}.
\end{proof}

Finally, we give the proof of Proposition \ref{trilinear}. 
\begin{proof}[Proof of Proposition \ref{trilinear}] 
Fix $s \ge 0$. We choose two extensions $\tilde{u}$, $\tilde{v}$ and $\tilde{w}$ of $u$, $v$ and $w$ satisfying 
\begin{displaymath}
\|\tilde{u}\|_{F^s} \le 2\|u\|_{F^s(T)}, \quad \|\tilde{v}\|_{F^s}\le 2\|v\|_{F^s(T)} \quad \text{and} \quad \|\tilde{w}\|_{F^s} \le 2\|w\|_{F^s(T)}.
\end{displaymath}
Therefore $\partial_x(\tilde{u}\tilde{v}\tilde{w})$ is an extension of $\partial_x(uvw)$ on $\mathbb R^2$ and we have from the definition of $N^s(T)$ and Minkowski inequality that
\begin{displaymath}  
\|\partial_x\big(uvw\big)\|_{N^s(T)}   
\le \Big(\sum_{k \ge 0}2^{2ks}\Big(\sum_{k_1,  k_2, k_3 \ge 0}\big\|P_k\partial_x\big(P_{k_1}\tilde{u}P_{k_2}\tilde{v}P_{k_3}\tilde{w}\big)\big\|_{N_k}\Big)^2 \Big)^{\frac12}.
\end{displaymath}
Note that by symmetry, we can always assume that $0 \le k_1 \le k_2 \le k_3$. Moreover, we denote
\begin{displaymath} 
\begin{array}{l}
G_1=\big\{(k_1,k_2,k_3) \in \mathbb Z_+^3 \ : \  k \ge 20, \ |k_3-k| \le 5 ,\  0 \le k_1\le k_2 \le k_3-10\big\} ,
\\ 
G_2=\big\{(k_1,k_2,k_3) \in \mathbb Z_+^3 \ : \  k \ge 20, \ |k_3-k| \le 5 \ |k_3-k_2| \le 10, \ 0 \le k_1 \le k_2-20\big\}, \\ 
G_3=\big\{(k_1,k_2,k_3) \in \mathbb Z_+^3 \ : \  k \ge 20, \ |k_3-k| \le 5 \ |k_3-k_2| \le 10, \ |k_1-k_2| \le 30\big\}, \\
G_4=\big\{(k_1,k_2,k_3) \in \mathbb Z_+^3 \ : \  k_3-5 \le k_2 \le k_3, \ k_2-10 \le k_1 \le k_2,  \ 20 \le k \le k_1-10
\big\},\\ 
G_5=\big\{(k_1,k_2,k_3) \in \mathbb Z_+^3 \ : \ k_3-5 \le k_2 \le k_3, \ 20 \le k,k_1 \le k_2-10\big\}, \\ 
G_6=\big\{(k_1,k_2,k_3) \in \mathbb Z_+^3 \ : \ 0 \le k, \ k_1, \ k_2, \ k_3 \le 200\big\}.
\end{array}
\end{displaymath}

Note that for a given $k \in \mathbb Z_+$, some of these regions may be empty and others may overlap, but due to the frequency localization, we always have that 
\begin{equation} \label{trilinear2} 
\|\partial_x\big(uvw\big)\|_{N^s(T)}   
\le \sum_{i=1}^6\Big(\sum_{k \ge 0}2^{2ks}\Big(\sum_{(k_1,  k_2, k_3) \in G_i}\big\|P_k\partial_x\big(P_{k_1}\tilde{u}P_{k_2}\tilde{v}P_{k_3}\tilde{w}\big)\big\|_{N_k}\Big)^2 \Big)^{\frac12}.
\end{equation}
We conclude the proof of Proposition \ref{trilinear} by applying respectively Lemmas \ref{high-low-low-high}--\ref{low-low-low-low} to each of the sum appearing on the right-hand side of \eqref{trilinear2}.
\end{proof}

\subsection{Modifications to the energy estimates}  We only explain how to deal with the \textit{a priori} estimates, since the modifications would be similar to derive estimates for the differences of two solutions. The main point is to derive an analog to Proposition \ref{propEE} in the case where $c_3 \neq 0$. 

\begin{proposition} \label{propEEwithc3}
Assume $s \ge \frac54$ and $T \in (0,1]$. Then, if $u \in C([-T,T];H^{\infty}(\mathbb R))$ is a solution to \eqref{5KdV}, we have that 
\begin{equation} \label{propEEwithc3.1} 
E^s_T(u) \lesssim (1+\|u_0\|_{H^s})\|u_0\|_{H^s}^2+\big(1+\Gamma_T^s(u)+\Gamma_T^s(u)^2\big)\Gamma_T^s(u)^3,
\end{equation}
where 
\begin{displaymath} 
\Gamma_T^s(u):=\max \big\{\|u\|_{F^s(T)},\|u\|_{B^s(T)} \big\}.
\end{displaymath}
\end{proposition}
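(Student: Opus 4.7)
The plan is to retrace the proof of Proposition \ref{propEE} using the same modified energy $E_k(u)$ of \eqref{Ek} and the same values of $\alpha,\beta$, but now with the full equation \eqref{5KdV} including the term $c_3\partial_x(u^3)$. All contributions already treated in Proposition \ref{propEE} reappear verbatim, so it suffices to control the extra terms produced by substituting $c_3\partial_x(u^3)$ in place of a factor $\partial_t u$ inside $\frac{d}{dt}E_k(u)$. These additional contributions are of two kinds: first, a quartic term coming from $\frac{d}{dt}\|P_k u\|_{L^2}^2$, namely
\[
 \mathcal{K}_k(u):=2c_3\int_{\mathbb R}P_k u\cdot P_k\partial_x(u^3)\,dx;
\]
second, several quintic terms of the general form $\int u\,P_k\partial_x^{-1}u\,Q_k\partial_x^{-1}\partial_x(u^3)\,dx$ and its analogs (with $Q_k$ replaced by $P_k$, or with $\partial_x(u^3)$ placed in another slot).

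For $\mathcal{K}_k(u)$ I perform a dyadic decomposition $u^3=\sum_{k_1\le k_2\le k_3}P_{k_1}u\,P_{k_2}u\,P_{k_3}u$ and split the triple $(k_1,k_2,k_3)$ into the six frequency regimes $G_1,\ldots,G_6$ already used in the proof of Proposition \ref{trilinear}. In each regime I apply the appropriate case of Lemma \ref{tec3EE}: estimate \eqref{tec3EE1} in the non-resonant regions $G_1,G_4,G_5$, estimate \eqref{tec3EE2} in the almost-resonant regions $G_2,G_3$, and a direct bound in the low-frequency region $G_6$. After multiplying by the derivative factor $2^k$ and summing in the auxiliary indices by Cauchy--Schwarz, this produces
\[
\sum_{k\ge 1}2^{2ks}\sup_{t_k\in[0,T]}\Bigl|\int_0^{t_k}\mathcal{K}_k(u)\,dt\Bigr|\lesssim \Gamma_T^s(u)^4,
\]
which contributes the $\Gamma_T^s(u)^4$-piece to the right-hand side of \eqref{propEEwithc3.1}.

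For the quintic contributions I first integrate by parts to transfer the extra derivative appearing in $\partial_x(u^3)$ onto a low-frequency factor such as $P_k\partial_x^{-1}u$ or the outer $u$, and then estimate by a combination of H\"older's inequality, the Strichartz bound \eqref{Bstrichartz1} (with $\alpha=2$), the embedding \eqref{lemma1.1} of Lemma \ref{lemma1}, and, in the genuinely high-high-high-low regime, an additional application of Lemma \ref{tec3EE}. This produces the remaining $\Gamma_T^s(u)^5$-contribution to \eqref{propEEwithc3.1}. The main obstacle is the high-high-high-low interaction in $\mathcal{K}_k(u)$: the gain from Lemma \ref{tec3EE} is only $2^{-k_{\max}}2^{k_{\min}/2}$, and the three high-frequency factors must absorb the $2^{2sk}$ weight through their $s$-regularity, which forces the hypothesis $s\ge 5/4$, in line with the restriction already encountered for the quadratic part of the nonlinearity.
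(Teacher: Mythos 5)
Your overall architecture coincides with the paper's: the same modified energy with the same $\alpha,\beta$, the extra contributions isolated as the quartic term $\mathcal{K}_k(u)=2c_3\int_{\mathbb R}P_ku\,P_k\partial_x(u^3)\,dx$ plus quintic terms, the quartic term handled by the dyadic decomposition into $G_1,\dots,G_6$ from Proposition \ref{trilinear} together with Lemma \ref{tec3EE}, and the quintic terms by H\"older, the embedding \eqref{lemma1.1} and the Strichartz bound. There is, however, a concrete gap in your treatment of the region $G_3$, where $k,k_1,k_2,k_3$ are all comparable and large. You propose to apply estimate \eqref{tec3EE2} there, but part (b) of Lemma \ref{tec3EE} requires either $k_{thd}\le k_{max}-5$ or $k_{min}\ll k_{thd}\sim k_{sub}\sim k_{max}$, and both hypotheses fail when the four frequencies are mutually comparable. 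No modulation-based gain is available in that region either: the resonance function $\widetilde{\Omega}$ of \eqref{resonancetilde} can vanish there (take $\xi_1=-\xi_2$ and $\xi_3=\xi$), so one cannot force $j_{max}$ to be large. The paper instead estimates the $G_3$ (and $G_6$) contributions by H\"older's inequality combined with \eqref{lemma1.1} and \eqref{Bstrichartz1b}, placing two factors in $L^2_TL^{\infty}_x$ and two in $L^{\infty}_TL^2_x$, which yields a bound of the form $\|u\|_{F^0(T)}\|u\|_{F^{\frac34+}(T)}\|u\|_{B^s(T)}^2$. You should do the same; these tools already appear in your treatment of the quintic terms, you simply did not deploy them for this piece of $\mathcal{K}_k(u)$.

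A second, less damaging point: your closing remark attributes the hypothesis $s\ge\frac54$ to the high $\times$ high $\times$ high $\to$ low interaction of the cubic term. That is not where the restriction comes from. The cubic nonlinearity is benign and is controlled for all $s\ge0$ (cf.\ Proposition \ref{trilinear}, and the bounds of the form $\|u\|_{F^0(T)}\|u\|_{F^{\frac12+}(T)}\|u\|_{F^s(T)}^2$ obtained for $\mathcal{K}_k(u)$); the threshold $\frac54$ is inherited from the quadratic terms already present in Proposition \ref{propEE}, where estimate \eqref{tecEE2} produces a factor $\|u\|_{F^{\frac54}(T)}$, together with the wish to express the final bound purely in terms of $\Gamma^s_T(u)$. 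Since $s\ge\frac54$ is assumed anyway this does not break the argument, but the diagnosis is incorrect.
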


\begin{proof} The proof of Proposition \ref{propEEwithc3} follows the same strategy as the one of Proposition \ref{propEE}. The unique difference is that we need to add the terms 
$\mathcal{M}_k^1(u)$, $\alpha \mathcal{M}_k^1(u)$ and $\beta \mathcal{M}_k^2(u)$
to the right-hand side of \eqref{propEE2}, where
\begin{displaymath}
\mathcal{K}_k(u)=2c_3\int_{\mathbb R}P_kuP_k\partial_x\big(u^3\big)dx,
\end{displaymath}
\begin{displaymath}  
\begin{split}
\mathcal{M}_k^1(u)&=c_3\int_{\mathbb R}\partial_x\big(u^3\big)P_k\partial_x^{-1}uQ_k\partial_x^{-1}udx+
c_3\int_{\mathbb R}uP_k\big(u^3\big)Q_k\partial_x^{-1}udx\\ & \quad +
c_3\int_{\mathbb R}uP_k\partial_x^{-1}uQ_k\big(u^3\big)dx,
\end{split}
\end{displaymath}
and
\begin{equation} \label{Mk2}  
\mathcal{M}_k^2(u)=c_3\int_{\mathbb R}\partial_x\big(u^3\big)P_k\partial_x^{-1}uP_k\partial_x^{-1}udx+
2c_3\int_{\mathbb R}uP_k\big(u^3\big)P_k\partial_x^{-1}udx.
\end{equation}
Therefore, it suffices to bound 
\begin{displaymath} 
\sum_{k \ge 1}2^{2ks}\sup_{t_k \in [0,T]}\Big|\int_{[0,t_k]}\big(\mathcal{K}_k(u)+\alpha\mathcal{M}_k^1(u)+\beta\mathcal{M}_k^2(u)\big)dt \Big|
\end{displaymath}
by the the terms appearing on the right-hand side of \eqref{propEEwithc3.1}.

We first treat the fourth-order term corresponding to $\mathcal{K}_k(u)$. We perform the same dyadic decomposition as in the proof of Proposition \ref{trilinear}. Thus, 
\begin{equation} \label{propEEwithc3.2} 
\begin{split}
&\sum_{k \ge 1}2^{2ks}\sup_{t_k \in [0,T]}\Big|\int_{[0,t_k]}\mathcal{K}_k(u)dt\Big| \\ &
\lesssim \sum_{i=1}^6\sum_{k \ge 1}2^{2ks}\sum_{(k_1,k_2,k_3) \in G_i}\sup_{t_k \in [0,T]}
\Big|\int_{\mathbb R \times [0,t_k]}P_kuP_k\partial_x\big(P_{k_1}uP_{k_2}uP_{k_3}u\big)dx\Big|.
\end{split}
\end{equation}
By using respectively estimate \eqref{tec3EE1} for the sums over $G_1$ and $G_5$ and estimate \eqref{tec3EE2} for the sums over $G_2$ and $G_4$, the corresponding terms on the right-hand side of \eqref{propEEwithc3.2} can be bounded by 
\begin{equation} \label{propEEwithc3.3}
\|u\|_{F^0(T)}\|u\|_{F^{\frac12+}(T)}\|u\|_{F^s(T)}^2.
\end{equation}
In the regions $G_3$ and $G_6$, we use estimates \eqref{lemma1.1} and \eqref{Bstrichartz1b} to bound the corresponding terms by 
\begin{equation} \label{propEEwithc3.4}
\|u\|_{F^0(T)}\|u\|_{F^{\frac34+}(T)}\|u\|_{B^s(T)}^2.
\end{equation}
Observe that \eqref{propEEwithc3.3} and \eqref{propEEwithc3.4} are controlled by the second term on the right-hand side of \eqref{propEEwithc3.1}.

Next, we deal with the fifth order term corresponding to $\mathcal{M}_k^2(u)$ and observe that the one corresponding to $\mathcal{M}_k^1(u)$ could be treated similarly. It follows from estimate \eqref{lemma1.1} that 
\begin{displaymath} 
\Big|\int_{\mathbb R \times [0,t_k]}\partial_x\big(u^3\big)P_k\partial_x^{-1}uP_k\partial_x^{-1}udx\Big| 
\lesssim \|u\|_{F^{\frac12+}(T)}^3\|P_ku\|_{L^{\infty}_TL^2_x}\|\partial_x^{-1}P_ku\|_{L^{\infty}_TL^2_x},
\end{displaymath}
which leads to the bound in \eqref{propEEwithc3.1} after summing over $k \in \mathbb Z_+ \cap [1,+\infty)$ and taking the supreme over $t_k \in [0,T]$. Finally, to deal with the second term on the right-hand side of \eqref{Mk2}, we introduce a dyadic decomposition $$\partial_x(u^3)=\sum_{k_1,k_2,k_3}\partial_x\big(P_{k_1}uP_{k_2}uP_{k_3}u \big),$$ and use estimates \eqref{lemma1.1} and \eqref{Bstrichartz1b} to obtain the right estimate. 

This finishes the proof of Proposition \ref{propEEwithc3}.
\end{proof}

\vspace{0,5cm}

\noindent \textbf{Acknowledgments.} This research was carried out when the second author was visiting the Department
of Mathematics of the University of Chicago, whose hospitality is gratefully acknowledged. D.P. would like to thank Luc Molinet for helpful conversations about this work. The authors would also like to thank Tadahiro Oh for pointing out a technical mistake in a previous version of the proof of Lemma \ref{apriorilemma}.

\bibliographystyle{amsplain}

\end{document}